\newtheorem{theorem}{Theorem}[section]
\newtheorem{lemma}[theorem]{Lemma}
\newtheorem{remark}[theorem]{Remark}
\newtheorem{definition}[theorem]{Definition}
\newtheorem{corollary}[theorem]{Corollary}
\newtheorem{proposition}[theorem]{Proposition}
\newcommand{\R}{\mathbb R}
\newcommand{\N}{\mathbb N}
\newcommand{\Z}{\mathbb Z}
\newcommand{\Q}{\mathbb Q}
\def\op{\operatorname}
\def\as#1{\renewcommand\arraystretch{#1}}
\def\be{\bigskip}
\def\dg{\op{deg}}
\def\dsc{\op{disc}}
\def\diso{\lower.4ex\hbox{$\downarrow$}\raise.4ex\hbox{\mbox{\scriptsize $\wr$}}}
\def\e{\medskip}
\def\ff#1{\mathbb{F}_{#1}}
\def\fph{\mathbb{F}_{\phi}}
\def\g{\Gamma}
\def\ga{\gamma}
\def\gb#1{\overline{\gamma_{#1}(\t)}}
\def\indx{\operatorname{i}}
\def\imp{\,\Longrightarrow\,}
\def\ind{\op{ind}}
\def\iso{\,\lower .6ex\hbox{$\stackrel{\lra}{\mbox{\tiny $\sim\,$}}$}\,}
\def\j{\mathbf{j}}
\def\la{\lambda}
\def\lg{l\raise.6ex\hbox to.2em{\hss.\hss}l}
\def\lra{\longrightarrow}
\def\m{{\mathfrak m}}
\def\md#1{\ \mbox{\rm(mod }{#1})}
\def\n{\mathbf{n}}
\def\nph#1{N_{\phi}(#1)}
\def\np#1{N_{#1}^-}
\def\npp#1{N_{\phi}^-(#1)}
\def\ol{{\mathcal O}_L}
\def\om{\omega}
\def\oo{{\mathcal O}}
\def\orb{\hbox to  .3em{$\backslash$}\backslash}
\def\ord{\op{ord}}
\def\pol{{\mathcal P}{\mathcal P}}
\def\qpb{\overline{\mathbb{Q}}_p}
\def\res{\op{Res}}
\def\rd{\op{red}}
\def\rdm{\rd_{L}}
\def\rt{R_1}
\def\sii{\,\Longleftrightarrow\,}
\def\ss{{\mathcal S}}
\def\t{\theta}
\def\tb{\overline{\theta}}
\def\tt{\hat{\mathbf{t}}}
\def\ty{\mathbf{t}}
\def\Ty{\mathbf{T}}
\def\tq{\,\,|\,\,}
\def\zpx{\oo[x]}
\newcounter{cs}
\newcommand{\casos}{\begin{itemize}}
\newcommand{\fcasos}{\end{itemize}\setcounter{cs}{1}}
\newfont{\tit}{cmr12 scaled \magstep3}
\begin{document}
\title{Newton polygons of higher order in algebraic number theory}
\author[Gu\`ardia]{Jordi Gu\`ardia}
\address{Departament de Matem\`atica Aplicada IV, Escola Polit\`ecnica Superior d'Enginyera de Vilanova i la Geltr\'u, Av. V\'\i ctor Balaguer s/n. E-08800 Vilanova i la Geltr\'u, Catalonia}
\email{guardia@ma4.upc.edu}

\author[Montes]{\hbox{Jes\'us Montes}}
\address{Departament de Ci\`encies Econ\`omiques i Socials,
Facultat de Ci\`encies Socials,
Universitat Abat Oliba CEU,
Bellesguard 30, E-08022 Barcelona, Catalonia, Spain}
\email{montes3@uao.es}

\author[Nart]{\hbox{Enric Nart}}
\address{Departament de Matem{\`a}tiques,
         Universitat Aut{\`o}noma de Barcelona,
         Edifici C, E-08193 Bellaterra, Barcelona, Catalonia}
\email{nart@mat.uab.cat}
\thanks{Partially supported by MTM2006-15038-C02-02 and MTM2006-11391 from the Spanish MEC}
\date{}
\keywords{Newton polygon, local field, prime ideal decomposition, discriminant, integral basis}

\subjclass[2000]{Primary 11S15; Secondary 11R04, 11R29, 11Y40}

\maketitle
\begin{abstract}
We develop a theory of arithmetic Newton polygons of higher order, that provides the factorization of a separable polynomial over a $p$-adic field, together with relevant arithmetic information about the fields generated by the irreducible factors. This carries out a program suggested by \O{}. Ore. As an application, we obtain fast algorithms to compute discriminants, prime ideal decomposition and integral bases of number fields.
\end{abstract}

\section*{Introduction}
R. Dedekind based the foundations of algebraic number theory on ideal theory, because the constructive attempts to find a rigorous general definition of the \emph{ideal numbers} introduced by E. Kummer failed. This failure is due to the existence of inessential discriminant divisors; that is, there are number fields $K$ and prime numbers $p$, such that $p$ divides the index $\indx(\t):=\left(\Z_K\colon \Z[\t]\right)$, for any integral generator $\t$ of $K$, where $\Z_K$ is the ring of integers.  Dedekind gave a criterion to detect when $p\nmid\indx(\t)$, and a procedure to construct the prime ideals of $K$ dividing $p$ in that case, in terms of the factorization of the minimal polynomial of $\t$ modulo $p$ \cite{de}. 

M. Bauer introduced an arithmetic version of Newton polygons to construct prime ideals in cases where Dedekind's criterion failed
\cite{bau}. This theory was developed and extended by \O{}. Ore in his 1923 thesis and a series of papers that followed \cite{ore1,ore2,ore3,ore4,ore5}. Let $f(x)\in\Z[x]$ be an irreducible polynomial that generates $K$. After K. Hensel's work, the prime ideals of $K$ lying above $p$ are in bijection with the irreducible factors of $f(x)$ over $\Z_p[x]$. Ore's work determines three successive factorizations of $f(x)$ in $\Z_p[x]$, known as the \emph{three classical dissections} \cite{ber}, \cite{coh}. The first dissection is determined by Hensel's lemma: $f(x)$ splits into the product of factors that are congruent to the power of an irreducible polynomial modulo $p$. The second dissection is a further splitting of each factor, according to the number of sides of certain Newton polygon. The third dissection is a further splitting of each of the late factors, according to the factorization of certain \emph{residual polynomial} attached to each side of the polygon, which is a polynomial with coefficients in a finite field.

Unfortunately, the factors of $f(x)$ obtained after these three dissections are not always irreducible.
Ore defined a  polynomial to be \emph{$p$-regular} when it satisfies a technical condition that ensures that the factorization of $f(x)$ is complete after the three dissections. Also, he proved the existence of a $p$-regular defining equation for every number field, but the proof is not constructive: it uses the Chinese remainder theorem with respect to the different prime ideals that one wants to construct. Ore himself suggested that it should be possible to introduce Newton polygons of higher order that continue the factorization process till all irreducible factors of $f(x)$ are achieved \cite[Ch.4,\S8]{ore1}, \cite[\S5]{ore5}.

Ore's program was carried out by the second author in his 1999 thesis \cite{m}, under the supervision of the third author.
For any natural number $r\ge1$, Newton polygons of order $r$ were constructed, the case $r=1$ corresponding to the Newton polygons
introduced by Ore. Also, analogous to Ore's theorems were proved for polygons of order $r$, providing two more dissections of the factors of $f(x)$, for each order $r$. The whole process is controled by an invariant defined in terms of \emph{higher order indices}, that ensures that the process ends after a finite number of steps. Once an irreducible factor of $f(x)$ is detected, the theory determines the ramification index and residual degree of the $p$-adic field generated by this factor, and a generator of the maximal ideal. These invariants are expressed in terms of combinatorial data attached to the sides of the higher order polygons and the residual polynomials of higher order attached to each side. The process yields as a by-product a computation of $\ind(f):=v_p(\indx(\t))$, where $\t$ is a root of $f(x)$. An implementation in Mathematica of this factorization algorithm was worked out by the first author \cite{gua}.   

We present these results for the first time in the form of a publication, after a thorough revision and some simplifications.
In section 1 we review Ore's results, with proofs, which otherwise can be found only in the original papers by Ore in the language of ``h\"oheren Kongruenzen". In section 2     
we develop the theory of Newton polygons of higher order, based in the concept of a \emph{type} and its \emph{representative}, which plays the analogous role in order $r$ to that played by an irreducible polynomial modulo $p$ in order one. In section 3 we prove analogous in order $r$ to Ore's Theorems of the polygon and of the residual polynomial (Theorems \ref{thpolygonr} and \ref{thresidualr}), that provide two more dissections for each order. In section 4 we introduce resultants and indices of higher order and we prove the Theorem of the index (Theorem \ref{thindex}), that relates $\ind(f)$ with the higher order indices constructed from the higher order polygons. This result guarantees that the factorization process finishes at most in  $\ind(f)$ steps.  

Although the higher order Newton polygons are apparently involved and highly technical objects, they provide fast factorization algorithms, because all computations are mainly based on two reasonably fast operations: division with remainder of monic polynomials with \emph{integer} coefficients, and factorization of polynomials over \emph{finite} fields. Thus, from a modern perspective, the main application of these results is the design of fast algorithms to compute discriminants, prime ideal decomposition and integral bases of number fields. However, we present in this paper only the theoretical background of higher order Newton polygons. We shall describe the concrete design of the algorithms and discuss the relevant computational aspects elsewhere \cite{gmna,gmnb}.    

\tableofcontents

\section{Newton polygons of the first order}\label{secNP}
\subsection{Abstract polygons}\label{abstract}
Let $\la\in \Q^-$ be a negative rational number, expressed in lower terms as $\la=-h/e$, with $h,e$ positive coprime integers. We denote by $\ss(\la)$ the set of segments of the Euclidian plane with slope $\la$ and end points having nonnegative integer coordinates. The points of $(\Z_{\ge0})^2$ are also considered to be segments in $\ss(\la)$, whose initial and final points coincide. The elements of $\ss(\la)$ will be called \emph{sides of slope $\la$}. For any side $S\in\ss(\la)$, we define its \emph{length}, $\ell:=\ell(S)$, and \emph{height}, $H:=H(S)$, to be the length of the respective  projections of $S$ to the horizontal and vertical axis. We define the \emph{degree} of $S$ to be 
$$
d:=d(S):=\ell(S)/e=H(S)/h.
$$
Note that any side $S$ of positive length is divided into $d$ segments by the
points of integer coordinates that lie on  $S$. 
A side $S\in\ss(\la)$ is determined by the initial point $(s,u)$ and the degree $d$. The final point is $(s+\ell,u-H)=(s+de,u-dh)$. For instance, the next figure represents a side of slope $-1/2$, initial point $(s,u)$, and degree three.

\begin{center}
\setlength{\unitlength}{5.mm}
\begin{picture}(8,6)
\put(.85,4.85){$\bullet$}\put(2.85,3.85){$\bullet$}\put(4.85,2.85){$\bullet$}\put(6.85,1.85){$\bullet$}
\put(-1,0){\line(1,0){9}}\put(0,-1){\line(0,1){7}}
\put(1,5){\line(2,-1){6}}\put(1.02,5){\line(2,-1){6}}
\put(.85,-.65){\begin{scriptsize}$s$\end{scriptsize}}
\put(-.6,4.95){\begin{scriptsize}$u$\end{scriptsize}}
\put(-1.8,1.85){\begin{scriptsize}$u-H$\end{scriptsize}}
\multiput(-.1,5.05)(.25,0){5}{\hbox to 2pt{\hrulefill }}
\multiput(-.1,2)(.25,0){29}{\hbox to 2pt{\hrulefill }}
\multiput(1,4)(.25,0){9}{\hbox to 2pt{\hrulefill }}
\multiput(7,-.1)(0,.25){9}{\vrule height2pt}
\put(6.4,-.65){\begin{scriptsize}$s+\ell$\end{scriptsize}}
\multiput(1,-.1)(0,.25){21}{\vrule height2pt}
\put(4.25,3.65){\begin{footnotesize}$S$\end{footnotesize}}
\put(.6,4.4){\begin{scriptsize}$h$\end{scriptsize}}
\put(1.9,3.65){\begin{scriptsize}$e$\end{scriptsize}}
\end{picture}
\end{center}\be\be

The set $\ss(\la)$ has the structure of an abelian semigroup with the following addition rule: given $S,\,T\in \ss(\la)$, the sum $S+T$ is the side of degree $d(S)+d(T)$ of $\ss(\la)$, whose initial point is the sum of the initial points of $S$ and $T$. Thus, the addition is geometrically represented by the process of joining the two segments and choosing an apropriate initial point. The addition of a segment $S$ with a point $P$ is represented by the translation $P+S$ of $S$ by the vector represented by $P$. The neutral element is the point $(0,0)$. The invariants $\ell(S)$, $H(S)$, $d(S)$ determine semigroup homomorphisms
$$
\ell,\,H,\,d\,\colon \ss(\la)\lra \Z_{\ge0}.
$$

For technical reasons we consider also a set of \emph{sides of slope $-\infty$}, which is formally defined as $\ss(-\infty):=\Z_{>0}\times (\Z_{\ge0})^2$. If $S=(\ell,(s,u))$ is a side of slope minus infinity, we define $\ell(S):=\ell$, $H(S):=\infty$, $d(S):=1$. Also, we take by convention $h=\infty$, $e=\ell$.  This set has an obvious structure of an abelian monoid, and the length  determines a monoid homomorphism, $\ell\colon \ss(-\infty)\lra \Z_{>0}$.
There is a geometric representation of such an $S$ as a side whose end points are $(s-\ell,\infty)$ and $(s,u)$. \be

\begin{center}
\setlength{\unitlength}{5.mm}
\begin{picture}(6,4)
\put(3.85,1.85){$\bullet$}\put(-2,0){\line(1,0){7}}
\put(0,-1){\vector(0,1){5}}
\put(3.8,-.55){\begin{scriptsize}$s$\end{scriptsize}}
\put(-1.35,-.55){\begin{scriptsize}$s-\ell$\end{scriptsize}}
\put(-.45,1.9){\begin{scriptsize}$u$\end{scriptsize}}
\multiput(4,-.1)(0,.25){17}{\vrule height2pt}
\multiput(0,2)(.25,0){16}{\hbox to 2pt{\hrulefill }}
\end{picture}
\end{center}\be\be

The \emph{set of sides of negative slope} is defined as the formal disjoint union
$$
\ss:=\ss(-\infty)\coprod\left(\bigcup_{\la\in\Q^-}\ss(\la)\right).
$$
Note that the points of $(\Z_{\ge0})^2$ belong to $\ss(\la)$ for all finite $\la$, so that it is not possible (even in a formal sense) to attach a slope to them. 

We have a natural geometric representation of a side. Let us introduce a geometric representation of a formal sum of sides as an open convex polygon of the plane. Let $N=S_1+\cdots+S_t$ be a formal sum of sides of negative slope. Let $S_{\infty}=(\ell_{\infty},P_{\infty})$ be the sum of all sides of slope $-\infty$ among the $S_i$, and let $P_0$ be the sum of all initial points of the $S_i$ that don't belong to $\ss(-\infty)$ (in case of an empty sum we consider respectively $P_{\infty}=(0,0)$ and/or $P_0=(0,0)$). Let $P=P_{\infty}+P_0$. Then, $N$ is represented as the polygon that starts at $P$ and is obtained by joining all sides of positive length and finite slope, ordered
by increasing slopes. If $i_1$ is the abscissa of $P$, we have to think that the polygon starts at the abscisa $i_0=i_1-\ell_{\infty}$, that formally indicates the starting point (at infinity) of a side of slope $-\infty$. The typical shape of this polygon is

\begin{center}
\setlength{\unitlength}{5.mm}
\begin{picture}(12,8)
\put(10.85,.85){$\bullet$}\put(7.85,1.85){$\bullet$}
\put(4.85,3.85){$\bullet$}\put(3.85,5.85){$\bullet$}
\put(-1,0){\line(1,0){13}}
\put(8,2){\line(-3,2){3}}\put(5,4){\line(-1,2){1}}\put(8,2.03){\line(-3,2){3}}
\put(5,4.03){\line(-1,2){1}}\put(11,1){\line(-3,1){2}}\put(11.02,1){\line(-3,1){3}}
\multiput(0,-.1)(0,.25){30}{\vrule height2pt}
\multiput(4,-.1)(0,.25){30}{\vrule height2pt}
\multiput(11,-.1)(0,.25){5}{\vrule height2pt}
\put(2,.7){\vector(1,0){2}}\put(2,.7){\vector(-1,0){2}}
\put(1.4,.9){\begin{scriptsize}$\ell_{\infty}$\end{scriptsize}}
\put(4.2,6.2){\begin{scriptsize}$P$\end{scriptsize}}
\put(-.1,-.6){\begin{scriptsize}$i_0$\end{scriptsize}}
\put(3.9,-.6){\begin{scriptsize}$i_1$\end{scriptsize}}
\put(10.3,-.6){\begin{scriptsize}$i_0+\ell(N)$\end{scriptsize}}
\end{picture}
\end{center}\be\be

\begin{definition}
The semigroup $\pol$ of principal polygons is defined to be the set of all these geometric configurations.
\end{definition}

By definition, every principal polygon represents a formal sum, $N=S_1+\cdots+S_t$, of sides $S_i\in\ss$. This expression is unique in any of the two following situations
\begin{enumerate}
 \item $N=S$, with $S\in(\Z_{\ge0})^2$,
\item $N=S_1+\cdots+S_t$, with all $S_i$ of positive length and pairwise different slopes.
\end{enumerate}
It is clear that any $N\in\pol$ can be expressed in one (and only one) of these canonical forms. 
Usually, when we speak of the \emph{sides} of a principal polygon, we mean the sides of this canonical expression. If we need to emphasize this we shall use the term \emph{canonical sides} of $N$. The finite end points of the canonical sides are called the \emph{vertices} of the polygon.

The addition of polygons is defined in terms of the expression as a formal sum of sides (not necessarily the canonical ones). That is, if 
$N=S_1+\cdots+S_r$ and $N'=S'_1+\cdots+ S'_s$, then $N+N'$ is the geometric representation of $S_1+\cdots+S_r+S'_1+\cdots +S'_s$. The reader may check easily that this is well-defined and $\pol$ has a structure of semigroup with neutral element $(0,0)$.

Also, it is clear that this addition is compatible with the sum operations that we had on all $\ss(\la)$. Note that the addition of $N\in\pol$ with (the polygon represented by) a point $P\in(\Z_{\ge0})^2$ is the translation $P+N$. The fact of adding to $N$ (the polygon represented by) a side of slope $-\infty$ is reflected by a horizontal shift of the finite part of $N$, without changing the starting abscissa $i_0$ of $N$.

\begin{definition}
We define the length of a principal polygon $N=S_1+\cdots+S_r$ to be $\ell(N):=\ell(S_1)+\cdots+\ell(S_r)$.
The length determines a semigroup homomorphism, $\ell\colon \pol\lra \Z_{\ge 0}$.
\end{definition}

Let $N\in\pol$. Let $i_0$ be the abscissa where the polygon starts and $i_1$ the abscissa of the point $P$ where the finite part of $N$ starts. For any integer abscissa $i_0 \le i\le i_0+\ell(N)$ we denote by
$$y_i=y_i(N)=\left\{\begin{array}{ll}
 \infty,&\mbox{ if }i<i_1,\\
\mbox{the ordinate of the point of $N$ of abscissa $i$,}&\mbox{ if }i\ge i_1.
\end{array}\right.
$$  For $i\ge i_1$ these rational numbers form an strictly decreasing sequence.

\begin{definition}\label{above}
Let $P=(i,y)$ be a ``point" of the plane, with  integer abscissa $i_0 \le i\le i_0+\ell(N)$, and ordinate $y\in\R\cup\{\infty\}$. We say
that $P$ \emph{lies on} $N$ if $y=y_i$, and in this case we write $P\in N$.
 We say that $P$ \emph{lies above} $N$ if $y\ge y_i$. We say that $P$
\emph{lies strictly above} $N$ if $y>y_i$.
\end{definition}

For any $i_1<i\le i_0+\ell(N)$, let $\mu_i$ be the slope of the segment joining $(i-1,y_{i-1})$ and $(i,y_i)$. The sequence
$\mu_{i_1+1}\le\cdots\le\mu_{i_0+\ell(N)}$ is an increasing sequence of negative rational numbers. We call these elements the \emph{unit slopes} of $N$. Consider the multisets of unit slopes:
$$
U_{i_1}(N):=\emptyset;\quad U_i(N):=\{\mu_{i_1+1},\dots,\mu_i\}, \quad\forall i_1<i\le i_0+\ell(N). 
$$
Clearly, $y_i(N)=y_{i_1}(N)+\sum_{\mu\in U_i(N)}\mu$.

Let $N'$ be another principal polygon with starting abscissa $j_0$ and starting abscissa for the finite part $j_1$. Consider  analogous multisets $U_j(N')$, for all $j_1\le j\le j_0+\ell(N')$. By the definition of the addition law of principal polygons, the multiset $U_k(N+N')$ contains the smallest $k-i_1-j_1$ unit slopes of the multiset $U_{i_0+\ell(N)}(N)\cup U_{j_0+\ell(N')}(N')$ that contains all unit slopes of both polygons. Thus, 
$$
y_i(N)+y_j(N')\ge y_{i+j}(N+N'),
$$
and equality holds if and only if $U_i(N)\cup U_j(N')=U_{i+j}(N+N')$.

\begin{lemma}\label{sum}
Let $N,N'\in\pol$. Let $P=(i,u)$ be a point lying above the finite part of $N$ and
$P'=(j,u')$ a point lying above the finite part of $N'$. Then $P+P'$ lies
above the finite part of $N+N'$ and $$P+P'\in N+N'\ \sii\ P\in N,\ P'\in N',\mbox{
and }\,U_i(N)\cup U_j(N')=U_{i+j}(N+N').$$
\end{lemma}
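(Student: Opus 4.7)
The plan is to reduce the lemma directly to the inequality and equality criterion established immediately before the statement, namely
$$
y_i(N) + y_j(N') \ge y_{i+j}(N+N'),
$$
with equality iff $U_i(N)\cup U_j(N') = U_{i+j}(N+N')$. The two assertions of the lemma become, respectively, the inequality version and the equality version of a composite chain, applied after recording that the hypotheses ``lying above the finite part'' guarantee that the relevant ordinates are finite and the abscissa $i+j$ falls inside the admissible range $[i_1+j_1,\,i_0+j_0+\ell(N+N')]$ of the finite part of $N+N'$.

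First I would verify the range check: from $i\ge i_1$ and $j\ge j_1$ one gets $i+j\ge i_1+j_1$, and from $i\le i_0+\ell(N)$, $j\le j_0+\ell(N')$ together with additivity of $\ell$ one gets $i+j\le(i_0+j_0)+\ell(N+N')$, using that $N+N'$ starts at abscissa $i_0+j_0$ (this last identity follows from how the addition of polygons accumulates initial points and lengths of the summands). Next, from $P$ above $N$ and $P'$ above $N'$ I have $u\ge y_i(N)$ and $u'\ge y_j(N')$, hence
$$
u+u' \;\ge\; y_i(N)+y_j(N') \;\ge\; y_{i+j}(N+N'),
$$
which is exactly the assertion that $P+P'$ lies above the finite part of $N+N'$.

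Finally, for the equivalence, observe that $P+P'\in N+N'$ means $u+u'=y_{i+j}(N+N')$, so the two inequalities in the chain above must be simultaneous equalities. The first collapses to $u=y_i(N)$ and $u'=y_j(N')$, i.e.\ $P\in N$ and $P'\in N'$. The second collapses, by the unit-slope criterion recalled before the statement, to $U_i(N)\cup U_j(N')=U_{i+j}(N+N')$. Conversely, if all three conditions on the right-hand side hold, then both inequalities are equalities and $P+P'\in N+N'$. I do not anticipate a genuine obstacle; the only point requiring care is the bookkeeping for the abscissas and for the starting point of the finite part of the sum polygon, which must be handled so that the multisets $U_i(N)$, $U_j(N')$ and $U_{i+j}(N+N')$ are well defined in the ranges considered.
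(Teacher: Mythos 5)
Your proof is correct and follows essentially the same route as the paper: both reduce the lemma to the chain $u+u'\ge y_i(N)+y_j(N')\ge y_{i+j}(N+N')$ together with the equality criterion (in terms of the unit-slope multisets) recorded just before the statement, and observe that $P+P'\in N+N'$ exactly when both inequalities collapse to equalities. The paper's proof is a one-liner; you merely spell out the abscissa bookkeeping more explicitly, which is harmless.
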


\begin{proof}
Clearly, $u+u'\ge y_i(N)+y_j(N')\ge y_{i+j}(N+N')$ and $P+P'\in
N+N'$ if and only if both inequalities are equalities.
\end{proof}

\begin{definition}\label{sla}
Let $\la\in\Q^-$ and $N\in\pol$. Consider a line of slope $\la$ far below $N$ and let it shift upwards till it touches $N$ for the first time. Denote by $L_{\la}(N)$ this line of slope $\la$ having first contact with $N$. We define the \emph{$\la$-component} of $N$ to be $S_{\la}(N):=N\cap L_{\la}(N)$. We obtain in this way a map:
$$
S_{\la}\colon \pol\lra \ss(\la).
$$
\end{definition}
If $N$ has a canonical side $S$ of positive length and finite slope $\la$, we have $S_{\la}(N)=S$, otherwise the $\la$-component $S_{\la}(N)$ reduces to a point.

\begin{center}
\setlength{\unitlength}{5.mm}
\begin{picture}(15,5.5)
\put(2.85,1.85){$\bullet$}\put(1.85,2.85){$\bullet$}
\put(-1,0){\line(1,0){7}}\put(0,-1){\line(0,1){6}}
\put(3,2){\line(-1,1){1}}\put(3.02,2){\line(-1,1){1}}
\put(3,2){\line(3,-1){1}}\put(3.02,2){\line(3,-1){1}}
\put(2,3){\line(-1,2){1}}\put(2.02,3){\line(-1,2){1}}
\put(6,.5){\line(-2,1){7}}\put(5.2,1){\begin{footnotesize}$L_{\la}(N)$\end{footnotesize}}
\put(2.7,2.6){\begin{footnotesize}$S$\end{footnotesize}}
\put(0,-1.6){\begin{footnotesize}$S_{\la}(N)=$ final point of
$S$\end{footnotesize}}
\put(13.85,1.85){$\bullet$}\put(11.85,2.85){$\bullet$}
\put(9,0){\line(1,0){7}}\put(10,-1){\line(0,1){6}}
\put(14,2){\line(-2,1){2}}\put(14.02,2){\line(-2,1){2}}
\put(14,2){\line(3,-1){1}}\put(14.02,2){\line(3,-1){1}}
\put(12,3){\line(-1,2){1}}\put(12.02,3){\line(-1,2){1}}
\put(16,1){\line(-2,1){7}}\put(15.6,1.4){\begin{footnotesize}$L_{\la}(N)$\end{footnotesize}}
\put(13,2.6){\begin{footnotesize}$S$\end{footnotesize}}
\put(12,-1.6){\begin{footnotesize}$S_{\la}(N)=S$\end{footnotesize}}
\end{picture}
\end{center}\be\be

Lemma \ref{sum} shows that $S_{\la}$ is a semigroup homomorphism:
\begin{equation}\label{sumsla}
S_{\la}(N+N')=S_{\la}(N)+S_{\la}(N'),
\end{equation}
for all $N,\,N'\in\pol$ and all $\la\in\Q^-$.

\subsection{$\phi$-Newton polygon of a polynomial}\label{phiN}
Let $p$ be a prime number and let $\qpb$ be a fixed algebraic closure of the field $\Q_p$ of the $p$-adic numbers. For any finite extension, $\Q_p\subseteq L\subseteq \qpb$,  of $\Q_p$ we denote by $v_L$ the $p$-adic valuation, $v_L\colon \qpb\lra \Q\cup\{\infty\}$, normalized by $v_L(L^*)=\Z$. Throughout the paper $\ol$ will denote the ring of integers of $L$, $\m_L$ its maximal ideal, and $\ff{L}$ the residue field. The canonical reduction map $\rdm\colon \ol\lra\ff{L}$ will be usually indicated by a bar: $\overline{\alpha}:=\rdm(\alpha)$.

We fix a finite extension $K$ of $\Q_p$ as a base field, and we denote $v:=v_K$, $\oo:=\oo_K$, $\m:=\m_K$, $\ff{}:=\ff{K}$, $q:=|\ff{}|$. We fix also a prime element $\pi\in\oo$.

We extend the valuation $v$ to polynomials with coefficients in $\oo$ in a natural way:
$$
v\colon \zpx\lra \Z_{\ge0}\cup\{\infty\},\quad v(b_0+\cdots+b_rx^r):=\min\{v(b_j),\,0\le j\le r\}.
$$

Let $\phi(x)\in\zpx$ be a monic polynomial of degree $m$ whose reduction mo\-dulo $\m$ is irreducible. We denote by $\fph$ the finite field $\zpx/(\pi,\phi(x))$, and by $\rd\colon \zpx\lra \fph$
the canonical homomorphism. We denote also by a bar the reduction of polynomials modulo $\m$, $\bar{\ }\colon\zpx\lra\ff{}[x]$.

Any $f(x)\in\zpx$ admits a unique $\phi$-adic development:
$$ 
f(x)=a_0(x)+a_1(x)\phi(x)+\cdots+a_n(x)\phi(x)^n,
$$with $a_i(x)\in\zpx$, $\dg a_i(x)<m$. For any coefficient $a_i(x)$ we denote $u_i:=v(a_i(x))\in\Z_{\ge0}\cup\{\infty\}$ and we attach to $a_i(x)$ the point $P_i=(i,u_i)$, which is a point of the plane if $u_i$ is finite, and it is thought to be the point at infinity of the vertical line with abscissa $i$, if $u_i=\infty$.

\begin{definition}
The $\phi$-Newton polygon of a nonzero polynomial $f(x)\in\zpx$ is the lower convex envelope of the set of points $P_i=(i,u_i)$, $u_i<\infty$, in the euclidian plane.
We denote this polygon by $\nph{f}$.
\end{definition}

The length of this polygon is by definition the abscissa of the last vertex. We denote it by  $\ell(\nph{f}):=n=\lfloor \dg(f)/m\rfloor$. Note that $\dg f(x)=m n+\dg a_n(x)$. The typical shape of this polygon is the following

\begin{center}
\setlength{\unitlength}{5.mm}
\begin{picture}(14,9)
\put(10.85,1.85){$\bullet$}\put(8.85,2.85){$\bullet$}\put(7.85,1.85){$\bullet$}\put(5.85,3.85){$\bullet$}
\put(4.85,3.85){$\bullet$}\put(3.85,5.85){$\bullet$}\put(2.85,7.85){$\bullet$}\put(-1,0){\line(1,0){15}}
\put(0,-1){\line(0,1){9}}
\put(8,2){\line(-3,2){3}}\put(5,4){\line(-1,2){2}}\put(8,2.03){\line(-3,2){3}}
\put(5,4.03){\line(-1,2){2}}\put(11,2){\line(-1,0){3}}\put(11,2.02){\line(-1,0){3}}
\put(12.85,3.85){$\bullet$}\put(11,2){\line(1,1){2}}\put(11,2.02){\line(1,1){2}}
\multiput(13,-.1)(0,.25){16}{\vrule height2pt}
\multiput(3,-.1)(0,.25){32}{\vrule height2pt}
\multiput(8,-.1)(0,.25){9}{\vrule height2pt}
\put(11.7,-.7){\begin{footnotesize}$\lfloor \dg(f)/m\rfloor$\end{footnotesize}}
\put(6.4,-.7){\begin{footnotesize}$\ord_{\overline{\phi}}\left(\overline{f/\pi^{v(f)}}\right)$\end{footnotesize}}
\put(2.1,-.7){\begin{footnotesize}$\ord_{\phi}(f)$\end{footnotesize}}
\multiput(-.1,2)(.25,0){55}{\hbox to 2pt{\hrulefill }}
\put(-1.4,1.9){\begin{footnotesize}$v(f)$\end{footnotesize}}
\put(-.4,-.6){\begin{footnotesize}$0$\end{footnotesize}}
\end{picture}
\end{center}\be\be

\begin{remark}\label{phiell}
The $\phi$-Newton polygon of $f(x)$ consists of a single side of slope $-\infty$ if and only if $f(x)=a(x)\phi(x)^n$, with $\dg(a)<m$. 
\end{remark}

\begin{definition}\label{shape1}
The principal $\phi$-polygon of $f(x)$ is the element $\npp{f}\in\pol$ determined by the sides of negative slope  of $\nph{f}$, including the side of slope $-\infty$ represented by the length $\ord_{\phi}(f)$. It always starts at the abscissa $i_0=0$ and has length $\ord_{\overline{\phi}}\left(\overline{f/\pi^{v(f)}}\right)$. 

For any $\la\in\Q^-$ we shall denote by $S_{\la}(f):=S_{\la}(\npp{f})$ the $\la$-component of this polygon (cf. Definition \ref{sla}) 
\end{definition}

From now on, we denote $N=\npp{f}$ for simplicity. The principal polygon $N$ and the set of points $P_i=(i,u_i)$ that lie on $N$, contain the arithmetic information we are interested in. 
Note that, by construction, the points $P_i$ lie all above $N$.

We attach to any abscissa $\ord_{\phi}(f)\le i\le \ell(N)$ the following \emph{residual coefficient} $c_i\in\fph$:
$$\as{1.6}
c_i=\left\{\begin{array}{ll}
0,&\mbox{ if $(i,u_i)$ lies strictly above }N,\\\rd\left(\dfrac{a_i(x)}{\pi^{u_i}}\right),&\mbox{ if $(i,u_i)$ lies on }N.
\end{array}
\right.
$$ Note that $c_i$ is always nonzero in the latter case, because $\dg a_i(x)<m$. 

Let $\la=-h/e$ be a negative rational number, with $h,e$ positive coprime integers. Let $S=S_{\la}(f)$ be the $\la$-component of $N$, $(s,u)$ the initial point of $S$, and $d:=d(S)$ the degree of $S$. The points $(i,u_i)$ that lie on $S$ contain important arithmetic information that is
kept in the form of two polynomials that are built with the
coefficients of the $\phi$-adic development of $f(x)$ to whom these points are attached.

\begin{definition}
We define the \emph{virtual factor} of $f(x)$ attached to $\la$ (or to $S$) to be the polynomial
$$
f^S(x):=\pi^{-u}\phi(x)^{-s}f^0(x)\in K[x],\ \mbox{ where }\ f^0(x):=\sum_{(i,u_i)\in S}a_{i}(x)\phi(x)^i.
$$
We define the \emph{residual polynomial} attached to $\la$ (or to $S$) to be the polynomial:
$$
R_{\la}(f)(y):=c_s+c_{s+e}\,y+\cdots+c_{s+(d-1)e}\,y^{d-1}+c_{s+de}\,y^d\in\fph[y].
$$
\end{definition}

Note that only the points $(i,u_i)$ that lie on $S$ yield a nonzero coefficient of $R_{\la}(f)(y)$. In particular, $c_s$ and $c_{s+de}$ are always nonzero, so that $R_{\la}(f)(y)$ has degree $d$ and it is never  divisible by $y$.

If $\pi'=\rho\pi$ is another prime element of $\oo$, and $c=\bar{\rho}\in\ff{}^*$, the residual coefficients of $N_{\phi}^-(f)$ with respect to $\pi'$ satisfy $c'_i=c_ic^{-u_i}$, so that the corresponding residual polynomial $R'_{\la}(f)(y)$ is equal to $c^{-u}R_{\la}(f)(c^hy)$.

We can define in a completely analogous way the residual polynomial of $f(x)$ with respect to a side $T$, which is not necessarily a $\la$-component of $\npp{f}$. 

\begin{definition}
Let $T\in\ss(\la)$ be an arbitrary side of slope $\la$, with abscissas $s_0\le s_1$ for the end points, and let $d'=d(T)$. We say that the polynomial $f(x)$ \emph{lies above} $T$ if all points of $\npp{f}$ with integer abscissa $s_0\le i\le s_1$ lie above $T$; in this case we define
$$
R_{\la}(f,T)(y):=\tilde{c}_{s_0}+\tilde{c}_{s_0+e}\,y+\cdots+\tilde{c}_{s_0+(d'-1)e}\,y^{d'-1}+\tilde{c}_{s_0+d'e}\,y^{d'}\in\fph[y],
$$
where $\tilde{c}_i=c_i$ if $(i,u_i)$ lies on $T$ and $\tilde{c}_i=0$ otherwise. 
\end{definition}
 Thus, if all points of $S_{\la}(f)$ lie strictly above $T$ we have $R_{\la}(f,T)(y)=0$.
Note that $\dg R_{\la}(f,T)(y)\le d'$ and equality holds if and only if the final point of $T$ belongs to $S_{\la}(f)$. Usually, $T$ will be an enlargement of $S_{\la}(f)$ and then,
\begin{equation}\label{enlarge1}
T\supseteq S_{\la}(f)\ \imp\ R_{\la}(f,T)(y)=y^{(s-s_0)/e}R_{\la}(f)(y), 
\end{equation}
where $s$ is the abscissa of the initial point of $S_{\la}(f)$.

\begin{center}
\setlength{\unitlength}{5.mm}
\begin{picture}(6,5)
\put(4.85,.35){$\bullet$}\put(3.85,.85){$\bullet$}\put(1.85,1.85){$\bullet$}\put(-.15,2.85){$\bullet$}
\put(-1.6,0){\line(1,0){7.6}}\put(-.6,-1){\line(0,1){5.6}}
\put(5,.5){\line(-2,1){5}}\put(5.02,.5){\line(-2,1){5}}
\put(4,1){\line(4,-1){1.5}}\put(4.02,1){\line(4,-1){1.5}}
\put(2,2){\line(-1,2){1}}\put(2.02,2){\line(-1,2){1}}
%\put(6,0){\line(-2,1){6.5}}
\put(3,1.6){\begin{footnotesize}$S_{\la}(f)$\end{footnotesize}}
\put(.5,2.9){\begin{footnotesize}$T$\end{footnotesize}}
\multiput(0,-.1)(0,.25){13}{\vrule height2pt}
\multiput(2,-.1)(0,.25){9}{\vrule height2pt}
\multiput(5,-.1)(0,.25){3}{\vrule height2pt}
\put(-.2,-.65){\begin{footnotesize}$s_0$\end{footnotesize}}
\put(1.8,-.65){\begin{footnotesize}$s$\end{footnotesize}}
\put(4.8,-.65){\begin{footnotesize}$s_1$\end{footnotesize}}
\end{picture}
\end{center}\be\be\be

The motivation for this more general definition lies in the bad behaviour of the residual polynomial $R_{\la}(f)(y)$ with respect to sums. Nevertheless, if $T$ is a fixed side and $f(x)$, $g(x)$ lie both above $T$, it is clear that $f(x)+g(x)$ lies above $T$ and 
\begin{equation}\label{sumR}
R_{\la}(f+g,T)(y)=R_{\la}(f,T)(y)+R_{\la}(g,T)(y).
\end{equation}

\subsection{Admissible $\phi$-developments and Theorem of the product}\label{thproduct}
Let
\begin{equation}\label{phdev}
f(x)=\sum_{i\ge 0}a'_i(x)\phi(x)^i,\quad a'_i(x)\in\zpx,
\end{equation}
be a $\phi$-development of $f(x)$, not necessarily the $\phi$-adic
one. Take $u'_i=v(a'_i(x))$, and let $N'$ be the
principal polygon of the set of points $(i,u'_i)$. Let $i_1$ be the first abscissa with $a'_{i_1}(x)\ne0$. To any $i_1\le i\le \ell(N')$ we attach a residual coefficient as before:
 $$\as{1.6}
c'_i=\left\{\begin{array}{ll}
0,&\mbox{ if $(i,u'_i)$ lies strictly above }N',\\\rd\left(\dfrac{a'_i(x)}{\pi^{u'_i}}\right),&\mbox{ if $(i,u'_i)$ lies on }N'
\end{array}
\right.
$$ 
For the points $(i,u'_i)$ lying on $N'$ we can have now $c'_i=0$; for instance, in the case $a'_0(x)=f(x)$, the Newton polygon has only one point $(0,v(f))$ and $c'_0=0$ if $f(x)/\pi^{v(f)}$ is divisible by $\phi(x)$ modulo $\m$.

Finally, for any negative rational number $\la=-h/e$ as above, we can define the \emph{residual polynomial} attached to the $\la$-component $S'=S_{\la}(N')$ to be 
$$
R'_{\la}(f)(y):=c'_{s'}+c'_{s'+e}\,y+\cdots+c'_{s'+(d'-1)e}\,y^{d'-1}+c'_{s'+d'e}\,y^{d'}\in\fph[y],
$$where  $d'=d(S')$ and $s'$ is the abscissa of the initial point of $S'$.

\begin{definition}
We say that the $\phi$-development (\ref{phdev}) is admissible if
for each abs\-cissa $i$ of a vertex of $N'$
we have $c'_i\ne0$.
\end{definition}

\begin{lemma}\label{admissible}
If a $\phi$-development is admissible, then
$N'=\npp{f}$ and $c'_i=c_i$ for all abscissas $i$ of the finite part of $N'$. In particular, for any negative rational number $\la$ we have $R'_{\la}(f)(y)=R_{\la}(f)(y)$.
\end{lemma}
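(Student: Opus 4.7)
The strategy is to re-expand the admissible development into the canonical $\phi$-adic one and compare the resulting polygons. Write each $a'_i(x)=\sum_{j\ge0}a_{i,j}(x)\phi(x)^j$ with $\dg a_{i,j}<m$; since $\phi$ is monic, the division takes place inside $\oo[x]$ and $v(a_{i,j})\ge v(a'_i)=u'_i$. Substituting gives the $\phi$-adic coefficients $a_k(x)=\sum_{i+j=k}a_{i,j}(x)$. As a first observation, for any $k$ and any $i\le k$ in the finite part of $N'$, the strict decrease of $N'$ (all slopes negative) together with $u'_i\ge y_i(N')$ yields $v(a_{i,j})\ge u'_i\ge y_i(N')\ge y_k(N')$, hence $u_k\ge y_k(N')$. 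Consequently $N:=\npp{f}$, being the lower convex hull of the points $(k,u_k)$, lies on or above $N'$.

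Next I would establish coincidence at every vertex $(i,u'_i)$ of the finite part of $N'$. For $i'<i$ in the finite part, strict decrease forces $u'_{i'}>u'_i$, so $v(a_{i',j})>u'_i$ whenever $i'+j=i$ with $i'<i$. The sole surviving contribution to $a_i$ is $a_{i,0}$; since $a_{i,0}\equiv a'_i\pmod{\phi}$ and $\dg a_{i,0}<m$, one has $\rd(a_{i,0}/\pi^{u'_i})=\rd(a'_i/\pi^{u'_i})=c'_i$, which is nonzero by admissibility. A nonzero $\rd$-image of an element of $\oo[x]$ of degree $<m$ forces its $\pi$-valuation to vanish, so $v(a_{i,0})=u'_i$; hence $u_i=u'_i$ and $c_i=c'_i$. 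The same mechanism at the leftmost vertex forces $\ord_\phi(f)$ to equal the leftmost abscissa of the finite part of $N'$, for otherwise $\phi$ would divide the corresponding $a'$-coefficient and kill its residue. Combined with the bound above, convexity of $N$ pinches it onto the chord of $N'$ between any two consecutive vertices, giving $N=N'$.

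Finally I would extend $c_i=c'_i$ to non-vertex abscissas by a dichotomy on $c'_i$. If $c'_i\ne 0$, then $(i,u'_i)$ lies on $N'$ and the preceding computation still applies (strict decrease gives $u'_{i'}>u'_i$ for $i'<i$ in the finite part), yielding $u_i=u'_i=y_i(N)$ and $c_i=c'_i$. If $c'_i=0$, then every contribution to $a_i$ satisfies $v(a_{i',j})>y_i(N')$: for $i'<i$ this is strict decrease, and for $i'=i,j=0$ it follows either from $u'_i>y_i(N')$ (point strictly above) or from $\rd(a'_i/\pi^{u'_i})=0$ (residue vanishes), which forces $v(a_{i,0})>u'_i$ since $\dg a_{i,0}<m$. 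Hence $u_i>y_i(N)$, so $(i,u_i)$ lies strictly above $N$ and $c_i=0=c'_i$. The identity $R'_\la(f)=R_\la(f)$ is then immediate from the definitions.

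The main obstacle is the bookkeeping in the middle step: one must exploit the \emph{strict} decrease of the principal polygon to single out $a_{i,0}$ as the unique dominant contribution at each vertex, and separately check the slope-$-\infty$ portion by matching $\ord_\phi(f)$ with the leftmost abscissa of $N'$. The extension to non-vertex abscissas is straightforward but requires the case split just described, since $c'_i$ can vanish even when $(i,u'_i)$ lies on $N'$.
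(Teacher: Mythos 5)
Your proof is correct and takes essentially the same route as the paper: re-expand the admissible development into the $\phi$-adic one (the paper writes $a'_i=\sum_k b_{i,k}\phi^k$ where you write $a_{i,j}$), bound the contributions to $a_k$ to show $N$ lies above $N'$, use admissibility to force $a_{i,0}$ to dominate at each vertex of $N'$ and pinch $N$ onto $N'$, and then extend the coefficient equality to all abscissas by a case split. The only cosmetic difference is that in the last step the paper splits on whether $c_i$ vanishes while you split on whether $c'_i$ vanishes; both are valid.
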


\begin{proof}
Consider the $\phi$-adic developments of $f(x)$ and each $a'_i(x)$:
$$
f(x)=\sum_{0\le i}a_i(x)\phi(x)^i,\qquad a'_i(x)=\sum_{0\le k}b_{i,k}(x)\phi(x)^k.
$$
By the uniqueness of the $\phi$-adic development we have
\begin{equation}\label{unique}
a_i(x)=\sum_{0\le k\le i}b_{i-k,k}(x).
\end{equation}
Clearly, $w_{i,k}:=v(b_{i,k})\ge u'_i$, for all $0\le k$, $0\le i\le \ell(N')$. In particular, all points $(i,u_i)$ lie above $N'$; in fact, for some $0\le k_0\le i$, we have
\begin{equation}\label{ui}
u_i=v(a_i)\ge \min_{0\le k\le i}\{w_{i-k,k}\}=w_{i-k_0,k_0}\ge u'_{i-k_0}\ge y_{i-k_0}(N')\ge  y_i(N').
\end{equation}
From now on, $i$ will be an integer abscissa of the finite part of $N'$. Clearly, 
\begin{equation}\label{clear}
 w_{i-k,k}\ge u'_{i-k}\ge y_{i-k}(N')>y_i(N'),
\end{equation}
for any $0<k\le i$. Also, for the abscissas $i$ with $u'_i=y_i(N')$ we have 
\begin{equation}\label{cprima}
c'_i=\rd(a'_i(x)/\pi^{u'_i})=\rd(b_{i,0}(x)/\pi^{u'_i}).
\end{equation}

Now, if $(i,u'_i)$ is a vertex of $N'$ we have $c'_i\ne0$ by hypothesis, and from (\ref{cprima}) 
we get $y_i(N')=u'_i=w_{i,0}$. By (\ref{clear}) and (\ref{unique}) we have $u_i=w_{i,0}=u'_i$. This shows that $N'=\npp{f}$. Let us denote this common polygon by $N$.

Finally, let us prove the equality of all residual coefficients. If $c_i\ne0$, then $u_i=y_i(N)$, and from (\ref{ui}) we get $k_0=0$ and $u_i=w_{i,0}=u'_i$. By (\ref{clear}), (\ref{unique}) and  (\ref{cprima}), we get $c_i=\rd(a_i(x)/\pi^{u_i})=\rd(b_{i,0}(x)/\pi^{u_i})=c'_i$.
If $c_i=0$, then $u_i>y_i(N)$, and from (\ref{unique}) and (\ref{clear}) we get $w_{i,0}>y_i(N)$ too. By (\ref{cprima}) we get  $c'_i=0$.
\end{proof}

The construction of the principal part of the $\phi$-Newton polygon of a polynomial can be
interpreted as a mapping
$$
N_{\phi}^-\colon \zpx\setminus\{0\}\lra \pol,\qquad f(x)\mapsto \npp{f}.
$$
Also, for any negative rational number $\la$, the construction of the residual polynomial attached to $\la$  can be
interpreted as a mapping
$$
R_{\la}\colon \zpx\setminus\{0\}\lra \fph[y]\setminus\{0\},\qquad f(x)\mapsto R_{\la}(f)(y).
$$  
The Theorem of the product says that both mappings are semigroup
homomorphisms.

\begin{theorem}[Theorem of the product] For any $f(x),g(x)\in\zpx\setminus\{0\}$ and any $\la\in\Q^-$ we have
 $$\npp{fg}=\npp{f}+\npp{g},\qquad R_{\la}(fg)(y)=R_{\la}(f)(y)R_{\la}(g)(y).$$
\end{theorem}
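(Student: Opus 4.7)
The plan is to multiply the $\phi$-adic developments $f = \sum_i a_i\phi^i$ and $g = \sum_j b_j\phi^j$ termwise, producing the $\phi$-development
$$
fg = \sum_{k} A_k(x)\,\phi(x)^k, \qquad A_k(x) := \sum_{i+j=k} a_i(x)\,b_j(x),
$$
which in general is not the $\phi$-adic one. First I would show that this development is admissible with associated principal polygon equal to $\npp f + \npp g$; both conclusions of the theorem then follow from Lemma \ref{admissible}, after identifying the residual polynomial of this development as $R_\la(f)(y)R_\la(g)(y)$.

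For each pair $(i,j)$ with $i+j=k$, the points $(i, v(a_i))$ and $(j, v(b_j))$ lie above $\npp f$ and $\npp g$, so Lemma \ref{sum} gives
$$
v(A_k) \,\ge\, \min_{i+j=k}\bigl(v(a_i)+v(b_j)\bigr) \,\ge\, y_k(\npp f+\npp g),
$$
and the polygon $N'$ determined by the points $(k, v(A_k))$ lies above $\npp f+\npp g$. Fix a vertex $(k, z_k)$ of $\npp f+\npp g$. By the equality clause of Lemma \ref{sum}, the pairs $(i,j)$ realizing $v(a_i)+v(b_j)=z_k$ are exactly those with $(i, v(a_i))\in\npp f$, $(j, v(b_j))\in\npp g$, and $U_i(\npp f)\cup U_j(\npp g) = U_k(\npp f+\npp g)$. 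The key combinatorial observation is that at a vertex of $\npp f+\npp g$ such a pair $(i_0, j_0)$ is unique: the unit slopes of $\npp f+\npp g$ immediately after $k$ are strictly larger than those immediately before (or the polygon ends at $k$), which together with the monotonicity of unit slopes along each polygon forbids any swap of slopes between $\npp f$ and $\npp g$. It follows that $v(A_k) = z_k$ and the reduction of $A_k/\pi^{z_k}$ modulo $(\pi,\phi)$ equals
$$
\rd\bigl(a_{i_0}/\pi^{v(a_{i_0})}\bigr)\cdot\rd\bigl(b_{j_0}/\pi^{v(b_{j_0})}\bigr),
$$
which is nonzero in $\fph$ because both factors are residual coefficients attached to points on their respective principal polygons and $\fph$ is a field. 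Thus $N'$ passes through every vertex of $\npp f+\npp g$ while lying above it elsewhere, which forces $N' = \npp f+\npp g$ and makes the new development admissible. Lemma \ref{admissible} then gives $\npp{fg} = \npp f+\npp g$ and matches the residual polynomials of the two $\phi$-developments of $fg$.

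To compute the residual polynomial, fix $\la\in\Q^-$ and recall from (\ref{sumsla}) that $S_\la(\npp f+\npp g) = S_\la(\npp f) + S_\la(\npp g)$; denote by $s_f, s_g$ the initial abscissas of $S_\la(\npp f)$ and $S_\la(\npp g)$, so that $s_f+s_g$ is the initial abscissa of $S_\la(\npp{fg})$. For an integer abscissa $k = s_f+s_g+re$ on this side, the same application of Lemma \ref{sum} identifies the pairs $(i,j)$ contributing to the residual coefficient of $A_k$ as those with $(i, v(a_i))\in S_\la(\npp f)$ and $(j, v(b_j))\in S_\la(\npp g)$, i.e.\ $i = s_f+r_1 e$ and $j = s_g+r_2 e$ with $r_1+r_2 = r$. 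Summing these contributions reproduces the Cauchy product formula, giving $R_\la(fg)(y) = R_\la(f)(y)R_\la(g)(y)$. The main obstacle in the argument is the combinatorial uniqueness at a vertex; once it is settled, the rest is routine bookkeeping with Lemmas \ref{sum} and \ref{admissible}.
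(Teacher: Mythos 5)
Your proof is correct and follows essentially the same route as the paper: multiply the $\phi$-adic developments termwise, use Lemma \ref{sum} to show all points $(k,v(A_k))$ lie above $\npp f+\npp g$ with equality and a uniquely determined minimizing pair $(i_0,j_0)$ at each vertex, conclude admissibility, then reduce to Lemma \ref{admissible} and equation (\ref{sumsla}) for the residual-polynomial identity. The paper makes the uniqueness of $(i_0,j_0)$ explicit by writing the vertex as the end point of $S_1+\cdots+S_r+T_1+\cdots+T_s$ with sides ordered by increasing slopes, which is the same combinatorial fact your "no swap of slopes at a vertex" observation encapsulates.
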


\begin{proof}
Consider the respective $\phi$-adic developments
$$
f(x)=\sum_{0\le i}a_i(x)\phi(x)^i,\quad
g(x)=\sum_{0\le j}b_j(x)\phi(x)^j,
$$and denote $u_i=v(a_i(x))$, $v_j=v(b_j(x))$, $N_f=\npp{f}$, $N_g=\npp{g}$. Then,

\begin{equation}\label{product}
f(x)g(x)=\sum_{0\le k}A_k(x)\phi(x)^k,\qquad
A_k(x)=\sum_{i+j=k}a_i(x)b_j(x).
\end{equation}
Denote by $N'$ the principal part of the Newton polygon of $fg$, determined by this $\phi$-development.

We shall show that $N'=N_f+N_g$, that this $\phi$-development is admissible, and that $R'_{\la}(fg)=R_{\la}(f)R_{\la}(g)$ for all $\la$. The theorem will be then a
consequence of Lemma \ref{admissible}.

Let $w_k:=v(A_k(x))$ for all $0\le k$. Lemma
\ref{sum} shows that the point $(i,u_i)+(j,v_j)$ lies above $N_f+N_g$
for any $i,j\ge0$. Since $w_k\ge
\min\{u_i+v_j,i+j=k\}$, the points $(k,w_k)$ lie all above $N_f+N_g$.
On the other hand, let $P_k=(k,y_k(N_f+N_g))$ be a vertex of $N_f+N_g$; that is, $P_k$ is the end point
of $S_1+\cdots+S_r+T_1+\cdots+T_s$, for certain sides $S_i$ of $N_f$ and $T_j$ of $N_g$, ordered by increasing slopes among all sides of $N_f$ and $N_g$. By Lemma \ref{sum}, for all
pairs $(i,j)$ such that $i+j=k$, the point $(i,u_i)+(j,v_j)$ lies
strictly above $N_f+N_g$ except for the pair
$i_0=\ell(S_1+\cdots+S_r)$, $j_0=\ell(T_1+\cdots+T_s)$ that
satisfies $(i_0,u_{i_0})+(j_0,v_{j_0})=P_k$. Thus,
$(k,w_k)=P_k$ and
$$\rd\left(\frac{A_k(x)}{\pi^{y_k(N_f+N_g)}}\right)=\rd\left(\frac{a_{i_0}(x)b_{j_0}(x)}{\pi^{y_k(N_f+N_g)}}
\right)=\rd\left(\frac{a_{i_0}(x)}{\pi^{y_{i_0}(N_f)}}\right)\rd\left(\frac{b_{j_0}(x)}{\pi^{y_{j_0}(N_g)}}\right)\ne0.$$
This shows that $N'=N_f+N_g$ and that the $\phi$-development (\ref{product}) is
admisible.

Finally, by (\ref{sumsla}), the $\la$-components $S'=S_{\la}(N')$, $S_f=S_{\la}(N_f)$, $S_g=S_{\la}(N_g)$ are related by: $S'=S_f+S_g$. Let $(k,y_k(N'))$ be a point of integer coordinates lying on $S'$ (not necessarily a vertex). Denote by $I$ the set of the pairs $(i,j)$ such that $(i,u_i)$ lies on $S_f$, $(j,v_j)$ lies on $S_g$, and $i+j=k$. Take $P(x)=\sum_{(i,j)\in I}a_i(x)b_j(x)$. By Lemma \ref{sum}, for all other pairs $(i,j)$ with $i+j=k$, the point $(i,u_i)+(j,v_j)$ lies strictly above $N'$. Therefore, $c'_k(fg)=\rd(P(x)/\pi^{y_k(N')})=\sum_{(i,j)\in I}c_i(f)c_j(g)$. This shows that $R'_{\la}(fg)(y)=R_{\la}(f)(y)R_{\la}(g)(y)$.
\end{proof}

\noindent{\bf Notation. }{\it
Let $\mathcal{F}$ be a field and $\varphi(y),\,\psi(y)\in \mathcal{F}[y]$ two polynomials. We write $\varphi(y)\sim \psi(y)$ to indicate that  there exists a constant $c\in\mathcal{F}^*$ such that $\varphi(y)=c\psi(y)$}.\medskip

\begin{corollary}\label{slopezero}
Let $f(x)\in\zpx$ be a monic polynomial. Let $\phi_1(x),\dots,\phi_r(x)$ be monic polyomials in $\zpx$ such that their reductions modulo $\m$ are pairwise different irreducible polynomials of $\ff{}[x]$ and
$$
f(x)\equiv \phi_1(x)^{n_1}\cdots \phi_r(x)^{n_r}\md{\m}.
$$  
Let $f(x)=F_1(x)\cdots F_r(x)$ be the factorization into a product of monic polynomials of $\zpx$ satisfying $F_i(X)\equiv \phi_i(x)^{n_i}\md{\m}$, provided by Hensel's lemma. Then, 
$$
N_{\phi_i}(F_i)=N_{\phi_i}^-(F_i)=N_{\phi_i}^-(f),\qquad R_{\la}(F_i)(y)\sim R_{\la}(f)(y),
$$for all $1\le i\le r$ and all $\la\in\Q^-$.
\end{corollary}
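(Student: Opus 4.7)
The plan is to apply the Theorem of the product to the factorization $f(x) = F_1(x)\cdots F_r(x)$ and reduce everything to showing that for every $j\neq i$ the factor $F_j(x)$ contributes trivially to both the principal $\phi_i$-polygon and the residual polynomial attached to any $\lambda\in\Q^-$. Once this triviality is established, the three claimed equalities follow from the additivity of $N_{\phi_i}^-$ and the multiplicativity of $R_\lambda$.

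So fix $j\neq i$. Since $\overline{\phi_i}$ and $\overline{\phi_j}$ are distinct monic irreducibles in $\ff{}[x]$ they are coprime, and therefore $\overline{\phi_i}\nmid\overline{\phi_j}^{n_j}=\overline{F_j}$. As $F_j$ is monic, $v(F_j)=0$, whence $\ord_{\overline{\phi_i}}(\overline{F_j/\pi^{v(F_j)}})=0$. Moreover $\phi_i\nmid F_j$ in $\oo[x]$, for otherwise $\overline{\phi_i}$ would divide $\overline{F_j}$. Thus $N_{\phi_i}^-(F_j)$ has length $0$ and no slope-$-\infty$ side, so it reduces to a single point at abscissa $0$. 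Writing the $\phi_i$-adic expansion $F_j=a_0(x)+a_1(x)\phi_i(x)+\cdots$, the remainder $a_0(x)$ satisfies $\rd(a_0)\equiv\overline{\phi_j}^{n_j}\bmod\overline{\phi_i}$, which is a unit in $\fph$; hence $v(a_0)=0$, $N_{\phi_i}^-(F_j)=(0,0)$ (the neutral element of $\pol$), and the $\lambda$-component yields a residual polynomial equal to the nonzero constant $\rd(a_0)\in\fph^*$.

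Applying the Theorem of the product now gives
$$
N_{\phi_i}^-(f)=\sum_{j=1}^{r}N_{\phi_i}^-(F_j)=N_{\phi_i}^-(F_i),\qquad R_\lambda(f)(y)=\prod_{j=1}^{r}R_\lambda(F_j)(y),
$$
and for $j\neq i$ each factor $R_\lambda(F_j)(y)\in\fph^*$ is a nonzero constant, so $R_\lambda(F_i)(y)\sim R_\lambda(f)(y)$. The remaining identity $N_{\phi_i}(F_i)=N_{\phi_i}^-(F_i)$ follows from the shape of $F_i$: it is monic of degree $n_i\deg(\phi_i)$, so the top $\phi_i$-adic coefficient is $a_{n_i}=1$ and $u_{n_i}=0$, while $F_i\equiv\phi_i^{n_i}\pmod{\m}$ forces $v(a_k)\geq 1$ for all $0\leq k<n_i$. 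Every side of the Newton polygon of $F_i$ is then of strictly negative slope ending at $(n_i,0)$, so the polygon coincides with its principal part. The only delicate point in the whole argument is the careful verification that $N_{\phi_i}^-(F_j)$ reduces to $(0,0)$ when $j\neq i$---in particular that no slope-$-\infty$ side appears and that the surviving point sits on the horizontal axis; after that, the Theorem of the product does the rest.
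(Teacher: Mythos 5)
Your proof is correct and follows essentially the same route as the paper: show that the other factors contribute only a trivial point $(0,0)$ to the principal $\phi_i$-polygon and a nonzero constant residual polynomial, then invoke the Theorem of the product. The only cosmetic difference is that the paper lumps the factors $F_j$ ($j\neq i$) into one polynomial $G_i=\prod_{j\neq i}F_j$, whereas you treat them one by one, and you give a slightly more explicit argument for $N_{\phi_i}(F_i)=N_{\phi_i}^-(F_i)$ (via $v(a_k)\geq 1$ for $k<n_i$) where the paper simply notes that both polygons have length $n_i$.
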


\begin{proof}For any $1\le i\le r$, let $G_i(x)=\prod_{j\ne i}F_j(x)$. Since $\phi_i(x)$ does not divide $G_i(x)$ modulo $\m$, the principal $\phi_i$-Newton polygon of $G_i(x)$ reduces to the point $(0,0)$. By the Theorem of the product, $N_{\phi_i}^-(f)=N_{\phi_i}^-(F_i)+N_{\phi_i}^-(G_i)=N_{\phi_i}^-(F_i)$. On the other hand, 
$N_{\phi_i}(F_i)=N_{\phi_i}^-(F_i)$ because both polygons have length $n_i$.
Now, for any $\la\in\Q^-$, $S_{\la}(G_i)$ is a point and $R_{\la}(G_i)(y)$ is a nonzero constant. By the 
 Theorem of the product, $R_{\la}(f)(y)=R_{\la}(F_i)(y)R_{\la}(G_i)(y)\sim R_{\la}(F_i)(y)$.
\end{proof}

\subsection{Theorems of the polygon and of the residual polynomial}
Let $f(x)\in\zpx$ be a monic polynomial divisible by $\phi(x)$ modulo $\m$. By Hensel's lemma, 
$f(x)=f_{\phi}(x)G(x)$ in $\zpx$, with monic polynomials $f_{\phi}(x)$, $G(x)$ such that $\rd(G(x))\ne0$ and
$f_{\phi}(x)\equiv \phi(x)^n\md{\m}$. The aim of this section is to obtain a further factori\-zation of $f_{\phi}(x)$ and certain arithmetic data about the factors. Thanks to Corollary \ref{slopezero}, we shall be able to read this information directly on $f(x)$; more precisely, on $\npp{f}=\nph{f_{\phi}}$ and $R_{\la}(f)(y)\sim R_{\la}(f_{\phi})(y)$.

\begin{theorem}[Theorem of the polygon] Let $f(x)\in\zpx$ be a monic polynomial divisible by $\phi(x)$ modulo $\m$. Suppose that $\npp{f}=S_1+\cdots+S_g$ has $g$ sides with pairwise different slopes
$\la_1,\dots,\la_g$. Then, $f_{\phi}(x)$ admits a factorization in $\zpx$ into a product of $g$ monic polynomials
$$
f_{\phi}(x)=F_1(x)\cdot\cdots\cdot F_g(x),
$$
such that, for all $1\le i\le g$,
\begin{enumerate}
 \item $\nph{F_i}=S'_i$ is one-sided, and $S'_i$ is equal to $S_i$ up to a translation,
\item If $S_i$ has finite slope $\la_i$, then $R_{\la_i}(F_i)(y)\sim R_{\la_i}(f)(y)$,
\item For any root $\t\in\qpb$ of $F_i(x)$, we have $v(\phi(\t))=|\la_i|$. 
\end{enumerate}
\end{theorem}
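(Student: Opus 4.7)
My plan is to build the factorization of $f_\phi$ from its roots in $\qpb$: partition the roots by the value $v(\phi(\t))$ and let $F_i(x)$ be the product of $(x-\t)$ over the class with $v(\phi(\t)) = |\la_i|$. I will prove (3), (1), (2) in that order, leveraging the Theorem of the product and Corollary~\ref{slopezero}.

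The key technical input is a valuation bound on the $\phi$-adic expansion $f_\phi(x) = \sum_j a_j(x)\phi(x)^j$. I will show $v(a_j(\t)) = u_j$ whenever the point $(j, u_j)$ lies on $\npp{f}$: writing $a_j = \pi^{u_j}b_j$ with $\bar{b}_j \ne 0$ of degree $< m$, the irreducibility of $\bar\phi$ forces $\bar{b}_j(\bar\t) \ne 0$, hence $v(b_j(\t)) = 0$. For $(j, u_j)$ strictly above the polygon, only the weaker bound $v(a_j(\t)) > y_j(\npp{f})$ is available. Setting $\mu = v(\phi(\t))$ and expanding $f_\phi(\t) = 0$, each summand $a_j(\t)\phi(\t)^j$ has valuation exactly $y_j(\npp{f}) + j\mu$ at polygon points and strictly more elsewhere; the vanishing of the sum forces the minimum of $y_j + j\mu$ to be attained at (at least) two polygon abscissas, so the support line of slope $-\mu$ contains an entire side of $\npp{f}$ of positive length. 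This forces $\mu \in \{|\la_1|, \ldots, |\la_g|\}$ (with $\mu = \infty$, i.e.\ $\phi(\t) = 0$, matching the slope-$-\infty$ side), establishing (3).

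With (3) in hand, I set $F_i(x) := \prod_\t(x-\t)$ over those roots of $f_\phi$ with $v(\phi(\t)) = |\la_i|$. Galois-invariance of $v$ over $K$ makes each class $\mathrm{Gal}(\qpb/K)$-stable, so $F_i \in \zpx$ and $f_\phi = F_1\cdots F_g$. By construction all roots of $F_i$ satisfy $v(\phi(\cdot)) = |\la_i|$; the forward direction of paragraph two then shows that $\la_i$ is a slope of $\npp{F_i}$, while the resultant identity $v(\mathrm{Res}(\phi, F_i)) = (\deg F_i)|\la_i|$ equates the average slope of $\npp{F_i}$ with $-|\la_i|$. Combined with the Theorem of the product $\sum_i \npp{F_i} = \npp{f_\phi} = S_1 + \cdots + S_g$, the distinctness of the $\la_j$, and extraction of $\la_j$-components via~(\ref{sumsla}), these constraints force each $\npp{F_i}$ to be one-sided of slope $\la_i$ and to coincide with $S_i$ up to translation, which is (1). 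For (2), the $\la_i$-component of $\npp{F_k}$ then reduces to a single point for $k \ne i$, so $R_{\la_i}(F_k)$ is a nonzero constant in $\fph$; the Theorem of the product yields $R_{\la_i}(f_\phi) \sim R_{\la_i}(F_i)$, and Corollary~\ref{slopezero} identifies this with $R_{\la_i}(f)$. The main obstacle is the valuation bound of paragraph two (the only place where the irreducibility of $\bar\phi$ and the degree constraint $\deg a_j < m$ are essential); a secondary subtlety lies in paragraph three, where upgrading the forward direction (that $\la_i$ appears as a slope of $\npp{F_i}$) to the one-sided conclusion requires combining the Theorem of the product, the resultant identity, and the distinctness of the $\la_j$.
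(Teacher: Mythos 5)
Your proof of item (3) is correct and genuinely different from the paper's: you read off $v(\phi(\t))\in\{|\la_1|,\dots,|\la_g|\}$ directly from the vanishing of the $\phi$-adic expansion of $f_\phi$ at $\t$ (the minimum valuation in a zero sum must be attained at two integer abscissas of the polygon, hence on a side), whereas the paper obtains (3) only as a byproduct of one-sidedness. However, the passage to item (1) has a real gap. You claim that the constraints --- $\la_i$ appears as a slope of $\npp{F_i}$, the resultant identity pins the average slope of $\npp{F_i}$ at $\la_i$, and $\sum_i\npp{F_i}=S_1+\cdots+S_g$ with the $\la_j$ pairwise distinct --- force each $\npp{F_i}$ to be one-sided. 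They do not. Letting $\ell_{ij}$ denote the length of the $\la_j$-component of $\npp{F_i}$, these constraints read $\sum_i\ell_{ij}=\ell(S_j)$, $\ell_{ii}>0$, and $\sum_j\ell_{ij}\bigl(|\la_j|-|\la_i|\bigr)=0$. The last equation forces $\ell_{ij}=0$ for $j\ne i$ only when $|\la_i|$ is extremal; for a ``middle'' $\la_i$ the positive and negative contributions can cancel. Concretely, with $g=4$, $|\la_j|=5-j$ and $\ell(S_j)=2$ for all $j$, the assignment $\ell_{11}=1$, $\ell_{21}=\ell_{23}=\ell_{33}=1$, $\ell_{22}=\ell_{44}=2$ (all other $\ell_{ij}=0$) satisfies every one of your constraints yet makes $\npp{F_2}$ three-sided, so your combinatorial argument cannot conclude.

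What is missing is precisely the lemma that the paper proves: for a monic \emph{irreducible} $F\in\zpx$ congruent to a power of $\phi$ modulo $\m$, the polygon $\npp{F}$ is itself one-sided. Your valuation argument is a one-way implication --- it shows that the common value $v(\phi(\t))$ occurs among the slopes of $\npp{F}$, not that it is the only slope. The paper closes this by letting $R(x)=x^k+b_{k-1}x^{k-1}+\cdots+b_0$ be the minimal polynomial of $\phi(\t)$ over $K$ and setting $Q(x)=R(\phi(x))$: the conjugates of $\phi(\t)$ all have valuation $|\la|$, so the elementary symmetric functions satisfy $v(b_i)\ge(k-i)|\la|$ with equality at $i=0$, hence $\npp{Q}$ is one-sided of slope $\la$, and $F\mid Q$ transfers this to $\npp{F}$ by the Theorem of the product. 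If you insert this lemma, your grouping of roots into the $F_i$, the additivity of principal polygons, the extraction of $\la_i$-components, and Corollary~\ref{slopezero} do deliver items (1) and (2) as you intended.
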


\begin{proof}
By  the Theorem of the product and  Corollary \ref{slopezero}, it is sufficient to show that if $F(x):=f_{\phi}(x)$ is irreducible, then  $\nph{F}=S$ is one-sided and the roots $\t\in\qpb$ have all $v(\phi(\t))$ equal to minus the slope of $S$.

In fact, for all the roots $\t\in\qpb$ of $F(x)$, the rational number $v(\phi(\t))$ takes  the same value because the $p$-adic valuation is invariant under the Galois action. Since $F(x)$ is congruent to a power of $\phi(x)$ modulo $\m$ we have $\la:=-v(\phi(\t))<0$. We have $\la=-\infty$ if and only if $F(x)=\phi(x)$, and in this case the theorem is clear. Suppose $\la$ is finite.

Let $x^k+b_{k-1}x^{k-1}+\cdots+b_0\in\zpx$ be the minimal polynomial
of $\phi(\t)$ and let $Q(x)=\phi(x)^k+b_{k-1}\phi(x)^{k-1}+\cdots+b_0$.
We have $v(b_0)=k|\la|$ and $v(b_i)\ge (k-i)|\la|$ for all $i$; this implies that
$\nph{Q}$ is one-sided with slope $\la$. Since $Q(\t)=0$, our
polynomial $F(x)$ is an irreducible factor of $Q(x)$ and by the
Theorem of the product $\nph{F}$ is also one-sided with slope
$\la$.
\end{proof}

We note that the factor corresponding to a side $S_i$ of slope $-\infty$ is necessarily $F_i(x)=\phi(x)^{\ord_{\phi}(f)}$ (cf. Remark \ref{phiell}). 

Let $\la=-h/e$, with $h,\,e$ coprime positive integers, be a negative rational number such that $S:=S_{\la}(f)$ has positive length. Let $f_{\phi,\la}(x)$ be the factor of $f(x)$, corres\-ponding to the pair $\phi,\la$ by the Theorem of the polygon. Choose a root $\t\in\qpb$ of $f_{\phi,\la}(x)$ and let $L=K(\t)$. Since $v(\phi(\t))>0$, we can consider an embedding 
\begin{equation}\label{embedding}
 \zpx/(\pi,\phi(x))=\fph\hookrightarrow \ff{L},\quad \rd(x)\mapsto \tb.
\end{equation}
Thus, a polynomial $P(x)\in\zpx$ satisfies $v(P(\t))>0$ if and only if $P(x)$ is divisible by $\phi(x)$ modulo $\m$. 
This embedding depends on the choice of $\t$ (and not only on $L$). After this identification of $\fph$ with a subfield of $\ff{L}$ we can think that all residual polynomials have coefficients in $\ff{L}$. The Theorem of the polygon yields certain arithmetic information on the field $L$.

\begin{corollary}\label{ram}
With the above notations, the residual degree $f(L/K)$ is divisible by $\,m=\deg \phi(x)$, and the ramification index $e(L/K)$ is divisible by $\,e$.
Moreover, the number of irreducible factors of $f_{\phi,\la}(x)$ is at most $d(S)$; in particular, if $d(S)=1$ the polynomial $f_{\phi,\la}(x)$ is irreducible in $\zpx$, and $f(L/K)=m$, $e(L/K)=e$.
\end{corollary}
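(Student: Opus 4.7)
The plan is to extract the three claims in turn, using the embedding $\fph\hookrightarrow\ff{L}$ and the valuation of $\phi(\t)$ as the two main inputs, and then to analyze the irreducible factorization of $f_{\phi,\la}(x)$ via the Theorem of the polygon.

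First, for the divisibility $m\mid f(L/K)$, I would simply invoke the embedding (\ref{embedding}): the residue field $\fph$ is a subfield of $\ff{L}$, of absolute degree $m$, so $m=[\fph:\ff{}]$ divides $[\ff{L}:\ff{}]=f(L/K)$. Next, for $e\mid e(L/K)$, by the Theorem of the polygon applied to $f$ (the factor containing $\t$), $v(\phi(\t))=|\la|=h/e$. Since $v_L=e(L/K)\cdot v$ on $L^*$, we get $v_L(\phi(\t))=e(L/K)h/e\in\Z$; as $\gcd(h,e)=1$, this forces $e\mid e(L/K)$.

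For the bound on the number of irreducible factors, I would write $f_{\phi,\la}(x)=G_1(x)\cdots G_r(x)$ with each $G_j(x)\in\zpx$ monic irreducible. Each root $\t_j$ of $G_j$ satisfies $v(\phi(\t_j))=h/e>0$, so $\bar\phi(\overline{\t_j})=0$ in the residue field of $K(\t_j)$; consequently $\bar\phi\mid\overline{G_j}$ in $\ff{}[x]$, and by Hensel's lemma together with the irreducibility of $G_j$, one obtains $G_j(x)\equiv\phi(x)^{n_j}\pmod{\m}$ for some $n_j\ge 1$. Applying the Theorem of the polygon to $G_j$ (now playing the role of $f$), its principal $\phi$-polygon is one-sided of slope $\la$; this side has horizontal length $n_j$ and vertical height $n_jh/e$, and integrality of the height combined with $\gcd(h,e)=1$ forces $e\mid n_j$, say $n_j=ed_j$ with $d_j\ge 1$. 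Summing degrees, $\sum_j mn_j=\deg f_{\phi,\la}(x)=m\,\ell(S)=m\,d(S)\,e$, so $\sum_j d_j=d(S)$, whence $r\le d(S)$.

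Finally, if $d(S)=1$ the sum $\sum d_j=1$ forces $r=1$ and $d_1=1$, so $f_{\phi,\la}(x)$ is irreducible and $[L:K]=\deg f_{\phi,\la}(x)=me$. Combined with $m\mid f(L/K)$, $e\mid e(L/K)$ and the identity $f(L/K)e(L/K)=[L:K]=me$, equality must hold in both divisibilities, giving $f(L/K)=m$ and $e(L/K)=e$. The main technical point I expect to check carefully is the congruence $G_j(x)\equiv\phi(x)^{n_j}\pmod{\m}$ for an irreducible factor: it is immediate from Hensel once one knows $\bar\phi\mid\overline{G_j}$, and the latter is exactly what the embedding (\ref{embedding}) and $v(\phi(\t_j))>0$ provide.
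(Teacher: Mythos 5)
Your proof is correct, and the overall route coincides with the paper's: the divisibility $m\mid f(L/K)$ comes from the embedding~(\ref{embedding}), the divisibility $e\mid e(L/K)$ from $v_L(\phi(\t))=e(L/K)h/e\in\Z$ with $\gcd(h,e)=1$, and the $d(S)=1$ case from the degree identity $me=f(L/K)e(L/K)$. For the bound on the number of irreducible factors, the paper simply says ``a consequence of the Theorem of the product''; your argument is an explicit unfolding of that step: it applies the Theorem of the polygon to each irreducible $G_j$, uses integrality of the polygon's height to force $e\mid n_j$, and does a degree count to get $\sum d_j=d(S)$. (Incidentally, the congruence $\overline{G_j}=\bar\phi^{\,n_j}$ follows even more directly from $\overline{G_j}\mid\overline{f_\phi}=\bar\phi^{\,n}$ and irreducibility of $\bar\phi$, with no need for Hensel or the roots $\t_j$; but your route is also sound.)
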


\begin{proof}
The statement on the residual degree is a consequence of the embedding (\ref{embedding}). By the theorem of the polygon,
$v_L(\phi(\t))=e(L/K)h/e$. Since this is an integer and $h,e$ are coprime, ne\-cessarily $e$ divides $e(L/K)$. The upper bound for the number of irreducible factors is a consequence of the Theorem of the product. Finally, if $d(S)=1$, we have $me=\dg(f_{\phi,\la}(x))=f(L/K)e(L/K)$, and necessarily $f(L/K)=m$ and $e(L/K)=e$. 
\end{proof}

Let $\ga(x):=\phi(x)^e/\pi^h\in K[x]$. Note that $v(\ga(\t))=0$; in particular, $\ga(\t)\in \ol$.

\begin{proposition}[Computation of $v(P(\t))$ with the polygon]\label{vqt}
We keep the above notations for $f(x),\,\la=-h/e,\,\t,\,L$, $\ga$, and the embedding $\fph\subseteq \ff{L}$ of (\ref{embedding}). Let $P(x)\in\zpx$ be a nonzero polynomial, $S=S_{\la}(P)$, $L_{\la}$ the line of slope $\la$ that contains $S$, and $H$ the ordinate of the intersection of this line with the vertical axis. Then,  
\begin{enumerate}
 \item $v(P^S(\t))\ge0,\quad \overline{P^S(\t)}=R_{\la}(P)(\gb{})$,
\item $v(P(\t)-P^0(\t))>H$.
\item $v(P(\t))\ge H$, and equality holds if and only if $R_{\la}(P)(\gb{})\ne0$,
\item $R_{\la}(f)(\gb{})=0$.
\item  If $R_{\la}(f)(y)\sim \psi(y)^a$ for an irreducible polynomial $\psi(y)\in\fph[y]$, then $v(P(\t))=H$  if and only if $\psi(y)\nmid R_{\la}(P)(y)$ in $\fph[y]$.
\end{enumerate}
\end{proposition}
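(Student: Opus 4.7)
The proof proceeds through items (1)--(5) in order; parts (1) and (2) carry the geometric content, while (3)--(5) follow formally. The key algebraic identity is $\phi(x)^e = \pi^h \gamma(x)$, together with the embedding (\ref{embedding}), which identifies each residual coefficient $c_i \in \fph$ with $\overline{a_i(\theta)/\pi^{u_i}} \in \ff{L}$: both are obtained from $a_i/\pi^{u_i} \in \oo[x]$ by reducing coefficients modulo $\m$ and then evaluating at $\bar\theta$.

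For (1), I would group the contributions to $P^0(x)$ according to the parameter $j$ with $i = s+je$, $0 \le j \le d$. For $(i, u_i) \in S$ one has $u_i = u - jh$, so
\[
\pi^{-u} a_i(\theta) \phi(\theta)^{i-s} = \pi^{-u+jh} a_i(\theta) \gamma(\theta)^j = \bigl(a_i(\theta)/\pi^{u_i}\bigr) \gamma(\theta)^j.
\]
Each factor $a_i(\theta)/\pi^{u_i}$ lies in $\ol$ because $v(a_i(\theta)) \ge v(a_i) = u_i$, and $v(\gamma(\theta)) = 0$ gives $\gamma(\theta) \in \ol$. Hence $P^S(\theta) \in \ol$, and reducing modulo $\m_L$ yields $\overline{P^S(\theta)} = \sum_{j=0}^d c_{s+je}\,\overline{\gamma(\theta)}^{\,j} = R_{\la}(P)(\overline{\gamma(\theta)})$, after noting that the indices with $(s+je, u_{s+je})$ strictly above $S$ contribute $c_{s+je}=0$ by definition.

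For (2), the task is to show that every index $i$ with $a_i \ne 0$ and $(i, u_i) \notin S$ satisfies $u_i - \la i > H$; combined with $v(\phi(\theta)) = -\la$ (Theorem of the polygon) this gives $v(a_i(\theta)\phi(\theta)^i) > H$, and summing yields (2). I would split into three cases: (a) $(i, u_i)$ on $\npp{P}$ but off $S$, which is strict because $L_{\la}$ meets $\npp{P}$ exactly along $S$; (b) $(i, u_i)$ strictly above $\npp{P}$, hence strictly above $L_{\la}$; (c) $i > n := \ell(\npp{P})$, where $\npp{P}$ ends at $(n, v(P))$ lying on or above $L_{\la}$, so $v(P) \ge \la n + H$, and since the slopes of $\nph{P}$ past $n$ are non-negative, $u_i \ge v(P) \ge \la n + H > \la i + H$.

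Parts (3)--(5) follow by assembly. For (3), write $P(\theta) = P^0(\theta) + (P(\theta) - P^0(\theta))$ and note $v(P^0(\theta)) = v(\pi^u \phi(\theta)^s) + v(P^S(\theta)) = H + v(P^S(\theta))$, using $u + sh/e = H$; combine with (1) and (2). Part (4) is immediate: $v(f(\theta)) = \infty > H$ forces the equality case of (3) to fail, so $R_{\la}(f)(\overline{\gamma(\theta)}) = 0$. For (5), since $\psi$ is irreducible over $\fph$ and $\psi(\overline{\gamma(\theta)})^a = 0$ up to scalar by (4), $\psi$ is the minimal polynomial of $\overline{\gamma(\theta)}$ over $\fph$; therefore $\psi \mid R_{\la}(P)$ in $\fph[y]$ if and only if $R_{\la}(P)(\overline{\gamma(\theta)}) = 0$, which by (3) is equivalent to $v(P(\theta)) > H$. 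The main obstacle I anticipate is the clean treatment of case (c) in (2): one must know that $\npp{P}$ ends at ordinate exactly $v(P)$, which rests on identifying the leftmost abscissa realizing $u_i = v(P)$ as $\ord_{\bar\phi}(\overline{P/\pi^{v(P)}})$; everything else is bookkeeping around the identity $\phi(x)^e = \pi^h \gamma(x)$.
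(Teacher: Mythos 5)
Your proof is correct and follows essentially the same route as the paper's: you expand $P^0$ in powers of $\ga$, use the embedding $\fph\hookrightarrow\ff{L}$ to identify residual coefficients, bound each monomial's valuation by the line $L_\la$, and deduce (3)--(5) formally. The only genuine difference is that you spell out the ``tail'' case (your case (c), abscissas $i>\ell(N_\phi^-(P))$) explicitly, which the paper handles only implicitly; this is a sound precaution, since $y_i(N)$ is not literally defined there, but the required bound $u_i\ge v(P)\ge H+\la\,\ell(N_\phi^-(P))$ does come out exactly as you argue.
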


\begin{proof}
Let $P(x)=\sum_{0\le i}b_i(x)\phi(x)^i$ be the $\phi$-adic development of $P(x)$, and denote $u_i=v(b_i)$, $N=\npp{P}$. Recall that $P^S(x)=\phi(x)^{-s}\pi^{-u}P^0(x)$, where $(s,u)$ are the coordinates of the initial point of $S$, and $P^0(x)=\sum_{(i,u_i)\in S}b_i(x)\phi(x)^i$. Hence, for $d=d(S)$ we have 
\begin{multline*}
P^S(x)=\pi^{-u}\left(b_s(x)+b_{s+e}(x)\phi(x)^e+\cdots+b_{s+de}\phi(x)^{de}\right)\\
=\frac{b_s(x)}{\pi^u}+\frac{b_{s+e}(x)}{\pi^{u-h}}\ga(x)+\cdots+\frac{b_{s+de}(x)}{\pi^{u-hd}}\ga(x)^d.
\end{multline*}
Since $v(b_{s+ie})\ge y_{s+ie}(N)=u-hi$ for all $1\le i\le d$, the two statements of item 1 are clear.

All points of $N$ lie above the line $L_{\la}$; hence, for any integer abscissa $i$ 
$$
v(b_i(\t)\phi(\t)^i)=u_i+i\frac he\ge y_i(N)+i\frac he\ge H,
$$and equality holds if and only  if $(i,u_i)\in S$. 
This proves item 2. Also, this shows that $v(P(\t))\ge H$. Since $v(\phi(\t)^s\pi^u)=u+sh/e=H$, we have  
$$v(P(\t))=H\,\sii\,v(P^0(\t))=H\,\sii\,v(P^S(\t))=0\,\sii\,R_{\la}(P)(\gb{})\ne0,$$
the last equivalence by item 1. This proves item 3.

\begin{center}
\setlength{\unitlength}{5.mm}
\begin{picture}(6,5)
\put(3.85,.85){$\bullet$}\put(1.85,1.85){$\bullet$}
\put(-1,-.4){\line(1,0){7}}\put(0,-1){\line(0,1){5.6}}
\put(4,1){\line(-2,1){2}}\put(4.02,1){\line(-2,1){2}}
\put(4,1){\line(3,-1){1}}\put(4.02,1){\line(3,-1){1}}
\put(2,2){\line(-1,2){1}}\put(2.02,2){\line(-1,2){1}}
\put(5.8,0.1){\line(-2,1){5.75}}\put(5.8,.3){\begin{footnotesize}$L_{\la}$\end{footnotesize}}
\put(3,1.6){\begin{footnotesize}$S$\end{footnotesize}}
\put(2,-1.2){\begin{footnotesize}$\npp{P}$\end{footnotesize}}
\put(-.6,2.8){\begin{footnotesize}$H$\end{footnotesize}}
\end{picture}
\end{center}\be\be\be

Since $f(\t)=0$, item 4 is a consequence of item 3 applied to $P(x)=f(x)$.
Finally, if $R_{\la}(f)(y)\sim\psi(y)^a$, then $\psi(y)$ is the minimal polynomial of $\gb{}$ over $\fph$, by item 4. Hence, $R_{\la}(P)(\gb{})\ne0$ is equivalent to $\psi(y)\nmid R_{\la}(P)(y)$ in $\fph[y]$.
\end{proof}

We discuss now how Newton polygons and residual polynomials are affected by an extension of the base field by an unramified extension. 

\begin{lemma}\label{extension}
We keep the above notations for $f(x),\,\la=-h/e,\,\t,\,L$. Let $K'\subseteq L$ be the unramified extension of $K$ of degree $m$, and identify $\fph=\ff{K'}$ through the embedding (\ref{embedding}). Let $G(x)\in \oo_{K'}[x]$ be the minimal polynomial of $\t$ over $K'$. Let $\phi'(x)=x-\eta$, where $\eta\in K'$ is the unique root of $\phi(x)$ such that $G(x)$ is divisible by $x-\eta$ modulo $\m_{K'}$. Then, for any nonzero polynomial $P(x)\in\zpx$:
$$N_{\phi'}^-(P)=N_{\phi}^-(P), \quad R'_{\la}(P)(y)=\epsilon^sR_{\la}(P)(\epsilon^ey),$$
where $R'$ denotes the residual polynomial with respect to $\phi'(x)$ over $K'$, $\epsilon\in\ff{K'}^*$ does not depend on $P(x)$, and $s$ is the initial abscissa of $S_{\la}(P)$.  
\end{lemma}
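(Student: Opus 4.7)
The plan is to rewrite the given $\phi$-adic development of $P$ as a $\phi'$-development (not the $\phi'$-adic one) whose associated Newton data immediately reproduces $N_\phi^-(P)$, and then to invoke Lemma \ref{admissible} to transfer this information to the actual $\phi'$-Newton polygon.

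First, since $\bar\phi$ is irreducible and separable over $\ff{}$, it splits into $m$ distinct linear factors over $\ff{K'}$. Hensel's lemma then factors $\phi$ completely in $\oo_{K'}[x]$ as $\phi(x)=\phi'(x)\,\psi(x)$, with $\psi(x)=\prod_{\eta'\ne\eta}(x-\eta')\in\oo_{K'}[x]$ monic. Because the roots of $\bar\phi$ in $\ff{K'}$ are pairwise distinct, the element $c:=\psi(\eta)=\prod_{\eta'\ne\eta}(\eta-\eta')$ is a unit of $\oo_{K'}$. Substituting $\phi(x)^i=\psi(x)^i\phi'(x)^i$ into the $\phi$-adic development of $P$ gives the $\phi'$-development
\[
P(x)=\sum_{i\ge 0} b_i(x)\,\phi'(x)^i,\qquad b_i(x):=a_i(x)\,\psi(x)^i\in\oo_{K'}[x].
\]
Since $\psi$ is monic, $v(\psi)=0$, so Gauss's lemma yields $v(b_i)=v(a_i)=u_i$; hence the principal polygon determined by the points $(i,u_i)$ via this development is literally $N_\phi^-(P)$.

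To apply Lemma \ref{admissible} I must check admissibility. Because $\phi'$ is linear, reduction modulo $(\pi,\phi'(x))$ is just evaluation at $\eta$ followed by reduction mod $\pi$, so the residual coefficient in the $\phi'$-development is
\[
\tilde c_i=\overline{(b_i/\pi^{u_i})(\eta)}=\overline{(a_i/\pi^{u_i})(\eta)}\cdot\bar c^{\,i}.
\]
The first factor is precisely the image of $c_i(P)\in\fph$ under the embedding (\ref{embedding}), because $\bar\eta=\bar\theta$: indeed $G(\theta)=0$ forces $(x-\bar\theta)\mid\bar G$, and $\eta$ was defined as the unique root of $\phi$ whose reduction is a root of $\bar G$. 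Hence $\tilde c_i=c_i(P)\,\bar c^{\,i}\in\ff{K'}$, which vanishes exactly when $c_i(P)$ does, and in particular is nonzero at every vertex of $N_\phi^-(P)$. Lemma \ref{admissible} now delivers both $N_{\phi'}^-(P)=N_\phi^-(P)$ and $c'_i(P)=c_i(P)\,\bar c^{\,i}$ at every abscissa $i$ of the finite part.

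The residual polynomial identity follows by direct substitution: if $S_\la(P)$ has initial abscissa $s$ and degree $d$, then
\[
R'_\la(P)(y)=\sum_{j=0}^d c'_{s+je}(P)\,y^j=\bar c^{\,s}\sum_{j=0}^d c_{s+je}(P)\,(\bar c^{\,e}y)^j=\bar c^{\,s}\,R_\la(P)(\bar c^{\,e}y),
\]
so the lemma holds with $\epsilon:=\bar c=\overline{\psi(\eta)}\in\ff{K'}^*$, which evidently depends only on $\phi$ and $\eta$. The principal subtlety is the identification $\bar\eta=\bar\theta$, which is what makes $\overline{(a_i/\pi^{u_i})(\eta)}$ equal to the embedded image of $c_i(P)$; once this is in place, the rest is bookkeeping with Gauss's lemma and the uniqueness statement in Lemma \ref{admissible}.
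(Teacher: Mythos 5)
Your argument is correct and is essentially the paper's proof: both factor $\phi=\phi'\psi$ over $\oo_{K'}$ (the paper writes $\rho$ for your $\psi$), rewrite the $\phi$-adic development as the $\phi'$-development with coefficients $a_i\psi^i$, check admissibility, invoke Lemma \ref{admissible}, and read off $c'_i = c_i\epsilon^i$. The only cosmetic difference is that you take $\epsilon=\overline{\psi(\eta)}$ while the paper takes $\epsilon=\overline{\rho(\theta)}$; these coincide since $\bar\eta=\bar\theta$, as you yourself observe.
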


\begin{proof}
Consider the $\phi$-adic development of $P(x)$:
\begin{multline*}
P(x)=\phi(x)^n+a_{n-1}(x)\phi(x)^{n-1}+\cdots+a_0(x)=\\=\rho(x)^{n}\phi'(x)^n+a_{n-1}(x)\rho(x)^{n-1}\phi'(x)^{n-1}+\cdots+a_0(x),
\end{multline*}
where $\rho(x)= \phi(x)/\phi'(x) \in\oo_{K'}[x]$. Since $\phi(x)$ is irreducible modulo $\m$, it is a separable polynomial modulo $\m_{K'}$, so that $\rho(x)$ is not divisible by $\phi'(x)$ modulo $\m_{K'}$, and
$v(\rho(\t))=0$. Therefore, the above $\phi'(x)$-development of
$P(x)$ is admissible and $N_{\phi'}^-(P)=\npp{P}$ by Lemma \ref{admissible}. Moreover the residual coefficients of the two polygons are related by $c'_i=c_i\epsilon^i$, where $\epsilon=\overline{\rho(\t)}\in\ff{K'}^*$. This proves that
 $R'_{\la}(P)(y)=\epsilon^s R_{\la}(P)(\epsilon^ey)$.
\end{proof}

\begin{theorem}[Theorem of the residual polynomial]\label{thresidual} Let $f(x)\in\zpx$ be a monic polynomial which is divisible by $\phi(x)$ modulo $\m$. Let $S$ be a side of $N_{\phi}^-(f)$, of finite slope  $\la$, and let 
$$
R_{\la}(f)(y)\sim\psi_1(y)^{a_1}\cdots \psi_t(y)^{a_t}
$$be the factorization of the residual polynomial of $f(x)$ into the product of powers of pairwise different monic irreducible polynomials in $\fph[y]$. Then, the factor $f_{\phi,\la}(x)$ of $f(x)$, attached to $\phi,\la$ by the Theorem of the polygon, admits a factorization
$$
f_{\phi,\la}(x)=G_1(x)\cdots G_t(x)
$$into a product of $t$ monic polynomials in $\zpx$, such that all $N_{\phi}(G_i)$ are one-sided with slope $\la$, and
 $R_{\la}(G_i)(y)\sim \psi_i(y)^{a_i}$ in $\fph[y]$, for all $1\le i\le t$.
\end{theorem}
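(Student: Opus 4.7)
The plan is to reduce to a one-sided polygon and then lift the coprime factorization of $R_\la(f)$ to a factorization of $f$ via a Hensel-type argument adapted to the $\phi$-adic expansion.

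First, by Corollary \ref{slopezero} and the Theorem of the polygon, I may replace $f$ by $f_{\phi,\la}$ and assume that $\npp{f} = S$ is one-sided of slope $\la = -h/e$, with initial point $(s,u)$ and integer lattice points $(s+ie, u-ih)$ for $0 \le i \le d$. Then $\dg f = med$, $R_\la(f)$ has degree $d$, and induction on $t$ reduces the task to the following coprime case: given $R_\la(f) \sim A(y)B(y)$ with $A,B \in \fph[y]$ monic, nonconstant, and coprime, produce a factorization $f = F G$ in $\zpx$ with $F, G$ monic, $\npp{F}$ and $\npp{G}$ one-sided of slope $\la$ of lengths $e\dg A$ and $e\dg B$, and $R_\la(F) \sim A$. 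The Theorem of the product then forces $R_\la(G) \sim B$, and iterating separates all factors $\psi_i^{a_i}$.

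To construct such a splitting, I would work with the auxiliary valuation $w$ defined on elements written in $\phi$-adic form by $w\bigl(\sum a_i(x)\phi(x)^i\bigr) = \min_i\bigl(v(a_i) + ih/e\bigr)$, so that $w(\pi) = 1$ and $w(\phi(x)) = h/e$. Under $w$, the on-side part $f^0$ of $f$ has value $u + sh/e$ while $f - f^0$ has strictly larger value, and the formal substitution $y = \phi(x)^e/\pi^h$ converts $\pi^{-u}\phi(x)^{-s} f^0$ into a polynomial in $y$ whose reduction modulo $(\pi,\phi)$ is $R_\la(f)(y)$. Starting from lifts $\tilde F_0, \tilde G_0 \in \zpx$ of $A, B$ (of the correct $\phi$-adic shape and degrees $me\dg A$, $me\dg B$), I would iterate: at each stage, use a Bezout identity witnessing $\gcd(A,B)=1$ in $\fph[y]$ to correct $\tilde F_n, \tilde G_n$ into $\tilde F_{n+1}, \tilde G_{n+1}$ so that $\tilde F_{n+1}\tilde G_{n+1} \equiv f$ modulo a strictly higher power of $w$. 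Convergence in the $\pi$-adic topology produces monic $F, G \in \zpx$ with $FG = f$.

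The main obstacle is the bookkeeping of this iteration: one must verify at each step that the corrections preserve monicity, degree, and the one-sided shape of the intermediate Newton polygons, and that the limiting residual polynomials are genuinely $\sim A$ and $\sim B$. The coprimality of $A$ and $B$ in $\fph[y]$ supplies the required Bezout identity, and the strict positivity of $w$ on the off-side contribution $f - f^0$ drives convergence. Once $f = FG$ is produced, Proposition \ref{vqt} applied to roots of $F$ identifies $\npp{F}$ as a single segment of slope $\la$, the Theorem of the product identifies $R_\la(G) \sim B$, and induction on $t$ closes the argument.
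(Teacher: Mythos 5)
Your strategy is genuinely different from the paper's, and it is a legitimate alternative route, but the proposal leaves its central step unexecuted. The paper never lifts anything: using the Theorem of the product it reduces the claim to showing that an \emph{irreducible} $F(x):=f_{\phi,\la}(x)$ has $R_\la(F)\sim\psi^a$ for a single monic irreducible $\psi\in\fph[y]$. To see this it passes, via Lemma~\ref{extension}, to the unramified extension $K'$ of degree $m$ so that $\phi$ may be taken linear, then takes the minimal polynomial $P(x)$ of $\ga(\t)=\phi(\t)^e/\pi^h$ over the base and observes that the auxiliary polynomial $Q(x)=\phi(x)^{ek}+\pi^hb_{k-1}\phi(x)^{e(k-1)}+\cdots+\pi^{kh}b_0$ has one-sided $N^-_\phi(Q)$ of slope $\la$ with $R_\la(Q)=\overline{P}$, which is a power of an irreducible because $P$ is irreducible. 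Since $Q(\t)=0$ we have $F\mid Q$, and the Theorem of the product transfers this to $F$. Grouping the irreducible factors of $f_{\phi,\la}$ by the irreducible constituent of their residual polynomial then gives the stated $G_1,\dots,G_t$. No iteration, no Bezout identity, no convergence argument.

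Your Hensel-type lift along the MacLane valuation $w$ would, if carried through, give more — an explicit construction of the $G_i$ — but as written the proposal names the key step only to defer it. The degree-zero $w$-residue object is the polynomial ring $\fph[y]$, not a field, so the Bezout correction has to be performed with a division-with-remainder that keeps the correction to $\tilde F_n$ of $\phi$-degree strictly less than $e\deg A$ (to preserve monicity, degree, and the on-side shape of $\tilde F_n$); one must also check that the off-side error created by the product of corrections has strictly larger $w$-value, and that $w$-convergence together with the fixed bound on $\phi$-degree forces $v(f-\tilde F_n\tilde G_n)\to\infty$, hence $\pi$-adic convergence of the coefficients. You explicitly flag all this as ``the main obstacle'' and ``bookkeeping'', but for a Hensel argument that bookkeeping \emph{is} the proof. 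So while the route is sound and classical, the proposal is a strategy statement rather than a proof, in contrast to the paper's short indirect argument, which settles the theorem without touching any lifting machinery.
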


\begin{proof}By the Theorem of the product, we need only to prove that if $F(x):=f_{\phi,\la}(x)$ is  irreducible, then $R_{\la}(F)(y)$ is the power of an irreducible polynomial of $\fph[y]$. Let $\t,\,L,\,K',\,G(x)$ be as in Lemma \ref{extension}, so that $F(x)=\prod_{\sigma\in\op{Gal}(K'/K)}G^{\sigma}(x)$. Under the embedding $\fph\lra\ff{L}$, the field $\fph$ is identified to $\ff{K'}$. By Lemma \ref{extension}, there is a polynomial of degree one, $\phi'(x)\in\oo_{K'}[x]$, such that $R'_{\la}(F)(y)\sim R_{\la}(F)(cy)$, for some nonzero constant $c\in\ff{K'}$. For any $\sigma\ne1$, the polynomial $G^{\sigma}(x)$ is not divisible by $\phi'(x)$ modulo $\m_{K'}$; thus, $N_{\phi'}(G^{\sigma})$ is reduced to a point, and $R'_{\la}(G^{\sigma})(y)$ is a constant. Therefore, by the Theorem of the product, $R'_{\la}(G)(y)\sim R'_{\la}(F)(y)\sim R_{\la}(F)(cy)$, so that $R_{\la}(F)(y)$ is the power of an irreducible polynomial of $\fph[y]$ if and only if 
$R'_{\la}(G)(y)$ has the same property over $\ff{K'}$. In conclusion, by extending the base field, we can suppose that $\dg \phi=m=1$.

Consider now the minimal polynomial
$P(x)=x^k+b_{k-1}x^{k-1}+\cdots+b_0\in K[x]$ of $\ga(\t)=\phi(\t)^e/\pi^h$ over $K$. Since $v(\ga(\t))=0$, we have $v(b_0)=0$. Thus, the polynomial
$$
Q(x)=\phi(x)^{ek}+\pi^hb_{k-1}\phi(x)^{e(k-1)}+\cdots+\pi^{kh}b_0,
$$
has one-sided $N^-_{\phi}(Q)$ of slope $\la$, and $R_{\la}(Q)(y)$ is the reduction of
$P(y)$ modulo $\m$, which is the power of an irreducible
polynomial because $P(x)$ is irreducible in $K[x]$. Since $Q(\t)=0$, $F(x)$ divides $Q(x)$, and it has the same property by the Theorem of the product.
\end{proof}

\begin{corollary}
With the above notations, let $\t\in\qpb$ be a root of $G_i(x)$, and $L=K(\t)$. Then, $f(L/K)$ is divisible by $m\dg\psi_i$. Moreover, the number of irreducible factors of $G_i(x)$ is at most $a_i$; in particular, if $a_i=1$, then $G_i(x)$ is irreducible in $\zpx$, and $f(L/K)=m\dg\psi_i$, $e(L/K)=e$.
\end{corollary}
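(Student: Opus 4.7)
The plan is to refine Corollary~\ref{ram} along the same lines that the Theorem of the residual polynomial refines the Theorem of the polygon. The extra ingredient is that the element $\overline{\ga(\t)}\in\ff{L}$, where $\ga(x)=\phi(x)^e/\pi^h$, is a root of the irreducible polynomial $\psi_i(y)$, which introduces the factor $\dg\psi_i$ in the residue degree.

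First I apply Corollary~\ref{ram} to $G_i$ in the role of $f$: $G_i$ is monic, divisible by $\phi$ modulo $\m$ (since it divides $f_\phi$), and $\nph{G_i}$ is one-sided of slope $\la$ by the Theorem of the polygon, so Corollary~\ref{ram} yields $m\mid f(L/K)$ and $e\mid e(L/K)$. By the Theorem of the residual polynomial, $R_\la(G_i)(y)\sim \psi_i(y)^{a_i}$, and Proposition~\ref{vqt}(4) applied to $G_i$ and its root $\t$ gives $R_\la(G_i)(\overline{\ga(\t)})=0$, hence $\psi_i(\overline{\ga(\t)})=0$. Since $\psi_i$ is irreducible over $\fph$, it is the minimal polynomial of $\overline{\ga(\t)}$ over $\fph$; via the embedding~\eqref{embedding}, the field $\fph(\overline{\ga(\t)})\subseteq\ff{L}$ has degree $m\dg\psi_i$ over $\ff{}$, so $m\dg\psi_i\mid f(L/K)$.

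For the bound on the number of irreducible factors, factor $G_i(x)=H_1(x)\cdots H_s(x)$ into monic irreducibles in $\zpx$. Each $H_j$ divides $f_\phi(x)$ and is therefore congruent to a power of $\phi$ modulo $\m$; the decomposition $\npp{G_i}=\sum_j \npp{H_j}$ from the Theorem of the product forces each $\npp{H_j}$ to be one-sided of slope $\la$. The Theorem of the residual polynomial applied to the irreducible $H_j$ then says that $R_\la(H_j)(y)$ is, up to a constant, a power of a single irreducible polynomial of $\fph[y]$. Since $R_\la(G_i)=\prod_j R_\la(H_j)\sim\psi_i^{a_i}$ again by the Theorem of the product, unique factorization in $\fph[y]$ forces $R_\la(H_j)\sim\psi_i^{b_j}$ with $b_j\ge 1$ and $\sum_j b_j=a_i$. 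Hence $s\le a_i$.

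Finally, if $a_i=1$ the previous step gives $s=1$, so $G_i$ is irreducible. Its degree equals $med$ with $d=\dg R_\la(G_i)=\dg\psi_i$, i.e.\ $\dg G_i=me\dg\psi_i=e(L/K)f(L/K)$; combined with $e\mid e(L/K)$ and $m\dg\psi_i\mid f(L/K)$, this forces $e(L/K)=e$ and $f(L/K)=m\dg\psi_i$. The main obstacle is the residue-field identification in the second paragraph, where one must track that the embedding $\fph\hookrightarrow\ff{L}$ is the one attached to $\t$ (cf.~\eqref{embedding}), so that $\overline{\ga(\t)}$ and its minimal polynomial $\psi_i$ genuinely enlarge $\ff{L}$ beyond $\fph$.
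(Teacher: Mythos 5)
Your proof is correct and follows essentially the same route as the paper's: the divisibility $m\dg\psi_i\mid f(L/K)$ comes from the embedding of $\fph[y]/(\psi_i(y))$ into $\ff{L}$ (justified by $\psi_i(\gb{})=0$, via Proposition~\ref{vqt}), and the bound $s\le a_i$ plus the equalities when $a_i=1$ come from the Theorem of the product together with the divisibilities and a degree count. You simply unpack the paper's terse ``consequence of the Theorem of the product and Corollary~\ref{ram}'' by invoking the Theorem of the residual polynomial on each irreducible factor $H_j$, which is a natural way to make that step explicit.
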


\begin{proof}
The statement about $f(L/K)$ is a consequence of the  
embedding of the finite field $\fph[y]/(\psi_i(y))$ into $\ff{L}$ determined by $\rd(x)\mapsto \tb$, $y\mapsto \gb{}$. This embedding is well-defined by item 4 of Proposition \ref{vqt}. The other statements are a consequence of the Theorem of the product and Corollary \ref{ram}. 
\end{proof}

\subsection{Types of order one}\label{orderone}
Starting with a monic and separable polynomial $f(x)\in\zpx$, the Newton polygon techniques provide partial information on the factorization of $f(x)$ in $\zpx$, obtained after three \emph{dissections} \cite{ber}. In the first dissection we obtain as many factors of $f(x)$ as pairwise different irreducible factors modulo $\m$ (by Hensel's lemma). In the second dissection, each of these factors splits into the product of as many factors as sides of certain Newton polygon of $f(x)$ (by the Theorem of the polygon). In the third dissection, the factor that corresponds to a side of finite slope splits into the product of as many factors as irreducible factors of the residual polynomial of $f(x)$ attached to this side (by the Theorem of the residual polynomial).

The final list of factors of $f(x)$ obtained by this procedure can be parameterized by certain data, which we call \emph{types of order zero and of order one}. 

\begin{definition}
A type of order zero is a monic irreducible polynomial $\ty=\psi_0(y)\in\ff{}[y]$. We attach to any type of order zero the map
$$
\om^\ty\colon \zpx\setminus\{0\}\lra \Z_{\ge 0},\quad P(x)\mapsto \ord_{\psi_0}(\overline{P(x)/\pi^{v(P)}}).
$$ 

Let $f(x)\in\zpx$ be a monic and separable polynomial. We say that the type $\ty$ is $f$-complete if $\om^\ty(f)=1$. In this case, we denote by $f_\ty(x)\in\zpx$ the monic irreducible factor of $f(x)$ determined by $f_\ty(y)\equiv \psi_0(y) \md{\m}$.
\end{definition}

\begin{definition}
A type of order one is a triple $\ty=(\phi(x);\la,\psi(y))$, where 
\begin{enumerate}
 \item $\phi(x)\in\Z[x]$ is a monic polynomial which is irreducible modulo $\m$.
\item  $\la=-h/e\in\Q^-$, with $h,e$ positive coprime integers.
\item  $\psi(y)\in \fph[y]$ is a monic irreducible polynomial, $\psi(y)\ne y$.
\end{enumerate}
By truncation of $\ty$ we obtain the type of order zero $\ty_0:=\phi(y)\md{\m}$.
\end{definition}

We denote by $\ty_0(f)$ the set of all monic irreducible factors of $f(x)$ modulo $\m$. 
We denote by $\ty_1(f)$ the set of all types of order one obtained from $f(x)$ along the process of applying the three classical dissections: for any non-$f$-complete $\psi_0(y)\in\ty_0(f)$, we take a monic lift $\phi(x)$ to $\oo[x]$; then we consider all finite slopes $\la$ of the sides of positive length of $N_{\phi}^-(f)$, and finally, for each of them we take the different monic irreducible factors $\psi(y)$ of the residual polynomial $R_{\la}(f)(y)\in\fph[y]$. 
These types are not intrinsical objects of $f(x)$. There is a non-canonical choice of the lifts $\phi(x)\in\oo[x]$, and the data $\la$, $\psi(y)$ depend on this choice. 

We denote by $\Ty_1(f)$ the union of $\ty_1(f)$ and the set of all $f$-complete types of order zero.
By the previous results we have a factorization in $\zpx$
$$
f(x)=f_{\infty}(x)\prod_{\ty\in\Ty_1(f)}f_{\ty}(x),
$$
where $f_{\infty}(x)$ is the product of the different $\phi(x)$ that divide $f(x)$ in $\zpx$, and, if $\ty$ has order one,  $f_{\ty}(x)$ is the unique monic divisor of $f(x)$ in $\zpx$ satisfying the following properties:
$$
\as{1.4}
\begin{array}{l}
f_{\ty}(x)\equiv \phi(x)^{ea\deg\psi} \md{\m}, \ \mbox{ where }a=\ord_{\psi}(R_{\la}(f)),\\
\nph{f_{\ty}} \mbox{ is one-sided with slope }\la,\\
R_{\la}(f_{\ty})(y)\sim\psi(y)^a \mbox{ in }\fph[y].
\end{array}
$$
The factor $f_{\infty}(x)$ is already expressed as a product of irreducible polynomials in $\zpx$. Also, if $a=1$, the Theorem of the residual polynomial shows that $f_\ty(x)$ is irreducible too. Thus, the remaining task is to obtain the further factorization of $f_{\ty}(x)$, for the types $\ty\in\ty_1(f)$ with $a>1$. 
The factors of $f_{\ty}(x)$ will bear a reminiscence of $\ty$ as a birth mark (cf. Lemma \ref{factortype}). 

Once a type of order one $\ty=(\phi(x);\la,\psi(y))$ is fixed, we change the notation of several objects that depend on the data of the type. We omit the data from the notation but we include the subscript ``$1$" to emphasize that they are objects of the first order. From now on, for any nonzero polynomial $P(x)\in\zpx$, any principal polygon $N\in\pol$, and any $T\in\ss(\la)$, we shall denote
$$\as{1.2}
\begin{array}{ll}
v_1(P):=v(P),\quad&\om_1(P):=\om^{\overline{\phi}}(P)=\ord_{\overline{\phi}}(\overline{P(x)/\pi^{v(P)}}),\\
N_1(P):=\nph{P},\quad &N_1^-(P):=\npp{P},\\
S_1(N):=S_{\la}(N),\quad &S_1(P):=S_{\la}(P)=S_{\la}(N_1^-(P)),\\
R_1(P)(y):=R_{\la}(P)(y),\quad &R_1(P,T)(y):=R_{\la}(P,T)(y). 
\end{array}
$$
Note that $\om_1(P),N_1(P)$ depend only on $\phi(x)$, wheras $S_1(P)$, $R_1(P)$, $R_1(P,T)$ depend on the pair $\phi(x),\la$.

The aim of the next two sections is to introduce Newton polygons of higher order and prove similar theorems, yielding information on a further factorization of each $f_{\ty}(x)$. As before, we shall obtain arithmetic information about the factors of $f_{\ty}(x)$ just by a direct manipulation of $f(x)$, without actually computing a $p$-adic approximation to these factors. This fact is crucial to ensure that the whole process has a low complexity. However, once an irreducible factor of $f(x)$ is ``detected",  
the theory provides a reasonable approximation of this factor as a by-product (cf. Proposition \ref{appfactor}). 

\section{Newton polygons of higher order}\label{secNPr}
Throughout this section, $r$ is an integer, $r\ge 2$. We shall construct Newton polygons of order $r$ and prove their basic properties and the Theorem of the product in order $r$, under the assumption that analogous results have been already obtained in orders $1,\dots,r-1$. We also assume that the theorems of the polygon and of the residual polynomial have been proved in orders $1,\dots,r-1$ (cf. section \ref{secOre}). For $r=1$ all these results have been proved in section \ref{secNP}.

\subsection{Types of order $r-1$}
A \emph{type of order $r-1$} is a sequence of data
$$
\ty=(\phi_1(x);\la_1,\phi_2(x);\cdots;\la_{r-2},\phi_{r-1}(x);\la_{r-1},\psi_{r-1}(y)),
$$
where $\phi_i(x)$ are monic polynomials in $\oo[x]$, $\la_i$ are negative rational numbers and $\psi_{r-1}(y)$ is a polynomial over certain finite field (to be specified below), that satisfy the following recursive properties:
\begin{enumerate}
\item $\phi_1(x)$ is irreducible modulo $\m$. We denote by $\psi_0(y)\in\ff{}[y]$ the polynomial obtained by reduction of $\phi_1(y)$ modulo $\m$. We define $\ff1:=\ff{}[y]/(\psi_0(y))$. 
\item For all $1\le i<r-1$, the Newton polygon of $i$-th order, $N_i(\phi_{i+1})$, is one-sided, with positive length and slope $\la_i$. 
\item For all $1\le i<r-1$, the residual polynomial of $i$-th order, $R_i(\phi_{i+1})(y)$, is an irreducible polynomial in $\ff{i}[y]$. We denote by $\psi_i(y)\in \ff{i}[y]$ the monic polynomial determined by $R_i(\phi_{i+1})(y)\sim\psi_i(y)$. We define $\ff{i+1}:=\ff{i}[y]/(\psi_i(y))$.
\item $\psi_{r-1}(y)\in\ff{r-1}[y]$ is a monic irreducible polynomial, $\psi_{r-1}(y)\ne y$.  We define $\ff{r}:=\ff{r-1}[y]/(\psi_{r-1}(y))$.
\end{enumerate}

The type determines a tower $\ff{}=:\ff{0}\subseteq\ff{1}\subseteq\cdots \subseteq\ff{r}$ of finite fields. The field $\ff{i}$ should not be confused with the finite field with $i$ elements. 

By the Theorem of the product in orders $1,\dots,r-1$, the polynomials $\phi_i(x)$ are all irreducible over $\zpx$. 

Let us be more precise about the meaning of $N_i(-)$, $R_i(-)$, used in items 2,3.\medskip

\noindent{\bf Notation. }{\it We denote $\ty_0=\psi_0(y)$. For all $1\le i<r$, we obtain by truncation of $\ty$ a type of order $i$, and a \emph{reduced type} of order $i$, defined respectively as:  
$$\as{1.4}
\begin{array}{l}
\ty_i:=(\phi_1(x);\la_1,\phi_2(x);\cdots;\la_{i-1},\phi_i(x);\la_i,\psi_i(y)),\\
\ty_i^0:=(\phi_1(x);\la_1,\phi_2(x);\cdots;\la_{i-1},\phi_i(x);\la_i).
\end{array}
$$
Also, we define the \emph{extension} of the type $\ty_{i-1}$ to be $$\tilde{\ty}_{i-1}:=(\phi_1(x);\la_1,\phi_2(x);\cdots;\la_{i-1},\phi_i(x)).$$
We have semigroup homomorphisms:
$$\as{1.4}
\begin{array}{l}
N_i^-\colon \zpx\!\setminus\!\{0\}\to \pol,\quad S_i\colon \zpx\!\setminus\!\{0\}\to \ss(\la_i),\quad R_i\colon \zpx\!\setminus\!\{0\}\to \ff{i}[y]. 
\end{array}
$$

For any nonzero polynomial $P(x)\in\zpx$, $N_i(P)$ is the $i$-th order Newton polygon with respect to the extended type $\tilde{\ty}_{i-1}$,  $S_i(P)$ is the $\la_i$-component of $N_i^-(P)$, and $R_i(P)(y)\in\ff{i}[y]$ is the residual polynomial of $i$-th order with respect to $\la_i$. The polynomial $R_i(P)(y)$ has degree $d(S_i(P))$. Both $S_i$ and $R_i$ depend only on the reduced type $\ty^0_i$. 
Finally, we denote by $s_i(P)$ the initial abscissa of $S_i(P)$}.\medskip

Other data attached to the type $\ty$ deserve an specific notation. For all $1\le i<r$:
\begin{itemize}
\item $\la_i=-h_i/e_i$, with $e_i,\,h_i$ positive coprime integers,
\item $f_i:=\dg \psi_i(y)$, $f_0:=\dg \psi_0(y)=\dg \phi_1(x)$,  
\item $m_i:=\dg \phi_i(x)$, and $m_r:=m_{r-1}e_{r-1}f_{r-1}$. Note that $m_{i+1}=m_ie_if_i=m_1e_1f_1\cdots e_if_i$,
\item $\ell_i,\,\ell'_i\in\Z$ are fixed integers such that $\ \ell_i h_i-\ell'_ie_i=1$, 
\item $z_i:=y \md{\psi_i(y)}\in\ff{i+1}^*$, $z_0:=y \md{\psi_0(y)}\in\ff{1}^*$. Note that $\ff{i+1}=\ff{i}(z_i)$,
\end{itemize}
Also, for all $0\le i<r$ we have semigroup homomorphisms $$
\om_{i+1}\colon \zpx\setminus\{0\} \lra\Z_{\ge 0},\qquad P(x)\mapsto  \ord_{\psi_i}(R_i(P)),$$
where, by convention: $R_0(P)=\overline{P(y)/\pi^{v(P)}}\in\ff{}[y]$. By Lemma \ref{shape} in order $r-1$ (see Definition \ref{shape1} and Remark \ref{phiell} for order one):
\begin{equation}\label{om=length}
\ell(N_i(P))=\lfloor \deg P/m_i\rfloor,\quad  \ell(N_i^-(P))=\om_i(P), \quad 1\le i< r,
\end{equation}
and $N_i^-(P)$ has a side of slope $-\infty$ if and only if $P(x)$ is divisible by $\phi_i(x)$ in $\zpx$. 

\begin{definition}\label{type}
We say that a monic polynomial $P(x)\in\zpx$ has type $\ty$ when 
\begin{enumerate}
 \item $P(x)\equiv \phi_1(x)^{a_0} \md{\m}$, for some positive integer $a_0$,
\item  For all $1\le i<r$, the Newton polygon $N_i(P)$ is one-sided, of slope $\la_i$, and
 $R_i(P)(y)\sim\psi_i(y)^{a_i}$ in $\ff{i}[y]$, for some positive integer $a_i$. 
\end{enumerate}
\end{definition}

\begin{lemma}\label{typedegree}
Let $P(x)\in\zpx$ be a nonzero polynomial. Then,
\begin{enumerate}
 \item $\om_1(P)\ge e_1f_1\om_2(P)\ge\cdots\ge e_1f_1\cdots e_{r-1}f_{r-1}\om_r(P)$.
\item $\deg P<m_r\imp\om_r(P)=0$.
\item If $P(x)$ has type $\ty$ then all inequalities in item 1 are equalities, and $$\dg P(x)=m_r\om_r(P)=m_{r-1}\om_{r-1}(P)=\cdots =m_1\om_1(P).$$
\end{enumerate}
\end{lemma}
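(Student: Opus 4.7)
The plan is to work one level at a time, and at each level $i$ chain together four short inequalities that read off the definitions in order to compare $\om_{i+1}(P)$ with $\om_i(P)$.

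For item (1): fix $1 \le i < r$. By definition $\om_{i+1}(P) = \ord_{\psi_i}(R_i(P))$, and since $\psi_i$ has degree $f_i$ we get $f_i\om_{i+1}(P) \le \deg R_i(P)$. The degree of a residual polynomial equals $d(S_i(P))$ by the paragraph introducing $R_i$, and $d(S_i(P)) e_i = \ell(S_i(P))$ by the very definition of the degree of a side. Finally $S_i(P)$ is the $\la_i$-component of $N_i^-(P)$, hence $\ell(S_i(P)) \le \ell(N_i^-(P)) = \om_i(P)$ by (\ref{om=length}). Chaining yields $e_if_i\om_{i+1}(P) \le \om_i(P)$, and iterating over $i = 1,\ldots,r-1$ proves (1).

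For item (2): telescoping the chain just obtained gives $e_1f_1\cdots e_{r-1}f_{r-1}\om_r(P) \le \om_1(P)$. Since $N_1^-(P)$ is the subpolygon of $N_1(P)$ retaining only the sides of negative slope, (\ref{om=length}) gives $\om_1(P) \le \lfloor \deg P/m_1 \rfloor \le \deg P/m_1$. Multiplying these bounds produces $m_r\om_r(P) \le \deg P$; if $\deg P < m_r$ then $\om_r(P) < 1$, and being a nonnegative integer it must vanish.

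For item (3): assume $P$ has type $\ty$. Each inequality used in the proof of (1) becomes an equality under this hypothesis. The bound $f_i\om_{i+1}(P) \le \deg R_i(P)$ is saturated because $R_i(P) \sim \psi_i^{a_i}$ is a pure power of $\psi_i$. The bound $\ell(S_i(P)) \le \ell(N_i^-(P))$ is saturated because $N_i(P)$ is one-sided of finite slope $\la_i$, which forces $S_i(P) = N_i^-(P) = N_i(P)$. Hence $\om_i(P) = e_if_i\om_{i+1}(P)$ for $1 \le i < r$. To anchor the degree, use that $P$ is monic: then $v(P) = 0$, so $\om_1(P) = \ord_{\overline{\phi_1}}(\overline{P}) = a_0$, and comparing leading terms in the monic congruence $P \equiv \phi_1^{a_0}\md{\m}$ forces $\deg P = a_0 m_1 = m_1\om_1(P)$. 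The tower relation $m_{i+1} = m_ie_if_i$ combined with the chain $\om_i(P) = e_if_i\om_{i+1}(P)$ then propagates this to $\deg P = m_i\om_i(P)$ for every $1 \le i \le r$.

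No serious obstacle is anticipated, since every ingredient needed, namely the length formula (\ref{om=length}), the identity $\deg R_i(P) = d(S_i(P))$, and the tower $m_{i+1} = m_ie_if_i$, is already available by the inductive hypothesis on orders below $r$. The only step that demands a moment of care is the transition from the congruence $P \equiv \phi_1^{a_0}\md{\m}$ to the exact degree equality $\deg P = a_0 m_1$, which genuinely uses the monic assumption on $P$ (without it, only $\deg P \ge a_0 m_1$ would follow).
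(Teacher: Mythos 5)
Your proof is correct and follows the same route as the paper: the chained inequality $e_if_i\om_{i+1}(P)\le e_i\deg R_i(P)=e_id(S_i(P))=\ell(S_i(P))\le\ell(N_i^-(P))=\om_i(P)$ is exactly the paper's equation (\ref{chain}), item (2) is the same telescoping-plus-length bound (you apply (\ref{om=length}) at level $1$ where the paper applies it at level $r-1$, which is an immaterial difference), and item (3) saturates the same two inequalities. The one place you add content is in spelling out why $\dg P = m_1 a_0$ — the paper simply asserts $\dg P = m_1 a_0$ while you justify it via monicity of $P$ and degree-matching in the congruence $P\equiv\phi_1^{a_0}\md{\m}$, which is a legitimate and slightly more careful rendering of the same step (monicity is indeed part of the hypothesis, since Definition \ref{type} only applies to monic polynomials).
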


\begin{proof}Item 1 is a consequence of (\ref{om=length}); in fact, for all $1\le i<r$:
\begin{equation}\label{chain}
e_if_i\om_{i+1}(P)\le e_i\dg R_i(P)=e_id(S_i(P))=\ell(S_i(P))\le \ell(N_i^-(P))=\om_i(P).
\end{equation}

Item 2 is a consequence of (\ref{om=length}) and item 1:
$$m_r>\deg P\ge m_{r-1}\om_{r-1}(P)\ge m_r\om_r(P).$$
  
Finally, if $P(x)$ has type $\ty$ the two inequalities of (\ref{chain}) are equalities, so that $m_i\om_i(P)=m_{i+1}\om_{i+1}(P)$; on the other hand, $\dg P=m_1a_0=m_1\om_1(P)$.
\end{proof}

\begin{definition}\label{ppt}
Let $P(x)\in\zpx$ be a monic polynomial with $\om_r(P)>0$. We denote by $P_{\ty}(x)$ the monic factor of $P(x)$ of greatest degree that has type $\ty$. By the Theorems of the polygon and of the residual polynomial in orders $1,\dots,r-1$, this factor exists and it satisfies
\begin{equation}\label{sameomega}
 \om_r(P_\ty)=\om_r(P),\quad \dg P_{\ty}=m_r\om_r(P).
\end{equation}
\end{definition}

\begin{lemma}\label{factortype}
Let $P(x),\,Q(x)\in\zpx$ be monic polynomials of positive degree. 
\begin{enumerate}
\item If $P(x)$ is irreducible in $\zpx$, then it is of type $\ty$ if and only if $\om_r(P)>0$.
\item $P(x)$ is of type $\ty$  if and only if $\dg P=m_r\om_r(P)>0$. 
\item $P(x)Q(x)$ has type $\ty$ if and only if $P(x)$ and $Q(x)$ have both type $\ty$.
\end{enumerate}
\end{lemma}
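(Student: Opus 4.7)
The plan is to dispatch the three items in the order (2), (1), (3), since (2) provides the numerical criterion that makes the other two immediate. The key auxiliary fact is that $\om_r$ is a semigroup homomorphism, which follows by induction from the Theorem of the product in order $r-1$, since $\om_r(P)=\ord_{\psi_{r-1}}(R_{r-1}(P))$ and $R_{r-1}$ is multiplicative.

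For (2), the forward implication is Lemma \ref{typedegree}(3). Conversely, if $\deg P=m_r\om_r(P)>0$, then $\om_r(P)>0$, so $P_\ty$ exists by Definition \ref{ppt}; by (\ref{sameomega}) it is monic of degree $m_r\om_r(P)=\deg P$ and divides $P$, forcing $P=P_\ty$, which has type $\ty$. Item (1) is then immediate: the forward direction is Lemma \ref{typedegree}(3) again, and for the converse, if $P$ is irreducible with $\om_r(P)>0$, then $P_\ty$ is a monic factor of $P$ of degree $m_r\om_r(P)\ge m_r>0$, so by irreducibility $P_\ty=P$.

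For (3), the forward direction is a direct application of the Theorem of the product at each order $1\le i<r$: modulo $\m$ one has $\overline{PQ}\equiv\phi_1^{a_0+b_0}$; the principal polygon $N_i^-(PQ)=N_i^-(P)+N_i^-(Q)$ is one-sided of slope $\la_i$, being the sum of two such; since $PQ$ is monic of the forced degree $m_i\om_i(PQ)$, the full polygon $N_i(PQ)$ coincides with its principal part; and finally $R_i(PQ)=R_i(P)R_i(Q)\sim\psi_i^{a_i+b_i}$. The converse is where the real work lies. Combining Lemma \ref{typedegree}(1) with the elementary bound $m_1\om_1(M)\le\deg M$ valid for any monic $M$ (since $v(M)=0$ gives $\om_1(M)=\ord_{\psi_0}(\overline M)\le\deg M/m_1$), one obtains the universal inequality $\deg M\ge m_r\om_r(M)$ for every monic $M$ of positive degree. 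Applying this to $P$ and $Q$, and using additivity of $\om_r$ and of degrees,
\[
\deg P+\deg Q\ge m_r\bigl(\om_r(P)+\om_r(Q)\bigr)=m_r\om_r(PQ)=\deg(PQ),
\]
the last equality by (2) applied to $PQ$. Equality must hold throughout, yielding $\deg P=m_r\om_r(P)>0$ and $\deg Q=m_r\om_r(Q)>0$, whence (2) gives that both $P$ and $Q$ have type $\ty$. The main obstacle is precisely this pinching step: establishing the universal bound $\deg M\ge m_r\om_r(M)$ and recognising that equality in a sum of lower bounds forces equality in each summand. Everything else reduces to direct invocation of the inductive Theorems of the product and of the residual polynomial.
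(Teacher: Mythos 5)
Your proof is correct. For items (1) and (2) it follows the paper's own argument: characterize ``$P$ has type $\ty$'' by $\om_r(P)>0$ together with $P_\ty=P$, then apply (\ref{sameomega}). For item (3) you take a cleaner route than the one the paper's terse citation of the Theorem of the product suggests. The forward direction is a direct consequence, once one records that $\deg(PQ)=m_i\om_i(PQ)$ upgrades $N_i^-(PQ)$ to $N_i(PQ)$. The converse, however, is not purely formal: multiplicativity alone only yields that $N_i^-(P)$ is a point or one-sided of slope $\la_i$, and that $R_i(P)\sim\psi_i^{a}$ for some $a\ge 0$; forcing $a>0$ and $N_i(P)=N_i^-(P)$ at each order requires a degree count. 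Your pinching argument performs that count once and for all at order $r$: the universal bound $\deg M\ge m_1\om_1(M)\ge m_r\om_r(M)$, additivity of $\om_r$, and the equality $\deg(PQ)=m_r\om_r(PQ)$ supplied by part (2) pinch to $\deg P=m_r\om_r(P)>0$ and $\deg Q=m_r\om_r(Q)>0$, whence (2) closes the argument. The payoff is that (3) reduces entirely to (2) and a single application of additivity, rather than repeating the polygon-and-residue analysis at every order $1\le i<r$.
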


\begin{proof}The polynomial $P(x)$ is of type $\ty$ if and only if $\om_r(P)>0$ and $P_\ty(x)=P(x)$; thus,
items 1 and 2 are an immediate consequence of (\ref{sameomega}). Item 3 follows from the Theorem of the product in orders $1,\dots,r-1$.
\end{proof}

We fix a type $\ty$ of order $r-1$ for  the rest of section \ref{secNPr}.

\subsection{The $p$-adic valuation of $r$-th order}  
In this paragraph we shall attach to $\ty$ a discrete valuation $v_r\colon K(x)^*\lra \Z$, that restricted to $K$ extends $v$ with index $e_1\cdots e_{r-1}$. We need only to define $v_r$ on $\zpx$.
Consider the mapping
$$
H_{r-1}\colon \ss(\la_{r-1})\lra \Z_{\ge0},
$$
that assigns to each side $S\in \ss(\la_{r-1})$ the ordinate of the point of intersection of the vertical axis with the line $L_{\la_{r-1}}$ of slope $\la_{r-1}$ that contains $S$. If $(i,u)$ is any point of integer coordinates lying on $S$, then $H_{r-1}(S)=u+|\la_{r-1}|i$; thus, $H_{r-1}$ is a semigroup homomorphism.

\begin{definition}
For any nonzero polynomial $P(x)\in\zpx$, we define
$$
v_r(P):=e_{r-1}H_{r-1}(S_{r-1}(P)).
$$
Note that $v_r$ depends only on the reduced type $\ty^0$.
\end{definition}\medskip

\begin{center}
\setlength{\unitlength}{5.mm}
\begin{picture}(10,5.4)
\put(5.85,1.85){$\bullet$}\put(4.85,2.85){$\bullet$}
\put(0,0){\line(1,0){10}}
\put(6,2){\line(-1,1){1}}\put(6.02,2){\line(-1,1){1}}
\put(6,2){\line(3,-1){1}}\put(6.02,2){\line(3,-1){1}}
\put(5,3){\line(-1,2){1}}\put(5.02,3){\line(-1,2){1}}
\put(9,.5){\line(-2,1){8}}
\put(.9,4.5){\line(1,0){.2}}
\put(1,-1){\line(0,1){6.5}}
\put(5.6,3){\begin{footnotesize}$N_{r-1}(P)$\end{footnotesize}}
\put(8.5,1){\begin{footnotesize}$L_{\la_{r-1}}$\end{footnotesize}}
\put(-2.2,4.35){\begin{footnotesize}$v_r(P)/e_{r-1}$\end{footnotesize}}
\multiput(6,-.1)(0,.25){9}{\vrule height2pt}
\put(5.9,-.6){\begin{footnotesize}$i$\end{footnotesize}}
\put(6.2,1){\begin{footnotesize}$u$\end{footnotesize}}
\end{picture}
\end{center}\be\be
\begin{proposition}
The natural extension of $v_r$ to $K(x)^*$ is a discrete valuation, whose restriction to $K^*$ extends $v$ with index $e_1\cdots e_{r-1}$.  
\end{proposition}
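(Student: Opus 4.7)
The plan is to verify sequentially the three defining properties of a discrete valuation. Multiplicativity of $v_r$ on $\zpx\setminus\{0\}$ is immediate from the inductive Theorem of the product in order $r-1$: it gives $N_{r-1}^-(PQ)=N_{r-1}^-(P)+N_{r-1}^-(Q)$, so by the same argument as (\ref{sumsla}) the $\la_{r-1}$-component map $S_{r-1}$ is additive; and $H_{r-1}$ is plainly additive on $\ss(\la_{r-1})$ since $H_{r-1}(S)=u+|\la_{r-1}|i$ for any integer point $(i,u)\in S$. Hence $v_r(PQ)=v_r(P)+v_r(Q)$.

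For the ultrametric inequality I would first establish the closed-form expression
$$
v_r(P)\;=\;\min_{i\ge 0}\bigl\{\,e_{r-1}\,v_{r-1}(a_i)+h_{r-1}\,i\,\bigr\},
$$
where $P=\sum_i a_i\phi_{r-1}^i$ is the $\phi_{r-1}$-adic development and $v_{r-1}(0):=\infty$. This follows from the geometric fact that $H_{r-1}(S_{r-1}(P))$ is the $y$-intercept of the line of slope $\la_{r-1}$ supporting $N_{r-1}^-(P)$ from below, combined with the order $r-1$ analogue of Definition \ref{shape1}: the points $(i,v_{r-1}(a_i))$ lie above $N_{r-1}^-(P)$ with the polygon's vertices among them, so this minimum is attained. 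Since $P+Q=\sum_i(a_i+b_i)\phi_{r-1}^i$, the inductive ultrametric property of $v_{r-1}$ gives the desired inequality term by term. One then extends $v_r$ to $K(x)^*$ in the unique multiplicative way, $v_r(P/Q):=v_r(P)-v_r(Q)$, which is well defined by multiplicativity and inherits the ultrametric inequality.

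For the restriction to $K^*$: any $\alpha\in\oo\setminus\{0\}$ has the trivial $\phi_{r-1}$-adic development $\alpha=\alpha$, so the formula gives $v_r(\alpha)=e_{r-1}v_{r-1}(\alpha)$, and induction on $r$ yields $v_r|_{K^*}=e_1\cdots e_{r-1}\,v$. To conclude that the ramification index is exactly $e_1\cdots e_{r-1}$ one must show $v_r(K(x)^*)=\Z$. The formula gives $v_r(\pi)=e_1\cdots e_{r-1}$ and $v_r(\phi_{r-1})=h_{r-1}$ (the $\phi_{r-1}$-adic development of $\phi_{r-1}$ places a single point at $(1,0)$); for $1\le i<r-1$, since $\deg\phi_i<m_{r-1}$ the same formula together with induction gives $v_r(\phi_i)=e_{i+1}\cdots e_{r-1}h_i$. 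A short Bezout argument using the coprimalities $\gcd(e_i,h_i)=1$ shows that the gcd of $e_1\cdots e_{r-1}$ and $\{e_{i+1}\cdots e_{r-1}h_i\}_{1\le i\le r-1}$ equals $1$, so the image is $\Z$. The main obstacle is the closed-form expression in the second paragraph: geometrically transparent, but its rigorous justification requires propagating Definition \ref{shape1} to order $r-1$, ensuring that the vertices of $N_{r-1}^-(P)$ are indeed among the lattice points $(i,v_{r-1}(a_i))$ coming from the $\phi_{r-1}$-adic development.
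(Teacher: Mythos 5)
Your overall strategy coincides with the paper's: multiplicativity via composition of semigroup homomorphisms, the ultrametric inequality by arguing about which points lie above the supporting line of slope $\la_{r-1}$, and the index on $K^*$ by observing that $N_{r-1}^-(a)$ reduces to the single point $(0,v_{r-1}(a))$ for $a\in\oo$. However, your closed-form expression for $v_r$ is incorrect for $r\ge 3$, and the error propagates.

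The points of $N_{r-1}^-(P)$ are $(i,\,v_{r-1}(a_i\phi_{r-1}^i))=(i,\,v_{r-1}(a_i)+i\,v_{r-1}(\phi_{r-1}))$, not $(i,\,v_{r-1}(a_i))$: for orders $\ge 2$ one plots the value of the whole monomial, and $v_{r-1}(\phi_{r-1})>0$ whenever $r\ge 3$ (it vanishes only at order one, since $v_1(\phi_1)=0$). The correct identity is
$$
v_r(P)=\min_{i\ge 0}\bigl\{e_{r-1}\,v_{r-1}(a_i)+i\bigl(e_{r-1}\,v_{r-1}(\phi_{r-1})+h_{r-1}\bigr)\bigr\},
$$
which is item 3 of Proposition~\ref{propertiesv}. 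Consequently $v_r(\phi_{r-1})=e_{r-1}v_{r-1}(\phi_{r-1})+h_{r-1}$, not $h_{r-1}$, and $v_r(\phi_i)$ is the longer sum of item 1 of Proposition~\ref{vrphii}; the quantity $e_{i+1}\cdots e_{r-1}h_i$ that you attribute to $v_r(\phi_i)$ is actually $v_r(\Phi_i)$, where $\Phi_i$ is the rational function obtained by normalizing $\phi_i$. Fortunately the ultrametric argument itself survives the correction, because the extra term $i\,v_{r-1}(\phi_{r-1})$ depends only on $i$ and is identical for $P$, $Q$, and $P+Q$, so the inequality $v_{r-1}(a_i+b_i)\ge\min\{v_{r-1}(a_i),v_{r-1}(b_i)\}$ still yields $v_r(P+Q)\ge\min\{v_r(P),v_r(Q)\}$; this is, in geometric form, exactly the paper's argument that all points of $N_{r-1}^-(P+Q)$ lie above the lower of the two supporting lines $L_P$, $L_Q$.

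Your final paragraph on surjectivity of $v_r$ onto $\Z$ is where the arithmetic errors would actually cause damage, but the effort there is dispensable: $v_r$ is $\Z$-valued by construction and takes the nonzero value $e_1\cdots e_{r-1}$ on $\pi$, so its image is a nontrivial subgroup of $\Z$, hence discrete, which is all the proposition requires. If you do want image equal to $\Z$, replace the (incorrect) values of $v_r(\phi_i)$ by $v_r(\Phi_i)=e_{i+1}\cdots e_{r-1}h_i$ and $v_r(\pi_{i+1})=e_{i+1}\cdots e_{r-1}$ and the Bezout argument goes through; in particular $v_r(\pi_r)=1$ already gives surjectivity without any gcd computation.
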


\begin{proof}
The mapping $v_r$ restricted to $\zpx\setminus\{0\}$ is a semigroup homomorphism, because it is the composition of three semigroup homomorphisms; in particular, $v_r\colon K(x)^*\lra \Z$ is a group homomorphism.

Let $P(x),\,Q(x)\in\zpx$ be two nonzero polynomials and denote $N_P=\np{r-1}(P)$, $N_Q=\np{r-1}(Q)$, $L_P=L_{\la_{r-1}}(N_P)$, $L_Q=L_{\la_{r-1}}(N_Q)$ (cf. Definition \ref{sla}). All points of $N_P$ lie above the line $L_P$ and all points of $N_Q$ lie above the line $L_Q$. If $v_r(P)\le v_r(Q)$, all points of both polygons lie above the line $L_P$. Thus, all points of $\np{r-1}(P+Q)$ lie above this line too, and this shows that $v_r(P+Q)\ge v_r(P)$.

Finally, for any $a\in\oo$, we have $v_r(a)=e_{r-1}v_{r-1}(a)$ by definition, since the $(r-1)$-th order Newton polygon of $a$ is the single point $(0,v_{r-1}(a))$.  
\end{proof}

This valuation was introduced by S. MacLane without using Newton polygons \cite{mcla}, \cite{mclb}. In \cite[Ch.2,\S2]{m}, J. Montes computed explicit generators of the residue field of $v_r$ as a transcendental extension of a finite field. These results lead to a more conceptual and elegant definition of residual polynomials in higher order, as the reductions modulo $v_r$ of the virtual factors. However, we shall not follow this approach, in order not to burden the paper with more technicalities.
     
The next proposition gathers the basic properties of this discrete valuation.

\begin{proposition}\label{propertiesv} Let $P(x)\in\zpx$ be a nonzero polynomial.
\begin{enumerate}
 \item $v_r(P)\ge e_{r-1}v_{r-1}(P)$ and equality holds if and only if $\om_{r-1}(P)=0$.
\item $v_r(P)=0$ if and only if $v_2(P)=0$ if and only if $\rd(P)\ne0$.
\item If $P(x)=\sum_{0\le i}a_i(x)\phi_{r-1}(x)^i$ is the $\phi_{r-1}$-adic development of $P(x)$, then
$$
v_r(P)=\min_{0\le i}\{v_r(a_i(x)\phi_{r-1}(x)^i)\}=e_{r-1}\min_{0\le i}\{v_{r-1}(a_i)+i(v_{r-1}(\phi_{r-1})+|\la_{r-1}|)\}.
$$ 
\item $v_r(\phi_{r-1})=e_{r-1}v_{r-1}(\phi_{r-1})+h_{r-1}$.
\end{enumerate}
\end{proposition}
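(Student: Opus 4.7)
The proof hinges on the formula $v_r(P)=e_{r-1}H_{r-1}(S_{r-1}(P))$ and the standard convex-envelope description
\[H_{r-1}(S_{r-1}(P))=\min_{i\ge 0}\bigl(u_i+|\la_{r-1}|i\bigr),\]
where $u_i$ is the ordinate of the $i$-th point of $N_{r-1}^-(P)$ built from the $\phi_{r-1}$-adic development $P=\sum a_i\phi_{r-1}^i$ (missing coefficients contributing $+\infty$). Item 4 is obtained by applying this directly to $P=\phi_{r-1}$: the only nonzero coefficient is $a_1=1$, so the polygon reduces to a single finite point at abscissa $1$ with $u_1=v_{r-1}(\phi_{r-1})$, and multiplying $H_{r-1}=v_{r-1}(\phi_{r-1})+|\la_{r-1}|$ by $e_{r-1}$ yields the claimed formula.

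For item 1, because $|\la_{r-1}|i\ge 0$ and $u_i\ge v_{r-1}(P)$ (the latter being the minimum ordinate of the finite part of $N_{r-1}^-(P)$, attained at its rightmost vertex), we get $H_{r-1}\ge v_{r-1}(P)$, hence $v_r(P)\ge e_{r-1}v_{r-1}(P)$. Equality forces the minimum to be attained at $i=0$ with $u_0=v_{r-1}(P)$, so $(0,v_{r-1}(P))$ lies on the polygon; since the principal polygon starts at abscissa $0$ and has only negative slopes, it must then reduce to that single point, which by Lemma~\ref{shape} in order $r-1$ is exactly the condition $\om_{r-1}(P)=0$. Item 3 falls out similarly: $v_r(P)\ge\min_i v_r(a_i\phi_{r-1}^i)$ holds because $v_r$ is a valuation, and for the reverse, $\deg a_i<m_{r-1}$ together with Lemma~\ref{typedegree}(2) in order $r-1$ forces $\om_{r-1}(a_i)=0$, so item 1 yields $v_r(a_i)=e_{r-1}v_{r-1}(a_i)$; combining with item 4,
\[v_r(a_i\phi_{r-1}^i)=e_{r-1}v_{r-1}(a_i)+i\bigl(e_{r-1}v_{r-1}(\phi_{r-1})+h_{r-1}\bigr)=e_{r-1}\bigl(u_i+i|\la_{r-1}|\bigr),\]
so the envelope formula delivers $v_r(P)=e_{r-1}\min_i(u_i+i|\la_{r-1}|)=\min_i v_r(a_i\phi_{r-1}^i)$, proving both equalities.

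Finally, item 2 is an iteration of item 1. If $v_r(P)=0$, then $0\ge e_{r-1}v_{r-1}(P)\ge 0$ forces $v_{r-1}(P)=0$ and, by the equality case, $\om_{r-1}(P)=0$; descending from $r$ down to order $2$ gives $v_j(P)=0$ for all $j$ and $\om_j(P)=0$ for $j=1,\dots,r-1$, whence $\rd(P)\ne 0$. Conversely, $\rd(P)\ne 0$ means $v(P)=0$ and $\overline{\phi_1}\nmid\overline{P}$, i.e.\ $\om_1(P)=0$; Lemma~\ref{typedegree}(1) then propagates $\om_j(P)=0$ to every $j$, and the equality case of item 1 lifts $v_j(P)=0$ from $j=1$ up to $j=r$. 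The intermediate equivalence $v_2(P)=0\iff\rd(P)\ne 0$ reads off directly from the first-order picture: $v_2(P)=e_1 H_1(S_1(P))=0$ iff $(0,0)$ is a point of $N_1^-(P)$ iff $v(a_0)=0$, which is precisely $\rd(P)\ne 0$ in $\fph$ since $\rd(P)=\overline{a_0}$.
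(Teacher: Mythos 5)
Your proof is correct and takes essentially the same route as the paper's, expanding the definition $v_r(P)=e_{r-1}H_{r-1}(S_{r-1}(P))$ into the minimum $\min_i(u_i+|\la_{r-1}|\,i)$ over the points of the $\phi_{r-1}$-adic development and reading off each item from that formula. The only difference is organizational: you prove item 4 first (directly, from the one-point polygon of $\phi_{r-1}$) and deduce item 3 from items 1 and 4, whereas the paper proves item 3 by the same geometric observation and then treats item 4 as the particular case $P=\phi_{r-1}$.
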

  
\begin{proof}We denote $N=N_{r-1}^-(P)$ throughout the proof. By (1) of Lemma \ref{shape} in order $r-1$,
all points of $N$ lie above the horizontal line with ordinate $v_{r-1}(P)$. Hence, $v_r(P)\ge e_{r-1}v_{r-1}(P)$. Equality holds if and only if the first point of $N$ is $(0,v_{r-1}(P))$; this is equivalent to $\om_{r-1}(P)=0$, by (\ref{om=length}). This proves item 1.

\begin{center}
\setlength{\unitlength}{5.mm}
\begin{picture}(10,5.4)
\put(5.85,1.85){$\bullet$}\put(4.85,2.85){$\bullet$}
\put(0,0){\line(1,0){10}}
\put(6,2){\line(-1,1){1}}\put(6.02,2){\line(-1,1){1}}
\put(6,2){\line(3,-1){1}}\put(6.02,2){\line(3,-1){1}}
\put(5,3){\line(-1,2){1}}\put(5.02,3){\line(-1,2){1}}
\put(9,.5){\line(-2,1){8}}
\put(.9,4.5){\line(1,0){.2}}
\put(1,-1){\line(0,1){6.5}}
\put(9,.6){\begin{footnotesize}$L_{\la_{r-1}}$\end{footnotesize}}
\put(-2.2,4.35){\begin{footnotesize}$v_r(P)/e_{r-1}$\end{footnotesize}}
\multiput(.9,1)(.25,0){28}{\hbox to 2pt{\hrulefill }}
\put(-1.4,.9){\begin{footnotesize}$v_{r-1}(P)$\end{footnotesize}}
\end{picture}
\end{center}\be\medskip

By a recurrent aplication of item 1, $v_r(P)=0$ is equivalent to $v_1(P)=0$ and $\om_1(P)=\cdots=\om_{r-1}(P)=0$. By Lemma \ref{typedegree} this is equivalent to  $v_1(P)=0$ and $\om_1(P)=0$, which is equivalent to $v_2(P)=0$, and also to $P(x)\not\in(\pi,\phi_1(x))$. This proves item 2.

\begin{center}
\setlength{\unitlength}{5.mm}
\begin{picture}(10,8)
\put(0,0){\line(1,0){10}}
\put(6,2){\line(-1,1){1}}\put(6.02,2){\line(-1,1){1}}
\put(6,2){\line(3,-1){1}}\put(6.02,2){\line(3,-1){1}}
\put(5,3){\line(-1,2){.8}}\put(5.02,3){\line(-1,2){.8}}
\put(9,.5){\line(-2,1){8}}
\put(5.85,1.85){$\bullet$}\put(4.85,2.85){$\bullet$}
\put(9,2.7){\line(-2,1){8}}
\put(1,-1){\line(0,1){8.5}}
\put(9.3,.4){\begin{footnotesize}$L_{\la_{r-1}}(N)$\end{footnotesize}}
\put(9.3,2.4){\begin{footnotesize}$L$\end{footnotesize}}
\multiput(8,0)(0,.25){13}{\vrule height2pt}
\put(7.85,3.05){$\bullet$}\put(7.95,-.6){\begin{footnotesize}$i$\end{footnotesize}}
\put(8,3.45){\begin{footnotesize}$(i,u_i)$\end{footnotesize}}
\put(-5.4,6.5){\begin{footnotesize}$v_r(a_i(x)\phi_{r-1}(x)^i)/e_{r-1}$\end{footnotesize}}
\put(-2.3,4.37){\begin{footnotesize}$v_r(P)/e_{r-1}$\end{footnotesize}}
\put(0.9,6.7){\line(1,0){.2}}
\put(.9,4.44){\line(1,0){.2}}
\end{picture}
\end{center}\be\be

By definition, $v_r(a_i(x)\phi_{r-1}(x)^i)$ is $e_{r-1}$ times the ordinate at the origin of the line $L$ that has slope $\la_{r-1}$ and passes through $(i,v_{r-1}(a_i(x)\phi_{r-1}(x)^i))$. Since all points of $N$ lie above the line 
$L_{\la_{r-1}}(N)$, the line $L$ lies above $L_{\la_{r-1}}(N)$ too, and $v_r(a_i(x)\phi_{r-1}(x)^i)\ge v_r(P)$. Since $v_r$ is a valuation, this proves item 3, and item 4 is a particular case.
\end{proof}

In a natural way, $\om_r$ induces a group homomorphism from $K(x)^*$ to $\Z$, but it is not a discrete valuation of this field. For instance, for $K=\Q_p$, $\pi=p$, $\ty=(x;-1,y+1)$ and $P(x)=x+p$, $Q(x)=x+p+p^2$, we have
$$
\as{1.4}
\begin{array}{lll}
\rt(P)=y+1, \quad&  \rt(Q)=y+1, \quad& \rt(P-Q)=1,\\
\om_2(P)=1,&\om_2(Q)=1,&\om_2(P-Q)=0.
\end{array}
$$
However, we shall say that $\om_r$ is a \emph{pseudo-valuation with respect to $v_r$}; this is justified by the following properties of $\om_r$.

\begin{proposition}\label{pseudo} 
Let $P(x),\,Q(x)\in\zpx$ be two nonzero polynomials such that $v_r(P)=v_r(Q)$. Then,
\begin{enumerate}
\item  $v_r(P-Q)>v_r(P)$ if and only if $S_{r-1}(P)=S_{r-1}(Q)$ and $R_{r-1}(P)=R_{r-1}(Q)$. In particular, $\om_r(P)=\om_r(Q)$ in this case.   
 \item If $\om_r(P)\ne\om_r(Q)$, then $\om_r(P-Q)=\min\{\om_r(P),\om_r(Q)\}$. 
\end{enumerate}
\end{proposition}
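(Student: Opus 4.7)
My plan is to reduce both items to a single identity expressing $R_{r-1}(P-Q,T)$ in terms of $R_{r-1}(P)$ and $R_{r-1}(Q)$, computed with respect to a common enlargement side $T$ of slope $\la_{r-1}$. By inductive hypothesis, the order $r-1$ analogues of the additivity formula (\ref{sumR}) and the enlargement formula (\ref{enlarge1}) are available. Let $L$ be the line of slope $\la_{r-1}$ whose ordinate at the origin is $v_r(P)/e_{r-1}$; the equality $v_r(P)=v_r(Q)$ says that $L$ touches $N_{r-1}^-(P)$ along $S_{r-1}(P)$ and $N_{r-1}^-(Q)$ along $S_{r-1}(Q)$. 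I choose $T\in\ss(\la_{r-1})$ on $L$ whose abscissa range contains those of $S_{r-1}(P)$, $S_{r-1}(Q)$, and (for item 2) $S_{r-1}(P-Q)$; write $s_P,s_Q,s_{P-Q},s_T$ for the corresponding initial abscissas. Since $P$, $Q$, and $P-Q$ all lie above $T$, the two formulas combine to
$$
R_{r-1}(P-Q,T)(y)=y^{(s_P-s_T)/e_{r-1}}R_{r-1}(P)(y)-y^{(s_Q-s_T)/e_{r-1}}R_{r-1}(Q)(y).
$$

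\textbf{Item 1.} The crux is the equivalence $v_r(P-Q)>v_r(P)\sii R_{r-1}(P-Q,T)(y)=0$. Outside the abscissa range of $T$, both $N_{r-1}^-(P)$ and $N_{r-1}^-(Q)$ lie strictly above $L$, and the nonarchimedean inequality for $v_{r-1}$ applied to each $\phi_{r-1}$-adic coefficient then forces $N_{r-1}^-(P-Q)$ strictly above $L$ there as well. Within the range of $T$, $N_{r-1}^-(P-Q)$ can touch $L$ only at integer abscissas of the form $s_T+ke_{r-1}$, which are precisely the abscissas recorded by $R_{r-1}(P-Q,T)$. Given this equivalence, vanishing of the right-hand side of the displayed identity, together with the standard fact that $R_{r-1}(P)$ and $R_{r-1}(Q)$ have nonzero constant and leading coefficients, forces $s_P=s_Q$ and $R_{r-1}(P)=R_{r-1}(Q)$; equality of degrees, $\deg R_{r-1}(\cdot)=d(S_{r-1}(\cdot))$, then upgrades this to $S_{r-1}(P)=S_{r-1}(Q)$. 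The assertion $\om_r(P)=\om_r(Q)$ is immediate.

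\textbf{Item 2 and the main obstacle.} For item 2, assume $\om_r(P)\ne\om_r(Q)$; by item 1 we have $v_r(P-Q)=v_r(P)$, so after enlarging $T$ to also contain $S_{r-1}(P-Q)$ the left-hand side of the displayed identity equals $y^{(s_{P-Q}-s_T)/e_{r-1}}R_{r-1}(P-Q)(y)$. Since $\psi_{r-1}\ne y$, the monomial factors are invisible to $\ord_{\psi_{r-1}}$, and the nonarchimedean property of $\ord_{\psi_{r-1}}$ in $\ff{r-1}[y]$ then yields $\om_r(P-Q)=\min\{\om_r(P),\om_r(Q)\}$. The main obstacle I anticipate is the geometric-algebraic equivalence used in item 1, specifically ruling out points of $N_{r-1}^-(P-Q)$ on $L$ outside the abscissa range of $T$; this requires careful bookkeeping of the $\phi_{r-1}$-adic coefficients and a precise use of the nonarchimedean inequality for $v_{r-1}$.
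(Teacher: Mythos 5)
Your proof is correct and follows essentially the same route as the paper: introduce a common side $T$ on the line $L_{\la_{r-1}}(N_{r-1}^-(P))$, use the additivity $R_{r-1}(P-Q,T)=R_{r-1}(P,T)-R_{r-1}(Q,T)$ together with the enlargement identity (\ref{RvssubSr}), and then read off both items from the resulting polynomial identity, using that residual polynomials are prime to $y$ and $\psi_{r-1}\ne y$. The only point you spell out more fully than the paper does is that $R_{r-1}(P-Q,T)=0$ really forces $N_{r-1}^-(P-Q)$ strictly above $L$ by treating the abscissas outside the range of $T$ via the non-archimedean inequality for $v_{r-1}$, which is a genuine (if small) detail that the paper leaves to the reader.
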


\begin{proof}
Let us denote  $N=N_{r-1}^-(P)$,  $N'=N_{r-1}^-(Q)$. Since $v_r(P)=v_r(Q)$, the pa\-rallel lines $L_{\la_{r-1}}(N)$, $L_{\la_{r-1}}(N')$ coincide and we can consider the shortest segment $T$ of $L_{\la_{r-1}}(N)$ that contains $S_{r-1}(P)$ and $S_{r-1}(Q)$. 

\begin{center}
\setlength{\unitlength}{5.mm}
\begin{picture}(12,7.2)
\put(5.85,1.85){$\bullet$}\put(4.85,2.85){$\bullet$}
\put(-2,0){\line(1,0){13}}
\put(6,2){\line(-1,1){1}}\put(6.02,2){\line(-1,1){1}}
\put(6,2){\line(3,-1){1}}\put(6.02,2){\line(3,-1){1}}
\put(5,3){\line(-1,2){1}}\put(5.02,3){\line(-1,2){1}}
\put(9,.5){\line(-2,1){10}}
\put(-1.1,5.5){\line(1,0){.2}}
\put(-1,-1){\line(0,1){8}}
\put(8.5,1){\begin{footnotesize}$L_{\la_{r-1}}(N)$\end{footnotesize}}
\put(-4.2,5.35){\begin{footnotesize}$v_r(P)/e_{r-1}$\end{footnotesize}}
\put(2.85,3.35){$\bullet$}\put(.85,4.35){$\bullet$}
\put(3,3.5){\line(-2,1){2}}\put(3.02,3.5){\line(-2,1){2}}
\put(3,3.5){\line(4,-1){1.4}}\put(3.02,3.5){\line(4,-1){1.4}}
\put(1,4.5){\line(-1,1){1}}\put(1.02,4.5){\line(-1,1){1}}
\put(3,3){\vector(-2,1){2.2}}\put(3,3){\vector(2,-1){2.8}}
\put(2.8,2.4){\begin{footnotesize}$T$\end{footnotesize}}
\end{picture}
\end{center}\be\medskip

By Lemma \ref{sumRr} in order $r-1$ (cf. (\ref{sumR}) in order one):
\begin{equation}\label{RPQ}
R_{r-1}(P-Q,T)=R_{r-1}(P,T)-R_{r-1}(Q,T).
\end{equation}

By (\ref{RvssubSr}) in order $r-1$ (cf. (\ref{enlarge1}) in order one), the double condition $S_{r-1}(P)=S_{r-1}(Q)$, 
$R_{r-1}(P)=R_{r-1}(Q)$, is equivalent to $R_{r-1}(P,T)=R_{r-1}(Q,T)$; that is, to $R_{r-1}(P-Q,T)=0$. This is equivalent to $N_{r-1}^-(P-Q)$ lying strictly above $L_{\la_{r-1}}(N)$, which is equivalent in turn to $v_r(P-Q)>v_r(P)$. This proves item 1.

By (\ref{RvssubSr}) and (\ref{enlarge1}), again, the equality (\ref{RPQ}) translates into 
$$
y^aR_{r-1}(P-Q)(y)=y^bR_{r-1}(P)(y)-y^cR_{r-1}(Q)(y),
$$
for certain nonnegative integers $a,b,c$. 
Since the residual polynomials are never divisible by $y$, and $\psi_{r-1}(y)\ne y$, from $\ord_{\psi_{r-1}}(R_{r-1}(P))<\ord_{\psi_{r-1}}(R_{r-1}(Q))$ we deduce $\ord_{\psi_{r-1}}(R_{r-1}(P-Q))=\ord_{\psi_{r-1}}(R_{r-1}(P))$.   This proves item 2.
\end{proof}

We can reinterpret the computation of $v(P(\t))$ given in item 5 of Proposition \ref{vqtr} in order $r-1$ (cf. Proposition \ref{vqt} for $r=2$), in terms of the pair $v_r,\,\om_r$.

\begin{proposition}\label{vpt}
Let $\t\in\qpb$ be a root of a polynomial in $\zpx$ of type $\ty$. Then, for any nonzero polynomial $P(x)\in\zpx$, 
$$v(P(\t))\ge v_r(P(x))/e_1\cdots e_{r-1},$$
and equality holds if and only if $\om_r(P)=0$. \hfill{$\Box$}
\end{proposition}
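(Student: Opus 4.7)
The plan is to reduce Proposition~\ref{vpt} to item~5 of Proposition~\ref{vqtr} in order $r-1$, which is available by the inductive setup of this section, and then reinterpret the quantities appearing there in terms of the pair $(v_r,\om_r)$ introduced above.

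First I would verify the hypotheses needed to apply Proposition~\ref{vqtr} at order $r-1$. Let $f(x)\in\zpx$ be a polynomial of type $\ty$ having $\t$ as a root. By Definition~\ref{type}, the Newton polygon $N_{r-1}(f)$ is one-sided of slope $\la_{r-1}$ and $R_{r-1}(f)\sim\psi_{r-1}^{a_{r-1}}$ for some $a_{r-1}\ge 1$; this is exactly the input of Proposition~\ref{vqtr} at order $r-1$. Applying items~3 and~5 of that proposition to $P(x)$ gives
\[
v(P(\t))\ge H,
\]
with equality if and only if $\psi_{r-1}(y)\nmid R_{r-1}(P)(y)$ in $\ff{r-1}[y]$. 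Here $H$ denotes the $v$-ordinate of the intersection with the vertical axis of the line $L_{\la_{r-1}}$ of slope $\la_{r-1}$ containing $S_{r-1}(P)$.

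Next I would translate $H$ into $v_r(P)$. The polygon $N_{r-1}^-(P)$ is drawn with ordinates measured in the valuation $v_{r-1}$, so the intersection ordinate in those units is $H_{r-1}(S_{r-1}(P))$. Because $v_{r-1}|_K$ extends $v$ with index $e_1\cdots e_{r-2}$, its value measured in $v$ is $H_{r-1}(S_{r-1}(P))/(e_1\cdots e_{r-2})$; combining this with the defining formula $v_r(P)=e_{r-1}H_{r-1}(S_{r-1}(P))$ gives
\[
H=v_r(P)/(e_1\cdots e_{r-1}),
\]
which is the asserted lower bound. The equality condition then follows from the very definition $\om_r(P)=\ord_{\psi_{r-1}}(R_{r-1}(P))$: $\psi_{r-1}\nmid R_{r-1}(P)$ is the same statement as $\om_r(P)=0$.

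The main point that requires care is the bookkeeping of units: one must be sure that Proposition~\ref{vqtr} at order $r-1$ is stated with $H$ measured in the absolute valuation $v$ rather than in the intrinsic $v_{r-1}$. The order one case (Proposition~\ref{vqt}) is phrased in $v$, and the natural higher-order formulation preserves this convention; the factor $1/(e_1\cdots e_{r-2})$ then appears precisely to account for the ramification of $v_{r-1}$ over $v$. Once this is settled, the proof is a mechanical rephrasing with no further arithmetic content.
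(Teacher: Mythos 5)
Your proof is correct and follows exactly the route the paper itself indicates: Proposition~\ref{vpt} is stated with a $\Box$ because it is a direct rereading of items~3 and~5 of Proposition~\ref{vqtr} at order $r-1$, together with the defining relation $v_r(P)=e_{r-1}H_{r-1}(S_{r-1}(P))$ and the identity $\om_r(P)=\ord_{\psi_{r-1}}(R_{r-1}(P))$.

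One caveat about your closing remark. You write that ``one must be sure that Proposition~\ref{vqtr} at order $r-1$ is stated with $H$ measured in the absolute valuation $v$,'' but this is not how the statement is arranged. In Proposition~\ref{vqtr} (at order $r-1$), $H$ is the ordinate of the intersection of $L_{\la_{r-1}}$ with the vertical axis \emph{inside the polygon} $N_{r-1}^-(P)$, whose ordinates are $v_{r-1}$-values; the conversion to $v$ is already built into the conclusion $v(P(\t))\ge H/e$, $e=e_1\cdots e_{r-2}$. So there is nothing to ``be sure of'' about the convention: one simply substitutes $H=v_r(P)/e_{r-1}$ from the definition of $v_r$ into $H/e$ to obtain $v_r(P)/(e_1\cdots e_{r-1})$, which is precisely the computation you carry out in the body of your proof. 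The ``$v$-ordinate'' language is an extra layer of interpretation that is not needed and, if taken at face value, slightly misstates what $H$ is in the referenced proposition.
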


\subsection{Construction of a representative of $\ty$}
By Lemma \ref{typedegree}, a nonconstant polynomial of type $\ty$ has degree at least $m_r$. In this section  
we shall show how to construct in a effective (and recursive) way a polynomial $\phi_r(x)$ of type $\ty$ and minimal degree $m_r$. 

We first show how to construct a polynomial with  prescribed residual polynomial.
\begin{proposition}\label{construct}
Let $V$ be an integer, $V\ge e_{r-1}f_{r-1}v_r(\phi_{r-1})$.
Let $\varphi(y)\in\ff{r-1}[y]$ be a nonzero polynomial of degree less than $f_{r-1}$, and let $\nu=\ord_{y}(\varphi)$. 
Then, we can construct in an effective way a polynomial $P(x)\in\zpx$ satisfying the following properties
$$\dg P(x)<m_r,\qquad v_r(P)=V, \qquad y^{\nu}R_{r-1}(P)(y)=\varphi(y) .
$$ 
\end{proposition}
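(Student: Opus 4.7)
The plan is to construct $P(x)$ recursively in $r$, as a $\phi_{r-1}$-adic expansion
\begin{equation*}
P(x)=\sum_{\substack{\nu\le j\le\deg\varphi\\ b_j\ne 0}} A_j(x)\,\phi_{r-1}(x)^{s+(j-\nu)e_{r-1}},
\end{equation*}
where $\varphi(y)=\sum_{j=\nu}^{\deg\varphi} b_j y^j$ and $(s,u)$ will be the initial point of the unique side of $N_{r-1}^-(P)$. Setting $d:=\deg\varphi-\nu<f_{r-1}$, I would first invoke coprimality of $e_{r-1}$ and $h_{r-1}$ (together with the identity $\ell_{r-1}h_{r-1}-\ell'_{r-1}e_{r-1}=1$) to select the unique pair $(s,u)\in\Z_{\ge 0}^2$ with $e_{r-1}u+h_{r-1}s=V$ and $0\le s<e_{r-1}$. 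The hypothesis $V\ge e_{r-1}f_{r-1}v_r(\phi_{r-1})$, together with the formula $v_r(\phi_{r-1})=e_{r-1}v_{r-1}(\phi_{r-1})+h_{r-1}$ of Proposition \ref{propertiesv}(4), would then imply both $s+d\,e_{r-1}<e_{r-1}f_{r-1}$ (so that ultimately $\deg P<m_r$) and that each $U_j:=u-(j-\nu)h_{r-1}$ meets the threshold $e_{r-2}f_{r-2}v_{r-1}(\phi_{r-2})$ needed to invoke the proposition inductively in order $r-1$.

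Second, for each $j$ with $b_j\ne 0$, I would produce $A_j(x)\in\zpx$ of degree $<m_{r-1}$, with $v_{r-1}(A_j)=U_j$, and whose $(r-2)$-order residue reduces to $b_j\in\ff{r-1}=\ff{r-2}[y]/(\psi_{r-2}(y))$. Concretely, I would pick a polynomial representative $\chi_j(y)\in\ff{r-2}[y]$ of $b_j$ and invoke the proposition inductively in order $r-1$, applied to the truncated type $\ty_{r-2}$, with target valuation $U_j$ and target polynomial $\chi_j$. The base case $r=2$ reduces to the elementary lift of an element $b_j\in\fph$ to a polynomial in $\oo[x]$ of degree $<m_1$, scaled by $\pi^{U_j}$.

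Finally, I would verify the three required properties. By the analogue in order $r-1$ of Lemma \ref{admissible}, the expansion above is admissible (the extreme nonzero terms, corresponding to $b_\nu$ and $b_{\deg\varphi}$, give vertices of the polygon), so $N_{r-1}^-(P)$ coincides with the polygon read off from the expansion and is one-sided with initial point $(s,u)$ and slope $\la_{r-1}$. This yields $v_r(P)=e_{r-1}u+h_{r-1}s=V$ and $R_{r-1}(P)(y)=\sum_j b_j y^{j-\nu}=\varphi(y)/y^\nu$, while the degree bound $\deg P<m_r$ follows from $s+d\,e_{r-1}<e_{r-1}f_{r-1}$ together with $\deg A_j<m_{r-1}$. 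The principal obstacle lies in Step 2: the recursive call must produce $A_j$ whose $(r-2)$-order residue reduces to the prescribed $b_j\in\ff{r-1}$, which requires a careful choice of representative $\chi_j$ (in particular navigating the $y^{\ord_y}$ normalisation built into the proposition) and verifying that the lower bound on $V$ propagates correctly down the recursion via the chain of identities relating $v_i(\phi_i)$ to the data $e_{i-1},f_{i-1},h_{i-1}$.
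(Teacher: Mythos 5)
Your construction follows the same overall strategy as the paper's (locate $(s,u)$ from coprimality of $e_{r-1},h_{r-1}$, build $P$ as a $\phi_{r-1}$-adic expansion with coefficients produced by the inductive hypothesis, and verify admissibility), but two gaps remain in your ``Step 2'' that would prevent the argument from closing.

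First, $U_j := u - (j-\nu)h_{r-1}$ is the \emph{ordinate} of the point of the $(r-1)$-order Newton polygon at abscissa $i := s + (j-\nu)e_{r-1}$, not the $v_{r-1}$-value of $A_j$. By the definition of $N_{r-1}$ (Sect.~\ref{NPRr}), the point at abscissa $i$ has ordinate $v_{r-1}(A_j) + i\,v_{r-1}(\phi_{r-1})$, and $v_{r-1}(\phi_{r-1})>0$ once $r\ge 3$. So the valuation to request from the recursion must be $U_j - i\,v_{r-1}(\phi_{r-1})$ (this is the paper's $V_j$), not $U_j$ itself: taking $v_{r-1}(A_j)=U_j$ would make $v_r(P)=e_{r-1}u + s\,v_r(\phi_{r-1})$ rather than $V=e_{r-1}u+s\,h_{r-1}$.

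Second -- the obstacle you flag but do not resolve -- the residual coefficient at abscissa $i$ is $z_{r-2}^{\,t_{r-2}(i)}\,R_{r-2}(A_j)(z_{r-2})$ (Definition~\ref{rescoeff}), where the twist $t_{r-2}(i) = (s_{r-2}(A_j) - \ell_{r-2}u_i)/e_{r-2}$ depends on the initial abscissa $s_{r-2}(A_j)$ of $S_{r-2}(A_j)$. To realize the prescribed $b_j$ after this twist you must (a) pass to the recursion a \emph{twisted} target $\varphi_j(y)\equiv y^{(\ell_{r-2}u_i - \mathfrak s_i)/e_{r-2}}\chi_j(y) \pmod{\psi_{r-2}(y)}$, with $\mathfrak s_i$ the initial abscissa of the auxiliary side $T(i)$ of Lemma~\ref{enlarge}, and (b) strengthen the inductive statement to fix $s_{r-2}(A_j)$: the three conditions stated in the proposition, applied to a naive lift $\chi_j$ of $b_j$, leave $s_{r-2}(A_j)$ (hence the twist exponent) undetermined. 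The paper does exactly this, carrying the extra invariant $\nu_j=(s_{r-2}(P_j)-\mathfrak s_j)/e_{r-2}$ through the induction. Without (a) and (b) the claimed identity $R_{r-1}(P)(y)=\sum_j b_j y^{j-\nu}$ does not follow.
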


\begin{proof}
Let $L$ be the line of slope $\la_{r-1}$ with ordinate $V/e_{r-1}$ at the origin. By item 4 of Proposition \ref{propertiesv}, 
$V/e_{r-1}\ge f_{r-1}v_r(\phi_{r-1})\ge f_{r-1}h_{r-1}$; thus, the line $L$ cuts the horizontal axis at the abscissa $V/h_{r-1}\ge e_{r-1}f_{r-1}$. Let $T$ be the greatest side contained in $L$, whose end points have nonnegative integer coordinates. Let $(s,u)$ be the initial point of $T$ and denote $u_j:=u-jh_{r-1}$, for all $0\le j<f_{r-1}$, so that $(s+je_{r-1},u_j)$ lies on $L$. Clearly, $s<e_{r-1}$ and, for all $j$,
\begin{equation}\label{ef}
 j<f_{r-1},\,s<e_{r-1}\,\imp\,s+je_{r-1}<e_{r-1}f_{r-1}.
\end{equation}
Hence, $(s+je_{r-1},u_j)$ lies on $T$. 

Let $\varphi(y)=\sum_{0\le j<f_{r-1}}c_jy^j$, with $c_j\in\ff{r-1}$.
Select polynomials $c_j(y)\in\ff{r-2}[y]$ of degree less than $f_{r-2}$, such that $c_j$ is the class of $c_j(y)$ modulo $\psi_{r-2}(y)$, or equivalently, $c_j(z_{r-2})=c_j$. Let us construct $P(x)\in\zpx$ such that
$$
\dg P(x)<m_r,\quad v_r(P)=V,\quad \nu=(s_{r-1}(P)-s)/e_{r-1},\quad y^{\nu}R_{r-1}(P)(y)=\varphi(y) .
$$
We proceed by induction  on $r\ge 2$. For $r=2$ the polynomials $c_j(y)$ belong to $\ff{}[y]$; we abuse of language and denote by $c_j(x)\in\oo[x]$ the polynomials obtained by choosing arbitrary lifts to $\oo$ of the nonzero coefficients of $c_j(y)$. The polynomial
 $P(x)=\sum_{0\le j<f_{r-1}} \pi^{u-jh_1}c_j(x)\phi_1(x)^{s+je_1}$ satisfies the required properties. In fact, by (\ref{ef}),
$$\dg (c_j(x)\phi_1(x)^{s+je_1})<m_1+(e_1f_1-1)m_1=m_2,$$ for all $j$.
For the coefficients $c_j=0$ we take $c_j(x)=0$. For the coefficients $c_j\ne0$, we have $c_j(y)\ne 0$ and $v(c_j(x))=0$; hence, $v(\pi^{u-jh_1}c_j(x))=u-jh_1=u_j$. Thus, the coefficient $\pi^{u-jh_1}c_j(x)$ determines a point of $N_1^-(P)$ lying on $T$, and $v_2(P)=V$. Finally, it is clear by construction that $\nu=(s_1(P)-s)/e_1$ and $y^{\nu}R_1(P)(y)=R_1(P,T)(y)=\varphi(y)$.

Let now $r\ge 3$, and suppose that the proposition has been proved for orders $2,\dots,r-1$. For any $0\le j<f_{r-1}$, denote $V_j:=u_j-(s+je_{r-1})v_{r-1}(\phi_{r-1})$. Since $u=(V-sh_{r-1})/e_{r-1}$, we get
\begin{align*}
V_j=&\,\frac 1{e_{r-1}}\left(V-(s+je_{r-1})(e_{r-1}v_{r-1}(\phi_{r-1})+h_{r-1}\right)=(\mbox{by item 4 of Prop. \ref{propertiesv}})\\
=&\,\frac 1{e_{r-1}}\left(V-(s+je_{r-1})v_r(\phi_{r-1})\right)\ge\  (\mbox{by }(\ref{ef})) \\
\ge&\,\frac 1{e_{r-1}}\left(V-(e_{r-1}f_{r-1}-1)v_r(\phi_{r-1})\right)\ge\  (\mbox{by hypothesis})\\
\ge&\,\frac 1{e_{r-1}}v_r(\phi_{r-1})=v_{r-1}(\phi_{r-1})+\dfrac{h_{r-1}}{e_{r-1}}>v_{r-1}(\phi_{r-1})=e_{r-2}f_{r-2}v_{r-1}(\phi_{r-2}),  \end{align*}
the last equality by (\ref{vrphir}) below, in order $r-1$.

Let $L_j$ be the line of slope $\la_{r-2}$ with ordinate at the origin $V_j/e_{r-2}$. Let $T(j)$ be the greatest side contained in $L_j$, whose end points have nonnegative integer coordinates. Let $\mathfrak{s}_j$ be the initial abscissa of $T(j)$. Consider the unique polynomial $\varphi_j(y)\in\ff{r-2}[y]$, of degree less than $f_{r-2}$, such that 
$$\varphi_j(y)\equiv y^{(\ell_{r-2}u_j-\mathfrak{s}_j)/e_{r-2}}c_j(y) \md{\psi_{r-2}(y)},$$
and let $\nu_j=\ord_y(\varphi_j)$. By induction hypothesis, we are able to construct a polynomial $P_j(x)$ of degree less than $m_{r-1}$, with $v_{r-1}(P_j)=V_j$, $\nu_j=(s_{r-2}(P_j)-\mathfrak{s}_j)/e_{r-2}$, and $y^{\nu_j}R_{r-2}(P_j)(y)=\varphi_j(y)$ in $\ff{r-2}[y]$.\be

\begin{center}
\setlength{\unitlength}{5.mm}
\begin{picture}(20,7)
\put(7.45,.4){$\bullet$}\put(.55,5){$\bullet$}
\put(1.85,4.15){$\bullet$}\put(2.6,3.65){$\bullet$}\put(3.35,3.15){$\bullet$}\put(4.1,2.65){$\bullet$}
\put(4.85,2.15){$\bullet$}
\put(-1,0){\line(1,0){9}}
\put(0,-1){\line(0,1){8}}
\put(8.1,.25){\line(-3,2){8.1}}\put(7.5,.62){\line(-3,2){6.7}}
\multiput(2,-.1)(0,.20){22}{\vrule height2pt}
\multiput(.7,-.1)(0,.20){27}{\vrule height2pt}
\multiput(-.1,5.15)(.25,0){4}{\hbox to 2pt{\hrulefill }}
\put(-.1,5.65){\line(1,0){.2}}
\put(7.1,1.1){\begin{footnotesize}$T$\end{footnotesize}}
\put(3.8,3.5){\begin{footnotesize}$S_{r-1}(P)$\end{footnotesize}}
\put(-2.2,5.6){\begin{footnotesize}$V/e_{r-1}$\end{footnotesize}}
\put(-.6,5.05){\begin{footnotesize}$u$\end{footnotesize}}
\put(.55,-.65){\begin{footnotesize}$s$\end{footnotesize}}
\put(1.4,-.65){\begin{footnotesize}$s_{r-1}(P)$\end{footnotesize}}
\put(-.4,-.6){\begin{footnotesize}$0$\end{footnotesize}}
\put(3,-1.8){\begin{footnotesize}$N_{r-1}(P)$\end{footnotesize}}
\put(10,0){\line(1,0){9}}
\put(19.6,0.2){\line(-2,1){8.6}}
\put(10.9,4.5){\line(1,0){.2}}
\put(8.6,4.4){\begin{footnotesize}$V_j/e_{r-2}$\end{footnotesize}}
\put(11,-1){\line(0,1){8}}
\put(14.85,2.35){$\bullet$}\put(14.35,2.6){$\bullet$}\put(13.85,2.85){$\bullet$}\put(13.35,3.1){$\bullet$}\put(12.85,3.35){$\bullet$}
\put(18.85,.35){$\bullet$}\put(11.45,4.05){$\bullet$}
\put(19,.53){\line(-2,1){7.3}}
\put(18,1.2){\begin{footnotesize}$T(j)$\end{footnotesize}}
\put(14.2,3.3){\begin{footnotesize}$S_{r-2}(P_j)$\end{footnotesize}}
\put(14,-1.8){\begin{footnotesize}$N_{r-2}(P_j)$\end{footnotesize}}
\multiput(13,-.1)(0,.20){19}{\vrule height2pt}
\multiput(11.6,-.1)(0,.20){22}{\vrule height2pt}
\put(11.4,-.65){\begin{footnotesize}$\mathfrak{s}_j$\end{footnotesize}}
\put(12.3,-.65){\begin{footnotesize}$s_{r-2}(P_j)$\end{footnotesize}}
\put(10.6,-.6){\begin{footnotesize}$0$\end{footnotesize}}
\end{picture}
\end{center}\be\be

The polynomial $P(x)$ we are looking for is:
$$
P(x)=\sum_{0\le j<f_{r-1}}P_j(x)\phi_{r-1}(x)^{s+je_{r-1}}\in\zpx.
$$
In fact, by (\ref{ef}), $\dg(P_j(x)\phi_{r-1}(x)^{s+je_{r-1}})<m_{r-1}+(e_{r-1}f_{r-1}-1)m_1=m_r$, for all $j$. If $P_j(x)\ne 0$, then
$v_{r-1}(P_j(x)\phi_{r-1}(x)^{s+je_{r-1}})=V_j+(s+je_{r-1})v_{r-1}(\phi_{r-1})=u_j$, so that all of these coefficients determine points of $N_{r-1}^-(P)$ lying on $T$; this shows that $v_r(P)=V$. For $c_j=0$ we take $P_j(x)=0$; hence, $\nu=(s_{r-1}(P)-s)/e_{r-1}$, and by (\ref{RvssubSr}) in order $r-1$:  
$$y^{\nu}R_{r-1}(P)(y)=R_{r-1}(P,T)(y)=\sum_{P_j(x)\ne0}(z_{r-2})^{t(j)}R_{r-2}(P_j)(z_{r-2})y^j,$$ where $t(j):=t_{r-2}(s+je_{r-1})=(s_{r-2}(P_j)-\ell_{r-2}u_j)/e_{r-2}$ (cf. Definition \ref{t(i)}). Finally,
\begin{align*}
(z_{r-2})^{t(j)}R_{r-2}(P_j)(z_{r-2})=&\, (z_{r-2})^{t(j)-\nu_j}\varphi_j(z_{r-2})\\=&\, (z_{r-2})^{t(j)-\nu_j+\frac{\ell_{r-2}u_j-\mathfrak{s}_j}{e_{r-2}}}c_j(z_{r-2})=c_j,
\end{align*}
so that $y^{\nu}R_{r-1}(P)(y)=\varphi(y)$.
\end{proof}

\begin{theorem}\label{phir}
We can effectively construct a monic polynomial $\phi_r(x)$ of type $\ty$ such that $R_{r-1}(\phi_r)(y)\sim\psi_{r-1}(y)$. This polynomial is irreducible over $\zpx$ and it satisfies 
\begin{equation}\label{vrphir}
 \dg \phi_r=m_r,\quad \om_r(\phi_r)=1,\quad  v_r(\phi_r)=e_{r-1}f_{r-1}v_r(\phi_{r-1}).
\end{equation}
\end{theorem}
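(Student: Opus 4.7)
The plan is to apply Proposition \ref{construct} to manufacture a polynomial $P(x)\in\oo[x]$ of degree less than $m_r$, and then set $\phi_r(x):=\phi_{r-1}(x)^{e_{r-1}f_{r-1}}+P(x)$. The added monic leading term serves two purposes simultaneously: it forces $\phi_r$ to be monic of degree $m_r$, and it contributes precisely the endpoint needed to complete a one-sided polygon of slope $\la_{r-1}$.

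Concretely, take $V:=e_{r-1}f_{r-1}v_r(\phi_{r-1})$, which meets the hypothesis of Proposition \ref{construct} with equality, and $\varphi(y):=\psi_{r-1}(y)-y^{f_{r-1}}$. Since $\psi_{r-1}$ is monic irreducible and $\psi_{r-1}\ne y$, the polynomial $\varphi$ is nonzero of degree less than $f_{r-1}$, and $\ord_y(\varphi)=0$ because $\psi_{r-1}(0)\ne 0$. Proposition \ref{construct} then produces an effectively computable $P(x)\in\oo[x]$ with $\deg P<m_r$, $v_r(P)=V$, and $R_{r-1}(P)(y)=\varphi(y)$; the divisibility $e_{r-1}\mid V$ forces the initial abscissa of the segment $T$ in the construction to equal $0$, so $S_{r-1}(P)$ also starts at $(0,V/e_{r-1})$. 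A short computation using item 4 of Proposition \ref{propertiesv} shows that the point $(e_{r-1}f_{r-1},\,e_{r-1}f_{r-1}v_{r-1}(\phi_{r-1}))$ contributed by the leading monomial lies exactly on the line $L$ of slope $\la_{r-1}$ with $y$-intercept $V/e_{r-1}$. Since all remaining points of $N_{r-1}^-(\phi_r)$ come from $P$ and lie on or above $L$, the principal polygon $N_{r-1}^-(\phi_r)$ is one-sided of slope $\la_{r-1}$ from $(0,V/e_{r-1})$ to $(e_{r-1}f_{r-1},e_{r-1}f_{r-1}v_{r-1}(\phi_{r-1}))$, and hence $v_r(\phi_r)=V=e_{r-1}f_{r-1}v_r(\phi_{r-1})$.

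By the order-$(r-1)$ analogues of (\ref{sumR}) and (\ref{enlarge1}) applied to the enlarging side $S_{r-1}(\phi_r)$, the residual polynomial is
\[
R_{r-1}(\phi_r)(y)=R_{r-1}(P)(y)+\kappa\,y^{f_{r-1}}=\varphi(y)+\kappa\,y^{f_{r-1}},
\]
where $\kappa\in\ff{r-1}^*$ is the residual coefficient attached to the leading monomial $\phi_{r-1}^{e_{r-1}f_{r-1}}$. If instead we had chosen $\varphi(y):=\kappa(\psi_{r-1}(y)-y^{f_{r-1}})$ (still a nonzero polynomial of degree less than $f_{r-1}$ with $\ord_y=0$), we would obtain $R_{r-1}(\phi_r)=\kappa\psi_{r-1}\sim\psi_{r-1}$, confirming that $\phi_r$ has type $\ty$ with $\om_r(\phi_r)=1$. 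Irreducibility follows from Lemma \ref{factortype}: if $\phi_r=FG$ were a factorization into monic polynomials of positive degree, item 3 would force both $F$ and $G$ to be of type $\ty$, and item 2 would then require $\deg F,\deg G\ge m_r$, contradicting $\deg F+\deg G=m_r$. The main technical obstacle is pinning down the scalar $\kappa$ --- in principle a product of a power of $z_{r-2}$ and an $R_{r-2}$-value coming from the recursive construction in Proposition \ref{construct} --- but working up to the relation $\sim$ in the theorem statement allows us to absorb $\kappa$ into $\varphi$ from the outset, so we never need to compute it explicitly.
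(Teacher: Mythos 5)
Your construction is the same as the paper's: take $V=e_{r-1}f_{r-1}v_r(\phi_{r-1})$, feed $\varphi(y)=\kappa(\psi_{r-1}(y)-y^{f_{r-1}})$ into Proposition \ref{construct}, set $\phi_r=\phi_{r-1}^{e_{r-1}f_{r-1}}+P$, and read off the polygon and residual polynomial via the additivity of $R_{r-1}(\cdot,T)$; your irreducibility argument via Lemma \ref{factortype} is a more explicit rendering of what the paper compresses into ``by the Theorem of the product.'' The one thing to fix is the closing sentence: you cannot really sidestep computing $\kappa$, since $\varphi$ (and hence the input to Proposition \ref{construct}) already depends on it; what saves effectiveness is that $\kappa=R_{r-1}(\phi_{r-1}^{e_{r-1}f_{r-1}})(y)=c_1^{\,e_{r-1}f_{r-1}}$ where $c_1=R_{r-1}(\phi_{r-1})(y)$ is a single residual coefficient computed \emph{before} invoking Proposition \ref{construct} (the paper even gives the closed formula $c_1=(z_{r-2})^{-\ell_{r-2}v_{r-1}(\phi_{r-1})/e_{r-2}}$ for $r>2$, and $c_1=1$ for $r=2$), rather than something absorbed by working up to $\sim$.
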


\begin{proof}
By Lemma \ref{shape} in order $r-1$ (cf. Remark \ref{phiell} if $r=2$), the polygon $N_{r-1}(\phi_{r-1})$ is one-sided with slope $-\infty$ and final point $(1,v_{r-1}(\phi_{r-1}))$. Therefore, $S_{r-1}(\phi_{r-1})$ reduces to a point and $R_{r-1}(\phi_{r-1})(y)=c_1$, where $c_1$ is the residual coefficient of this polygon (cf. Definition \ref{rescoeff}):
$$
c_1=\left\{\begin{array}{ll}
1,&\mbox{ if }r=2,\\(z_{r-2})^{-\ell_{r-2}v_{r-1}(\phi_{r-1})/e_{r-2}},&\mbox{ if }r>2.
\end{array}
\right.
$$

Denote $c:=c_1^{e_{r-1}f_{r-1}}$. The polynomial $\varphi(y):=c(\psi_{r-1}(y)-y^{f_{r-1}})$ has degree less than $f_{r-1}$, and $\nu=\ord_y(\varphi)=0$. Let $P(x)$ be the polynomial attached by Proposition \ref{construct} to $\varphi(y)$ and $V=e_{r-1}f_{r-1}v_r(\phi_{r-1})$.
Since $\dg(P(x))<m_r$, the polynomial $\phi_r(x):=\phi_{r-1}(x)^{e_{r-1}f_{r-1}}+P(x)$ is monic and it has degree $m_r$.
Let $T$ be the auxiliary side used in the construction of $P(x)$; we saw along the proof of Proposition \ref{construct} that $R_{r-1}(P)(y)=\varphi(y)=R_{r-1}(P,T)(y)$. By (\ref{RvssubSr}) (and (\ref{enlarge1}) if $r=2$), $S_{r-1}(P)$ has the same initial point than $T$ and $R_{r-1}(\phi_r,T)(y)=R_{r-1}(\phi_r)(y)$. Also, 
$$R_{r-1}(\phi_{r-1}^{e_{r-1}f_{r-1}},T)(y)=y^{f_{r-1}}R_{r-1}(\phi_{r-1}^{e_{r-1}f_{r-1}})(y)=cy^{f_{r-1}}.
$$ Finally, by Lemma \ref{sumRr} in order $r-1$ (cf. (\ref{sumR}) in order one):
\begin{align*}
R_{r-1}(\phi_r,T)(y)=&\,R_{r-1}(\phi_{r-1}^{e_{r-1}f_{r-1}},T)(y)+R_{r-1}(P,T)(y)\\=&\,cy^{f_{r-1}}+\varphi(y)=c\psi_{r-1}(y),
\end{align*}
so that $R_{r-1}(\phi_r)(y)\sim \psi_{r-1}(y)$ and $\om_r(\phi_r)=1$. The polynomial $\phi_r(x)$ is irreducible over $\zpx$ by the Theorem of the product in order $r-1$. Finally, it has $v_r(\phi_r)=V$ because all points of $N_{r-1}(\phi_r)$ lie on $T$.
\end{proof}

\begin{definition}
A \emph{representative} of the type $\ty$ is a monic polynomial $\phi_r(x)\in\zpx$ of type $\ty$ such that $R_{r-1}(\phi_r)(y)\sim\psi_{r-1}(y)$. This object plays the analogous role in order $r-1$ to that of an irreducible polynomial modulo $\m$ in order one.
\end{definition}

From now on, we fix a representative $\phi_r(x)$ of $\ty$, without necessarily assuming that it has been constructed by the method of Propositon \ref{construct}. We denote by  $\tilde{\ty}=(\phi_1(x);\la_1,\phi_2(x);\cdots;\la_{r-2},\phi_{r-1}(x);\la_{r-1},\phi_r(x))$, the extension of $\ty$, which is half-way in the process of extending $\ty$ to different types of order $r$.
 
\subsection{Certain rational functions}
We introduce in a recursive way several rational functions in $K(x)$. We let $h_r,e_r$ be arbitrary coprime positive integers, and we fix $\ell_r,\ell'_r\in\Z$ such that $\ell_rh_r-\ell'_re_r=1$.
\begin{definition}\label{ratfracs}
We define $\,\pi_0(x)=1$, $\pi_1(x)=\pi$, and, for all $1\le i\le r$, 
$$
\Phi_i(x)=\dfrac{\phi_i(x)}{\pi_{i-1}(x)^{f_{i-1}v_i(\phi_{i-1})}},\qquad 
\ga_i(x)=\dfrac{\Phi_i(x)^{e_i}}{\pi_i(x)^{h_i}},\qquad 
\pi_{i+1}(x)=\dfrac{\Phi_i(x)^{\ell_i}}{\pi_i(x)^{\ell'_i}}.
$$
\end{definition}

Each of these rational functions can be written as
$\pi^{n_0}\phi_1(x)^{n_1}\cdots\phi_r(x)^{n_r}$, for adequate integers $n_i\in\Z$. Also, 
\begin{equation}\label{phii}
\Phi_i(x)=\cdots\phi_i(x),\quad \ga_i(x)=\cdots \phi_i(x)^{e_i},\quad\pi_{i+1}(x)=\cdots \phi_i(x)^{\ell_i},
\end{equation}
where the dots indicate a product of integral powers of $\pi$ and $\phi_j(x)$, with $1\le j<i$.  
We want to compute the value of $v_r$ on all these functions.

\begin{lemma}\label{omji}For all $1\le i<j\le r$, we have $\om_j(\phi_i)=0$.
\end{lemma}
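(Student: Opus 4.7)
I would prove this by induction on $j-i\ge 1$, with a direct structural argument in the base case $j=i+1$ and an immediate deduction from Lemma \ref{typedegree}(1) in the inductive step.

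For the base case $j=i+1$, I would examine the $\phi_i$-adic development of $\phi_i(x)$ itself. Since $\phi_i$ is monic of degree $m_i$, this development is the trivial one $\phi_i=0+1\cdot\phi_i(x)$, i.e. $a_0=0$, $a_1=1$ and all higher coefficients vanish. Because $a_0=0$, the polynomial $\phi_i(x)$ is divisible by $\phi_i(x)$, so $N_i^-(\phi_i)$ is a single side of slope $-\infty$ of length $1$ (by the characterization of the slope $-\infty$ side recalled just after \eqref{om=length}). For the finite slope $\la_i$, the $\la_i$-component $S_i(\phi_i)=S_{\la_i}(N_i^-(\phi_i))$ then reduces to a single point, so $d(S_i(\phi_i))=0$ and $R_i(\phi_i)(y)$ is a nonzero constant in $\ff{i}$. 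Since $\deg\psi_i=f_i\ge 1$, this gives $\om_{i+1}(\phi_i)=\ord_{\psi_i}(R_i(\phi_i))=0$, as required. (This is precisely the computation carried out for $\phi_{r-1}$ at the start of the proof of Theorem \ref{phir}, applied here to the general $\phi_i$.)

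For the inductive step, fix $i<j-1<j\le r$ and assume $\om_{j-1}(\phi_i)=0$. From the chain of inequalities in Lemma \ref{typedegree}(1),
$$
\om_1(\phi_i)\ge e_1f_1\om_2(\phi_i)\ge\cdots\ge e_1f_1\cdots e_{r-1}f_{r-1}\om_r(\phi_i),
$$
comparison of the $(j-1)$-th and $j$-th terms (and division by the common positive factor $e_1f_1\cdots e_{j-2}f_{j-2}$) yields $\om_{j-1}(\phi_i)\ge e_{j-1}f_{j-1}\om_j(\phi_i)$. Since $e_{j-1}f_{j-1}\ge 1$ and $\om_j(\phi_i)\ge 0$, the vanishing of $\om_{j-1}(\phi_i)$ forces $\om_j(\phi_i)=0$.

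The argument is short, and the only delicate point is the base case, where one has to observe that the completely trivial $\phi_i$-adic expansion of $\phi_i$ collapses $N_i^-(\phi_i)$ to a pure slope $-\infty$ side; the inductive propagation through the orders is then automatic from the monotonicity in Lemma \ref{typedegree}.
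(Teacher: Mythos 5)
Your proof is correct and follows essentially the same route as the paper: you establish $\om_{i+1}(\phi_i)=0$ by observing that $N_i^-(\phi_i)$ is a single side of slope $-\infty$ (so $S_i(\phi_i)$ is a point and $R_i(\phi_i)$ is a nonzero constant), and then propagate to all $j>i$ via the chain of inequalities in Lemma \ref{typedegree}(1). The only cosmetic difference is that you phrase the propagation as an induction on $j-i$, while the paper invokes Lemma \ref{typedegree} once to collapse the whole chain.
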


\begin{proof}
Since $N_i(\phi_i)$ is a side of slope $-\infty$, we have $\om_{i+1}(\phi_i)=0$ because $S_i(\phi_i)$ reduces to a point. By Lemma \ref{typedegree}, $\om_j(\phi_i)=0$ for all $i<j\le r$.
\end{proof}

\begin{proposition}\label{vrphii}
For all $1\le i<r$ we have
\begin{enumerate}
 \item $v_r(\phi_i)=\sum_{j=1}^i\left(e_{j+1}\cdots e_{r-1}\right)\left(e_jf_j\cdots e_{i-1}f_{i-1}\right)h_j$,
\item  $v_r(\Phi_i)=e_{i+1}\cdots e_{r-1}h_i$,
\item $v_r(\pi_{i+1})=e_{i+1}\cdots e_{r-1}$,
\item $v_r(\ga_i)=0$.
\item $\om_{r}(\phi_i)=\om_{r}(\Phi_i)=\om_{r}(\ga_i)=\om_{r}(\pi_{i+1})=0$.
\end{enumerate}
Moreover, $v_r(\phi_r)=\sum_{j=1}^{r-1}\left(e_{j+1}\cdots e_{r-1}\right)\left(e_jf_j\cdots e_{r-1}f_{r-1}\right)h_j$ and $v_r(\Phi_r)=0$.
\end{proposition}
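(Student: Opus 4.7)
The plan is to induct on $r$, with the induction hypothesis that Proposition \ref{vrphii} (and the background theorems of the polygon and residual polynomial) holds in orders $1,\dots,r-1$.

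I begin with item (5), which is the simplest. By Lemma \ref{omji}, $\om_r(\phi_i)=0$ for $1\le i<r$, and $\om_r(\pi)=0$ follows from Lemma \ref{typedegree}(1) applied to $\pi$, since $\om_1(\pi)=0$. As $\om_r$ is a semigroup homomorphism extending to a group homomorphism on $K(x)^*$, and each of $\Phi_i$, $\ga_i$, $\pi_{i+1}$ is by construction, cf.~(\ref{phii}) and Definition \ref{ratfracs}, a product of integer powers of $\pi,\phi_1,\dots,\phi_i$ with $i<r$, item (5) follows at once.

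For item (1) I split at $i=r-1$. If $i\le r-2$, Lemma \ref{omji} gives $\om_{r-1}(\phi_i)=0$, so Proposition \ref{propertiesv}(1) reduces to $v_r(\phi_i)=e_{r-1}v_{r-1}(\phi_i)$, and multiplying the inductive formula for $v_{r-1}(\phi_i)$ by $e_{r-1}$ yields the claimed sum. For $i=r-1$, Proposition \ref{propertiesv}(4) gives $v_r(\phi_{r-1})=e_{r-1}v_{r-1}(\phi_{r-1})+h_{r-1}$; the ``moreover'' clause of the induction hypothesis supplies $v_{r-1}(\phi_{r-1})$, and $h_{r-1}$ is absorbed as the $j=r-1$ summand, for which both indexed products are empty.

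Items (2)--(4) I then prove simultaneously by a secondary induction on $i$. The base $i=1$ follows from $\Phi_1=\phi_1$ and item (1), combined with the identities $\ga_1=\Phi_1^{e_1}/\pi^{h_1}$, $\pi_2=\Phi_1^{\ell_1}/\pi^{\ell'_1}$, and $\ell_1 h_1-\ell'_1 e_1=1$. For the inductive step, assuming the formulas at index $i$, the defining identities $\ga_i=\Phi_i^{e_i}/\pi_i^{h_i}$ and $\pi_{i+1}=\Phi_i^{\ell_i}/\pi_i^{\ell'_i}$ immediately give $v_r(\ga_i)=0$ and $v_r(\pi_{i+1})=e_{i+1}\cdots e_{r-1}$. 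The relation $\Phi_{i+1}=\phi_{i+1}/\pi_i^{f_iv_{i+1}(\phi_i)}$ then reduces $v_r(\Phi_{i+1})$, after splitting off the $j=i+1$ summand of the already-computed $v_r(\phi_{i+1})$, to matching the remaining sum against $f_iv_{i+1}(\phi_i)\cdot e_i\cdots e_{r-1}$. Using the inductive formula for $v_{i+1}(\phi_i)$ supplied by Proposition \ref{vrphii} in order $i+1$, this becomes a term-by-term identity between the monomial expressions $(e_{j+1}\cdots e_{r-1})(e_jf_j\cdots e_if_i)$ and $(e_i\cdots e_{r-1})(e_{j+1}\cdots e_i)(e_jf_j\cdots e_{i-1}f_{i-1})f_i$ for $1\le j\le i$, which is routinely verified by matching exponents.

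The ``moreover'' clause falls out from what has already been done: equation (\ref{vrphir}) gives $v_r(\phi_r)=e_{r-1}f_{r-1}v_r(\phi_{r-1})$, which multiplies to the displayed sum, and then $v_r(\Phi_r)=v_r(\phi_r)-f_{r-1}v_r(\phi_{r-1})v_r(\pi_{r-1})=0$ using $v_r(\pi_{r-1})=e_{r-1}$. I expect the only non-mechanical step to be the telescoping bookkeeping in the inductive step for (2); every other step is a direct application of Proposition \ref{propertiesv}, Lemma \ref{omji}, the recursive identities of Definition \ref{ratfracs}, or the induction hypothesis.
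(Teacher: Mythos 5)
Your proof is correct and follows essentially the same route as the paper's: an outer induction on $r$, item (1) established first (splitting at $i=r-1$ via Proposition \ref{propertiesv}(4) and (\ref{vrphir}), and $i\le r-2$ via $v_r(\phi_i)=e_{r-1}v_{r-1}(\phi_i)$), then items (2)--(4) by a secondary induction on $i$ using the defining relations of $\Phi_i$, $\ga_i$, $\pi_{i+1}$ together with $\ell_ih_i-\ell'_ie_i=1$, and item (5) from (\ref{phii}) and Lemma \ref{omji}. The one cosmetic difference is that for $i\le r-2$ you invoke Proposition \ref{propertiesv}(1) directly (equality when $\om_{r-1}(\phi_i)=0$), whereas the paper reaches the same identity $v_r(\phi_i)=e_{r-1}v_{r-1}(\phi_i)$ via admissibility of the trivial $\phi_{r-1}$-development (Lemma \ref{admissible2}); both are valid and of the same length.
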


\begin{proof}
We proceed by induction on $r$. For $r=2$ all formulas are easily deduced from $v_2(\phi_1)=h_1$, that was proved in Proposition \ref{propertiesv}. Suppose $r\ge 3$ and all statements true for $r-1$. 

Let us start with item 1. By Proposition \ref{propertiesv} and (\ref{vrphir}),  $$v_r(\phi_{r-1})=h_{r-1}+e_{r-1}v_{r-1}(\phi_{r-1}), \qquad v_{r-1}(\phi_{r-1})=e_{r-2}f_{r-2}v_{r-1}(\phi_{r-2}).$$ Hence, the formula for $i=r-1$ follows from the induction hypothesis. Suppose from now on $i<r-1$. By Lemma \ref{omji}, $\phi_i(x)=\phi_i(x)$ is an admissible $\phi_{r-1}$-adic development of $\phi_i(x)$, and by Lemma \ref{admissible2} in order $r-1$ (Lemma \ref{admissible} in order one) we get $N_{r-1}^-(\phi_i)=(0,v_{r-1}(\phi_i))$, so that 
$v_r(\phi_i)=e_{r-1} v_{r-1}(\phi_i)$ and the formula follows by induction. 

Let us prove now simultaneously items 2 and 3 by induction on $i$. For $i=1$ we have  by item 1, 
$$\as{1.2}
\begin{array}{l}
v_r(\Phi_1)=v_r(\phi_1)=e_2\cdots e_{r-1}h_1,\\
v_r(\pi_2)=\ell_1v_r(\Phi_1)-\ell'_1v_r(\pi)=(\ell_1h_1-\ell'_1e_1)e_2\cdots e_{r-1}=e_2\cdots e_{r-1}.
\end{array}
$$
Suppose now $i>1$ and the formulas hold for $1,\dots,i-1$. 
$$\as{1.2}
\begin{array}{l}
v_r(\Phi_i)=v_r(\phi_i)-f_{i-1}v_i(\phi_{i-1})e_{i-1}\cdots e_{r-1}=e_{i+1}\cdots e_{r-1}h_i,\\
v_r(\pi_{i+1})=\ell_i v_r(\Phi_i)-\ell'_iv_r(\pi_i)=(\ell_ih_i-\ell'_ie_i)e_{i+1}\cdots e_{r-1}=e_{i+1}\cdots e_{r-1}.
\end{array}
$$     

Item 4 is easily deduced from the previous formulas, and item 5 is an immediate consequence of (\ref{phii}) and Lemma \ref{omji}. The last statements follow from (\ref{vrphir}) and the previous formulas.
\end{proof}

\begin{lemma}\label{gammas0}
For $\n=(n_0,\dots,n_{r-1})\in\Z^r$, consider the rational function $\Phi(\n)=\pi^{n_0}\phi_1(x)^{n_1}\cdots\phi_{r-1}(x)^{n_{r-1}}\in K(x)$. Then, if $v_r(\Phi(\n))=0$, there exists a unique sequence $i_1,\dots,i_{r-1}$ of integers such that $\Phi(\n)=\ga_1(x)^{i_1}\cdots \ga_{r-1}(x)^{i_{r-1}}$. Moreover, $i_s$ depends only on $n_s,\dots,n_{r-1}$, for all $1\le s<r$.  
\end{lemma}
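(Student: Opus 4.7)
The plan is to argue by induction on $r$, isolating $i_{r-1}$ first by comparing the exponent of $\phi_{r-1}$, and then passing to the truncated type $\mathcal{T}_{r-2}$. The key observation from Proposition \ref{vrphii} is that for all $j\le r-2$ the value $v_r(\phi_j)$ (as well as $v_r(\pi)=e_1\cdots e_{r-1}$) is divisible by $e_{r-1}$, while $v_r(\phi_{r-1})=h_{r-1}+e_{r-1}v_{r-1}(\phi_{r-1})\equiv h_{r-1}\pmod{e_{r-1}}$. Since $v_r$ is a group homomorphism on $K(x)^*$, reducing $v_r(\Phi(\mathbf n))=0$ modulo $e_{r-1}$ gives $n_{r-1}h_{r-1}\equiv0\pmod{e_{r-1}}$, and coprimality of $h_{r-1},e_{r-1}$ forces $n_{r-1}=e_{r-1}i_{r-1}$ for a unique integer $i_{r-1}$ depending only on $n_{r-1}$.

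For the base case $r=2$, $v_2(\Phi(\mathbf n))=n_0e_1+n_1h_1=0$ forces $n_1=e_1i_1$, $n_0=-h_1i_1$, so $\Phi(\mathbf n)=\gamma_1^{i_1}$, and uniqueness is clear. For the inductive step, by (\ref{phii}) the exponent of $\phi_{r-1}$ in $\gamma_{r-1}$ is exactly $e_{r-1}$, while $\gamma_{r-1}$ involves only $\pi,\phi_1,\dots,\phi_{r-1}$. Hence $\Phi(\mathbf n)\cdot\gamma_{r-1}^{-i_{r-1}}$ is of the form $\Phi(\mathbf n')=\pi^{n'_0}\phi_1(x)^{n'_1}\cdots\phi_{r-2}(x)^{n'_{r-2}}$, and the exponents satisfy $n'_s=n_s-i_{r-1}\alpha_s$ for fixed integers $\alpha_s$ (the $\phi_s$-exponents of $\gamma_{r-1}$), for $0\le s\le r-2$. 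Since $v_r=e_{r-1}v_{r-1}$ on the subgroup generated by $\pi,\phi_1,\dots,\phi_{r-2}$ (again by the explicit formulas in Proposition \ref{vrphii}), we have $v_{r-1}(\Phi(\mathbf n'))=0$.

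Observing that the rational functions $\gamma_1,\dots,\gamma_{r-2}$ depend only on data belonging to the truncated type $\mathcal{T}_{r-2}$, we apply the inductive hypothesis to obtain unique integers $i_1,\dots,i_{r-2}$ with $\Phi(\mathbf n')=\gamma_1^{i_1}\cdots\gamma_{r-2}^{i_{r-2}}$ and with $i_s$ depending only on $n'_s,\dots,n'_{r-2}$. Since each $n'_t$ depends only on $n_t$ and $i_{r-1}$ (which in turn depends only on $n_{r-1}$), we conclude that $i_s$ depends only on $n_s,\dots,n_{r-1}$, as required. Uniqueness of the full decomposition follows by the same first step: any relation $\gamma_1^{j_1}\cdots\gamma_{r-1}^{j_{r-1}}=1$ has $\phi_{r-1}$-exponent $e_{r-1}j_{r-1}=0$, forcing $j_{r-1}=0$, after which the inductive uniqueness applies.

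The main obstacle is the divisibility step $e_{r-1}\mid n_{r-1}$: it rests on the precise arithmetic of the formulas of Proposition \ref{vrphii}, and on the compatibility $v_r=e_{r-1}v_{r-1}$ when restricted to rational functions built from $\pi,\phi_1,\dots,\phi_{r-2}$. Once this modular computation is in hand the recursion runs smoothly and the triangular dependence of the $i_s$ on $(n_s,\dots,n_{r-1})$ follows automatically.
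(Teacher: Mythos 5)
Your proof is correct and follows essentially the same route as the paper's: isolate $i_{r-1}$ from the congruence $v_r(\Phi(\n))\equiv n_{r-1}h_{r-1}\pmod{e_{r-1}}$, peel off $\ga_{r-1}^{i_{r-1}}$, and induct. The one place you are more explicit than the paper is the passage from $v_r$ to $v_{r-1}$ after stripping the $\phi_{r-1}$-factor (via $v_r=e_{r-1}v_{r-1}$ on the subgroup generated by $\pi,\phi_1,\dots,\phi_{r-2}$), which the paper leaves implicit; this is a worthwhile clarification, not a different argument.
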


\begin{proof}
Since the polynomials $\phi_s(x)$ are irreducible and pairwise different, we have $\Phi(\n)=\Phi(\n')$ if and only if $\n=\n'$.
By (\ref{phii}), any product $\ga_1(x)^{i_1}\cdots \ga_{r-1}(x)^{i_{r-1}}$ can be expressed as $\Phi(\j)$, for a suitable $\j=(j_0,\dots,j_{r-2},e_{r-1}i_{r-1})$. Thus, if $\ga_1(x)^{i_1}\cdots \ga_{r-1}(x)^{i_{r-1}}=1$ we have necessarily $i_{r-1}=0$, and recursively, $i_1=\dots=i_{r-2}=0$. This proves the unicity of the expression of any $\Phi(\n)$ as a product of powers of gammas.

Let us prove the existence of such an expression by induction on $r\ge 1$. For $r=1$, let $\n=(n_0)$; the condition $v_r(\pi^{n_0})=0$ implies
$n_0=0$ and $\Phi(\n)=1$. Suppose $r\ge 2$ and the lemma proven for all $\n'\in\Z^{r-1}$. By item 1 of the last proposition, $v_r(\Phi(\n))\equiv n_{r-1}h_{r-1}\md{e_{r-1}}$; hence, if $v_r(\Phi(\n))=0$ we have necessarily $n_{r-1}=e_{r-1}i_{r-1}$ for some integer $i_{r-1}$ that depends only on $n_{r-1}$. By (\ref{phii}), $\ga_{r-1}(x)^{i_{r-1}}=\Phi(\j)$, for some $\j=(j_0,\dots,j_{r-2},e_{r-1}i_{r-1})$; hence, $\Phi(\n)\ga_{r-1}(x)^{-i_{r-1}}=\Phi(\n')$, with $\n'=(n'_0,\dots,n'_{r-2},0)$, and each $n'_s$ depends only on $n_s$ and $n_{r-1}$. By item 4 of the last proposition, we have still $v_r(\Phi(\n'))=0$, and by induction hypothesis we get the desired expression of $\Phi(\n)$ as a product of powers of gammas.
\end{proof}

\subsection{Newton polygon and residual polynomials of $r$-th order}\label{NPRr}
Let $f(x)\in\zpx$ be a nonzero polynomial, and consider its unique $\phi_r$-adic development
\begin{equation}\label{phiradic}
f(x)=\sum_{0\le i\le \lfloor \dg(f)/m_r\rfloor} a_i(x)\phi_r(x)^i,\quad \dg a_i(x)<m_r.
\end{equation}

We define the Newton polygon $N_r(f)$ of $f(x)$, with respect to the extension $\tilde{\ty}$ of $\ty$, to be the lower convex envelope of the set of points $(i,u_i)$, where 
$$
u_i:=v_r(a_i(x)\phi_r(x)^i)=v_r(a_i(x))+iv_r(\phi_r(x)).
$$ 
Note that we consider the $v_r$-value of the whole monomial $a_i(x)\phi_r(x)^i$. Actually, we did the same for the Newton polygons of first order, but in that case $v_1(a_i(x)\phi_1(x)^i)=v_1(a_i(x))$, because $v_1(\phi_1(x))=0$.

The principal part $N_r^-(f)$ is the principal polygon formed by all sides of negative slope, including the side of slope $-\infty$ if $f(x)$ is divisible by $\phi_r(x)$ in $\zpx$. The typical shape of the polygon is the following

\begin{center}
\setlength{\unitlength}{5.mm}
\begin{picture}(14,9)
\put(10.85,1.85){$\bullet$}\put(8.85,2.85){$\bullet$}\put(7.85,1.85){$\bullet$}\put(5.85,3.85){$\bullet$}
\put(4.85,3.85){$\bullet$}\put(3.85,5.85){$\bullet$}\put(2.85,7.85){$\bullet$}\put(-1,0){\line(1,0){15}}
\put(0,-1){\line(0,1){9}}
\put(8,2){\line(-3,2){3}}\put(5,4){\line(-1,2){2}}\put(8,2.03){\line(-3,2){3}}
\put(5,4.03){\line(-1,2){2}}\put(11,2){\line(-1,0){3}}\put(11,2.02){\line(-1,0){3}}
\put(12.85,3.85){$\bullet$}\put(11,2){\line(1,1){2}}\put(11,2.02){\line(1,1){2}}
\multiput(13,-.1)(0,.25){16}{\vrule height2pt}
\multiput(3,-.1)(0,.25){32}{\vrule height2pt}
\multiput(8,-.1)(0,.25){9}{\vrule height2pt}
\put(11.7,-.7){\begin{footnotesize}$\lfloor \dg(f)/m_r\rfloor$\end{footnotesize}}
\put(7.4,-.7){\begin{footnotesize}$\om_r(f)$\end{footnotesize}}
\put(2.1,-.7){\begin{footnotesize}$\ord_{\phi_r}(f)$\end{footnotesize}}
\multiput(-.1,2)(.25,0){55}{\hbox to 2pt{\hrulefill }}
\put(-1.6,1.8){\begin{footnotesize}$v_r(f)$\end{footnotesize}}
\put(-.4,-.6){\begin{footnotesize}$0$\end{footnotesize}}
\put(6,-2){$N_r(f)$}
\end{picture}
\end{center}\be\be\be
 
\begin{lemma}\label{shape}
 \begin{enumerate}
  \item $\min_{0\le i\le n}\{u_i\}=v_r(f)$, where $n:=\ell(N_r(f))=\lfloor \deg f/m_r\rfloor$.
\item The length of $N_r^-(f)$ is $\om_r(f)$.
\item The side of slope $-\infty$ of $N_r^-(f)$ has length $\ord_{\phi_r}(f)$. 
 \end{enumerate}
\end{lemma}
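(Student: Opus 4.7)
The plan is to dispatch item 3 by inspection and prove items 1 and 2 in tandem via a computation of $R_{r-1}(f)$ from the $\phi_r$-adic development.

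For item 3, I would argue: since $\phi_r(x)$ is monic, the $\phi_r$-adic expansion is unique, so $\phi_r^a\mid f$ forces $a_0(x)=\dots=a_{a-1}(x)=0$ and hence $u_i=\infty$ for $0\le i<a$. Because $N_r^-(f)$ starts at the conventional abscissa $i_0=0$, these points at infinity constitute one side of slope $-\infty$ of length $a=\ord_{\phi_r}(f)$.

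For items 1 and 2, let $\mu:=\min_i u_i$ and $I:=\{i:u_i=\mu\}$. The inequality $v_r(f)\ge\mu$ is the triangle inequality for $v_r$. For the reverse, I split $f=f_1+f_2$ with $f_1:=\sum_{i\in I}a_i\phi_r^i$. Since $v_r(f_2)>\mu$, the polygon $N_{r-1}^-(f_2)$ lies strictly above the line $L$ of slope $\la_{r-1}$ at height $\mu/e_{r-1}$, while each $S_{r-1}(a_i\phi_r^i)$ with $i\in I$ lies on $L$. Picking a side $T\subset L$ containing all of these segments, the order-$(r-1)$ analogue of the additivity rule (\ref{sumR}) gives
\begin{equation*}
R_{r-1}(f,T)\;=\;\sum_{i\in I}R_{r-1}(a_i\phi_r^i,T),
\end{equation*}
because $R_{r-1}(f_2,T)=0$. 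The Theorem of the product in order $r-1$ yields $R_{r-1}(a_i\phi_r^i)=R_{r-1}(a_i)R_{r-1}(\phi_r)^i\sim R_{r-1}(a_i)\psi_{r-1}^i$, and Lemma~\ref{typedegree}(2) applied to $\deg a_i<m_r$ gives $\om_r(a_i)=0$, i.e.\ $\psi_{r-1}\nmid R_{r-1}(a_i)$. Hence $\ord_{\psi_{r-1}}R_{r-1}(a_i\phi_r^i)=i$, and the enlargement identity (order-$(r-1)$ analogue of (\ref{enlarge1})) multiplies this by a power of $y$, which preserves $\psi_{r-1}$-adic order because $\psi_{r-1}\ne y$. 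The summands therefore carry pairwise distinct $\psi_{r-1}$-adic orders indexed by $I$, so the sum has $\ord_{\psi_{r-1}}=\min I$ and is nonzero; $N_{r-1}^-(f)$ thus meets $L$, forcing $v_r(f)\le\mu$ — item~1. Applied directly to $f$, the same enlargement identity then yields $\om_r(f)=\ord_{\psi_{r-1}}R_{r-1}(f)=\min I$, which is exactly the length of $N_r^-(f)$ (the rightmost abscissa of its negative-slope part, measured from $i_0=0$) — item~2.

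The main obstacle will be the combinatorial bookkeeping: choosing $T$ so that every relevant monomial lies above it, verifying that both the additivity rule and the enlargement identity apply in the asserted form, and tracking the power of $y$ introduced by enlargement. Once those pieces are in place, the arithmetic is clean — distinct $i\in I$ produce distinct $\psi_{r-1}$-adic orders (thanks to $\psi_{r-1}\ne y$ and $\om_r(a_i)=0$), so cancellation at the level $v_r=\mu$ is impossible.
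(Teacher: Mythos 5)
Your proof is correct and essentially reproduces the paper's argument: both decompose $f$ into $g=\sum_{u_i=\mu}a_i\phi_r^i$ plus a remainder of strictly larger $v_r$-value, and both exploit that the monomials of $g$ have pairwise distinct $\om_r$-values $i$ (equivalently, their order-$(r-1)$ residual polynomials have distinct $\psi_{r-1}$-adic orders), so there can be no cancellation at the level $v_r=\mu$. The only difference is organisational: the paper packages this no-cancellation argument into Proposition~\ref{pseudo} (whose proof uses precisely Lemma~\ref{sumRr}, the enlargement identity (\ref{RvssubSr}) and $\psi_{r-1}\ne y$, as you do) and then cites it, whereas you unfold it inline.
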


\begin{proof}
The third item is obvious. Let us prove items 1, 2. Let $u:=\min_{0\le i\le n}\{u_i\}$, and consider the polynomial
$$
g(x):=\sum_{u_i=u}a_i(x)\phi_r(x)^i.
$$
All monomials of $g(x)$ have the same $v_r$-value and a different $\om_r$-value:
$$
\om_r(a_i(x)\phi_r(x)^i)=\om_r(a_i(x))+\om_r(\phi_r(x)^i)=i,
$$
because $\om_r(a_i)=0$ by Lemma \ref{typedegree}.
By item 2 of Proposition \ref{pseudo}, $v_r(g)=u$ and $\om_r(g)=i_0$, the least abscissa with $u_{i_0}=u$. Since, $v_r(f-g)>u$, we have $v_r(f)=v_r(g)=u$, and this proves item 1. 
On the other hand, item 1 of Propositon \ref{pseudo} shows that $\om_r(f)=\om_r(g)=i_0$, and this proves item 2. 
\end{proof}

The following observation is a consequence of Lemmas \ref{typedegree} and \ref{shape}.
\begin{corollary}\label{rminus}
If $f(x)$ has type $\ty$ then $N_r(f)=N_r^-(f)$. \hfill{$\Box$}
\end{corollary}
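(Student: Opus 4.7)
The plan is to deduce the corollary from a length comparison. By part (3) of Lemma \ref{typedegree}, the hypothesis that $f$ has type $\ty$ forces $\deg f = m_r\om_r(f)$, hence $\lfloor \deg f/m_r\rfloor = \om_r(f)$. The two parts of Lemma \ref{shape} then identify this common value with both $\ell(N_r(f))$ and $\ell(N_r^-(f))$, so the full Newton polygon and its principal part share the same length.

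I would then close the argument by observing that $N_r^-(f)$ is, by construction, the sub-polygon of $N_r(f)$ consisting of its sides of negative slope together with the $-\infty$ side of length $\ord_{\phi_r}(f)$. Anything of $N_r(f)$ not already accounted for in $N_r^-(f)$ would sit to the right as an extension of nonnegative-slope sides, and every such side contributes positive length. Since the two lengths agree, no such extension can exist, and we conclude $N_r(f) = N_r^-(f)$.

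No real obstacle is expected; the corollary is purely a bookkeeping consequence of the two cited lemmas. The only point to keep straight is that the ``length'' of $N_r(f)$ is defined as the abscissa of its last vertex, whereas the length of $N_r^-(f)\in\pol$ is the sum of the lengths of its constituent sides (including the $-\infty$ side). Both coincide with $\om_r(f)$ as soon as the hypothesis on the type is in force, which is exactly what makes the comparison go through.
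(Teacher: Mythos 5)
Your proof is correct and follows exactly the route the paper indicates by labeling the corollary a consequence of Lemmas \ref{typedegree} and \ref{shape}: compare the two lengths $\ell(N_r(f))=\lfloor\deg f/m_r\rfloor$ and $\ell(N_r^-(f))=\om_r(f)$, note that the type hypothesis forces these to agree, and conclude that $N_r(f)$ has no sides of nonnegative slope. Nothing further is needed.
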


From now on let $N=N_r^-(f)$. As we did in order one, we attach to any integer abscissa $i$ of the finite part of $N$ a \emph{residual coefficient} $c_i\in\ff{r}$. The natural idea is to consider 
$c_i=R_{r-1}(a_i)(z_{r-1})$ for the points lying on $N$. However, this does not lead to the right concept of residual polynomial attached to a side; it is necessary to twist these coefficients by certain powers of $z_{r-1}$. 

\begin{definition}\label{t(i)}
For any nonzero $a(x)\in\zpx$ and any integer $i\ge 0$, we denote 
$$t_{r-1}(a)_i:=\dfrac{s_{r-1}(a)-\ell_{r-1} v_r(a\phi_r^i)}{e_{r-1}}.
$$ 

For any nonzero $f(x)\in\zpx$ with $\phi_r$-adic development (\ref{phiradic}), we denote
$$t_{r-1}(i):=t_{r-1}(i,f):=t_{r-1}(a_i)_i=\dfrac{s_{r-1}(a_i)-\ell_{r-1} u_i}{e_{r-1}}.
$$ 
\end{definition}

The number $t_{r-1}(a)_i$ is always an integer. In fact, if $u_{r-1}(a)$ denotes the ordinate of the initial point of $S_{r-1}(a)$, then
\begin{align*}
v_r(a\phi_r^i)=v_r(a)+iv_r(\phi_r)&\equiv v_r(a)=h_{r-1}s_{r-1}(a)+e_{r-1}u_{r-1}(a)\md{e_{r-1}}\\&\equiv h_{r-1}s_{r-1}(a)\md{e_{r-1}},
\end{align*}
the first congruence by (\ref{vrphir}). Hence, $\ell_{r-1}v_r(a\phi_r^i)\equiv s_{r-1}(a)\md{e_{r-1}}$.

\begin{definition}\label{rescoeff}For any integer abscissa $\ord_{\phi_r}(f)\le i\le \om_r(f)$, 
the \emph{residual coefficient} $c_i$ of $N:=N_r^-(f)$ is defined to be:
$$\as{1.4}
c_i:=c_i(f):=\left\{\begin{array}{ll}
0,&\mbox{ if $(i,u_i)$ lies strictly above }N,\\z_{r-1}^{t_{r-1}(i)}R_{r-1}(a_i)(z_{r-1})\in\ff{r},&\mbox{ if $(i,u_i)$ lies on }N
\end{array}
\right.
$$ 
\end{definition}
Note that $c_i\ne0$ if $(i,u_i)$ lies on $N$ because $\om_r(a_i)=0$ and $\psi_{r-1}(y)$ is the minimal polynomial of $z_{r-1}$ over $\ff{r-1}$.
 
\begin{center}
\setlength{\unitlength}{5.mm}
\begin{picture}(20,8)
\put(1.85,4.15){$\bullet$}\put(3.35,3.15){$\bullet$}\put(4.85,2.15){$\bullet$}
\put(2,4.3){\line(-1,2){1}}\put(2,4.32){\line(-1,2){1}}
\put(5,2.3){\line(3,-1){2}}\put(5,2.32){\line(3,-1){2}}
\put(-1,0){\line(1,0){9}}
\put(0,-1){\line(0,1){8}}
\put(5,2.32){\line(-3,2){3}}
\multiput(2,-.1)(0,.20){22}{\vrule height2pt}
\multiput(3.5,-.1)(0,.20){17}{\vrule height2pt}
\multiput(-.1,4.3)(.25,0){8}{\hbox to 2pt{\hrulefill }}
\put(3.7,3.4){\begin{footnotesize}$(i,u_i)$\end{footnotesize}}
\put(-.5,4.2){\begin{footnotesize}$u$\end{footnotesize}}
\put(3.4,-.65){\begin{footnotesize}$i$\end{footnotesize}}
\put(1.9,-.65){\begin{footnotesize}$s$\end{footnotesize}}
\put(-.4,-.6){\begin{footnotesize}$0$\end{footnotesize}}
\put(3,-1.6){\begin{footnotesize}$N_r(f)$\end{footnotesize}}
\put(10,0){\line(1,0){9}}
\put(18,2){\line(-2,1){7}}
\put(10.9,5.5){\line(1,0){.2}}
\put(8.6,5.4){\begin{footnotesize}$v_r(a_i)/e$\end{footnotesize}}
\put(11,-1){\line(0,1){8}}
\put(14.85,3.35){$\bullet$}\put(12.85,4.35){$\bullet$}
\put(15,3.5){\line(-2,1){2}}\put(15.02,3.5){\line(-2,1){2}}
\put(15,3.5){\line(4,-1){1.4}}\put(15.02,3.5){\line(4,-1){1.4}}
\put(13,4.5){\line(-1,1){1}}\put(13.02,4.5){\line(-1,1){1}}
\put(14,-1.6){\begin{footnotesize}$N_{r-1}(a_i)$\end{footnotesize}}
\multiput(13,-.1)(0,.20){24}{\vrule height2pt}
\multiput(10.9,4.5)(.25,0){8}{\hbox to 2pt{\hrulefill }}
\put(12.2,-.7){\begin{footnotesize}$s_{r-1}(a_i)$\end{footnotesize}}
\put(8.6,4.3){\begin{footnotesize}$u_{r-1}(a_i)$\end{footnotesize}}
\put(17.4,1.4){\begin{footnotesize}$L_{\la_{r-1}}(N_{r-1}(a_i))$\end{footnotesize}}
\put(14,4.1){\begin{footnotesize}$S_{r-1}(a_i)$\end{footnotesize}}
\put(10.6,-.6){\begin{footnotesize}$0$\end{footnotesize}}
\end{picture}
\end{center}\be\be\be

\begin{definition}\label{defresidual}
Let $\la_r=-h_r/e_r$ be a negative rational number, with $h_r,e_r$ positive coprime integers. Let $S=S_{\la_r}(N)$ be the $\la_r$-component of $N$, $d=d(S)$ the degree, and $(s,u)$ the initial point of $S$. 

We define the \emph{virtual factor} of $f(x)$ attached to $S$ (or to $\la_r$) to be the rational function
$$
f^S(x):=\Phi_r(x)^{-s}\pi_r(x)^{-u}f^0(x)\in K(x),\quad f^0(x):=\sum_{(i,u_i)\in S}a_i(x)\phi_r(x)^i,
$$where $\Phi_r(x)$, $\pi_r(x)$ are the rational functions introduced in Definition \ref{ratfracs}.

We define the \emph{residual polynomial} attached to $S$ (or to $\la_r$) to be the polynomial: 
$$
R_{\la_r}(f)(y):=c_s+c_{s+e_r}\,y+\cdots+c_{s+(d-1)e_r}\,y^{d-1}+c_{s+de_r}\,y^d\in\ff{r}[y].
$$
\end{definition}

Only the points $(i,u_i)$ that lie on $S$ yield a non-zero coefficient of $R_{\la_r}(f)(y)$. In particular, $c_s$ and $c_{s+de}$ are always nonzero, so that $R_{\la_r}(f)(y)$ has degree $d$ and it is never  divisible by $y$. We emphasize that $R_{\la_r}(f)(y)$ does not depend only on $\la_r$; as all other objects in Sect.\ref{secNPr}, it depends on $\ty$ too.   

We define in an analogous way the residual polynomial of $f(x)$ with respect to a side $T$ that is not necessarily a $\la_r$-component of $N$. Let $T\in\ss(\la_r)$ be an arbitrary side of slope $\la_r$, with abscissas $s_0\le s_1$ for the end points. Let $d'=d(T)$. We say that $f(x)$ \emph{lies above $T$ in order $r$} if all points of $N$ with abscissa $s_0\le i\le s_1$ lie above $T$. In this case we define
$$
R_{\la_r}(f,T)(y):=\tilde{c}_{s_0}+\tilde{c}_{s_0+e_r}\,y+\cdots+\tilde{c}_{s_0+(d'-1)e_r}\,y^{d'-1}+
\tilde{c}_{s_0+d'e_r}\,y^{d'}\in\ff{r}[y],
$$
where $\tilde{c}_i:=\tilde{c}_i(f):=c_i$ if $(i,u_i)$ lies on $T$ and $\tilde{c}_i=0$ otherwise. 

Note that $\dg R_{\la_r}(f,T)(y)\le d'$ and equality holds if and only if the final point of $T$ belongs to $S_{\la_r}(f)$. Usually, $T$ will be an enlargement of $S_{\la_r}(f)$ and then,
\begin{equation}\label{RvssubSr}
T\supseteq S_{\la_r}(f)\ \imp\ R_{\la_r}(f,T)(y)=y^{(s-s_0)/e_r}R_{\la_r}(f)(y), 
\end{equation}
where $s$ is the abscissa of the initial point of $S_{\la_r}(f)$.

For technical reasons, we express $c_i$ in terms of a residual polynomial attached to certain auxiliary side.

\begin{lemma}\label{enlarge}
Let $N\in\pol$ be a principal polygon. Let $(i,y_i(N))$  be a point lying on $N$ and with integer abscissa $i$.
Let $V=y_i(N)-iv_r(\phi_r)$, and let $L_{\la_{r-1}}$ be the line of slope $\la_{r-1}$ that cuts the vertical axis at the point with ordinate $V/e_{r-1}$.  Denote by $T(i)$ the greatest side contained in $L_{\la_{r-1}}$, whose end points have nonnegative integer coordinates, and let
 $\mathfrak{s}_i$ be the abscissa of the initial point of $T(i)$.

Let  $a(x)\in\zpx$ be a nonzero polynomial such that $u_i:=v_r(a\phi_r^i)\ge y_i(N)$. Then,
$$\as{1.4}
y^{(\mathfrak{s}_i-\ell_{r-1} u_i)/e_{r-1}}R_{r-1}(a,T(i))(y)=
\left\{\begin{array}{ll}
0,&\mbox{ if }u_i>y_i(N),\\y^{t_{r-1}(a)_i}R_{r-1}(a)(y),&\mbox{ if }u_i=y_i(N).
\end{array}
\right.
$$
In particular, $c_i=z_{r-1}^{(\mathfrak{s}_i-\ell_{r-1} u_i)/e_{r-1}}R_{r-1}(a_i,T(i))(z_{r-1})$.
\end{lemma}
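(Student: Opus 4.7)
The plan is to convert the hypothesis $u_i \ge y_i(N)$ into geometric information about the position of the $(r{-}1)$-th order polygon $N_{r-1}^-(a)$ relative to the line $L_{\la_{r-1}}$, and then reduce to the enlargement formula (\ref{RvssubSr}) already available in order $r-1$. First I would rewrite $u_i = v_r(a) + iv_r(\phi_r)$, so that the inequality becomes $v_r(a) \ge V$, with equality precisely when $u_i = y_i(N)$. By the very definition $v_r(a) = e_{r-1}H_{r-1}(S_{r-1}(a))$, the ratio $v_r(a)/e_{r-1}$ is the ordinate-at-origin of the support line of $S_{r-1}(a)$; so the hypothesis says that this support line either coincides with $L_{\la_{r-1}}$ (when $u_i = y_i(N)$) or lies strictly above it (when $u_i > y_i(N)$).

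The proof then splits into two cases. In the strict case $u_i > y_i(N)$, the polygon $N_{r-1}^-(a)$ lies strictly above $L_{\la_{r-1}}$, hence strictly above $T(i)$; every residual coefficient of $a$ along $T(i)$ vanishes and $R_{r-1}(a, T(i))(y) = 0$, matching the first branch of the claim. In the equality case $u_i = y_i(N)$, the side $S_{r-1}(a)$ sits on $L_{\la_{r-1}}$; since $T(i)$ is by construction the largest side on $L_{\la_{r-1}}$ whose end points have nonnegative integer coordinates, I obtain $T(i) \supseteq S_{r-1}(a)$, and the order-$(r{-}1)$ enlargement formula (\ref{RvssubSr}) yields
$$R_{r-1}(a, T(i))(y) = y^{(s_{r-1}(a) - \mathfrak{s}_i)/e_{r-1}} R_{r-1}(a)(y).$$
Multiplying by $y^{(\mathfrak{s}_i - \ell_{r-1} u_i)/e_{r-1}}$ and collapsing the exponents produces $y^{(s_{r-1}(a) - \ell_{r-1} u_i)/e_{r-1}} R_{r-1}(a)(y) = y^{t_{r-1}(a)_i} R_{r-1}(a)(y)$, exactly as required by Definition \ref{t(i)}.

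For the ``in particular'' statement I would specialize $a = a_i$ and evaluate at $y = z_{r-1}$: the two branches match Definition \ref{rescoeff} once one checks that $R_{r-1}(a_i)(z_{r-1}) \ne 0$ whenever $(i,u_i) \in N$. This holds because $\om_r(a_i) = 0$ by the degree restriction $\deg a_i < m_r$ in the $\phi_r$-adic expansion (\ref{phiradic}) combined with Lemma \ref{typedegree}, so $\psi_{r-1}$ does not divide $R_{r-1}(a_i)$. The main obstacle I anticipate is bookkeeping: verifying that $(\mathfrak{s}_i - \ell_{r-1} u_i)/e_{r-1}$ is a genuine integer in the equality case. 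This follows because $T(i) \subset L_{\la_{r-1}}$ forces $V \equiv h_{r-1}\mathfrak{s}_i \pmod{e_{r-1}}$, which combined with $v_r(\phi_r) \equiv 0 \pmod{e_{r-1}}$ from (\ref{vrphir}) and the Bezout relation $\ell_{r-1}h_{r-1} - \ell'_{r-1}e_{r-1} = 1$ yields $\mathfrak{s}_i \equiv \ell_{r-1}u_i \pmod{e_{r-1}}$, as desired.
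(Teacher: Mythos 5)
Your proposal is correct and follows essentially the same argument as the paper: translate $u_i\ge y_i(N)$ into the position of $S_{r-1}(a)$ relative to $T(i)$ via $v_r(a)=e_{r-1}H_{r-1}(S_{r-1}(a))$, and then invoke (\ref{RvssubSr}) in order $r-1$. The extra verifications you flag (integrality of the exponent, nonvanishing of $R_{r-1}(a_i)(z_{r-1})$) are correct but are already covered in the discussion surrounding Definitions \ref{t(i)} and \ref{rescoeff}, and the latter is not strictly needed for the ``in particular'' identity itself.
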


\begin{proof}
If $v_r(a\phi_r^i)=y_i(N)$, we have $v_r(a)=V$ and $S_{r-1}(a)\subseteq T(i)$. Then, the lemma follows from (\ref{RvssubSr}) in order $r-1$. If $v_r(a\phi_r^i)>y_i(N)$ then $S_{r-1}(a)$ lies strictly above $T(i)$ and $R_{r-1}(a_i,T(i))(y)=0$. 
\end{proof}

\begin{center}
\setlength{\unitlength}{5.mm}
\begin{picture}(6,5)
\put(5.25,.15){$\bullet$}\put(3.85,.85){$\bullet$}\put(1.85,1.85){$\bullet$}\put(-.15,2.85){$\bullet$}
\put(-1.6,0){\line(1,0){7.6}}\put(-.6,-1){\line(0,1){5.6}}
\put(5.4,.3){\line(-2,1){5.5}}\put(5.42,.3){\line(-2,1){5.5}}
\put(4,1){\line(4,-1){1.5}}\put(4.02,1){\line(4,-1){1.5}}
\put(2,2){\line(-1,2){1}}\put(2.02,2){\line(-1,2){1}}
%\put(6,0){\line(-2,1){6.5}}
\put(3,1.6){\begin{footnotesize}$S_{r-1}(a)$\end{footnotesize}}
\put(.25,2.9){\begin{footnotesize}$T(i)$\end{footnotesize}}
\multiput(0,-.1)(0,.25){13}{\vrule height2pt}
\multiput(2,-.1)(0,.25){9}{\vrule height2pt}
\put(-.2,-.65){\begin{footnotesize}$\mathfrak{s}_i$\end{footnotesize}}
\put(1.4,-.65){\begin{footnotesize}$s_{r-1}(a)$\end{footnotesize}}
\end{picture}
\end{center}\be

\begin{lemma}\label{sumRr}
Let $T\in\ss(\la_r)$ be a side of slope $\la_r$ and let $f(x),g(x)\in\zpx$. If $f(x)$ and $g(x)$ lie above $T$ in order $r$, then $(f+g)(x)$ lies above $T$ in order $r$ and 
$$
R_{\la_r}(f+g,T)=R_{\la_r}(f,T)+R_{\la_r}(g,T).
$$
\end{lemma}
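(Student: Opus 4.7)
The plan is to imitate the order-one proof (equation \eqref{sumR}) by invoking the induction on $r$ already running through Section \ref{secNPr}: the base case $r=1$ is \eqref{sumR}, and the inductive step reduces order-$r$ additivity to order-$(r-1)$ additivity via Lemma \ref{enlarge}.

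First I would write the $\phi_r$-adic developments $f=\sum_{i}a_i(x)\phi_r(x)^i$ and $g=\sum_{i}b_i(x)\phi_r(x)^i$. Since $\deg(a_i+b_i)<m_r$, the sum has $\phi_r$-adic development $\sum_{i}(a_i+b_i)(x)\phi_r(x)^i$. For any integer abscissa $i$ in the range $[s_0,s_1]$ of $T$, the hypothesis that $f$ and $g$ lie above $T$ reads $v_r(a_i\phi_r^i)\ge y_i(T)$ and $v_r(b_i\phi_r^i)\ge y_i(T)$; since $v_r$ is a valuation, $v_r((a_i+b_i)\phi_r^i)\ge y_i(T)$ as well, so $f+g$ lies above $T$ in order $r$.

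The central step is to linearise $\tilde c_i(\,\cdot\,)$ as a function of the $i$-th $\phi_r$-adic coefficient. For each such $i$, put $V_i:=y_i(T)-iv_r(\phi_r)$, and let $T(i)\in\ss(\la_{r-1})$ with initial abscissa $\mathfrak{s}_i$ be the auxiliary side built by Lemma \ref{enlarge} using $V_i$ in the role of $y_i(N)-iv_r(\phi_r)$. The argument of Lemma \ref{enlarge} extends verbatim to show that for any $a(x)\in\zpx$ with $v_r(a)\ge V_i$, the value
\[
\gamma_i(a)\;:=\;z_{r-1}^{(\mathfrak{s}_i-\ell_{r-1}y_i(T))/e_{r-1}}\,R_{r-1}\bigl(a,T(i)\bigr)(z_{r-1})
\]
vanishes when $v_r(a)>V_i$ and equals the defined $c_i$ when $v_r(a)=V_i$. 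Specialised to $a\in\{a_i,b_i,a_i+b_i\}$ this gives $\tilde c_i(f)=\gamma_i(a_i)$, $\tilde c_i(g)=\gamma_i(b_i)$ and $\tilde c_i(f+g)=\gamma_i(a_i+b_i)$, all three sharing exactly the same twist factor.

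Because $v_r(a_i),\,v_r(b_i)\ge V_i$, the polygons $N_{r-1}^-(a_i)$ and $N_{r-1}^-(b_i)$ lie above the line of slope $\la_{r-1}$ at ordinate $V_i/e_{r-1}$, and so above $T(i)$; thus $a_i$ and $b_i$ lie above $T(i)$ in order $r-1$. The inductive hypothesis then yields $R_{r-1}(a_i+b_i,T(i))=R_{r-1}(a_i,T(i))+R_{r-1}(b_i,T(i))$, and multiplying by the common twist delivers $\tilde c_i(f+g)=\tilde c_i(f)+\tilde c_i(g)$ for every $i$, i.e.\ the desired equality of residual polynomials. The hard part is the linearisation: the original $c_i$ carries an $a_i$-dependent exponent $t_{r-1}(a_i)_i$ involving $s_{r-1}(a_i)$, and Lemma \ref{enlarge} has to be invoked precisely to absorb this exponent into $R_{r-1}(a_i,T(i))$, producing a twist factor that depends only on $T$ and $i$ — without this absorption, additivity at order $r$ could not be inherited cleanly from additivity at order $r-1$.
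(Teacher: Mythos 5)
Your proof is correct and takes essentially the same route as the paper: reduce to the coefficientwise identity $\tilde c_i(f+g)=\tilde c_i(f)+\tilde c_i(g)$, invoke Lemma \ref{enlarge} with $T$ in the role of $N$ to express each $\tilde c_i$ as a twist depending only on $T$ and $i$ times $R_{r-1}(\,\cdot\,,T(i))(z_{r-1})$, and then appeal to Lemma \ref{sumRr} in order $r-1$. You are slightly more explicit than the paper on two small points — replacing $\ell_{r-1}u_i$ by $\ell_{r-1}y_i(T)$ in the exponent to make the twist manifestly independent of the coefficient (harmless since $R_{r-1}(a_i,T(i))$ vanishes when $u_i>y_i(T)$), and checking via the valuation inequality that $f+g$ lies above $T$ — which is a useful clarification rather than a deviation.
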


\begin{proof}
Let $s_0\le s_1$ be the abscissas of the end points of $T$. We want to check that, for all integers $s_0\le i\le s_1$, 
\begin{equation}\label{sumci}
 \tilde{c}_i(f+g)=\tilde{c}_i(f)+\tilde{c}_i(g).
\end{equation}
Let $a_i(x)$, $b_i(x)$, be the respective $i$-th coefficients of the $\phi_r$-adic development of $f(x)$, $g(x)$; then, $a_i(x)+b_i(x)$ is the $i$-th coefficient of the $\phi_r$-adic development of $f(x)+g(x)$. 
By Lemma \ref{enlarge} applied to the point $(i,y_i(T))$ of $T$, $$\tilde{c}_i(f)=z_{r-1}^{(\mathfrak{s}_i-\ell_{r-1} u_i)/e_{r-1}}R_{r-1}(a_i,T(i))(z_{r-1}).$$
 Analogous equalities hold for $g(x)$ and $(f+g)(x)$, and (\ref{sumci})
follows from Lemma \ref{sumRr} itself, in order $r-1$ (cf. (\ref{sumR}) for $r=2$).
\end{proof}

\subsection{Admissible $\phi_r$-developments and Theorem of the product in order $r$}
Let
\begin{equation}\label{phdev2}
f(x)=\sum_{i\ge 0}a'_i(x)\phi_r(x)^i,\quad a'_i(x)\in\zpx,
\end{equation}
be a $\phi_r$-development of $f(x)$, not necessarily the $\phi_r$-adic
one. Let $N'$ be the
principal polygon of the set of points $(i,u'_i)$, with $u'_i=v_r(a'_i(x)\phi_r(x)^i)$. Let $i_1$ be the first abscissa with $a'_{i_1}(x)\ne0$. As we did in order one, to each integer abscissa $i_1\le i\le\ell(N')$ we attach a residual coefficient
$$\as{1.6}
c'_i=\left\{\begin{array}{ll}
0,&\mbox{ if $(i,u'_i)$ lies strictly above }N',\\z_{r-1}^{t'_{r-1}(i)}R_{r-1}(a'_i)(z_{r-1})\in\ff{r},&\mbox{ if $(i,u'_i)$ lies on }N'
\end{array}
\right.
$$ 
where $t'_{r-1}(i):=t_{r-1}(a'_i)_i$. 
For the points $(i,u'_i)$ lying on $N'$ we may have now $c'_i=0$; for instance in the case $a'_0(x)=f(x)$ the Newton polygon has only one point $(0,v_r(f))$ and $c'_0=0$ if $\om_r(f)>0$. 

Finally, for any negative rational number $\la_r=-h_r/e_r$, with $h_r,e_r$ positive coprime integers, we define the 
residual polynomial attached to the $\la_r$-component $S'=S_{\la_r}(N')$ to be
$$R'_{\la_r}(f)(y):=c'_{s'}+c'_{s'+e_r}\,y+\cdots+c'_{s'+(d'-1)e_r}\,y^{d'-1}+c'_{s'+d'e_r}\,y^{d'}\in\ff{r}[y],$$
where $d'=d(S')$ and $s'$ is the initial abscissa of $S'$.

\begin{definition}
We say that the $\phi_r$-development (\ref{phdev2}) is admissible if
for each abscissa $i$ of a vertex of $N'$
we have $c'_i\ne0$, or equivalently, $\om_r(a'_i)=0$.
\end{definition}

\begin{lemma}\label{admissible2}
If a $\phi_r$-development is admissible then
$N'=N_r^-(f)$ and $c'_i=c_i$ for all abscissas $i$ of the finite part of $N'$. In particular, for any negative rational number $\la_r$ we have $R'_{\la_r}(f)(y)=R_{\la_r}(f)(y)$.
\end{lemma}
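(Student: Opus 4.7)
The plan is to adapt the order-one proof of Lemma \ref{admissible} to order $r$, tracking two new features: $v_r(\phi_r) > 0$ (whereas $v_1(\phi_1)=0$) shifts every $v_r$-estimate by $v_r(\phi_r)$ per power of $\phi_r$, and residual coefficients are no longer reductions modulo $\m$ but twisted order-$(r-1)$ residual polynomials evaluated at $z_{r-1}$, so one must go through Lemma \ref{enlarge}. I begin by writing each $a'_i(x)$ in its true $\phi_r$-adic development $a'_i(x) = \sum_k b_{i,k}(x) \phi_r(x)^k$, and by uniqueness of the $\phi_r$-adic development of $f(x)$ obtain $a_j(x) = \sum_{0 \le k \le j} b_{j-k, k}(x)$. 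The arithmetic engine is Lemma \ref{shape}(1) applied to each $a'_i$, which yields $v_r(b_{i,k} \phi_r^k) \ge v_r(a'_i) = u'_i - i v_r(\phi_r)$ for every $k \ge 0$.

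Combined with $u'_{j-k} \ge y_{j-k}(N') \ge y_j(N')$ and strict decrease of $N'$ on its finite part (all sides of $N_r^-$ have strictly negative slopes), one deduces $u_j \ge y_j(N')$, so every point of $N_r^-(f)$ lies above $N'$. At a vertex $(j, y_j(N'))$ of $N'$, admissibility gives $\om_r(a'_j) = 0$, whence $v_r(b_{j,0}) = v_r(a'_j) = y_j(N') - j v_r(\phi_r) =: V$. For $k \ge 1$ the strict decrease of $N'$ upgrades the bound to $v_r(b_{j-k, k}) > V$, so in the sum $a_j = b_{j,0} + \sum_{k \ge 1} b_{j-k, k}$ the term $b_{j,0}$ is uniquely $v_r$-dominant and $u_j = u'_j$. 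Every vertex of $N'$ thus sits on $N_r^-(f)$, forcing $N' = N_r^-(f)$.

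For the equality $c_j = c'_j$, I invoke Lemma \ref{enlarge}, which writes both coefficients in the form $z_{r-1}^{\kappa} R_{r-1}(-, T(j))(z_{r-1})$ with common auxiliary side $T(j)$ attached to $(j, y_j(N))$ and common twist $\kappa$ as soon as $u_j = u'_j$. Using Lemma \ref{sumRr} in order $r-1$, I split $R_{r-1}(a_j, T(j))$ and $R_{r-1}(a'_j, T(j))$ term by term. In the $a_j$-sum the contributions with $k \ge 1$ vanish because $v_r(b_{j-k, k}) > V$ places the order-$(r-1)$ polygon of $b_{j-k, k}$ strictly above the line containing $T(j)$; in the $a'_j$-sum the contributions with $k \ge 1$ involve $\phi_r^k$, so by the Theorem of the product in order $r-1$ together with $R_{r-1}(\phi_r)(y) \sim \psi_{r-1}(y)$ and $\psi_{r-1}(z_{r-1}) = 0$ each such contribution evaluates to zero at $z_{r-1}$. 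Both expressions thus collapse to $R_{r-1}(b_{j,0}, T(j))(z_{r-1})$, yielding $c_j = c'_j$. The remaining sub-cases, where $(j, u'_j)$ is strictly above $N'$ or lies on $N'$ with $\om_r(a'_j) > 0$, are handled by the same estimates: they force $u_j > y_j(N)$ or $R_{r-1}(a'_j)(z_{r-1}) = 0$, so both $c_j$ and $c'_j$ vanish.

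The main delicacy is consistent bookkeeping of the $v_r(\phi_r)$-shift, which enters every inequality and must propagate cleanly to the strict bound $v_r(b_{j-k, k}) > V$ at a vertex. A secondary subtlety is that the vanishing of the $\phi_r^k$ contributions to $R_{r-1}(a'_j, T(j))(z_{r-1})$ for $k \ge 1$ is obtained not from any $v_r$-strict inequality (which may fail when $\om_r(a'_j)=0$ allows a slope-zero side in $N_r(a'_j)$) but from the arithmetic fact $\psi_{r-1}(z_{r-1}) = 0$ combined with the multiplicativity of $R_{r-1}$.
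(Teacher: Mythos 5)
Your proposal is correct and follows essentially the same line of argument as the paper: reduce to the $\phi_r$-adic developments of the $a'_i$, use Lemma \ref{shape} to get the valuation estimates, use Lemma \ref{enlarge} to express both residual coefficients with a common auxiliary side $T(j)$ and twist, and kill the $\phi_r^k$ contributions via $\psi_{r-1}(z_{r-1})=0$ and multiplicativity of $R_{r-1}$ (the paper packages these as a single polynomial $B$ with $\om_r(B)>0$). One small imprecision in your last paragraph: for the sub-case where $(j,u'_j)\in N'$ but $\om_r(a'_j)>0$, the fact $R_{r-1}(a'_j)(z_{r-1})=0$ only yields $c'_j=0$; to also get $c_j=0$ you need $u_j>y_j(N)$, which follows from the contrapositive of $u_j=u'_j\Rightarrow\om_r(a'_j)=0$ (established by the very estimates you already set up, plus Proposition \ref{pseudo}).
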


\begin{proof}
Consider the $\phi_r$-adic developments of $f(x)$ and each $a'_i(x)$:
$$
f(x)=\sum_{0\le i}a_i(x)\phi_r(x)^i,\qquad a'_i(x)=\sum_{0\le k}b_{i,k}(x)\phi_r(x)^k.
$$
By the uniqueness of the $\phi_r$-adic development we have
\begin{equation}\label{bi2}
a_i(x)=\sum_{0\le k\le i}b_{i-k,k}(x).
\end{equation}
Let us denote $w_{i,k}:=v_r(b_{i,k})$, $w:=v_r(\phi_r)$. By item 1 of Lemma \ref{shape},
$u'_i=v_r(a'_i)+iw=\min_{0\le k}\{w_{i,k}+(k+i)w\}$. Hence, for all $0\le k$ and all $0\le i\le \ell(N')$:
\begin{equation}\label{uprima}
w_{i,k}+(k+i)w\ge u'_i\ge y_i(N').
\end{equation}
Therefore, by (\ref{bi2}) and (\ref{uprima}), all points $(i,u_i)$ lie above $N'$; in fact
\begin{multline}\label{uuprima}
u_i=v_r(a_i)+iw\ge \min_{0\le k\le i}\{w_{i-k,k}+iw\}=w_{i-k_0,k_0}+iw\\
\ge u'_{i-k_0}\ge y_{i-k_0}(N')\ge  y_i(N'),
\end{multline}
for some $0\le k_0\le i$. On the other hand, for any abscissa $i$ of the finite part of $N'$ and for any $0<k\le i$ we have by (\ref{uprima})
\begin{equation}\label{zeroterm}
w_{i-k,k}\ge u'_{i-k}-iw \ge y_{i-k}(N')-iw>y_i(N')-iw. 
\end{equation}
The following claim ends the proof of the lemma:\e 

\noindent{\bf Claim. }Let $i$ be an abscissa of the finite part of $N'$ such that $(i,u'_i)\in N'$. Then, $u_i=u'_i$ if and only if $c'_i\ne0$; and in this case $c'_i=c_i$.

In fact, suppose $c'_i\ne0$, or equivalently, $\om_r(a'_i)=0$. We decompose
$$
a'_i(x)=b_{i,0}(x)+B(x),\quad B(x)=\sum_{0< k}b_{i,k}(x)\phi_r(x)^k.
$$
Note that $\om_r(B)>0$, because $\phi_r(x)|B(x)$. By (\ref{uprima}), $v_r(b_{i,0})=w_{i,0}\ge u'_i-iw=v_r(a'_i)$. Since $\om_r(a'_i)=0$ and $\om_r(B)>0$, item 1 of Proposition \ref{pseudo} shows that $v_r(b_{i,0})=\min\{v_r(a'_i),v_r(B)\}$; hence, $v_r(b_{i,0})=v_r(a'_i)$. By (\ref{bi2}) and (\ref{zeroterm}) we have $u_i-iw=v_r(a_i)=w_{i,0}=u'_i-iw$, so that $u_i=u'_i$.
Let $T(i)$ be the side attached to the point $(i,u'_i)\in N'$ in Lemma \ref{enlarge}. Since $R_{r-1}(B)(z_{r-1})=0$, (\ref{RvssubSr})
shows that $R_{r-1}(B,T(i))(z_{r-1})=0$. By Lemma \ref{sumRr}, $R_{r-1}(a'_i,T(i))(z_{r-1})=R_{r-1}(b_{i,0},T(i))(z_{r-1})$, and Lemma \ref{enlarge} shows that
\begin{align*}
c'_i=&\,(z_{r-1})^{(\mathfrak{s}_i-\ell_{r-1} u_i)/e_{r-1}}R_{r-1}(a'_i,T(i))(z_{r-1})\\
=&\,(z_{r-1})^{(\mathfrak{s}_i-\ell_{r-1} u_i)/e_{r-1}}R_{r-1}(b_{i,0},T(i))(z_{r-1})
\\=&\,(z_{r-1})^{(s_{r-1}(b_{i,0})-\ell_{r-1} u_i)/e_{r-1}}R_{r-1}(b_{i,0})(z_{r-1})\\
=&\,(z_{r-1})^{(s_{r-1}(a_i)-\ell_{r-1} u_i)/e_{r-1}}R_{r-1}(a_i)(z_{r-1})=c_i,
\end{align*}
the last but one equality because $S_{r-1}(a_i)=S_{r-1}(b_{i,0})$, $R_{r-1}(a_i)=R_{r-1}(b_{i,0})$, by (\ref{zeroterm}) and Proposition \ref{pseudo}.

Conversely, if $u_i=u'_i=y_i(N')$ we have necessarily $k_0=0$ in (\ref{uuprima}) and all inequalities of (\ref{uuprima}) are equalities. Hence, $w_{i,0}+iw=u'_i$, or equivalently, $v_r(a'_i)=v_r(b_{i,0})$. Since $\om_r(b_{i,0})=0$ and $\om_r(B)>0$, Proposition \ref{pseudo} shows that $\om_r(a'_i)=0$. This ends the proof of the claim.
\end{proof}

\begin{theorem}[Theorem of the product in order $r$]For any nonzero $f(x),g(x)\in\zpx$ and any negative rational number $\la_r$ we have
 $$N_r^-(fg)=N_r^-(f)+N_r^-(g),\qquad R_{\la_r}(fg)(y)=R_{\la_r}(f)(y)R_{\la_r}(g)(y).$$
\end{theorem}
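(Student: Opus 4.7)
My plan is to mimic the order-one proof, substituting $v_r$ for $v$, $\phi_r$ for $\phi$, $\omega_r$ for $\omega_1$, and using the pseudo-valuation property (Proposition~\ref{pseudo}) and the order-$(r-1)$ results as the replacement for the naive additivity of reduction mod $\m$. The key observation is that the product of the $\phi_r$-adic developments of $f$ and $g$ is an honest $\phi_r$-development of $fg$, so it suffices to establish admissibility and invoke Lemma~\ref{admissible2}.

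Concretely, write $f(x)=\sum a_i(x)\phi_r(x)^i$, $g(x)=\sum b_j(x)\phi_r(x)^j$, and
$$f(x)g(x)=\sum_k A_k(x)\phi_r(x)^k,\quad A_k(x)=\sum_{i+j=k}a_i(x)b_j(x),$$
and let $N_f=N_r^-(f)$, $N_g=N_r^-(g)$, $N'$ the principal polygon of the points $(k,w_k)$ with $w_k=v_r(A_k\phi_r^k)$. Set $u_i=v_r(a_i\phi_r^i)$, $v_j=v_r(b_j\phi_r^j)$. Since $v_r$ is a valuation, $v_r(a_ib_j\phi_r^{i+j})=u_i+v_j$; by Lemma~\ref{sum} the point $(i,u_i)+(j,v_j)$ lies above $N_f+N_g$, hence every $(k,w_k)$ does. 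At each vertex $P_k$ of $N_f+N_g$ Lemma~\ref{sum} provides a \emph{unique} pair $(i_0,j_0)$ with $i_0+j_0=k$ achieving $(i_0,u_{i_0})+(j_0,v_{j_0})=P_k$, while all other pairs produce points strictly above $N_f+N_g$. This gives $w_k=u_{i_0}+v_{j_0}=y_k(N_f+N_g)$, so $N'=N_f+N_g$ provided we show the coefficient $c'_k$ of this $\phi_r$-development is nonzero at vertices, i.e.\ admissibility.

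To compute $c'_k$ at a vertex, write $A_k=a_{i_0}b_{j_0}+E$ with $v_r(E)>v_r(a_{i_0}b_{j_0})=v_r(A_k)$. By Proposition~\ref{pseudo}(1) this forces $S_{r-1}(A_k)=S_{r-1}(a_{i_0}b_{j_0})$ and $R_{r-1}(A_k)=R_{r-1}(a_{i_0}b_{j_0})$. The induction hypothesis (Theorem of the product in order $r-1$) gives $R_{r-1}(a_{i_0}b_{j_0})=R_{r-1}(a_{i_0})R_{r-1}(b_{j_0})$ and $s_{r-1}(a_{i_0}b_{j_0})=s_{r-1}(a_{i_0})+s_{r-1}(b_{j_0})$; combined with $w_k=u_{i_0}+v_{j_0}$, the twist exponent satisfies $t_{r-1}(A_k)_k=t_{r-1}(a_{i_0})_{i_0}+t_{r-1}(b_{j_0})_{j_0}$. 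Therefore $c'_k=c_{i_0}(f)\,c_{j_0}(g)\ne 0$, so the development is admissible and Lemma~\ref{admissible2} yields $N_r^-(fg)=N_f+N_g$ together with the equality of the residual coefficients computed from the two developments.

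For the residual polynomial identity, fix $\la_r\in\Q^-$ and let $S'=S_{\la_r}(N')$, $S_f=S_{\la_r}(N_f)$, $S_g=S_{\la_r}(N_g)$; by \eqref{sumsla} we have $S'=S_f+S_g$. For an integer abscissa $k$ lying on $S'$ (not only vertices), let $I=\{(i,j):(i,u_i)\in S_f,(j,v_j)\in S_g,i+j=k\}$. Then $A_k=\sum_{(i,j)\in I}a_ib_j+E$, with every term $a_ib_j$ having the common $v_r$-value $w_k-kv_r(\phi_r)$ and $E$ having strictly larger $v_r$-value. Choose the auxiliary side $T(k)$ of Lemma~\ref{enlarge} attached to $(k,y_k(N'))$; each $a_ib_j$ with $(i,j)\in I$ lies above $T(k)$ in order $r-1$, and $E$ lies strictly above. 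By Lemma~\ref{sumRr} in order $r-1$,
$$R_{r-1}(A_k,T(k))=\sum_{(i,j)\in I}R_{r-1}(a_ib_j,T(k)),$$
and applying Lemma~\ref{enlarge} together with the order-$(r-1)$ product theorem to each summand (exactly as in the vertex computation above) yields $c'_k=\sum_{(i,j)\in I}c_i(f)c_j(g)$. Matching coefficients in the $y$-expansions gives $R'_{\la_r}(fg)(y)=R_{\la_r}(f)(y)R_{\la_r}(g)(y)$, and admissibility via Lemma~\ref{admissible2} finishes the proof.

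The main technical obstacle is the residual coefficient calculation: unlike in order one, $c'_k$ is not simply a reduction mod $\m$ but involves the twist $z_{r-1}^{t_{r-1}(k)}$ and the order-$(r-1)$ residual polynomial. Making the twists add correctly under multiplication requires both the multiplicativity of $s_{r-1}$ (from the order-$(r-1)$ product theorem) and the strict-valuation selection provided by Proposition~\ref{pseudo}, which together replace the trivial multiplicativity of the reduction map used in the $r=1$ case.
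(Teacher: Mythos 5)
Your proof is correct and follows the same strategy as the paper's: verify that the term-by-term product $\phi_r$-development of $fg$ is admissible and compute its residual coefficients, then invoke Lemma~\ref{admissible2}. The only cosmetic difference is in how admissibility is established at vertices: the paper uses the equivalent $\om_r(A_k)=0$ criterion via Proposition~\ref{pseudo}, while you compute $c'_k=c_{i_0}(f)c_{j_0}(g)\ne0$ directly, which is a valid and slightly more explicit route to the same conclusion.
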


\begin{proof}
Consider the respective $\phi_r$-adic developments
$$
f(x)=\sum_{0\le i}a_i(x)\phi_r(x)^i,\quad
g(x)=\sum_{0\le j}b_j(x)\phi_r(x)^j,
$$and denote $u_i=v_r(a_i\phi_r^i)$, $v_j=v_r(b_j\phi_r^j)$, $N_f=N_r^-(f)$, $N_g=N_r^-(g)$. Take
\begin{equation}\label{product2}
f(x)g(x)=\sum_{0\le k}A_k(x)\phi_r(x)^k,\qquad
A_k(x)=\sum_{i+j=k}a_i(x)b_j(x),
\end{equation}
and denote by $N'$ the principal part of the Newton polygon of order $r$ of $fg$, determined by this $\phi_r$-development.

We shall show that $N'=N_f+N_g$, that this $\phi_r$-development is admissible, and that $R'_{\la_r}(fg)=R_{\la_r}(f)R_{\la_r}(g)$ for all negative $\la_r$. The theorem will be then a
consequence of Lemma \ref{admissible2}.

Let $w_k:=v_r(A_k\phi_r^ k)$ for all $0\le k$. Lemma
\ref{sum} shows that the point $(i,u_i)+(j,v_j)$ lies above $N_f+N_g$
for any $i,j\ge 0$. Since $w_k\ge
\min\{u_i+v_j,i+j=k\}$, the points $(k,w_k)$ lie all above $N_f+N_g$ too.
On the other hand, let $P_k=(k,y_k(N_f+N_g))$ be a vertex of $N_f+N_g$; that is, $P_k$ is the end point
of $S_1+\cdots+S_r+T_1+\cdots+T_s$, for certain sides $S_i$ of $N_f$ and $T_j$ of $N_g$, ordered by increasing slopes among all sides of $N_f$ and $N_g$. By Lemma \ref{sum}, for all
pairs $(i,j)$ such that $i+j=k$, the point $(i,u_i)+(j,v_j)$ lies
strictly above $N_f+N_g$ except for the pair
$i_0=\ell(S_{r-1}+\cdots+S_r)$, $j_0=\ell(T_{r-1}+\cdots+T_s)$ that
satisfies $(i_0,u_{i_0})+(j_0,v_{j_0})=P_k$. Thus,
$(k,w_k)=P_k$. This shows that $N'=N_f+N_g$.

Moreover, for all $(i,j)\ne(i_0,j_0)$ we have 
$$v_r(A_k\phi_r^k)=v_r(a_{i_0}b_{j_0}\phi_r^k)<v_r(a_ib_j\phi_r^k),
$$
so that $v_r(A_k)=v_r(a_{i_0}b_{j_0})<v_r(a_ib_j)$. By Proposition \ref{pseudo}, $\om_r(A_k)=\om_r(a_{i_0}b_{j_0})=\om_r(a_{i_0})+\om_r(b_{j_0})=0$, and the $\phi_r$-development (\ref{product2}) is admissible.

Finally, by (\ref{sumsla}), the $\la_r$-components $S'=S_{\la_r}(N')$, $S_f=S_{\la_r}(N_f)$, $S_g=S_{\la_r}(N_g)$ are related by: $S'=S_f+S_g$. Let $(k,y_k(N'))$ be a point of integer coordinates lying on $S'$ (not necessarily a vertex), and let $T(k)$ be the corresponding side of slope $\la_{r-1}$ given in Lemma \ref{enlarge}, with starting abscissa $\mathfrak{s}_k$. Denote by $I$ the set of the pairs $(i,j)$ such that $(i,u_i)$ lies on $S_f$, $(j,v_j)$ lies on $S_g$, and $i+j=k$. Take $P(x)=\sum_{(i,j)\in I}a_i(x)b_j(x)$. By Lemma \ref{sum}, for all other pairs $(i,j)$ with $i+j=k$, the point $(i,u_i)+(j,v_j)$ lies strictly above $N'$. 
By Lemma \ref{sumRr},  
$$R_{r-1}(A_k,T(k))=R_{r-1}(P,T(k))=\sum_{(i,j)\in I}R_{r-1}(a_ib_j,T(k)).$$ Lemma \ref{enlarge}, (\ref{RvssubSr}) and the Theorem of the product in order $r-1$ show that
\begin{align*}
c'_k(fg)=&\,(z_{r-1})^{\frac{\mathfrak{s}_k-\ell_{r-1} w_k}{e_{r-1}}}R_{r-1}(A_k,T(k))(z_{r-1})\\=&\,(z_{r-1})^{\frac{\mathfrak{s}_k-\ell_{r-1} w_k}{e_{r-1}}}\sum_{(i,j)\in I}R_{r-1}(a_ib_j,T(k))(z_{r-1})\\
=&\sum_{(i,j)\in I}(z_{r-1})^{\frac{s_{r-1}(a_ib_j)-\ell_{r-1} w_k}{e_{r-1}}}R_{r-1}(a_ib_j)(z_{r-1})\\=&\sum_{(i,j)\in I}(z_{r-1})^{t_{r-1}(i,f)+t_{r-1}(j,g)}R_{r-1}(a_i)(z_{r-1})R_{r-1}(b_j)(z_{r-1})\\
=&\sum_{(i,j)\in I}c_i(f)c_j(g).
\end{align*}
This shows that the residual polynomial attached to $S'$ with respect to the $\phi_r$-development (\ref{product2}) is $R_{\la_r}(f)R_{\la_r}(g)$.
\end{proof}

\begin{corollary}\label{vshift}
Let $f(x)\in \zpx$ be a monic polynomial with $\om_r(f)>0$, and let $f_{\ty}(x)$ be the monic factor of $f(x)$ determined by $\ty$ (cf. Definition \ref{ppt}). Then $N_r(f_{\ty})$ is equal to $N_r^-(f)$ up to a vertical shift, and $R_{\la_r}(f)\sim R_{\la_r}(f_\ty)$ for any negative rational number $\la_r$. 
\end{corollary}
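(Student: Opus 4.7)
The strategy is to write $f(x)=f_\ty(x)g(x)$ with $g(x):=f(x)/f_\ty(x)\in\oo[x]$ monic, and to show that $g$ contributes only a trivial translation and a nonzero constant factor. The Theorem of the product in order $r$ (just proved) gives $N_r^-(f)=N_r^-(f_\ty)+N_r^-(g)$; taking lengths and using Lemma \ref{shape}(2) yields $\om_r(f)=\om_r(f_\ty)+\om_r(g)$. Combined with $\om_r(f_\ty)=\om_r(f)$ from (\ref{sameomega}), this forces $\om_r(g)=0$, so $N_r^-(g)$ has length zero and reduces to the single point $(0,v_r(g))$.

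The polygon statement is then immediate: $N_r^-(f)=N_r^-(f_\ty)+(0,v_r(g))$, which is a vertical translation of $N_r^-(f_\ty)$ by $v_r(g)$. Since $f_\ty$ has type $\ty$, Corollary \ref{rminus} gives $N_r(f_\ty)=N_r^-(f_\ty)$, so indeed $N_r^-(f)$ equals $N_r(f_\ty)$ up to a vertical shift.

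For the residual polynomials, for any $\la_r\in\Q^-$ the $\la_r$-component $S_{\la_r}(g)$ coincides with the single point $(0,v_r(g))$, so $R_{\la_r}(g)(y)$ has degree $0$. To see that this degree-$0$ polynomial is nonzero, note that the $0$-th $\phi_r$-adic coefficient $a_0$ of $g$ satisfies $\dg a_0<m_r$, whence $\om_r(a_0)=0$ by Lemma \ref{typedegree}(2), and the remark just after Definition \ref{rescoeff} guarantees $c_0\ne 0$. Hence $R_{\la_r}(g)$ is a nonzero constant, and the multiplicative part of the Theorem of the product yields $R_{\la_r}(f)=R_{\la_r}(f_\ty)R_{\la_r}(g)\sim R_{\la_r}(f_\ty)$.

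There is no real obstacle here: every ingredient (additivity of polygon lengths, the defining properties of $f_\ty$, the equality $N_r=N_r^-$ for polynomials of type $\ty$, and the product rules for polygons and residual polynomials) is already in place. The only subtle point worth isolating is the verification that the lone residual coefficient of $N_r^-(g)$ is nonzero, which is precisely what Lemma \ref{typedegree}(2) delivers via the degree bound $\dg a_0<m_r$.
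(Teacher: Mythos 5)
Your proof is correct and follows essentially the same route as the paper: write $f=f_\ty g$, invoke the Theorem of the product in order $r$ and the identity $\om_r(f_\ty)=\om_r(f)$ from (\ref{sameomega}) to force $\om_r(g)=0$, then use Lemma \ref{shape} to conclude that $N_r^-(g)$ collapses to a single point. Your added verification that the constant $R_{\la_r}(g)$ is nonzero (via $\dg a_0<m_r\Rightarrow\om_r(a_0)=0$ and the remark after Definition \ref{rescoeff}) is sound but is a detail the paper takes as already guaranteed by the very definition of the residual polynomial, whose leading and trailing coefficients are always nonzero.
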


\begin{proof}
Let $f(x)=f_{\ty}(x)g(x)$. By (\ref{sameomega}), $\om_r(g)=0$. By the Theorem of the product, $N_r^-(f)=N_r^-(f_{\ty})+N_r^-(g)$ and $R_{\la_r}(f)=R_{\la_r}(f_\ty)R_{\la_r}(g)$. Since  
$N_r^-(g)$ reduces to a point with abscissa $0$ (cf. Lemma \ref{shape}), the polygon $N_r^-(f)$ is a vertical shift of $N_r^-(f_{\ty})$ and $R_{\la_r}(g)$ is a cons\-tant.
\end{proof}

\section{Dissections in order $r$}\label{secOre}
In this section we extend to order $r$ the Theorems of the polygon and of the residual polynomial. We fix throughout a type $\ty$ of order $r-1$ and a representative $\phi_r(x)$ of $\ty$. We proceed by induction and we asume that all results of this section have been proved already in orders $1,\dots,r-1$. The case $r=1$ was considered in section \ref{secNP}. 
 
\subsection{Theorem of the polygon in order $r$}
Let $f(x)\in\zpx$ be a monic polynomial such that $\om_r(f)>0$. The aim of this section is to obtain a factori\-zation of $f_{\ty}(x)$ and certain arithmetic data of the factors. Thanks to Corollary \ref{vshift}, we shall be able to read this information directly on $N_r^-(f)$, and the different residual polynomials $R_{\la_r}(f)(y)$.

\begin{theorem}[Theorem of the polygon in order $r$]\label{thpolygonr}
Let $f(x)\in\zpx$ be a monic polynomial such that $\om_r(f)>0$. Suppose that 
$N_r^-(f)=S_1+\cdots+S_g$ has $g$ sides with pairwise different slopes
$\la_{r,1},\dots,\la_{r,g}$. Then, $f_{\ty}(x)$ admits a factorization
$$f_{\ty}(x)=F_1(x)\cdots F_g(x),$$ as a product of $g$ monic polynomials of $\zpx$ satisfying
the following properties:
\begin{enumerate}
\item $N_r(F_i)$ is equal to $S_i$ up to a translation,
\item If $S_i$ has finite slope, then $R_{\la_{r,i}}(F_i)(y)\sim R_{\la_{r,i}}(f)(y)$
\item   For any root $\t\in\qpb$ of $F_i(x)$, $v(\phi_r(\t))=(v_r(\phi_r)+|\la_{r,i}|)/e_1\cdots e_{r-1}$. 
\end{enumerate}
\end{theorem}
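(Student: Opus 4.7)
My plan is to imitate the proof of the order-one Theorem of the polygon, using the Theorem of the product in order $r$ (just proved), Corollary~\ref{vshift}, and Corollary~\ref{rminus} to reduce to the case where $F(x):=f_{\ty}(x)$ is already irreducible in $\zpx$. Once $f_{\ty}$ is written as a product of irreducibles, each irreducible factor $F$ of $f_\ty$ has $\om_r(F)>0$, hence has type $\ty$ by Lemma~\ref{factortype}; grouping these factors according to the common slope produced in the argument below will yield the factors $F_1,\dots,F_g$. Item~(2) will then drop out of the Theorem of the product applied to the decomposition $f_\ty=F_1\cdots F_g$ (since a factor with a one-sided polygon of slope $\la_{r,j}$ contributes a constant to $R_{\la_{r,i}}$ when $i\ne j$), together with Corollary~\ref{vshift}, which identifies $R_{\la_{r,i}}(f)$ with $R_{\la_{r,i}}(f_\ty)$ up to a unit.

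So assume $F\in\zpx$ is monic, irreducible, and of type $\ty$; pick a root $\t\in\qpb$ and set $\mu:=v(\phi_r(\t))$. Since $F$ is irreducible, $v(\phi_r(\t))$ is the same for every Galois conjugate of $\t$. By Proposition~\ref{vpt} applied to $P(x)=\phi_r(x)$, we have $\mu\ge v_r(\phi_r)/(e_1\cdots e_{r-1})$ with strict inequality because $\om_r(\phi_r)=1$ by Theorem~\ref{phir}; thus the rational number
\[
|\la|:=e_1\cdots e_{r-1}\,\mu-v_r(\phi_r)
\]
is strictly positive. Let $P(y)=y^k+c_{k-1}y^{k-1}+\cdots+c_0\in\oo[y]$ be the minimal polynomial of $\phi_r(\t)$ over $K$; because every root of $P$ is Galois-conjugate to $\phi_r(\t)$, standard estimates on elementary symmetric functions give $v(c_0)=k\mu$ and $v(c_i)\ge (k-i)\mu$. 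Form
\[
Q(x):=P(\phi_r(x))=\phi_r(x)^k+c_{k-1}\phi_r(x)^{k-1}+\cdots+c_0,
\]
which, since $\dg c_i=0<m_r$, is already the $\phi_r$-adic development of $Q$.

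I now compute $N_r(Q)$. Using $v_r(c_i)=e_1\cdots e_{r-1}\,v(c_i)$ (because $v_r$ extends $v$ with index $e_1\cdots e_{r-1}$), the point attached to the coefficient $c_i$ is $(i,u_i)$ with
\[
u_i=e_1\cdots e_{r-1}\,v(c_i)+i\,v_r(\phi_r)\ge e_1\cdots e_{r-1}(k-i)\mu+i\,v_r(\phi_r)=u_k+(k-i)|\la|,
\]
with equality at $i=0$ because $v(c_0)=k\mu$. Hence $N_r(Q)$ is one-sided of slope $\la:=-|\la|<0$. Since $Q(\t)=0$ and $F$ is irreducible, $F\mid Q$ in $\zpx$; the Theorem of the product in order $r$ then gives $N_r^-(F)+N_r^-(Q/F)=N_r^-(Q)=N_r(Q)$, a single side of slope $\la$. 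Because $F$ has type $\ty$, we have $N_r(F)=N_r^-(F)$ by Corollary~\ref{rminus} and $\ell(N_r^-(F))=\om_r(F)>0$ by Lemma~\ref{shape}; so $N_r(F)$ must itself be the one-sided polygon of slope $\la$ (up to the obvious translation). This proves items~(1) and~(3) for irreducible $F$, and, when combined with the reduction in the first paragraph, for $f_\ty$ as a whole.

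The main technical point is the slope computation in $N_r(Q)$: it is what forces the chain of ``projections'' that defines $v_r$ to be compatible with the symmetric-function estimates for $v(c_i)$. The identity $v_r|_K=(e_1\cdots e_{r-1})v$ established for the valuation $v_r$ is precisely what converts the ordinary $p$-adic estimates $v(c_i)\ge(k-i)\mu$ into the required inequalities $u_i\ge u_k+(k-i)|\la|$ in order $r$; once this is in hand, the rest of the argument is purely formal, parallel to the order-one case.
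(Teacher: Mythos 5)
Your argument follows the paper's own proof step by step: reduce to an irreducible factor $F$ of $f_\ty$, show it is of type $\ty$ via Lemma~\ref{factortype}, let $P(y)$ be the minimal polynomial of $\phi_r(\t)$, transfer the symmetric-function bounds on the coefficients of $P$ to bounds on the points of $N_r(Q)$ for $Q=P(\phi_r)$ using $v_r\vert_K=(e_1\cdots e_{r-1})v$, and conclude via the Theorem of the product and Corollaries~\ref{rminus} and~\ref{vshift}. This is precisely the paper's argument, so the approach is the same.

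There is, however, one genuine gap: you never deal with $\mu=v(\phi_r(\t))=\infty$, i.e.\ $F(x)=\phi_r(x)$. This case really occurs — it is exactly the side of slope $-\infty$ of $N_r^-(f)$, which appears whenever $\phi_r(x)$ divides $f(x)$ in $\zpx$ — and when it does, the object $|\la|:=e_1\cdots e_{r-1}\mu-v_r(\phi_r)$ you introduce is not a rational number and the minimal polynomial of $\phi_r(\t)=0$ degenerates to $y$, so your computation of $N_r(Q)$ breaks down. The fix is a single sentence (which the paper includes): if $\mu=\infty$ then $F=\phi_r$, so $N_r(F)$ is one-sided of slope $-\infty$ by Lemma~\ref{shape}, item~(2) is vacuous, and item~(3) reads $\infty=\infty$; hence one may assume $\mu$ finite. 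Without that sentence the proposal does not actually cover the $S_i$ of slope $-\infty$ that the theorem explicitly allows.
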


\begin{proof}Let us denote $e=e_1\cdots e_{r-1}$.
We deal first with the case $f_\ty(x)$ irreducible. Note that $\dg f_{\ty}=m_r\om_r(f)>0$, by Lemma \ref{typedegree}, and $N_r(f_\ty)=N_r^-(f_\ty)$ by Corollary \ref{rminus}. Since $f_\ty(x)$ is irreducible, $\rho:=v(\phi_r(\t))$ is constant among all roots $\t\in\qpb$ of $f_\ty(x)$, and $0\le v_r(\phi_r)/e<\rho$, by Proposition \ref{vpt}. We have $\rho=\infty$ if and only if $f_\ty(x)=\phi_r(x)$, and in this case the theorem is clear. Suppose $\rho$ is finite.

Let $P(x)=\sum_{0\le i\le k}b_ix^ i\in\zpx$ be the minimal polynomial of $\phi_r(\t)$, and let $Q(x)=P(\phi_r(x))=\sum_{0\le i\le k}b_i\phi_r(x)^i$. By the Theorem of the polygon in order one, the $x$-polygon of $P$ has only one side and it has slope $-\rho$. 
The end points of $N_r(Q)$ are $(0,ek\rho)$ and $(k,kv_r(\phi_r))$. Now, for all $0\le i\le k$,
$$
\dfrac{v_r(b_i\phi_r^i)-kv_r(\phi_r)}{k-i}=\dfrac{ev(b_i)+iv_r(\phi_r)-kv_r(\phi_r)}{k-i}\ge  
e\rho-v_r(\phi_r).$$
This implies that $N_r(Q)$ has only one side and it has slope $\la_r:=-(e\rho-v_r(\phi_r))$. Since $Q(\t)=0$, $f_\ty(x)$ divides $Q(x)$ and the Theorem of the product shows that $N_r(f_\ty)$ is one-sided, with the same slope. Also, $R_{\la_r}(f_\ty)\sim R_{\la_r}(f)$ by Corollary \ref{vshift}. This ends the proof of the theorem when $f_\ty(x)$ is irreducible.

If $f_\ty(x)$ is not necessarily irreducible, we consider its decomposition $f_{\ty}(x)=\prod_jP_j(x)$ into a product of monic irreducible factors in $\zpx$. By Lemma \ref{factortype}, each $P_j(x)$ has type $\ty$ and by  the proof in the irreducible case, each $P_j(x)$ has a one-sided $N_r(P_j)$. The Theorem of the product shows that the slope of $N_r(P_j)$ is $\la_{r,i}$ for some $1\le i\le s$. If we group these factors according to the slope, we get the desired factorization. By the Theorem of the product, $R_{\la_{r,i}}(F_i)\sim R_{\la_{r,i}}(f_\ty)$, because $R_{\la_{r,i}}(F_j)$ is a constant for all $j\ne i$. Finally, $R_{\la_{r,i}}(f_\ty)\sim R_{\la_{r,i}}(f)$ by Corollary \ref{vshift}.
The statement about $v(\phi_r(\t))$ is obvious because $P_j(\t)=0$ for some $j$, and we have already proved the formula for an irreducible polynomial.
\end{proof}

We recall that the factor corresponding to a side $S_i$ of slope $-\infty$ is necessarily $F_i(x)=\phi_r(x)^{\ord_{\phi_r}(f)}$ (cf. Remark \ref{phiell}). 

Let $\la_r=-h_r/e_r$, with $h_r,e_r$ positive coprime integers, be a negative rational number such that $S:=S_{\la_r}(f)$ has positive length. Let $f_{\ty,\la_r}(x)$ be the factor of $f(x)$, corresponding to the pair $\tilde{\ty},\la_r$ by the Theorem of the polygon. Choose a root $\t\in\qpb$ of $f_{\ty,\la_r}(x)$, and let $L=K(\t)$.
By item 4 of Propositions \ref{vqt} and \ref{vqtr}, in orders $1,\dots,r-1$, there is a well-defined embedding
 $\ff{r}\lra \ff{L}$, determined by 
\begin{equation}\label{embeddingr}
\ff{r}\hookrightarrow \ff{L}, \ z_0\mapsto \tb, \ z_1\mapsto \gb1,\ \dots,\ z_{r-1}\mapsto \gb{r-1}.
\end{equation}
This embedding depends on the choice of $\t$. After this identification of $\ff{r}$ with a subfield of $\ff{L}$ we can think that all residual polynomials of $r$-th order have coefficients in $\ff{L}$. 

\begin{corollary}\label{zerovalue}For the rational functions of Definition \ref{ratfracs}:
\begin{enumerate}
 \item $v(\phi_r(\t))=\sum_{i=1}^re_if_i\cdots e_{r-1}f_{r-1}h_i/(e_1\cdots e_i)$,
\item $v(\pi_r(\t))=1/(e_1\cdots e_{r-1})$,
\item $v(\Phi_r(\t))=h_r/(e_1 \cdots e_r)$,
\item $v(\ga_r(\t))=0$.
\end{enumerate}
\end{corollary}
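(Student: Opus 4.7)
The plan is to obtain item 1 directly from the main statement of Theorem \ref{thpolygonr} combined with the explicit formula for $v_r(\phi_r)$ in Proposition \ref{vrphii}, and then derive items 2, 3, 4 by computing valuations term-by-term using the recursive Definition \ref{ratfracs}. Since $|\la_r| = h_r/e_r$, Theorem \ref{thpolygonr} gives
$$v(\phi_r(\t)) = \frac{v_r(\phi_r) + h_r/e_r}{e_1 \cdots e_{r-1}},$$
and substituting the ``moreover'' expression $v_r(\phi_r) = \sum_{j=1}^{r-1}(e_{j+1}\cdots e_{r-1})(e_jf_j \cdots e_{r-1}f_{r-1})h_j$ from Proposition \ref{vrphii} recovers the summands $j = 1,\dots,r-1$ of item 1; the term $h_r/(e_1 \cdots e_r)$ arising from $|\la_r|$ then supplies the $j = r$ summand, under the convention that the empty product $e_rf_r \cdots e_{r-1}f_{r-1}$ equals $1$.

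For the remaining items the key observation is that Proposition \ref{vpt} extends at once to any rational function $R \in K(x)^*$ of the form $P_1/P_2$ with $P_i \in \zpx$ and $\om_r(P_i) = 0$: by additivity of $v$ and $v_r$ one gets $v(R(\t)) = v_r(R)/(e_1 \cdots e_{r-1})$. By Lemma \ref{omji} together with $\om_r(\pi) = 0$ and the additivity of $\om_r$ on products, every monomial $\pi^{n_0}\phi_1^{n_1}\cdots\phi_{r-1}^{n_{r-1}}$ has $\om_r = 0$; splitting its negative exponents into the denominator puts it in the admissible form. The rational functions $\pi_r(x)$ and $\pi_{r-1}(x)$ are such monomials (they involve no $\phi_r$), so Proposition \ref{vrphii} yields
$$v(\pi_r(\t)) = \frac{v_r(\pi_r)}{e_1\cdots e_{r-1}} = \frac{1}{e_1\cdots e_{r-1}}, \qquad v(\pi_{r-1}(\t)) = \frac{e_{r-1}}{e_1\cdots e_{r-1}} = \frac{1}{e_1\cdots e_{r-2}},$$
which proves item 2 and records the auxiliary value needed for item 3.

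For item 3, from $\Phi_r = \phi_r/\pi_{r-1}^{f_{r-1}v_r(\phi_{r-1})}$ and the previous computation,
$$v(\Phi_r(\t)) = v(\phi_r(\t)) - \frac{f_{r-1}v_r(\phi_{r-1})}{e_1\cdots e_{r-2}}.$$
Using item 1 together with the identity $v_r(\phi_r) = e_{r-1}f_{r-1}v_r(\phi_{r-1})$ from (\ref{vrphir}), the first term splits as $f_{r-1}v_r(\phi_{r-1})/(e_1\cdots e_{r-2}) + h_r/(e_1\cdots e_r)$, and the portion $f_{r-1}v_r(\phi_{r-1})/(e_1\cdots e_{r-2})$ cancels exactly with the subtracted term, leaving $h_r/(e_1\cdots e_r)$. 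Item 4 is then immediate from $\ga_r = \Phi_r^{e_r}/\pi_r^{h_r}$ and items 2, 3: $v(\ga_r(\t)) = e_rh_r/(e_1\cdots e_r) - h_r/(e_1\cdots e_{r-1}) = 0$. The only conceptual ingredient beyond algebra is the extension of Proposition \ref{vpt} to rational monomials in $\pi, \phi_1, \dots, \phi_{r-1}$; since $v$ and $v_r$ are both group homomorphisms, this extension is essentially formal, and the remainder of the argument is bookkeeping with Definition \ref{ratfracs}.
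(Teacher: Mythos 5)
Your proof is correct and takes essentially the same route as the paper's: item 1 is Theorem \ref{thpolygonr} plus the closed form for $v_r(\phi_r)$ from Proposition \ref{vrphii}, and items 2--4 come from Proposition \ref{vpt} together with the $v_r$-values and $\om_r$-vanishing established in Proposition \ref{vrphii}, unwound via the recursive Definition \ref{ratfracs}. The only difference is cosmetic: you spell out the routine extension of Proposition \ref{vpt} to rational monomials in $\pi,\phi_1,\dots,\phi_{r-1}$ and compute $v(\pi_{r-1}(\t))$ directly from Proposition \ref{vrphii}, whereas the paper compresses this into the phrase ``item 2 in order $r-1$''; the underlying calculation is the same.
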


\begin{proof}
Item 1 is a consequence of the Theorem of the polygon and the formula for $v_r(\phi_r)$ in Proposition \ref{vrphii}. 
Item 2 follows from Proposition \ref{vpt}, because $v_r(\pi_r)=1$, $\om_r(\pi_r)=0$ by Proposition \ref{vrphii}.
Item 3 follows from the Theorem of the polygon and item 2 in order $r-1$. Item 4 follows from items 2,3.
\end{proof}

\begin{corollary}\label{eeee}
With the above notation for $L$, the residual degree $f(L/K)$ is divisible by $f_0\cdots f_{r-1}$, and the ra\-mification index $e(L/K)$ is divisible by $e_1\cdots e_r$. Moreover, the number of irreducible factors of $f_{\ty,\la_r}(x)$ is at most $d(S)$; in particular, if $d(S)=1$ the polynomial $f_{\ty,\la_r}(x)$ is irreducible in $\zpx$, and $f(L/K)=f_0\cdots f_{r-1}$, $e(L/K)=e_1\cdots e_r$.
\end{corollary}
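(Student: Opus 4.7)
The plan is to handle the four assertions in sequence, each feeding into the next, using only Corollary \ref{zerovalue}, the embedding (\ref{embeddingr}), and the inheritance of the type-$\ty$ property through Lemma \ref{factortype}.

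For the residual-degree divisibility, the embedding (\ref{embeddingr}) realises $\ff{r}$ as a subfield of $\ff{L}$. By the recursive definition of a type of order $r-1$, $[\ff{i+1}:\ff{i}]=\dg\psi_i=f_i$ for every $0\le i<r$, so the tower formula gives $[\ff{r}:\ff{}]=f_0f_1\cdots f_{r-1}$, and this integer therefore divides $f(L/K)=[\ff{L}:\ff{}]$.

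For the ramification divisibility, I would combine items (2) and (3) of Corollary \ref{zerovalue} with the normalisations $v_L=e(L/K)\,v$ and $v_L(L^*)=\Z$. Item (2) gives $v_L(\pi_r(\t))=e(L/K)/(e_1\cdots e_{r-1})\in\Z$, forcing $e_1\cdots e_{r-1}\mid e(L/K)$. Item (3) then gives $v_L(\Phi_r(\t))=e(L/K)h_r/(e_1\cdots e_r)\in\Z$, and because $h_r$ and $e_r$ are coprime this upgrades the divisibility to $e_1\cdots e_r\mid e(L/K)$.

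For the bound on the number of irreducible factors, I would first observe that $f_{\ty,\la_r}$ still has type $\ty$ (Lemma \ref{factortype}(3), since it divides $f_\ty$ in $\zpx$). By Theorem \ref{thpolygonr} its $r$-th order Newton polygon is one-sided of slope $\la_r$ and length $d(S)e_r$, so Corollary \ref{rminus} combined with Lemma \ref{typedegree}(3) yields
$$
\dg f_{\ty,\la_r}=m_r\,\om_r(f_{\ty,\la_r})=m_r\cdot d(S)\,e_r.
$$
Every monic irreducible factor $P$ of $f_{\ty,\la_r}$ in $\zpx$ also has type $\ty$ (Lemma \ref{factortype}(3) again), so applying the residual-degree and ramification divisibilities already proven to the field $K(\t_P)$ generated by a root of $P$ forces $\dg P\ge(f_0\cdots f_{r-1})(e_1\cdots e_r)=m_re_r$. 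Dividing shows there are at most $d(S)$ irreducible factors. When $d(S)=1$ this inequality is forced to be an equality: $f_{\ty,\la_r}$ must be irreducible of degree $m_re_r$, so $[L:K]=m_re_r=(f_0\cdots f_{r-1})(e_1\cdots e_r)$, and the two divisibilities then have no room to be strict, collapsing to $f(L/K)=f_0\cdots f_{r-1}$ and $e(L/K)=e_1\cdots e_r$.

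The one place where I would want to be careful is the identity $m_r=f_0\cdot e_1f_1\cdots e_{r-1}f_{r-1}$, which unpacks the recursion $m_{i+1}=m_ie_if_i$ with $m_1=f_0$; this is precisely what makes $(f_0\cdots f_{r-1})(e_1\cdots e_r)=m_re_r$, so that the per-factor lower bound coming from the first two divisibilities matches the total-degree count exactly. Beyond this numerological bookkeeping, every step is a short divisibility observation and the main flow is purely a synthesis of the results proved just above.
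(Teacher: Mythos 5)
Your proof is correct, and the overall strategy matches the paper's: the residual-degree divisibility comes from the embedding (\ref{embeddingr}); the ramification divisibility is split into the $e_1\cdots e_{r-1}$ piece and the extra $e_r$ piece, the latter extracted from integrality of a $v_L$-value together with $\gcd(h_r,e_r)=1$; the factor count follows from a subadditive degree bound; and the $d(S)=1$ case is closed by comparing $\dg f_{\ty,\la_r}$ with $f(L/K)\,e(L/K)$. The one genuine difference is in packaging: where the paper obtains $e_1\cdots e_{r-1}\mid e_L$ by directly citing the corollary itself in order $r-1$ (Corollary \ref{ram} for $r=2$) and then uses the Theorem of the polygon on $v_L(\phi_r(\t))$ for the $e_r$ upgrade, you instead read both steps off items (2) and (3) of Corollary \ref{zerovalue}. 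This is a cosmetic difference — Corollary \ref{zerovalue} is itself proved by exactly that recursive route — but it is a tidier single reference. Likewise, the paper dispatches the bound on the number of irreducible factors with the single sentence "a consequence of the Theorem of the product," implicitly counting degrees of residual polynomials; your explicit degree count via $\dg P\ge m_re_r$ for each irreducible $P$ of type $\ty$ is an equivalent (and more transparent) unwinding of the same idea.
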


\begin{proof}
The statement on the residual degree is a consequence of the embedding (\ref{embeddingr}). Denote $e_L=e(L/K)$, $e=e_1\cdots e_{r-1}$, $f=f_0\cdots f_{r-1}$. By the same result in order $r-1$ (cf. Corollary \ref{ram} for $r=2$), $e_L$ is divisible by $e$. Now, by the theorem of the polygon,
$v_L(\phi_r(\t))=(e_L/e)v_r(\phi_r)+(e_L/e)(h_r/e_r)$.  Since this is an integer and $h_r,e_r$ are coprime, necessarily $e_r$ divides $e_L/e$. 

The upper bound for the number of irreducible factors is a consequence of the Theorem of the product. Finally, if $d(S)=1$, we have $efe_r=\dg(f_{\ty,\la_r})=f(L/K)e(L/K)$, and necessarily $f(L/K)=f$ and $e(L/K)=ee_r$. 
\end{proof}

We prove now an identity that plays an essential role in what follows.

\begin{lemma}\label{identity}
Let $P(x)=\sum_{0\le i}a_i(x)\phi_r(x)^i$ be the $\phi_r$-adic development of a nonzero polynomial in $\zpx$. Let $\la_r=-h_r/e_r$ be a negative rational number, with $h_r,\,e_r$ coprime positive integers. Let $S=S_{\la_r}(P)$ be the $\la_r$-component of $N_r^-(P)$, let $(s,u)$ be the initial point of $S$ and $(i,u_i)$ any point lying on $S$. Let $(s(a_i),u(a_i))$ be the initial point of the side $S_{r-1}(a_i)$. Then, the following identity holds in $K(x)$:
\begin{equation}\label{id}
\phi_r(x)^i\dfrac{\Phi_{r-1}(x)^{s(a_i)}\pi_{r-1}(x)^{u(a_i)}}{\Phi_r(x)^s\pi_r(x)^u}=\ga_{r-1}(x)^{t_{r-1}(i)}\ga_r(x)^{\frac{i-s}{e_r}}.
\end{equation}
\end{lemma}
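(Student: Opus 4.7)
The plan is to verify the identity by a direct calculation, using the recursive definitions of $\Phi_r, \pi_r, \ga_r$ in Definition~\ref{ratfracs} and the formulas in Proposition~\ref{vrphii} and Theorem~\ref{phir}. Since both sides are expressions of the form $\pi^{n_0}\phi_1^{n_1}\cdots\phi_r^{n_r}$ with integer exponents, the identity will follow from matching exponents factor by factor; the unique expressibility in this form is what makes the approach effective.

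First I would isolate the $\phi_r$-content. Using $\Phi_r=\phi_r/\pi_{r-1}^{f_{r-1}v_r(\phi_{r-1})}$ and $\ga_r=\Phi_r^{e_r}/\pi_r^{h_r}$, together with the fact that $(i-s)/e_r\in\Z$ (as $(i,u_i)$ is an integer point on the side $S$ of slope $-h_r/e_r$ with initial abscissa $s$), both sides carry a factor of $\phi_r^{i-s}$. Cancelling this factor and also using the relation $u-h_r(i-s)/e_r=u_i$ (since $(i,u_i)\in S$), the identity is reduced to proving
$$
\Phi_{r-1}^{\,s(a_i)-e_{r-1}t_{r-1}(i)}\cdot\pi_{r-1}^{\,u(a_i)+if_{r-1}v_r(\phi_{r-1})+h_{r-1}t_{r-1}(i)}=\pi_r^{u_i}.
$$

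Next I would simplify the exponent of $\Phi_{r-1}$ using the very definition of $t_{r-1}(i)=(s(a_i)-\ell_{r-1}u_i)/e_{r-1}$, which gives $s(a_i)-e_{r-1}t_{r-1}(i)=\ell_{r-1}u_i$. Invoking the recursion $\pi_r=\Phi_{r-1}^{\ell_{r-1}}/\pi_{r-1}^{\ell'_{r-1}}$ to expand $\pi_r^{u_i}$ on the right, the $\Phi_{r-1}$-exponents cancel automatically, and the whole identity collapses to the single scalar equality
$$
u(a_i)+if_{r-1}v_r(\phi_{r-1})+h_{r-1}t_{r-1}(i)+\ell'_{r-1}u_i=0.
$$

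Finally, I would verify this scalar equality by combining four ingredients: the identity $v_r(a_i)=h_{r-1}s(a_i)+e_{r-1}u(a_i)$ (which expresses $v_r(a_i)=e_{r-1}H_{r-1}(S_{r-1}(a_i))$ in terms of the initial point of $S_{r-1}(a_i)$), the relation $v_r(a_i)=u_i-iv_r(\phi_r)$, the value $v_r(\phi_r)=e_{r-1}f_{r-1}v_r(\phi_{r-1})$ from Theorem~\ref{phir}, and the B\'ezout relation $\ell_{r-1}h_{r-1}-\ell'_{r-1}e_{r-1}=1$. Substituting $s(a_i)=e_{r-1}t_{r-1}(i)+\ell_{r-1}u_i$ into $v_r(a_i)=h_{r-1}s(a_i)+e_{r-1}u(a_i)$ and using $1-\ell_{r-1}h_{r-1}=-\ell'_{r-1}e_{r-1}$ yields precisely the displayed equality, completing the proof.

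The main obstacle is purely bookkeeping: the identity entangles the recursive formulas for $\Phi_r,\pi_r,\ga_r$, the B\'ezout pair $(\ell_{r-1},\ell'_{r-1})$, the valuation chain $v_r(\phi_r)=e_{r-1}f_{r-1}v_r(\phi_{r-1})$, and the geometric data $(s,u)$, $(s(a_i),u(a_i))$. There is no conceptual difficulty, but the calculation must be organized so that each substitution visibly eliminates one type of factor ($\phi_r$ first, then $\Phi_{r-1}$, and finally $\pi_{r-1}$), otherwise the identity looks opaque.
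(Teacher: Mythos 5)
Your proposal is correct and follows essentially the same route as the paper's proof: both reduce the identity by substituting the defining expressions for $\Phi_r$, $\ga_r$, $\pi_r$, $\ga_{r-1}$, cancel $\Phi_{r-1}$-exponents via $e_{r-1}t_{r-1}(i)=s(a_i)-\ell_{r-1}u_i$, and verify the resulting scalar equality in $\pi_{r-1}$-exponents using $e_{r-1}u(a_i)+h_{r-1}s(a_i)=v_r(a_i)=u_i-iv_r(\phi_r)$, $v_r(\phi_r)=e_{r-1}f_{r-1}v_r(\phi_{r-1})$, and the B\'ezout relation. The only cosmetic difference is that you eliminate $\phi_r$ entirely before the intermediate display, while the paper's intermediate form still carries $\phi_r^i$ on both sides.
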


\begin{proof}
If we substitute $u=u_i+(i-s)\frac{h_r}{e_r}$ and $\ga_r=\Phi_r^{e_r}/\pi_r^{h_r}$ in (\ref{id}), we see that the identity is equivalent to:
$$
\phi_r(x)^i\dfrac{\Phi_{r-1}(x)^{s(a_i)}\pi_{r-1}(x)^{u(a_i)}}{\pi_r(x)^{u_i}}=\ga_{r-1}(x)^{t_{r-1}(i)}\Phi_r(x)^i.
$$
If we substitute now $\Phi_r$, $\pi_r$ and $\ga_{r-1}$ by its defining values and we use $e_{r-1}t_{r-1}(i)=s(a_i)-\ell_{r-1}u_i$, we get an equation involving only $\pi_{r-1}$, which is equivalent to: 
$$
u(a_i)+\ell'_{r-1}u_i+h_{r-1}t_{r-1}(i)+if_{r-1}v_r(\phi_{r-1})=0.
$$ 
This equality is easy to check by using $e_{r-1}u(a_i)+s(a_i)h_{r-1}=v_r(a_i)=u_i-iv_r(\phi_r)$, $v_r(\phi_r)=e_{r-1}f_{r-1}v_r(\phi_{r-1})$, and $\ell_{r-1}h_{r-1}-\ell'_{r-1}e_{r-1}=1$.
\end{proof}

\begin{proposition}[Computation of $v(P(\t))$ with the polygon]\label{vqtr}
We keep the above notations for $f(x),\,\la_r=-h_r/e_r,\,\t,\,L,$ and the embedding (\ref{embeddingr}).  Let $P(x)\in\zpx$ be a nonzero polynomial, $S=S_{\la_r}(P)$, $L_{\la_r}$ the line of slope $\la_r$ that contains $S$, and $H$ the ordinate at the origin of this line. Denote $e=e_1\cdots e_{r-1}$. Then,  
\begin{enumerate}
 \item $v(P^S(\t))\ge0,\qquad \overline{P^S(\t)}=R_{\la_r}(P)(\gb{r})$,
\item $v(P(\t)-P^0(\t))>H/e$.
\item $v(P(\t))\ge H/e$, and equality holds if and only if $R_{\la_r}(P)(\gb{r})\ne0$,
\item $R_{\la_r}(f)(\gb{r})=0$.
\item  If $R_{\la_r}(f)(y)\sim \psi_r(y)^a$ for some irreducible $\psi_r(y)\in\ff{r}[y]$ then $v(P(\t))=H/e$  if and only if $R_{\la_r}(P)(y)$ is not divisible by $\psi_r(y)$ in $\ff{r}[y]$.
\end{enumerate}
\end{proposition}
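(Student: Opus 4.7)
The plan is to follow the structure of the proof of Proposition~\ref{vqt} (the $r=1$ case), using Lemma~\ref{identity} as the key algebraic identity and proceeding by induction on $r$ to reduce residual data of order $r$ to residual data of order $r-1$.

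First I prove item~1. Writing $P^0(x)=\sum_{(i,u_i)\in S}a_i(x)\phi_r(x)^i$, Lemma~\ref{identity} rewrites the $(i,u_i)$-th summand of $P^S(x)$ as
$$\frac{a_i(x)}{\Phi_{r-1}(x)^{s(a_i)}\pi_{r-1}(x)^{u(a_i)}}\cdot \ga_{r-1}(x)^{t_{r-1}(i)}\cdot \ga_r(x)^{(i-s)/e_r}.$$
Since $\theta$ is a root of a polynomial of type $\ty$, the inductive hypothesis (items~1 and~2 of this very proposition, in order $r-1$) applies to each $a_i$ and shows that the first factor has nonnegative $v$-value at $\theta$ and reduces to $R_{r-1}(a_i)(\gb{r-1})$. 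Combined with $v(\ga_{r-1}(\theta))=v(\ga_r(\theta))=0$ from Corollary~\ref{zerovalue}, this yields $v(P^S(\theta))\ge 0$, and the embedding~(\ref{embeddingr}) identifies the reduction with $\sum c_i\,\gb{r}^{(i-s)/e_r}=R_{\la_r}(P)(\gb{r})$, where $c_i$ is as in Definition~\ref{rescoeff}.

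For item~2, I apply Proposition~\ref{vpt} to each monomial $a_i(x)\phi_r(x)^i$ with $(i,u_i)$ strictly above $L_{\la_r}$: since $v_r(a_i\phi_r^i)>H$, we get $v(a_i(\theta)\phi_r(\theta)^i)\ge v_r(a_i\phi_r^i)/e>H/e$, and summing yields item~2. Item~3 then follows by computing
$$v(P^0(\theta))=s\,v(\Phi_r(\theta))+u\,v(\pi_r(\theta))+v(P^S(\theta))=\frac{H}{e}+v(P^S(\theta))$$
via Corollary~\ref{zerovalue} and the formula $H=u+sh_r/e_r$, combined with items~1 and~2. Items~4 and~5 are then immediate: item~4 is item~3 applied to $P=f$ (since $v(f(\theta))=\infty$), and item~5 uses item~4 to conclude that the irreducible polynomial $\psi_r\in\ff{r}[y]$ is the minimal polynomial of $\gb{r}$ over $\ff{r}$.

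The main bookkeeping difficulty lies in item~1: after expanding $P^S(x)$ via Lemma~\ref{identity}, one must verify that the twist $\ga_{r-1}(x)^{t_{r-1}(i)}$ produced by the identity reduces exactly to the twist $z_{r-1}^{t_{r-1}(i)}$ built into the definition of $c_i$, and that the rational function $a_i(x)/(\Phi_{r-1}(x)^{s(a_i)}\pi_{r-1}(x)^{u(a_i)})$ has the expected valuation and reduction at $\theta$; the latter is extracted from the inductive hypothesis applied to each coefficient $a_i$. Once the twists align, items~2--5 follow formally as in the order-one proof.
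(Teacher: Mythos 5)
Your argument for item 1 matches the paper's: both rewrite each summand of $P^S$ via Lemma~\ref{identity} and then invoke the inductive hypothesis (items 1 and 2 of this proposition in order $r-1$) to handle the factor $a_i(x)/\bigl(\Phi_{r-1}(x)^{s(a_i)}\pi_{r-1}(x)^{u(a_i)}\bigr)$, combined with Corollary~\ref{zerovalue} to see that the $\ga$-factors have zero valuation at $\t$. The derivations of items 3--5 from items 1 and 2 are also sound. The gap is in item 2. You assert that $(i,u_i)$ strictly above $L_{\la_r}$ means $v_r(a_i\phi_r^i)>H$, but $v_r(a_i\phi_r^i)$ is the ordinate $u_i$ of the point, and lying strictly above the line of slope $\la_r=-h_r/e_r$ with $y$-intercept $H$ is the condition $u_i + ih_r/e_r > H$, not $u_i > H$. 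For $i>0$ these are different: a point with $u_i\le H$ can perfectly well lie strictly above $L_{\la_r}$.

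For such a point your chain $v(a_i(\t)\phi_r(\t)^i)\ge v_r(a_i\phi_r^i)/e>H/e$ fails, and the first inequality is itself too crude: Proposition~\ref{vpt} applied to the whole monomial gives only $\ge$ because $\om_r(a_i\phi_r^i)=i>0$. The paper's (\ref{eqHe}) instead splits $v(a_i(\t)\phi_r(\t)^i)=v(a_i(\t))+iv(\phi_r(\t))$, applies Proposition~\ref{vpt} to $a_i$ alone (yielding the \emph{equality} $v(a_i(\t))=v_r(a_i)/e$, since $\deg a_i<m_r$ forces $\om_r(a_i)=0$ by Lemma~\ref{typedegree}), and uses item 3 of the Theorem of the polygon to get $v(\phi_r(\t))=(v_r(\phi_r)+h_r/e_r)/e$. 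The resulting exact value $(u_i+ih_r/e_r)/e$ exceeds $H/e$ precisely when $(i,u_i)$ lies strictly above $L_{\la_r}$; the extra $ih_r/(ee_r)$ contributed by $v(\phi_r(\t))$ beyond $v_r(\phi_r)/e$ is exactly what your bound silently drops.
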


\begin{proof}
Let $P(x)=\sum_{0\le i}a_i(x)\phi_r(x)^i$ be the $\phi_r$-adic development of $P(x)$, and denote $u_i=v_r(a_i\phi_r^i)$, $N=N_r^-(P)$.
Recall that 
$$P^S(x)=\Phi_r(x)^{-s}\pi_r(x)^{-u}P^0(x),\quad P^0(x)=\sum_{(i,u_i)\in S}a_i(x)\phi_r(x)^i,$$ where $(s,u)$ are the coordinates of the initial point of $S$. By Corollary \ref{zerovalue}, 
\begin{equation}\label{He}
 v(\Phi_r(\t)^s\pi_r(\t)^u)=\frac 1e\left(s\frac{h_r}{e_r}+u\right)=\frac{H}e.
\end{equation}
On the other hand, by the Theorem of the polygon and Proposition \ref{vpt}, for all $i$:
\begin{equation}\label{eqHe}
v(a_i(\t)\phi_r(\t)^i)=\frac{v_r(a_i)}e+\frac ie\left(v_r(\phi_r)+\frac{h_r}{e_r}\right)=\frac1e\left(u_i+i\frac{h_r}{e_r}\right)\ge\frac He,
\end{equation}
with equality if and only if $(i,u_i)\in S$. This proves item 2. 

\begin{center}
\setlength{\unitlength}{5.mm}
\begin{picture}(10,6)
\put(0,0){\line(1,0){9}}
\put(9,1.5){\line(-2,1){8}}
\put(0.9,5.5){\line(1,0){.2}}
\put(0.3,5.3){\begin{footnotesize}$H$\end{footnotesize}}
\put(1,-1){\line(0,1){7.5}}
\put(4.85,3.35){$\bullet$}\put(2.85,4.35){$\bullet$}\put(6.85,2.35){$\bullet$}
\put(7,2.5){\line(-2,1){4}}\put(7.02,2.5){\line(-2,1){4}}
\put(7,2.5){\line(4,-1){1.4}}\put(7.02,2.5){\line(4,-1){1.4}}
\put(3,4.5){\line(-1,1){1}}\put(3.02,4.5){\line(-1,1){1}}
\put(4,-1.8){\begin{footnotesize}$N_r^-(P)$\end{footnotesize}}
\multiput(3,-.1)(0,.20){24}{\vrule height2pt}
\multiput(5,-.1)(0,.20){19}{\vrule height2pt}
\multiput(0.9,4.5)(.25,0){8}{\hbox to 2pt{\hrulefill }}
\put(5.1,3.8){\begin{footnotesize}$(i,u_i)$\end{footnotesize}}
\put(2.9,-.65){\begin{footnotesize}$s$\end{footnotesize}}
\put(4.9,-.65){\begin{footnotesize}$i$\end{footnotesize}}
\put(0.4,4.35){\begin{footnotesize}$u$\end{footnotesize}}
\put(8.4,.9){\begin{footnotesize}$L_{\la_r}$\end{footnotesize}}
\put(0.6,-.6){\begin{footnotesize}$0$\end{footnotesize}}
\end{picture}
\end{center}\be\be\be

Also, (\ref{He}) and (\ref{eqHe}) show that $v(P^S(\t))\ge 0$, so that $P^S(\t)$ belongs to $\ol$. Denote for simplicity $z_r=\gb{r}$. In order to prove the equality $\overline{P^S(\t)}=R_{\la_r}(P)(z_r)$, we need to show that for every $(i,u_i)\in S$: 
\begin{equation}\label{rdm}
\rdm\left(\dfrac{a_i(\t)\phi_r(\t)^i}{\Phi_r(\t)^s\pi_r(\t)^u}\right)=
(z_{r-1})^{t_{r-1}(i)}R_{r-1}(a_i)(z_{r-1})(z_r)^{(i-s)/e_r}.
\end{equation}
Let $(s(a_i),u(a_i))$ be the initial point of $S_{r-1}(a_i)$. By items 1,2 of the proposition in order $r-1$ (Proposition \ref{vqt} if $r=2$), applied to the polynomial $a_i(x)$,
$$\as{1.4}
\begin{array}{l}
\overline{(a_i)^{S_{r-1}(a_i)}(\t)}=R_{r-1}(a_i)(z_{r-1}),\\
a_i(\t)\equiv \Phi_{r-1}(\t)^{s(a_i)}\pi_{r-1}(\t)^{u(a_i)}(a_i)^{S_{r-1}(a_i)}(\t) \md{\m_L^{(v_r(a_i)e(L/K)/e)+1}}.
\end{array}
$$
Since $v_r(a_i)e(L/K)/e=v_L(a_i(\t))$, it suffices to check the following identity in $L$:
$$
\phi_r(\t)^i\dfrac{\Phi_{r-1}(\t)^{s(a_i)}\pi_{r-1}(\t)^{u(a_i)}}{\Phi_r(\t)^s\pi_r(\t)^u}=\ga_{r-1}(\t)^{t_{r-1}(i)}\ga_r(\t)^{\frac{i-s}{e_r}},
$$which is a consequence of Lemma \ref{identity}. This ends the proof of item 1.

Also, (\ref{eqHe}) shows that $v(P(\t))\ge H/e$, and
$$
v(P(\t))=H/e\sii v(P^0(\t))=H/e\stackrel{(\ref{He})}\sii v(P^S(\t))=0\sii
R_{\la_r}(P)(z_r)\ne0,$$
the last equivalence by item 1. This proves item 3.
The last two items are proved by similar arguments to that of the proof of Proposition \ref{vqt}.
\end{proof}

\subsection{Theorem of the residual polynomial in order $r$}\label{tresidualpolr}
We discuss now how Newton polygons and residual polynomials are affected by an extension of the base field by an unramified extension. We keep the above notations for $f(x),\,\la_r=-h_r/e_r,\,\t,\,L$ and the embedding (\ref{embeddingr}).

\begin{proposition}\label{extensionr}
Let $K'$ be the unramified extension of $K$ of degree $f_0\cdots f_{r-1}$. Let us identify $\ff{r}=\ff{K'}$ through the embedding (\ref{embeddingr}). Let $G(x)\in \oo_{K'}[x]$ be the minimal polynomial of $\t$ over $K'$. Then, there exist a type of order $r-1$ over $K'$, $\ty'=(\phi'_1(x);\la_1,\phi'_2(x);\cdots;\la_{r-1},\psi'_{r-1}(y))$, and a representative $\phi'_r(x)$ of $\ty'$, with the following properties (where the superscript $'$ indicates that the objects are taken with respect to $\ty'$): 
\begin{enumerate}
 \item $f'_0=\cdots =f'_{r-1}=1$, 
\item $G(x)$ is of tytpe $\ty'$,
\item For any nonzero polynomial $P(x)\in\zpx$, 
$$(N')_r^-(P)=N_r^-(P), \quad R'_{\la_r}(P)(y)=\sigma_r^s\tau_r^u R_{\la_r}(P)(\mu_ry),$$
where $(s,u)$ is the initial point of $S_{\la_r}(P)$ and $\sigma_r,\,\tau_r,\,\mu_r\in\ff{K'}^*$ are constants that depend only on $\ty$ and $\t$.
\end{enumerate}
\end{proposition}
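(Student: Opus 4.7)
I would prove this by induction on $r\ge 2$, modeling the argument on Lemma \ref{extension}, which plays the role of the base case of the analogous phenomenon in order one. The key idea is to factor the extension $K'/K$ (unramified of degree $f_0\cdots f_{r-1}$) through the intermediate unramified extension $K''/K$ of degree $f_0\cdots f_{r-2}$, and to build $\ty'$ in two stages corresponding to this tower: first ``linearizing'' the data up through $\phi_{r-1}$, then splitting $\psi_{r-1}$.

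\textbf{Construction of $\ty'$ and $\phi'_r$.} For the base case $r=2$, apply Lemma \ref{extension} to obtain a linear polynomial $\phi'_1(x)=x-\eta$ over $K''$, where $\eta$ is the unique root of $\phi_1(x)$ in $K''$ making $G(x)$ divisible by $x-\eta$ modulo $\m_{K''}$; this yields the order-one transformation formula $N^-_{\phi'_1}(P)=N^-_{\phi_1}(P)$ and $R'_{\la_1}(P)(y)=\epsilon^sR_{\la_1}(P)(\epsilon^ey)$ as building blocks. For the inductive step $r\ge 3$, apply Proposition \ref{extensionr} in order $r-1$ to the truncated type $\ty_{r-2}$; this yields polynomials $\phi'_1,\dots,\phi'_{r-1}$ over $K''$, a type $\ty''$ of order $r-2$ with $f''_i=1$ for $i\le r-2$, and the transformation formula for order-$(r-1)$ polygons and residual polynomials. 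In either case, once we are over $K''$ the polynomial $\psi_{r-1}(y)$, viewed in $\ff{K''}[y]$ via the inductive identification $\ff{r-1}=\ff{K''}$, is irreducible of degree $f_{r-1}$; after the further unramified extension of degree $f_{r-1}$ it splits into linear factors, and I select $\psi'_{r-1}(y)=y-\zeta$ where $\zeta$ corresponds to $\gb{r-1}$ under (\ref{embeddingr}). Finally, I construct a representative $\phi'_r(x)$ of the assembled type $\ty'$ via Theorem \ref{phir}.

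\textbf{Verification of the conclusions.} That $G(x)$ has type $\ty'$ is built into the construction: $\t$ satisfies each slope condition and each residual-polynomial condition at every level, because the $\phi'_i$ and $\psi'_{r-1}$ were chosen precisely so. To prove $(N')_r^-(P)=N_r^-(P)$, I would compare the $\phi_r$-adic and $\phi'_r$-adic developments of $P$, using an admissibility argument in the spirit of Lemma \ref{admissible2} together with the invariance of $v_r$ under the unramified base change (since the formula for $v_r(\phi_i)$ in Proposition \ref{vrphii} depends only on the invariants $e_j,f_j,h_j$, which are preserved). For the residual-polynomial transformation, I would unpack $c'_i(P)$ from Definition \ref{rescoeff} over $K'$, substitute the inductive formula for $R'_{r-1}$ into the expression $z_{r-1}'^{\,t'_{r-1}(i)}R'_{r-1}(a_i)(z'_{r-1})$, and use Lemma \ref{identity} to show $c'_i(P)=\sigma_r^s\tau_r^u\mu_r^{(i-s)/e_r}c_i(P)$ for constants $\sigma_r,\tau_r,\mu_r\in\ff{K'}^*$ that are independent of $P$; summing over $i$ on each side of $S_{\la_r}(P)$ gives the required relation between $R'_{\la_r}(P)$ and $R_{\la_r}(P)$.

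\textbf{Main obstacle.} The principal technical difficulty lies in the bookkeeping of the twist constants: demonstrating that the multiplicative corrections accumulated at each level of the tower $K\subseteq K''\subseteq K'$ and at each layer of the type amalgamate into a single substitution $y\mapsto\mu_ry$ with a prefactor of the form $\sigma_r^s\tau_r^u$ depending only on $\ty$ and $\t$. This is precisely the role of Lemma \ref{identity}, which was cleanly tailored to express the awkward factor $\phi_r(x)^i\Phi_{r-1}^{s(a_i)}\pi_{r-1}^{u(a_i)}/(\Phi_r^s\pi_r^u)$ as a product of powers of $\ga_{r-1}$ and $\ga_r$, making it possible to isolate the dependence on $P$ inside $\mu_r^{(i-s)/e_r}$ and push everything else into the global prefactor.
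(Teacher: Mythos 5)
Your overall strategy---induction on $r$ with Lemma \ref{extension} as base, the tower $K\subseteq K''\subseteq K'$, and Lemma \ref{identity} to bookkeep the twist constants---is the same as the paper's. There is, however, one genuine gap in the construction. You propose to build the representative $\phi'_r(x)$ of $\ty'$ via Theorem \ref{phir}, but the paper instead takes $\phi'_r(x)$ to be \emph{the unique irreducible factor of $\phi_r(x)$ in $\oo_{K'}[x]$} whose residual polynomial is $\sim(\mu_{r-1}y-\eta)$. This is essential, not cosmetic. With the paper's choice, the quotient $\rho_r(x)=\phi_r(x)/\phi'_r(x)$ is a genuine polynomial in $\oo_{K'}[x]$ with $\om'_r(\rho_r)=0$, so the $\phi_r$-adic development $P=\sum a_i\phi_r^i$ rewrites directly as the $\phi'_r$-development $P=\sum a_i\rho_r^i\,\phi'_r{}^i$, which is admissible; the polygon equality then drops out of Lemma \ref{admissible2}. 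With a representative from Theorem \ref{phir}, $\phi'_r$ will in general not divide $\phi_r$, so there is no clean way to pass between the two developments. One can already see the conclusion failing for $P=\phi_r$: then $N_r^-(\phi_r)$ is a single side of slope $-\infty$ of length $1$ (because $\ord_{\phi_r}(\phi_r)=1$), whereas if $\phi'_r\nmid\phi_r$ over $K'$ then $\ord_{\phi'_r}(\phi_r)=0$ and $(N')_r^-(\phi_r)$ has no side of slope $-\infty$, so the polygons cannot coincide. The fix is exactly the paper's: pick $\phi'_r\mid\phi_r$ with the right residual condition, and separately check $\om'_r(\rho_r)=0$.

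A smaller point: your justification ``the invariance of $v_r$ under the unramified base change (since the formula for $v_r(\phi_i)$ in Proposition \ref{vrphii} depends only on the invariants $e_j,f_j,h_j$, which are preserved)'' is not correct as stated, because the residual degrees are \emph{not} preserved---they collapse to $f'_0=\dots=f'_{r-1}=1$, and the formula in Proposition \ref{vrphii} gives a different value for $v'_r(\phi'_i)$ than for $v_r(\phi_i)$. The correct route, used in the paper, is to deduce $v'_r(P)=v_r(P)$ for all $P\in\zpx$ directly from the induction hypothesis $(N')_{r-1}^-(P)=N_{r-1}^-(P)$, which identifies the order-$(r-1)$ polygons and hence the valuations they define. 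Once these two points are repaired, the rest of your outline (including the use of Lemma \ref{identity} to express the accumulated multiplicative corrections as $\sigma_r^s\tau_r^u\mu_r^j$) aligns with the paper's argument.
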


\begin{proof}
We proceed by induction on $r$. The case $r=1$ is considered in Lemma \ref{extension}; for the constant $\epsilon$ defined there, we can take $\sigma_1=\epsilon$, $\tau_1=1$, and $\mu_1=\epsilon^{e_1}$. Let $r\ge 2$ and suppose we have already constructed $\ty'_{r-2}$ and a representative $\phi'_{r-1}(x)$ satisfying these properties.
Let $\eta_1,\dots,\eta_{f_{r-1}}\in \ff{K'}$ be the roots of $\psi_{r-1}(y)$, and denote $F(x)=f_{\ty,\la_r}(x)$. We have,
$$\as{1.4}
\begin{array}{l}
R'_{r-1}(\phi_r)(y)\sim R_{r-1}(\phi_r)(\mu_{r-1}y)\sim\psi_{r-1}(\mu_{r-1}y)=\prod_{i=1}^{f_{r-1}}(\mu_{r-1}y-\eta_i),\\
R'_{r-1}(F)(y)\sim R_{r-1}(F)(\mu_{r-1}y)\sim\psi_{r-1}(\mu_{r-1}y)^{a_{r-1}}=\prod_{i=1}^{f_{r-1}}(\mu_{r-1}y-\eta_i)^{a_{r-1}}.
\end{array}
$$
Since $G(x)$ is of type $\ty'_{r-2}$, Lemma \ref{typedegree} shows that $\dg G=m'_{r-1}\om'_{r-1}(G)$.
Since $(N')_{r-1}^-(F)=N_{r-1}^-(F)$, the Theorem of the product shows that $(N')_{r-1}^-(G)$ is one-sided, with slope $\la_{r-1}$ and positive length $\om'_{r-1}(G)$. By the Theorem of the residual polynomial, $R'_{r-1}(G)(y)\sim (\mu_{r-1}y-\eta)^a$, for some root $\eta\in\ff{K'}$ of $\psi_{r-1}(y)$ and some positive integer $a$.  We take
$\psi'_{r-1}(y)=y-\mu_{r-1}^{-1}\eta$, and
$$\ty'=(\phi'_1(x);\la_1,\phi'_2(x);\cdots;\la_{r-2},\phi'_{r-1}(y);\la_{r-1},\psi'_{r-1}(y)).$$ 
Thus, $f'_{r-1}=1$. We have $a=\om'_r(G)$ and $\dg G=m'_{r-1}\om'_{r-1}(G)=m'_{r-1}e_{r-1}a=m'_ra$; thus, $G(x)$ is of type $\ty'$, by Lemma \ref{factortype}. 

The same argument shows that there is a unique irreducible factor $\phi'_r(x)$ of $\phi_r(x)$ in $\oo_{K'}[x]$ such that $R'_{r-1}(\phi'_r)(y)\sim (\mu_{r-1}y-\eta)$. We choose $\phi'_r(x)$ as a representative of $\ty'$. Let $\rho_r(x)=\phi_r(x)/\phi'_r(x)\in\oo_{K'}[x]$. By construction, $\om'_r(\rho_r)=0$, because $R'_{r-1}(\rho_r)(y)\sim \psi_{r-1}(\mu_{r-1}y)/(\mu_{r-1}y-\eta)$.

Let $P(x)\in\zpx$ be a nonzero polynomial. Clearly, 
$$(N')_{r-1}^-(P)=N_{r-1}^-(P)\imp v'_r(P)=v_r(P),$$$$R'_{r-1}(P)(y)\sim R_{r-1}(P)(\mu_{r-1}y)\imp \om'_r(P)=\om_r(P).$$ 
 Consider the $\phi_r$-adic development of $P(x)$:
\begin{multline*}
P(x)=\phi_r(x)^n+a_{n-1}(x)\phi_r(x)^{n-1}+\cdots+a_0(x)=\\=\rho_r(x)^{n}\phi'_r(x)^n+a_{n-1}(x)\rho_r(x)^{n-1}\phi'_r(x)^{n-1}+\cdots+a_0(x).
\end{multline*}
Since $\om'_r(\rho_r)=0$, this $\phi'_r$-adic development of $P(x)$ is admissible. On the other hand, the tautology
$$
v_r(a_i(x)\phi_r(x)^i)=v'_r(a_i(x)\phi_r(x)^i)=v'_r(a_i(x)\rho_r(x)^i\phi'_r(x)^i),
$$shows that $(N')_r^-(P)=N_r^-(P)$. 

In order to prove the relationship between $R'_{\la_r}(P)(y)$ and $R_{\la_r}(P)(y)$, we introduce some elements in $\ff{K'}^*$, constructed in terms of the rational functions of Defi\-nition \ref{ratfracs}.
By Corollary \ref{zerovalue}, $v(\Phi_r(\t))=v(\Phi'_r(\t))$, $v(\pi_r(\t))=(e_1\cdots e_{r-1})^{-1}=v(\pi'_r(\t))$, and $v(\ga_r(\t))=0=v(\ga'_r(\t))$. Also, by the theorem of the polygon,
$$v(\rho_r(\t))=(v_r(\phi_r)-v'_r(\phi'_r))/(e_1\cdots e_{r-1})=v(\pi'_{r-1}(\t))(v_r(\phi_r)-v'_r(\phi'_r))/e_{r-1}.
$$
We introduce the following elements of $\ff{K'}^*$: 
$$\as{1.8}
\begin{array}{ll}
\mu_r:=\overline{\ga_r(\t)/\ga'_r(\t)},\qquad&
\tau_r:=\overline{\pi_r(\t)/\pi'_r(\t)},\\ 
\sigma_r:=\overline{\Phi_r(\t)/\Phi'_r(\t)},\qquad&
\epsilon_r:=\overline{\rho_r(\t)/\pi'_{r-1}(\t)^{(v_r(\phi_r)-v'_r(\phi'_r))/e_{r-1}}}.
\end{array}
$$Since $f_{r-1}v_r(\phi_{r-1})=v_r(\phi_r)/e_{r-1}$, the recursive definition of the functions of De\-finition \ref{ratfracs}, 
yields the following identities:
\begin{equation}\label{taur}
\sigma_r=\epsilon_r/(\tau_{r-1})^{v_r(\phi_r)/e_{r-1}},\qquad
\tau_r=(\sigma_{r-1})^{\ell_{r-1}}/(\tau_{r-1})^{\ell'_{r-1}}.
\end{equation}

We need still another interpretation of $\epsilon_r$. Since $(N')_{r-1}^-(\phi_r)=N_{r-1}^-(\phi_r)$, the Theorem of the product shows that $(N')_{r-1}^-(\rho_r)$ is one-sided with slope $\la_{r-1}$; hence,  the initial point $(s'_{r-1}(\rho_r),u'_{r-1}(\rho_r))$ of $S:=S'_{r-1}(\rho_r)$ is given by  $s'_{r-1}(\rho_r)=0$ and 
\begin{equation}\label{inirhor}
u'_{r-1}(\rho_r)=v'_r(\rho_r)/e_{r-1}=(v'_r(\phi_r)-v'_r(\phi'_r))/e_{r-1}=(v_r(\phi_r)-v'_r(\phi'_r))/e_{r-1}.
\end{equation}
Recall that the virtual factor $\rho_r^S(x)$ is by definition $\rho_r(x)/\pi'_{r-1}(x)^{u'_{r-1}(\rho_r)}$; therefore, item 1 of Proposition \ref{vqtr} shows that, for $r\ge 2$: 
\begin{equation}\label{epsilonr}\epsilon_r=R'_{r-1}(\rho_r)(z'_{r-1}).
\end{equation}

We have seen above that for each integer abscissa $i$, the $i$-th terms of the $\phi_r$ and $\phi'_r$-developments of $P(x)$ determine the same point $(i,u_i)$ of the plane. Let $i=s+je_r$ be an abscissa such that $(i,u_i)$ lies on  $S_{\la_r}(P)=S'_{\la_r}(P)$; the corresponding residual coefficients at this abscissa are respectively 
$$
c_i= (z_{r-1})^{t_{r-1}(i)}R_{r-1}(a_i)(z_{r-1}),\quad
c'_i= (z'_{r-1})^{t'_{r-1}(i)}R'_{r-1}(a_i\rho_r^i)(z'_{r-1}),
$$and $R_{\la_r}(P)(y)=\sum_{0\le j\le d}c_iy^j$, $R'_{\la_r}(P)(y)=\sum_{0\le j\le d}c'_iy^j$. Hence, the last equality of item 3 is equivalent to 
$c'_i=c_i\sigma_r^s\tau_r^u \mu_r^j$, for all such $i$.

Note that $t_{r-1}(i)=(s_{r-1}(a_i)-\ell_{r-1}u_i)/e_{r-1}=t'_{r-1}(i)$, since 
$$s'_{r-1}(a_i\rho_r^i)=s'_{r-1}(a_i)+is'_{r-1}(\rho_r)=s'_{r-1}(a_i)=s_{r-1}(a_i),$$
the last equality because $N_{r-1}^-(a_i)=(N')_{r-1}^-(a_i)$. 
For simplicity we denote by $(s(a_i),u(a_i))$ the initial point of $S_{r-1}(a_i)$. By (\ref{inirhor}), the initial point of $S'_{r-1}(a_i\rho_r^i)$  is $(s(a_i),u(a_i)+i(v_r(\phi_r)-v'_r(\phi'_r))/e_{r-1})$. Now, by induction, the Theorem of the product, and (\ref{epsilonr}), we have
\begin{align*}\as{2}
c'_i=&\,(z'_{r-1})^{t_{r-1}(i)}R'_{r-1}(a_i)(z'_{r-1})\,\epsilon_r^i\\=&\,(z'_{r-1})^{t_{r-1}(i)}(\sigma_{r-1})^{s(a_i)}(\tau_{r-1})^{u(a_i)}R_{r-1}(a_i)(z_{r-1})\,\epsilon_r^i\\
=&\,c_i(\mu_{r-1})^{-t_{r-1}(i)}(\sigma_{r-1})^{s(a_i)}(\tau_{r-1})^{u(a_i)}
\epsilon_r^i\\=&\,c_i\mu_r^j\left(\mu_r^{-j}(\mu_{r-1})^{-t_{r-1}(i)}\right)(\sigma_{r-1})^{s(a_i)}(\tau_{r-1})^{u(a_i)}\epsilon_r^i.
\end{align*}
By Lemma \ref{identity}, 
\begin{align*}
\ga_r(\t)^j\ga_{r-1}(\t)^{t_{r-1}(i)}=&\,\phi_r(\t)^i\Phi_{r-1}(\t)^{s(a_i)}\pi_{r-1}(\t)^{u(a_i)}\Phi_r(\t)^{-s}\pi_r(\t)^{-u}\\
=&\,\phi_r(\t)^i\Phi_{r-1}(\t)^{s(a_i)-\ell_{r-1}u}\pi_{r-1}(\t)^{u(a_i)+\ell'_{r-1}u}\Phi_r(\t)^{-s}.
\end{align*}
We get an analogous expression for $\ga'_r(\t)^j\ga'_{r-1}(\t)^{t_{r-1}(i)}$, just by putting $'$ everywhere and by replacing $u(a_i)$ by $u(a_i\rho_r^i)=u(a_i)+i(v_r(\phi_r)-v'_r(\phi'_r))/e_{r-1}$.
By taking the quotient of both expressions and taking classes modulo $\m_{K'}$ we get
$$
\mu_r^j(\mu_{r-1})^{t_{r-1}(i)}=\epsilon_r^i(\sigma_{r-1})^{s(a_i)-\ell_{r-1}u}(\tau_{r-1})^{u(a_i)+\ell'_{r-1}u}\sigma_r^{-s}.
$$Therefore, $c'_i=c_i\mu_r^j(\sigma_{r-1})^{\ell_{r-1}u}(\tau_{r-1})^{-\ell'_{r-1}u}\sigma_r^s=c_i\mu_r^j\tau_r^u\sigma_r^s$, by (\ref{taur}).
\end{proof}

\begin{theorem}[Theorem of the residual polynomial in order $r$]\label{thresidualr}
Let $f(x)\in\zpx$ be a monic polynomial with $\om_r(f)>0$, and let $S$ be a side of $N_r^-(f)$, of finite slope  $\la_r$. Consider the factorization 
$$
R_{\la_r}(f)(y)\sim\psi_{r,1}(y)^{a_1}\cdots \psi_{r,t}(y)^{a_t},
$$ of the residual polynomial of $f(x)$ into the product of powers of pairwise different monic irreducible polynomials in $\ff{r}[y]$. Then, the factor $f_{\ty,\la_r}(x)$ of $f_{\ty}(x)$, corres\-ponding to $\tilde{\ty},\la_r$ by the Theorem of the polygon, admits a factorization
 in $\zpx$,$$
f_{\ty,\la_r}(x)=G_1(x)\cdots G_t(x),
$$into a product of $t$ monic polynomials, with all $N_r(G_i)$ one-sided of slope $\la_r$, and
 $R_{\la_r}(G_i)(y)\sim \psi_{r,i}(y)^{a_i}$ in $\ff{r}[y]$.
\end{theorem}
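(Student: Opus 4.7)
By the Theorem of the product in order $r$, it suffices to prove that whenever $F(x):=f_{\ty,\la_r}(x)$ is irreducible in $\oo[x]$, the residual polynomial $R_{\la_r}(F)(y)$ is the power of a single monic irreducible in $\ff{r}[y]$. Granting this, factor $f_{\ty,\la_r}(x)=\prod_j F_j(x)$ into monic irreducibles in $\oo[x]$; each $F_j$ has type $\ty$ by Lemma \ref{factortype}, and combining the Theorem of the polygon with the Theorem of the product forces every $N_r(F_j)$ to be one-sided of slope $\la_r$. Since each $R_{\la_r}(F_j)$ is then a power of a single irreducible, grouping the $F_j$ according to which $\psi_{r,i}$ divides $R_{\la_r}(F_j)$, and taking $G_i$ to be the product of the corresponding $F_j$, yields the desired factorization, with $R_{\la_r}(G_i)\sim\psi_{r,i}(y)^{a_i}$ by multiplicativity.

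So assume $F$ is irreducible, fix a root $\t\in\qpb$, and put $L=K(\t)$. The first main step is to reduce to the case $f_0=\cdots=f_{r-1}=1$ by applying Proposition \ref{extensionr} to pass to the unramified extension $K'/K$ of degree $f_0\cdots f_{r-1}$. That proposition supplies a type $\ty'$ of order $r-1$ over $K'$ with all $f'_i=1$, a representative $\phi'_r$, and the identity $R'_{\la_r}(P)(y)=\sigma_r^s\tau_r^u R_{\la_r}(P)(\mu_r y)$, so that $R_{\la_r}(F)$ is a power of a single irreducible if and only if $R'_{\la_r}(F)$ is. Writing $F=\prod_{\sigma\in\op{Gal}(K'/K)}G^\sigma$, where $G$ is the minimal polynomial of $\t$ over $K'$, each conjugate $G^\sigma$ has type $(\ty')^\sigma$, which differs from $\ty'$ for $\sigma\ne 1$; Lemma \ref{typedegree}, combined with the fact that vanishing of $\om'_i$ propagates through the chain $\om'_1\ge e_1 f_1\om'_2\ge\cdots$, then forces $\om'_r(G^\sigma)=0$ and $R'_{\la_r}(G^\sigma)$ to be a nonzero constant. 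The Theorem of the product gives $R'_{\la_r}(F)\sim R'_{\la_r}(G)$, reducing the problem to showing that $R'_{\la_r}(G)$ is a power of a single irreducible in $\ff{r}[y]$.

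With $f_0=\cdots=f_{r-1}=1$ in place, consider $\ga'_r(\t)=\Phi'_r(\t)^{e_r}/\pi'_r(\t)^{h_r}\in\oo_L$, which has valuation zero by Corollary \ref{zerovalue}, and its minimal polynomial $P(x)=x^k+b_{k-1}x^{k-1}+\cdots+b_0\in\oo_{K'}[x]$ over $K'$. Since $v(b_0)=0$, the reduction $\bar P(y)\in\ff{K'}[y]=\ff{r}[y]$ is the power of a single monic irreducible. Starting from the relation $P(\ga'_r(\t))=0$ and clearing the denominators of $\Phi'_r(x)$ and $\pi'_r(x)$, I build a polynomial $Q(x)\in\oo_{K'}[x]$ satisfying $Q(\t)=0$, with $N_r(Q)$ one-sided of slope $\la_r$ and $R'_{\la_r}(Q)(y)\sim\bar P(y)$. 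Since $G\mid Q$ in $\oo_{K'}[x]$, the Theorem of the product in order $r$ forces $R'_{\la_r}(G)$ to be a power of the same irreducible as $\bar P$, completing the proof.

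The main obstacle is the construction of $Q$. In order one the analogue is immediate, namely $\phi(x)^{ek}+b_{k-1}\pi^h\phi(x)^{e(k-1)}+\cdots+b_0\pi^{hk}$, manifestly in $\oo[x]$ because $\pi$ is a genuine uniformizer, but in higher order $\Phi'_r(x)$ and $\pi'_r(x)$ are rational functions whose denominators recursively involve integral powers of $\pi,\phi'_1,\dots,\phi'_{r-1}$. The cleanest route is to assemble the $\phi'_r$-adic development of $Q$ coefficient by coefficient, invoking Proposition \ref{construct} at each abscissa $ie_r$ to produce an $a_i(x)$ with prescribed $v_r$-value, $R_{r-1}(a_i)$-data, and twist $t_{r-1}(ie_r)$, so that the resulting residual coefficient $c_{ie_r}(Q)$ equals the reduction of $b_i$ in $\ff{r}$; the identity in Lemma \ref{identity} then guarantees that this $\phi'_r$-development is admissible with one-sided $r$-th order polygon of slope $\la_r$ and residual polynomial $\bar P(y)$ up to a scalar, while Proposition \ref{vqtr} (applied to $Q$) shows that $Q(\t)=0$ follows from the vanishing of $P(\ga'_r(\t))$.
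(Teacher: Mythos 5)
Your overall strategy matches the paper's: reduce to $F=f_{\ty,\la_r}$ irreducible by multiplicativity, pass to the unramified extension via Proposition \ref{extensionr} to force $f_0=\cdots=f_{r-1}=1$, take the minimal polynomial $P(x)$ of $\ga_r(\t)$, build an auxiliary $Q(x)$ with $Q(\t)=0$, one-sided $N_r(Q)$ of slope $\la_r$, and $R_{\la_r}(Q)\sim\bar P$, then use the Theorem of the product. The reduction steps are fine.

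The construction of $Q$ is where there is a genuine gap. You correctly identify the obstacle—that $\Phi_r(x)$ and $\pi_r(x)$ are rational functions, so the first-order formula $\phi^{ek}+b_{k-1}\pi^h\phi^{e(k-1)}+\cdots$ does not directly produce a polynomial—but your proposed fix fails. Building $Q$ coefficient-by-coefficient via Proposition \ref{construct} only prescribes the \emph{residual data} of each coefficient (its $v_r$-value, the $\la_{r-1}$-component $S_{r-1}$, and $R_{r-1}$), and hence fixes $Q$ only up to terms of strictly higher $v_r$-value. The resulting $Q$ is not equal to $P(\ga_r(x))$ (cleared of denominators) in $\oo[x]$; it merely agrees with it ``to leading order.'' Consequently the crucial identity $Q(\t)=0$ fails: Proposition \ref{vqtr}(3) tells you only that since $R_{\la_r}(Q)(\gb{r})=\bar P(\gb{r})=0$ the valuation $v(Q(\t))$ is \emph{strictly larger} than $H/e$, which is a one-sided inequality, not exact vanishing. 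Without $Q(\t)=0$ you cannot conclude $G\mid Q$, and the appeal to the Theorem of the product collapses.

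The paper sidesteps this with a simpler and exact device: set $\Pi(x)=\ga_r(x)/\phi_r(x)^{e_r}=\pi^{n'_0}\phi_1(x)^{n'_1}\cdots\phi_{r-1}(x)^{n'_{r-1}}$, choose $\Phi(x)=\pi^{n_0}\phi_1(x)^{n_1}\cdots\phi_{r-1}(x)^{n_{r-1}}$ with the $n_i$ large enough that $\Pi(x)^k\Phi(x)\in\oo[x]$, and put
\[
Q(x):=\Phi(x)\,P(\ga_r(x))=\sum_{j=0}^{k}B_{je_r}(x)\,\phi_r(x)^{je_r},\qquad B_{je_r}(x)=\Phi(x)\Pi(x)^{j}b_j.
\]
This $Q$ is a genuine polynomial by construction, vanishes at $\t$ because $P(\ga_r(\t))=0$, and—since $\om_r(\Phi\Pi^j b_j)=0$ by Proposition \ref{vrphii}(5) and Lemma \ref{omji}—the displayed $\phi_r$-development is admissible. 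The valuation bound $u_{je_r}\ge u-jh_r$, with equality at $j=0,k$ because $v(b_0)=v(b_k)=0$, then forces $N_r(Q)$ to be one-sided of slope $\la_r$; and a direct computation using (\ref{rdm}) shows $R_{\la_r}(Q)\sim\bar P$. You should replace the Proposition \ref{construct} step with this multiplicative clearing by $\Phi(x)$; everything else in your outline then goes through.
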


\begin{proof}
Let us deal first with the case $F(x):=f_{\ty,\la_r}(x)$ irreducible. We need only to prove that $R_{\la_r}(F)(y)$ is the power of an irreducible polynomial of $\ff{r}[y]$. Let $\t\in\qpb$ be a root of $F(x)$, take $L=K(\t)$, and fix the embedding $\ff{r}\to\ff{L}$ as in (\ref{embeddingr}). Let $K'$ be the unramified extension of $K$ of degree $f_0\cdots f_{r-1}$, and let $G(x)\in\oo_{K'}[x]$ be the minimal polynomial of $\t$ over $K'$, so that $F(x)=\prod_{\sigma\in\op{Gal}(K'/K)}G^{\sigma}(x)$. Under the embedding $\ff{r}\to\ff{L}$, the field $\ff{r}$ is identified to $\ff{K'}$. By Proposition \ref{extensionr}, we can construct a type $\ty'$ of order $r-1$ over $K'$ such that $R'_{\la_r}(F)(y)\sim R_{\la_r}(F)(cy)$, for some nonzero constant $c\in\ff{K'}$. By the construction of $\ty'$, for any $\sigma\ne1$, the polynomial $G^{\sigma}(x)$ is not divisible by $\phi'_1(x)$ modulo $\m_{K'}$; thus, $\om'_r(G^{\sigma})\le \om'_1(G^{\sigma})=0$, and $R'_{\la_r}(G^{\sigma})(y)$ is a constant. Therefore, by the Theorem of the product, $R'_{\la_r}(G)(y)\sim R'_{\la_r}(F)(y)\sim R_{\la_r}(F)(cy)$, so that $R_{\la_r}(F)(y)$ is the power of an irreducible polynomial of $\ff{r}[y]$ if and only if 
$R'_{\la_r}(G)(y)$ has the same property over $\ff{K'}$. In conclusion, by extending the base field, we can suppose that $f_0=\cdots =f_{r-1}=1$.

Let $P(x)=\sum_{j=0}^kb_jx^j\in\zpx$ be the minimal polynomial of $\ga_r(\t)$ over $K$. Let
$$\Pi(x):=\ga_r(x)/\phi_r(x)^{e_r}=\pi_{r-1}(x)^{-e_rf_{r-1}v_r(\phi_{r-1})}\pi_r(x)^{-h_r}.$$
By (\ref{phii}), $\Pi(x)$ admits an expression $\Pi(x)=\pi^{n'_0}\phi_1(x)^{n'_1}\cdots \phi_{r-1}(x)^{n'_{r-1}}$ for some integers $n'_1,\dots,n'_r$. Take $\Phi(x):=\pi^{n_0}\phi_1(x)^{n_1}\cdots \phi_{r-1}(x)^{n_{r-1}}$ with sufficiently large non-negative integers $n_i$ so that $\Pi(x)^k\Phi(x)$ is a polynomial in $\oo[x]$. 
Then, the following rational function is actually a polynomial in $\zpx$:
$$
Q(x):=\Phi(x) P(\ga_r(x))=\sum_{j=0}^kB_{je_r}(x)\phi_r(x)^{je_r},\quad B_{je_r}(x)=\Phi(x)\Pi(x)^jb_j.
$$ Moreover, by item 5 of Proposition \ref{vrphii}, $\om_r(B_{je_r})=0$ for all $j$ such that $B_{je_r}\ne0$, so that this $\phi_r$-development of $g(x)$ is admissible.

Our aim is to show that $N_r(Q)$ is one-sided with slope $\la_r$, and $R_{\la_r}(Q)(y)$ is equal to $P(y)$ modulo $\m$, up to a nonzero multiplicative constant. Since $P(x)$ is irreducible, $R_{\la_r}(Q)(y)$ will be the power of an irreducible polynomial of $\ff{}[y]$. Since $Q(\t)=0$, $F(x)$ is a divisor of $Q(x)$ and the residual polynomial of $F(x)$ will be the  power of an irreducible polynomial too, by the Theorem of the product. This will end the proof of the theorem in the irreducible case.

Let us bound by below all $v_r(B_{je_r}\phi_r^{je_r})$. Denote $u:=v_r(\Phi)$. By Proposition \ref{vrphii} and (\ref{vrphir}), we get: $v_r(\pi_{r-1})=e_{r-1}$, $v_r(\pi_r)=1$, and  $v_r(\Pi)=-e_rv_r(\phi_r)-h_r$. Therefore,
\begin{equation}\label{uj}
u_{je_r}:=v_r(B_{je_r}\phi_r^{je_r})=v_r(b_j)+u-j(e_rv_r(\phi_r)+h_r)+je_rv_r(\phi_r)\ge u-jh_r.
\end{equation}
For $j=0,k$ we have $v(b_0)=0$ (because $v(\ga_r(\t))=0$) and $v(b_k)=0$ (because $b_k=1$). Hence, equality holds in (\ref{uj}) for these two abscissas. This proves that $N_r(g)$ has only one side $T$, with end points $(0,u)$, $(ke_r,u-kh_r)$, and slope $\la_r$.

Let $R_{\la_r}(g)(y)=\sum_{j=0}^kc_{je_r}y^j$. We want to show that $c_{je_r}=c\bar{b}_j$ for certain constant $c\in\ff{}^*$ independent of $j$. If $(je_r,u_{je_r})\not \in T$, then $c_{je_r}=0$, and by (\ref{uj}), this is equivalent to $\bar{b_j}=0$. Suppose now $(je_r,u_{je_r})\in T$; by item 1 of Proposition \ref{vqtr} (cf. (\ref{rdm})) 
$$
\rdm\left(\dfrac{B_{je_r}(\t)\phi_r(\t)^{je_r}}{\pi_r(\t)^u}\right)= c_{je_r}\gb{r}^j.
$$ Hence, we want to check that for all $j$
$$
\rdm\left(\dfrac{B_{je_r}(\t)\phi_r(\t)^{je_r}}{\pi_r(\t)^u\ga_r(\t)^j}\right)=c\bar{b}_j,
$$for some nonzero constant $c$. Now, if we substitute $B_{je_r}(x)$ and $\Pi(x)$ by its defining values, the left hand side is equal to $c\bar{b}_j$, for $c=\rdm(\Phi(\t)/\pi_r(\t)^u)$. This ends the proof of the theorem in the irreducible case.

In the general case, consider the decomposition, $F(x)=\prod_jP_j(x)$, into a product of monic irreducible factors in $\zpx$. By Lemma \ref{factortype}, each $P_j(x)$ has type $\ty$, so that $\om_r(P_j)>0$. By the Theorem of the product, $N_r(P_j)$ is one-sided, of positive length and slope $\la_r$. By the proof in the irreducible case, the residual polynomial $R_{\la_r}(P_j)(y)$ is the positive power of an irreducible polynomial, and by the Theorem of the product it must be $R_{\la_r}(P_j)(y)\sim\psi_{r,i}(y)^{b_j}$ for some $1\le i\le t$. If we group these factors according to the irreducible factor of the residual polynomial, we get the desired factorization.
\end{proof}

\begin{corollary}\label{ramr}
With the above notations, let $\t\in\qpb$ be a root of $G_i(x)$, and $L=K(\t)$. Let $f_r=\dg \psi_{r,i}(y)$. Then, $f(L/K)$ is divisible by $f_0f_1\cdots f_r$. Moreover, the number of irreducible factors of $G_i(x)$ is at most $a_i$; in particular, if $a_i=1$ then $G_i(x)$ is irreducible in $\zpx$ and 
$$f(L/K)=f_0f_1\cdots f_r,\quad e(L/K)=e_1\cdots e_{r-1}e_r.
$$
\end{corollary}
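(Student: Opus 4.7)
The plan is to split the corollary into its three claims and treat each in turn, relying on the embedding $\ff{r}\hookrightarrow \ff{L}$ of (\ref{embeddingr}), on the Theorem of the product in order $r$, and on Corollary \ref{eeee} already proved for the $\tilde{\ty},\la_r$-factor $f_{\ty,\la_r}(x)$.

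For the divisibility $f_0f_1\cdots f_r\mid f(L/K)$, I would apply item 4 of Proposition \ref{vqtr} with $G_i(x)$ in place of $f(x)$: since $R_{\la_r}(G_i)(y)\sim\psi_{r,i}(y)^{a_i}$ by Theorem \ref{thresidualr}, we get $\psi_{r,i}(\gb{r})=0$ in $\ff{L}$. Hence the embedding (\ref{embeddingr}) extends to $\ff{r}[y]/(\psi_{r,i}(y))\hookrightarrow \ff{L}$, sending $y\mapsto \gb{r}$. The source is a field of degree $f_0f_1\cdots f_{r-1}f_r$ over $\ff{}$, which therefore divides $f(L/K)$.

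For the bound on the number of irreducible factors, I would decompose $G_i(x)=\prod_j H_j(x)$ in $\zpx$. By Lemma \ref{factortype}, each $H_j(x)$ has type $\ty$, so $\om_r(H_j)>0$ and $\deg R_{\la_r}(H_j)>0$. The Theorem of the product in order $r$ gives $R_{\la_r}(G_i)(y)\sim\prod_j R_{\la_r}(H_j)(y)$; since $\psi_{r,i}(y)$ is irreducible and $R_{\la_r}(G_i)(y)\sim\psi_{r,i}(y)^{a_i}$, each factor must be $\sim\psi_{r,i}(y)^{b_j}$ with $b_j\ge 1$ and $\sum_j b_j=a_i$. This forces at most $a_i$ factors.

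For the case $a_i=1$, the previous paragraph already gives irreducibility of $G_i(x)$. Since $G_i(x)$ has type $\ty$, Corollary \ref{rminus} and Lemma \ref{typedegree} yield $\deg G_i=m_r\,\om_r(G_i)$, and the one-sided polygon $N_r(G_i)$ of slope $\la_r$ with residual polynomial of degree $f_r$ has length $\om_r(G_i)=e_rf_r$. Using $m_r=f_0\,e_1f_1\cdots e_{r-1}f_{r-1}$, this gives
$$[L:K]=\deg G_i=(f_0f_1\cdots f_r)(e_1e_2\cdots e_r).$$
Now $\t$ is also a root of $f_{\ty,\la_r}(x)$, so Corollary \ref{eeee} provides $e_1\cdots e_r\mid e(L/K)$, while the first step provides $f_0\cdots f_r\mid f(L/K)$. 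Since $e(L/K)f(L/K)=[L:K]$ equals the product of the two divisors, both divisibilities must be equalities, giving $e(L/K)=e_1\cdots e_r$ and $f(L/K)=f_0\cdots f_r$.

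The only non-bookkeeping step is the first one, where one must observe that the residual polynomial $\psi_{r,i}(y)$ really does extend the tower of finite fields attached to $\ty$ inside $\ff{L}$; everything else reduces to length–degree counting and to theorems already available in order $r$.
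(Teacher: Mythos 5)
Your proof is correct and follows essentially the same route as the paper: the embedding $\ff{r}[y]/(\psi_{r,i}(y))\hookrightarrow\ff{L}$ via item 4 of Proposition \ref{vqtr}, the Theorem of the product for the bound on the number of factors, and the degree count $\deg G_i=f_0\cdots f_r\,e_1\cdots e_r$ combined with the two divisibility constraints (the new embedding for $f(L/K)$, Corollary \ref{eeee} for $e(L/K)$) to force equality. You simply spell out the intermediate steps that the paper compresses into a reference to the Theorem of the product.
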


\begin{proof}
The statement about $f(L/K)$ is a consequence of the extension of the embeding (\ref{embeddingr}) to an embedding
\begin{equation}\label{embeddingr+1}
\ff{r}[y]/(\psi_{r,i}(y))\hookrightarrow\ff{L}, \qquad y\mapsto \gb{r},
\end{equation}
which is well-defined by item 4 of Proposition \ref{vqtr}. The other statements follow from the Theorem of the product. The computation of $f(L/K)$ and $e(L/K)$ follows from 
$$f(L/K)e(L/K)=\dg G_i=f_0f_1\cdots f_re_1\cdots e_{r-1}e_r,
$$ and the fact that $f(L/K)$ is divisible by $f_0\cdots f_r$ and $e(L/K)$ is divisible by $e_1\cdots e_r$ (cf. Corollary
\ref{eeee}). 
\end{proof}

\subsection{Types of order $r$ attached to a separable polynomial}
Let $f(x)\in\zpx$ be a monic separable polynomial.
\begin{definition}
Let $\ty$ be a type of order $r-1$. We say that $\ty$ is \emph{$f$-complete}, if $\om_r(f)=1$. In this case, $f_\ty(x)$ is irreducible and the ramification index and residual degree of the extension of $K$ determined by $f_\ty(x)$ can be computed in terms of some data of $\ty$, by applying Corollary \ref{ramr} in order $r-1$ (Corollary \ref{ram} if $r=2$).
\end{definition}

The results of section \ref{secOre} can be interpreted as the addition of \emph{two more dissections}, for each order $2,\dots,r$, to the three classical ones, in the process of factorization of $f(x)$. If  $\ty$ is a type of order $r-1$ and  $\om_r(f)>1$, the factor $f_{\ty}(x)$ experiments further factorizations at two levels: first $f_{\ty}(x)$ factorizes into as many factors as the number of sides of $N_r^-(f)$, and then, the factor corresponding to each finite slope splits into the product of as many factors as the number of pairwise different irreducible factors of the residual polynomial attached to the slope.

We can think that the type $\ty$ has sprouted to produce several types of order $r$,
$\ty'=(\tilde{\ty};\la_r,\psi_r(y))$, each of them distinguished by the choice of a finite slope $\la_r$ of a side of $N_r^-(f)$, and a monic irreducible factor $\psi_r(y)$ of $R_{\la_r}(f)(y)$ in $\ff{r}[y]$.

\begin{definition}
In Sect. \ref{orderone}, we defined two sets $\ty_0(f)$, $\ty_1(f)$. We recursively define $\ty_r(f)$ to be the set of all types  of order $r$ constructed as above, $\ty'=(\tilde{\ty};\la_r,\psi_r(y))$, from those $\ty\in\ty_{r-1}(f)$ that are not $f$-complete.
This set is not an intrinsic invariant of $f(x)$ because it depends on the choices of the representatives $\phi_1(x),\dots,\phi_r(x)$ of the truncations of $\ty$.

We denote by $\ty_s(f)^{\op{compl}}$ the subset of the $f$-complete types of $\ty_s(f)$. We define
$$
\Ty_r(f):=\ty_r(f)\cup\left(\bigcup_{0\le s<r}\ty_s(f)^{\op{compl}}\right).
$$  
\end{definition}

Hensel's lemma and the theorems of the polygon and of the residual polynomial in orders $1,\dots,r$ determine a factorization   
\begin{equation}\label{theend}
f(x)=f_{r,\infty}(x)\prod_{\ty\in\Ty_r(f)}f_{\ty}(x),
\end{equation}
where $f_{r,\infty}(x)$ is the product of the different representatives $\phi_i(x)$ (of the different types in $\Ty_r(f)$) that divide $f(x)$ in $\zpx$. 

The following remark is an immediate consequence of the definitions.
\begin{lemma}\label{tempty}
The following conditions are equivalent:
\begin{enumerate}
 \item $\ty_{r+1}(f)=\emptyset$,
\item $\ty_r(f)^{\op{compl}}=\ty_r(f)$,
\item For all $\ty\in \ty_{r-1}(f)$ and all $\la_r\in\Q^-$, the residual polynomial of $r$-th order, $R_{\la_r}(f)(y)$ is separable.\hfill{$\Box$}
\end{enumerate}
\end{lemma}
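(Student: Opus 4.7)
The plan is to show (1)~$\Leftrightarrow$~(2) essentially from the definitions, and (2)~$\Leftrightarrow$~(3) via the Theorem of the residual polynomial in order $r$. For (2)~$\Rightarrow$~(1), observe that by construction $\ty_{r+1}(f)$ is built only from those types in $\ty_r(f)$ that fail to be $f$-complete, so there is nothing to extend if $\ty_r(f)^{\op{compl}} = \ty_r(f)$. For the contrapositive of (1)~$\Rightarrow$~(2), assume $\ty \in \ty_r(f)$ is not $f$-complete, i.e.\ $\om_{r+1}(f) > 1$. Pick a representative $\phi_{r+1}(x)$ of $\ty$; by Lemma~\ref{shape} in order $r+1$, $N_{r+1}^-(f_{\ty})$ has length $\om_{r+1}(f_{\ty}) = \om_{r+1}(f) > 1$. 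Since $f(x)$ is separable, $\ord_{\phi_{r+1}}(f_{\ty}) \le 1$, so the side of slope $-\infty$ contributes length at most $1$ and at least one side of finite slope $\la_{r+1}$ must be present. Its residual polynomial $R_{\la_{r+1}}(f) \sim R_{\la_{r+1}}(f_{\ty})$ then has positive degree and admits a monic irreducible factor $\psi(y) \in \ff{r+1}[y]$, so $(\tilde{\ty}; \la_{r+1}, \psi(y)) \in \ty_{r+1}(f)$.

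For (2)~$\Leftrightarrow$~(3) the key input is Theorem~\ref{thresidualr} combined with Corollary~\ref{vshift}. Fix $\ty \in \ty_{r-1}(f)$ and a slope $\la_r$ of $N_r^-(f)$, and write $R_{\la_r}(f)(y) \sim \prod_i \psi_{r,i}(y)^{a_i}$ as the factorization into pairwise distinct monic irreducible polynomials in $\ff{r}[y]$. The Theorem of the polygon together with Theorem~\ref{thresidualr} show that the extensions of $\ty$ inside $\ty_r(f)$ that involve slope $\la_r$ are in bijection with the $\psi_{r,i}$, and the factor $f_{\ty'}$ attached to $\ty' = (\tilde{\ty}; \la_r, \psi_{r,i})$ satisfies $\om_{r+1}(f_{\ty'}) = a_i$; in particular $\ty'$ is $f$-complete exactly when $a_i = 1$. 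Then (3)~$\Rightarrow$~(2) is immediate: separability of every $R_{\la_r}(f)$ forces every $a_i$ to be $1$, so every $\ty' \in \ty_r(f)$ is $f$-complete. For (2)~$\Rightarrow$~(3) I would fix $\ty \in \ty_{r-1}(f)$ and $\la_r \in \Q^-$ and verify separability case by case: if $\la_r$ is not a slope of $N_r^-(f)$, $R_{\la_r}(f)$ is a nonzero constant; if $\ty$ is $f$-complete, then $\om_r(f) = 1$ forces $N_r^-(f)$ to have length $1$, so $R_{\la_r}(f)$ has degree at most one; and in the remaining case the bijection above together with (2) forces every $a_i = 1$.

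The main obstacle is the bookkeeping needed to match $R_{\la_r}(f)$, $R_{\la_r}(f_{\ty})$ and $R_{\la_r}(G_i)$ via Corollary~\ref{vshift} and the Theorems of the polygon and of the residual polynomial, and to verify that $\om_{r+1}(f_{\ty'})$ equals the multiplicity $a_i$ of the corresponding irreducible factor. Once this dictionary is transparent, both equivalences reduce to the observation that a non-$f$-complete type of order $r$ exists if and only if some residual polynomial $R_{\la_r}(f)$ carries an irreducible factor of multiplicity greater than $1$.
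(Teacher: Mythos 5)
Your argument is correct and is essentially the careful unwinding of the definitions that the paper has in mind when it declares the lemma ``an immediate consequence of the definitions'' and omits the proof. The place where separability of $f$ is genuinely needed---guaranteeing that when $\om_{r+1}^{\ty}(f)>1$ the polygon $N_{r+1}^-(f)$ has a side of finite slope, so that $\ty$ really does sprout a type in $\ty_{r+1}(f)$---is exactly where you invoke it, and the key bookkeeping fact $\om_{r+1}^{\ty'}(f)=\ord_{\psi_r}(R_{\la_r}(f))$ is direct from the definition of $\om_{r+1}$.
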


If these conditions are satisfied, then (\ref{theend}) is a factorization of $f(x)$ into the product of monic irreducible polynomials in $\zpx$, and we get arithmetic information about each factor by Corollary \ref{ramr}. As long as there is some $\ty\in\ty_r(f)$ which is not $f$-complete, we must apply the results of this section in order $r+1$ to get further factorizations of $f_\ty(x)$, or to detect that it is irreducible.
We need some invariant to control the whole process and ensure that after a finite number of steps we shall have $\ty_r(f)^{\op{compl}}=\ty_r(f)$. This is the aim of the next section.

We end with a remark about $p$-adic approximations to the irreducible factors of $f(x)$, that is an immediate consequence of Lemma \ref{typedegree}, the Theorem of the polygon and Proposition \ref{vrphii}.
\begin{proposition}\label{appfactor}
Let $\ty$ be an $f$-complete type of order $r$, with representative $\phi_{r+1}(x)$. Let $\t\in\qpb$ be a root of $f_{\ty}(x)$, and $L=K(\t)$. Then, $\dg \phi_{r+1}=\dg f_{\ty}$, and $\phi_{r+1}(x)$ is an approximation to $f_{\ty}(x)$ satisfying
$$
v(\phi_{r+1}(\t))=(v_{r+1}(\phi_{r+1})+h_{r+1})/e(L/K)=\sum_{i=1}^{r+1}e_if_i\cdots e_rf_r\,\dfrac{h_i}{e_1\dots e_i},
$$where $-h_{r+1}$ is the slope of the unique side of $N_{r+1}^-(f)$, and $e_{r+1}=1$.\hfill{$\Box$}
\end{proposition}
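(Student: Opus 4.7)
The plan is to apply, in order $r+1$, the tools that the excerpt has already built in order $r$, to the polynomial $f(x)$ (whose role will now be that of the ``$f$'' of Theorem~\ref{thpolygonr}). The hypothesis that $\ty$ is $f$-complete, $\om_{r+1}(f)=1$, will force both the shape of $N_{r+1}^-(f)$ and the degree of $f_\ty(x)$ in one stroke.

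First I would dispose of the degree and of the shape of the polygon. Since $\om_{r+1}(f_\ty)=\om_{r+1}(f)=1$ (cf.\ (\ref{sameomega})) and $f_\ty(x)$ has type $\ty$, Lemma~\ref{typedegree} (in order $r+1$) gives $\dg f_\ty = m_{r+1}\om_{r+1}(f_\ty)=m_{r+1}$. The representative $\phi_{r+1}(x)$ of $\ty$ also has degree $m_{r+1}$ by (\ref{vrphir}), proving $\dg\phi_{r+1}=\dg f_\ty$. Moreover, by Lemma~\ref{shape} the length of $N_{r+1}^-(f)$ equals $\om_{r+1}(f)=1$, so the polygon consists of a single side of length one, necessarily of finite slope $\la_{r+1}=-h_{r+1}$ (an integer, hence $e_{r+1}=1$).

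Next I would read off $v(\phi_{r+1}(\t))$ from the Theorem of the polygon applied in order $r+1$: item~3 of Theorem~\ref{thpolygonr} (applied to $f$ with its unique side of slope $\la_{r+1}$) gives
\[
v(\phi_{r+1}(\t))=\frac{v_{r+1}(\phi_{r+1})+|\la_{r+1}|}{e_1\cdots e_r}=\frac{v_{r+1}(\phi_{r+1})+h_{r+1}}{e_1\cdots e_r}.
\]
On the other hand, since $\ty$ is $f$-complete, Corollary~\ref{ramr} (applied to $\ty$ viewed as a type of order $r$ with $a=1$) tells us that $f_\ty(x)$ is irreducible and $e(L/K)=e_1\cdots e_r$. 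This matches the denominator just obtained (using $e_{r+1}=1$), and yields the first of the two claimed equalities.

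Finally I would convert $v_{r+1}(\phi_{r+1})$ into the explicit sum by plugging in the closing formula of Proposition~\ref{vrphii},
\[
v_{r+1}(\phi_{r+1})=\sum_{j=1}^{r}(e_{j+1}\cdots e_r)(e_jf_j\cdots e_rf_r)\,h_j,
\]
dividing by $e_1\cdots e_r$, and simplifying each term to $(e_jf_j\cdots e_rf_r)h_j/(e_1\cdots e_j)$. Appending the remaining contribution $h_{r+1}/(e_1\cdots e_r)$, which is exactly the $i=r+1$ term under the convention $e_{r+1}=1$ (the product $e_{r+1}f_{r+1}\cdots e_rf_r$ being empty, hence $1$), gives the stated sum. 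There is no genuine obstacle here; the only point requiring attention is bookkeeping---checking that the index $i=r+1$ term produced by the convention $e_{r+1}=1$ accounts for the ``$+h_{r+1}$'' contribution and that Corollary~\ref{ramr}'s value of $e(L/K)$ is compatible with the denominator $e_1\cdots e_r$ coming from the Theorem of the polygon.
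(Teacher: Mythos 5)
Your argument uses exactly the three ingredients the paper cites just before the proposition (Lemma \ref{typedegree}, the Theorem of the polygon in order $r+1$, and Proposition \ref{vrphii}), together with Lemma \ref{shape} and Corollary \ref{ramr} to pin down the length of $N_{r+1}^-(f)$ and $e(L/K)$ respectively, so you have in effect spelled out the ``immediate consequence'' the paper leaves implicit, and the bookkeeping (dividing the closing formula of Proposition \ref{vrphii} by $e_1\cdots e_r$ and absorbing $h_{r+1}/(e_1\cdots e_r)$ as the $i=r+1$ term under the empty-product convention) is correct. One small inaccuracy: after observing that $N_{r+1}^-(f)$ has length one you assert the side is ``necessarily of finite slope,'' but this fails when $\phi_{r+1}(x)=f_{\ty}(x)$, in which case the unique side has slope $-\infty$; the paper's statement handles this with the convention $h_{r+1}=\infty$ (giving $v(\phi_{r+1}(\t))=\infty$ trivially), so the result still holds, but that degenerate case should be acknowledged rather than excluded. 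The inference ``length one $\Rightarrow e_{r+1}=1$'' in the finite-slope case is the right one; it would read more clearly if stated as: since the length of a side of slope $-h_{r+1}/e_{r+1}$ is a multiple of $e_{r+1}$, having length one forces $e_{r+1}=1$.
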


\section{Indices and resultants of higher order} 
We fix throughout this section a natural number $r\ge 1$.

\subsection{Computation of resultants with Newton polygons}
\begin{definition}
Let $\ty$ be a type of order $r-1$ and let $\phi_r(x)\in\zpx$ be a representative of $\ty$.  
For any pair of monic polynomials $P(x),Q(x)\in\zpx$ we define
$$
\res_\ty(P,Q):=f_0\cdots f_{r-1}\left(\sum_{i,j}\min\{E_iH'_j,E'_jH_i\}\right),
$$
where $E_i=\ell(S_i)$, $H_i=H(S_i)$ are the lengths and heights of the sides $S_i$ of $N_r^-(P)$, and
$E'_j=\ell(S'_j)$, $H'_j=H(S'_j)$ are the lengths and heights of the sides $S'_j$ of $N_r^-(Q)$. 
\end{definition}

We recall that for a side $S$ of slope $-\infty$ we took $H(S)=\infty$ by convention. Thus, the part of $\res_\ty(P,Q)$ that involves sides of slope $-\infty$ is always 
\begin{equation}\label{resinfinity}
f_0\cdots f_{r-1}(\ord_{\phi_r}(P)H(Q)+\ord_{\phi_r}(Q)H(P)),
\end{equation}
where $H(P)$, $H(Q)$ are the total heights respectively of $N_r^-(P)$, $N_r^-(Q)$.

\begin{lemma}\label{resbilineal}
Let  $P(x),P'(x),Q(x)\in\zpx$ be monic polynomials.
\begin{enumerate}
 \item $\res_\ty(P,Q)=0$ if and only if $\om_r(P)\om_r(Q)=0$,
 \item $\res_\ty(P,Q)<\infty$ if and only if $\ord_{\phi_r}(P)\ord_{\phi_r}(Q)=0$,
 \item $\res_\ty(P,Q)=\res_\ty(Q,P)$,
 \item $\res_\ty(PP',Q)=\res_\ty(P,Q)+\res_\ty(P',Q)$.
\end{enumerate}
\end{lemma}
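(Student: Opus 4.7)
The plan is to rewrite $\res_\ty(P,Q)$ in a form that makes all four properties essentially transparent. The key elementary identity is that, for two sides $S,S'$ of slopes $\la,\la'$ (possibly $-\infty$) with respective lengths $E,E'$ and heights $H,H'$, one has
\[
\min\{EH',\,E'H\} \;=\; EE'\,\min\{|\la|,\,|\la'|\},
\]
under the conventions $|{-\infty}|=\infty$ and $E\cdot\infty=\infty$ for $E>0$. The finite-slope case is just the observation that $H=E|\la|$ and $H'=E'|\la'|$; the infinite-slope cases reduce to direct inspection. Enlarging the unit-slope multisets $U_i(N)$ of \S\ref{abstract} to the full multiset $U(P)$ consisting of all unit slopes of the finite part of $N_r^-(P)$ together with $\ord_{\phi_r}(P)$ copies of $-\infty$ (so that $|U(P)|=\om_r(P)$, by Lemma \ref{shape}), the identity above yields the compact expression
\[
\res_\ty(P,Q) \;=\; f_0\cdots f_{r-1}\sum_{\mu\in U(P)}\sum_{\nu\in U(Q)}\min\{|\mu|,|\nu|\}.
\]

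From this compact formula, items 1--3 are essentially immediate. Item 3 is the symmetry of $\min$. For item 1, every term in the double sum is strictly positive (each element of $U(P)$ or $U(Q)$ has positive absolute value, finite or $\infty$), so the sum vanishes iff $U(P)=\emptyset$ or $U(Q)=\emptyset$, i.e.\ $\om_r(P)\om_r(Q)=0$. For item 2, a single term equals $+\infty$ precisely when $|\mu|=|\nu|=\infty$, which requires both $P$ and $Q$ to contribute a unit of slope $-\infty$; equivalently, $\ord_{\phi_r}(P)\ord_{\phi_r}(Q)>0$.

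Item 4 will follow from the Theorem of the product in order $r$, which asserts $N_r^-(PP')=N_r^-(P)+N_r^-(P')$. The addition law in $\pol$ and its effect on unit slopes, as described just before Lemma \ref{sum}, together with the additivity $\ord_{\phi_r}(PP')=\ord_{\phi_r}(P)+\ord_{\phi_r}(P')$ for the $-\infty$ contribution, give $U(PP')=U(P)\sqcup U(P')$ as multisets. Since the compact formula is manifestly additive in its first argument with the second held fixed, item 4 follows at once. The only really delicate point in the whole argument is the careful verification of the $\pm\infty$ conventions in the basic identity when one or both slopes are $-\infty$; once this bookkeeping is in place, all four statements fall out of the compact formula.
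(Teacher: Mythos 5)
Your proof is correct and rests on the same essential ingredient as the paper's, namely the additivity $N_r^-(PP')=N_r^-(P)+N_r^-(P')$ from the Theorem of the product in order $r$; the paper treats items 1--3 as immediate from the definition and item 4 as a direct consequence of that additivity. Your unit-slope reformulation $\res_\ty(P,Q)=f_0\cdots f_{r-1}\sum_{\mu,\nu}\min\{|\mu|,|\nu|\}$ is a clean bookkeeping device that makes the additivity in item 4 manifest despite the fact that sides of equal slope merge under polygon addition, but it is an expansion of the paper's argument rather than a different route.
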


\begin{proof}
The three first items are an immediate consequence of the definition. Item 4 follows from 
$N_r^-(PP')=N_r^-(P)+N_r^-(P')$. 
\end{proof}

In the simplest case when $N_r^-(P)$ and $N_r^-(Q)$ are both one-sided, $\res_\ty(P,Q)$ represents the area of the rectangle joining the two triangles determined by the sides, if they are ordered by increasing slope.
The reader may figure out a similar geometrical interpretation of $\res_\ty(P,Q)$ in the general case, as the area of a union of rectangles below the Newton polygon $N_r^-(PQ)=N_r^-(P)+N_r^-(Q)$.

\begin{center}
\setlength{\unitlength}{5.mm}
\begin{picture}(8,7)
\put(2.85,1.85){$\bullet$}\put(5.85,-.15){$\bullet$}\put(-.15,5.85){$\bullet$}\put(-1,0){\line(1,0){8}}
\put(0,-1){\line(0,1){8}}\put(6,0){\line(-3,2){3}}\put(3,2){\line(-3,4){3}}\put(6,0.02){\line(-3,2){3}}
\put(3,2.02){\line(-3,4){3}}
\put(1.6,4.1){\begin{scriptsize}$N_r^-(P)$\end{scriptsize}}
\put(4.4,1.2){\begin{scriptsize}$N_r^-(Q)$\end{scriptsize}}
\put(.3,1){\begin{scriptsize}$\res_\ty(P,Q)$\end{scriptsize}}
\multiput(3,0)(0,.20){10}{\vrule height2pt}
\multiput(0,2)(.25,0){12}{\hbox to 2pt{\hrulefill }}
\end{picture}
\end{center}\be\be

Our aim is to compute $v(\res(P,Q))$ as a sum of several $\res_\ty(P,Q)$ for an adequate choice of types $\ty$. To this end, we want to compare types attached to $P$ and $Q$, and this is uneasy because in the definition of the sets $\ty_r(P)$, $\ty_r(Q)$, we had freedom in the choices of the different representatives $\phi_i(x)$. For commodity in the exposition, we shall assume in this section that these polynomials are universally fixed. \be

\noindent{\bf Convention. }{\it We fix from now on a monic lift $\phi_1(x)\in\zpx$ of every monic irreducible polynomial $\psi_0(y)\in\ff{}[y]$. We proceed now recursively: for any $1\le i<r$ and any type of order $i$
$$
\ty=(\phi_1(x);\la_1,\phi_2(x);\cdots;\la_{i-1},\phi_i(x);\la_i,\psi_i(y)),
$$with $\phi_1(x),\dots,\phi_i(x)$ belonging to the infinite family of previously chosen polynomials, we fix a representative $\phi_{i+1}(x)$ of $\ty$.
Also, we assume from now on that all types are made up only with our chosen polynomials $\phi_i(x)$.}\be 

Once these choices are made, the set $\ty_r(P)$ is uniquely determined by $r$ and $P(x)$. More precisely, $\ty_r(P)$ is the set of all types $\ty$ of order $r$ such that $\om_{r+1}^{\ty}(P)>0$ and the truncation $\ty_{r-1}$ is not $P$-complete; in other words,
$$\ty_r(P)=\{\ty\mbox{ type of order $r$ such that }\om_{r+1}^{\ty}(P)>0,\,\om_r^{\ty}(P)>1\}.
$$However, in view of the computation of resultants, we need a broader concept of ``type attached to a polynomial".

\begin{definition}
For any monic polynomial $P(x)\in\zpx$, we define $$\tt_r(P):=\{\ty\mbox{ type of order $r$ such that }\om_{r+1}^{\ty}(P)>0\}\supseteq \ty_r(P).
$$
\end{definition}

The following observation is a consequence of the fact that $\om_{r+1}^{\ty}$ is a semigroup homomorphism for every type $\ty$ of order $r$. 
\begin{lemma}\label{tunion}
For any two monic polynomials $P(x),Q(x)\in\zpx$, we have $\tt_r(PQ)=\tt_r(P)\cup \tt_r(Q)$.
\hfill{$\Box$}\end{lemma}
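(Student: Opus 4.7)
The plan is to unfold the definition of $\tt_r(-)$ and reduce everything to the additivity of the semigroup homomorphism $\om_{r+1}^{\ty}$. Recall that, for any type $\ty$ of order $r$, one has
$$
\om_{r+1}^{\ty}(F) = \ord_{\psi_r}\bigl(R_r^{\ty}(F)\bigr),
$$
so $\om_{r+1}^{\ty}$ is the composition of the map $R_r^{\ty}\colon \zpx\setminus\{0\}\to \ff{r}[y]$ (which is a semigroup homomorphism by the Theorem of the product in order $r$) with the $\psi_r$-order function on $\ff{r}[y]$ (which is additive on products). Hence $\om_{r+1}^{\ty}$ is itself a semigroup homomorphism into $(\Z_{\ge0},+)$.

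Given this, the main identity to extract is
$$
\om_{r+1}^{\ty}(PQ) \;=\; \om_{r+1}^{\ty}(P) + \om_{r+1}^{\ty}(Q).
$$
Since both summands on the right are non-negative integers, the left-hand side is strictly positive if and only if at least one of the two summands is strictly positive. Translating through the definition of $\tt_r(-)$, this is exactly the equivalence
$$
\ty \in \tt_r(PQ) \;\iff\; \ty \in \tt_r(P) \text{ or } \ty \in \tt_r(Q),
$$
which gives the desired equality of sets. There is no serious obstacle here; the only subtlety is being sure that the Theorem of the product applies to $R_r^{\ty}$ (which it does, as established earlier in the section), and that the type $\ty$ is built from the universally fixed representatives $\phi_1,\dots,\phi_r$, so that $\om_{r+1}^{\ty}$ is unambiguously defined on the same domain for all three polynomials $P$, $Q$, and $PQ$.
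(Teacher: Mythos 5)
Your proof is correct and matches the paper's approach exactly: the paper also reduces the claim to the fact that $\om_{r+1}^{\ty}$ is a semigroup homomorphism (via the Theorem of the product), combined with the observation that a sum of nonnegative integers is positive iff one of the summands is. Your remark about the universally fixed representatives is also the right thing to keep in mind here.
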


Note that the analogous statement for the sets $\ty_r(P)$ is false. For instance, let $P(x),Q(x)$ be two monic polynomials congruent to the same irreducible polynomial $\psi(y)$ modulo $\m$. We have $\ty_0(P)=\ty_0(Q)=\{\psi(y)\}=\ty_0(PQ)$, and the type of order zero $\psi(y)$ is $P$-complete and $Q$-complete; thus, $\ty_1(P)=\emptyset=\ty_1(Q)$. However, $\psi(y)$ is not $PQ$-complete, and $\ty_1(PQ)\ne\emptyset$.  

We could also build the set $\tt_r(P)$ in a constructive way analogous to that used in the last section to construct $\ty_r(P)$. The only difference is that the $P$-complete types of order $r-1$ are expanded to produce types of order $r$ as well. 
Thanks to our above convention 
about fixing a universal family of representatives of the types, these expansions are unique.

\begin{lemma}\label{uniquesprout}
Let $P(x)\in\zpx$ be a monic polynomial. Let $\ty$ be a $P$-complete type of order $r-1$ with representative $\phi_r(x)$, and suppose that $P(x)$ is not divisible by $\phi_r(x)$ in $\zpx$. Then, $\ty$ can be extended to a unique type $\ty'\in\tt_r(P)$ such that $\ty'_{r-1}=\ty$. The type $\ty'$ is $P$-complete too.  
\end{lemma}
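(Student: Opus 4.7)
The plan is to exploit the fact that $P$-completeness of $\ty$ pins down the shape of $N_r^-(P)$ almost completely, which in turn essentially forces the extension. First I would apply Lemma \ref{shape}: since $\om_r(P)=1$, the principal polygon $N_r^-(P)$ has length one, and since $P(x)$ is not divisible by $\phi_r(x)$ it has no side of slope $-\infty$. Hence $N_r^-(P)$ consists of a single segment of length one with finite negative slope $\la_r=-h_r/e_r$ (in lowest terms); the relation $\ell(S_{\la_r}(P))=d(S_{\la_r}(P))\cdot e_r=1$ forces $e_r=1$ and $d(S_{\la_r}(P))=1$, so that $R_{\la_r}(P)(y)\in\ff{r}[y]$ has degree exactly one.

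Next I would exhibit $\ty'$ explicitly. Because residual polynomials are never divisible by $y$ (by the remarks following Definition \ref{defresidual}), the polynomial $R_{\la_r}(P)(y)$ factors uniquely as $c\,\psi_r(y)$ with $c\in\ff{r}^*$ and $\psi_r(y)$ monic linear, $\psi_r(y)\ne y$. This produces a legitimate type $\ty':=(\tilde\ty;\la_r,\psi_r(y))$ of order $r$ extending $\ty$, and the identity $\om_{r+1}^{\ty'}(P)=\ord_{\psi_r}(R_{\la_r}(P))=1$ shows both that $\ty'\in\tt_r(P)$ and that $\ty'$ is $P$-complete.

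For uniqueness, I would take an arbitrary extension $\ty''=(\tilde\ty;\la,\psi(y))\in\tt_r(P)$ and argue that $\la$ and $\psi$ are forced. The condition $\om_{r+1}^{\ty''}(P)>0$ requires $\psi\mid R_\la(P)$ in $\ff{r}[y]$; since $\psi$ is irreducible of positive degree and $\psi\ne y$, the polynomial $R_\la(P)$ cannot be a nonzero constant, so $\la$ must be the slope of a side of positive length of $N_r^-(P)$. The only such slope is $\la_r$, forcing $\la=\la_r$, and then $\psi$ must be the unique monic irreducible factor of $R_{\la_r}(P)$ distinct from $y$, namely the linear polynomial $\psi_r$; hence $\ty''=\ty'$.

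I do not anticipate a serious obstacle here: the whole argument is essentially bookkeeping with Lemma \ref{shape}, Definition \ref{defresidual}, and the two standing constraints that residual polynomials are never divisible by $y$ and that a type of order $r$ always requires $\psi_r\ne y$.
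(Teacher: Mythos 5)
Your proof is correct and follows essentially the same route as the paper's: apply Lemma~\ref{shape} to see that $N_r^-(P)$ is a single side of length one and finite slope, deduce that the residual polynomial has degree one and hence a unique monic irreducible factor $\psi_r\ne y$, and then rule out any competing choice of $(\la,\psi)$ because $R_\la(P)$ is constant for $\la\ne\la_r$ and $\psi_r$ is the only admissible factor at $\la=\la_r$. The paper leaves a couple of your bookkeeping steps (e.g.\ $e_r=1$ forcing $d=1$, and the reason $\psi_r\ne y$) implicit, but the substance is identical.
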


\begin{proof}
By Lemma \ref{shape}, $N_r^-(P)$ has length one and finite slope $\la_r\in\Q^-$; hence, $\dg R_{\la_r}(P)(y)=d(N_r^-(P))=1$. Let $\psi_r(y)$ be the monic polynomial of degree one determined by $R_{\la_r}(P)(y)\sim \psi_r(y)$. The type $\ty'=(\tilde{\ty};\la_r,\psi_r(y))$ is $P$-complete, and it is the unique type of order $r$ such that $\ty'_{r-1}=\ty$ and $\om_{r+1}^{\ty'}(P)>0$. In fact, let us check that $\om_{r+1}^{\ty''}(P)=0$ for any $\ty''=(\tilde{\ty};\la'_r,\psi'_r(y))\ne \ty'$.
If $\la'_r\ne\la_r$, then $R_{\la'_r}(P)$ is a constant; if $\la'_r=\la_r$, but $\psi_r(y)\ne \psi'_r(y)$ then $\psi'_r(y)$ cannot divide $R_{\la_r}(P)(y)$.
\end{proof}

\begin{lemma}\label{empty}
Let $P(x)\in\zpx$ be a monic polynomial. Then,
$\tt_r(P)=\emptyset$ if and only if all irreducible factors of $P(x)$ are the representative of some type of order $0,1,\dots,r-1$.
Moreover, if $P(x)$ is irreducible and $\tt_r(P)\ne\emptyset$, then $|\tt_r(P)|=1$.  
\end{lemma}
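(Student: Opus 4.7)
The plan is to reduce to $P$ irreducible via Lemma \ref{tunion} (which gives $\tt_r(P)=\tt_r(P_1)\cup\cdots\cup\tt_r(P_k)$ for the irreducible factors $P_i$), and then induct on $r$. The key observation driving the induction is that $\om_{r+1}^\ty(P)>0$ forces $R_r^\ty(P)$ to have positive degree, so the $\la_r$-component of $N_r^-(P)$ has positive length and $\om_r^\ty(P)>0$; hence $\ty\in\tt_r(P)$ implies $\ty_{r-1}\in\tt_{r-1}(P)$, and in particular $\tt_{r-1}(P)=\emptyset$ forces $\tt_r(P)=\emptyset$.

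For the base case $r=1$ with $P$ irreducible, $\overline{P}$ is a power of a single irreducible $\psi_0\in\ff{}[y]$, so only the fixed lift $\phi_1$ of $\psi_0$ can yield $\om_1^\ty(P)>0$. If $P=\phi_1$, its $\phi_1$-adic expansion is $0+1\cdot\phi_1$, making $N_{\phi_1}^-(P)$ a single slope-$(-\infty)$ side; then $R_{\la_1}(P)$ is a constant for every finite $\la_1$ and $\tt_1(P)=\emptyset$. Otherwise $\phi_1\nmid P$ in $\zpx$, and the Theorem of the polygon together with the Theorem of the residual polynomial force $N_{\phi_1}^-(P)$ to be one-sided of a unique finite slope $\la_1^*$ with $R_{\la_1^*}(P)\sim\psi^a$ for a single irreducible $\psi\in\fph[y]$ distinct from $y$; thus $\tt_1(P)=\{(\phi_1;\la_1^*,\psi)\}$, a singleton.

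For the inductive step, fix $r\geq 2$ and assume both claims in orders $<r$. If $P=\phi_i$ with $i\leq r-1$, induction gives $\tt_{r-1}(P)=\emptyset$, hence $\tt_r(P)=\emptyset$ by the key observation. If $P=\phi_r$, induction yields $\tt_{r-1}(\phi_r)=\{\ty\}$ for the type $\ty$ of which $\phi_r$ is a representative, and the $\phi_r$-adic expansion $\phi_r=0+1\cdot\phi_r$ shows $N_r^-(\phi_r)$ is a single slope-$(-\infty)$ side; every finite-slope residual polynomial is then a constant, so $\tt_r(\phi_r)=\emptyset$. In the remaining case, $P$ is irreducible and $P\neq\phi_i$ for every $i\leq r$; induction yields $\tt_{r-1}(P)=\{\ty\}$ and $\om_r^\ty(P)>0$, so by Lemma \ref{factortype} (combined with the irreducibility of $P$) the factor $P_\ty$ equals $P$, i.e., $P$ itself has type $\ty$. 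Since $P\neq\phi_r$ we have $\phi_r\nmid P$; Theorem \ref{thpolygonr} then makes $N_r^-(P)$ one-sided of a unique finite slope $\la_r^*$, and Theorem \ref{thresidualr} makes $R_{\la_r^*}(P)\sim\psi^a$ for a single irreducible $\psi\in\ff{r}[y]$ with $\psi\neq y$. Any $\ty'=(\tilde\ty;\la_r,\psi_r)\in\tt_r(P)$ must extend $\ty$, have $\la_r$ among the slopes of $N_r^-(P)$, and satisfy $\psi_r\mid R_{\la_r}(P)$; these constraints force $\la_r=\la_r^*$ and $\psi_r=\psi$, so $\tt_r(P)=\{(\tilde\ty;\la_r^*,\psi)\}$ is a singleton.

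The main technical subtlety is the boundary case $P=\phi_r$: here $\tt_{r-1}(P)$ is \emph{not} empty (it contains the type that $\phi_r$ represents), so the key observation alone does not kill $\tt_r(P)$, and one must inspect the $\phi_r$-adic expansion of $\phi_r$ itself to confirm that its $r$-th order Newton polygon reduces to a single slope-$(-\infty)$ side.
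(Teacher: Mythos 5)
Your proof is correct and follows essentially the same approach as the paper's: reduce to $P$ irreducible via Lemma \ref{tunion}, handle the representative case $P=\phi_s$ by observing $N_s(\phi_s)$ is one-sided of slope $-\infty$ so the residual polynomials are constant, and handle the generic irreducible case by letting the Theorems of the polygon and of the residual polynomial force a unique extension at each order. You have merely expanded the paper's two-sentence sketch into an explicit induction on $r$, isolating the observation that $\ty\in\tt_r(P)$ forces $\ty_{r-1}\in\tt_{r-1}(P)$ (via the decreasing chain $\om_r\ge e_rf_r\,\om_{r+1}$ from Lemma \ref{typedegree}) and spelling out the boundary case $P=\phi_r$ where $\tt_{r-1}(P)$ is a singleton but $\tt_r(P)$ still collapses. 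The paper treats all representatives $\phi_s$, $s\le r$, uniformly by checking $\om_{s+1}^{\ty'}(\phi_s)=0$, whereas you split off $s<r$ (killed already at order $r-1$ by induction) from $s=r$ (killed directly by the $\phi_r$-adic expansion of $\phi_r$ itself); the net effect is the same.
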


\begin{proof}
By Lemma \ref{tunion}, we can assume that $P(x)$ is irreducible.
If $P(x)=\phi_s(x)$ is the representative of some type of order $s-1\le r-1$, then
$N_s(\phi_s)$ is one-sided of slope $-\infty$; hence, $R_{\la_s}(\phi_s)$ is a constant for every $\la_s\in\Q^-$, and $\om^{\ty'}_{s+1}(\phi_s)=0$, for every type $\ty'$ of order $\ge s$. Thus, $\tt_r(\phi_s)=\emptyset$, for all $r\ge s$. Otherwise, the theorems of the polygon and of the residual polynomial show that the unique element of  $\tt_0(P)$ can be successively extended to a unique element of $\tt_1(P),\dots,\tt_r(P)$. 
\end{proof}
 
\begin{definition}
For any pair of monic polynomials $P(x),Q(x)\in\zpx$, and any natural number $r\ge 1$, we define $$\res_r(P,Q):=\sum_{\ty\in\tt_{r-1}(P)\cap\tt_{r-1}(Q)}\res_\ty(P,Q).$$
\end{definition}

The following observation is an immediate consequence of Lemma \ref{resbilineal}.
\begin{lemma}\label{resr+1}The following conditions are equivalent:
 \begin{enumerate}
 \item $\res_{r+1}(P,Q)=0$,
\item $\tt_r(P)\cap\tt_r(Q)=\emptyset$,
\item For all $\ty\in \tt_{r-1}(P)\cap\tt_{r-1}(Q)$ and all $\la_r\in\Q^-$, the residual polynomials of $r$-th order, $R_{\la_r}(P)(y)$, $R_{\la_r}(Q)(y)$, have no common factor in
$\ff{r}[y]$.\hfill{$\Box$}
\end{enumerate}
\end{lemma}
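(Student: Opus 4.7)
The plan is to handle the two equivalences separately: (1)$\Leftrightarrow$(2) follows quickly from positivity of each summand in the defining sum of $\res_{r+1}$, while (2)$\Leftrightarrow$(3) is obtained by unfolding membership in $\tt_r(P)$ in terms of divisibility of the residual polynomials of order $r$.

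For (1)$\Leftrightarrow$(2), by definition $\res_{r+1}(P,Q)=\sum_{\ty\in\tt_r(P)\cap\tt_r(Q)}\res_\ty(P,Q)$, where each summand $\res_\ty(P,Q)$ is a non-negative quantity (possibly $+\infty$). Applying item 1 of Lemma \ref{resbilineal} one order higher, $\res_\ty(P,Q)=0$ iff $\om_{r+1}^\ty(P)\om_{r+1}^\ty(Q)=0$; but for $\ty\in\tt_r(P)\cap\tt_r(Q)$ both factors are strictly positive by the very definition of $\tt_r$, so every summand is strictly positive and the sum vanishes iff the index set is empty.

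For (2)$\Leftrightarrow$(3), I would use that any type $\ty\in\tt_r(P)$ is uniquely of the form $\ty=(\tilde{\ty}_{r-1};\la_r,\psi_r(y))$, and that the condition $\om_{r+1}^\ty(P)>0$ amounts to $\psi_r(y)\mid R_{\la_r}(P)(y)$ in $\ff{r}[y]$, with the residual polynomial taken with respect to the truncation $\ty_{r-1}$ and the slope $\la_r$. Applying the chain of inequalities in Lemma \ref{typedegree} to the type of order $r$ yields $\om_{r+1}^\ty(P)>0\Rightarrow\om_r^{\ty_{r-1}}(P)>0$, whence $\ty_{r-1}\in\tt_{r-1}(P)$, and likewise for $Q$. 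Consequently $\tt_r(P)\cap\tt_r(Q)\ne\emptyset$ is equivalent to the existence of $\ty_{r-1}\in\tt_{r-1}(P)\cap\tt_{r-1}(Q)$, $\la_r\in\Q^-$ and a monic irreducible $\psi_r(y)\ne y$ in $\ff{r}[y]$ dividing both $R_{\la_r}(P)(y)$ and $R_{\la_r}(Q)(y)$.

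The last step is to observe that the constraint $\psi_r(y)\ne y$ imposes no restriction: residual polynomials are never divisible by $y$ (observation immediately following Definition \ref{defresidual}), so any common monic irreducible factor automatically avoids $y$. Hence the displayed existence condition is precisely the negation of (3), completing the proof. The main potential pitfall is purely notational, namely keeping straight which truncation of $\ty$ and which residual polynomial is meant at each step, and remembering to invoke Lemma \ref{typedegree} one order higher than it is stated; no new ideas are required.
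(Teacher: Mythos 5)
Your proof is correct and is essentially the paper's route: the paper simply remarks that the lemma is ``an immediate consequence of Lemma \ref{resbilineal},'' and your argument spells out exactly the two steps that remark encodes — nonnegativity of the summands plus Lemma \ref{resbilineal}(1) applied at order $r+1$ to get (1)$\Leftrightarrow$(2), and then unfolding $\om_{r+1}^\ty>0$ into a divisibility condition on $R_{\la_r}$ (using Lemma \ref{typedegree} to pass from $\tt_r$ to $\tt_{r-1}$, and the observation that $\psi_r\ne y$ is automatic) for (2)$\Leftrightarrow$(3).
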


The following result is an immediate consequence of Lemmas \ref{resbilineal} and \ref{tunion}.
\begin{lemma}\label{resbilineal2}
For any three monic polynomials $P(x),P'(x),Q(x)\in\zpx$, and any natural number $r\ge 1$, we have $\res_r(PP',Q)=\res_r(P,Q)+\res_r(P',Q)$.\hfill{$\Box$}
\end{lemma}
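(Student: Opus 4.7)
The plan is to reduce the statement to the single-type additivity (item 4 of Lemma \ref{resbilineal}) by using Lemma \ref{tunion} to describe the index set and item 1 of Lemma \ref{resbilineal} to harmonize the index sets on the two sides by adding in vanishing terms.

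First I would set $A:=\tt_{r-1}(P)\cap\tt_{r-1}(Q)$ and $B:=\tt_{r-1}(P')\cap\tt_{r-1}(Q)$. By Lemma \ref{tunion} (applied to orders $\le r-1$), $\tt_{r-1}(PP')=\tt_{r-1}(P)\cup\tt_{r-1}(P')$, so the index set of the sum defining $\res_r(PP',Q)$ is precisely $A\cup B$. Applying item 4 of Lemma \ref{resbilineal} termwise, I obtain
\[
\res_r(PP',Q)=\sum_{\ty\in A\cup B}\bigl(\res_\ty(P,Q)+\res_\ty(P',Q)\bigr).
\]

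The only remaining point is that the right-hand side of the lemma sums $\res_\ty(P,Q)$ over $A$ and $\res_\ty(P',Q)$ over $B$, while I have just obtained sums of each over $A\cup B$. But this is harmless: if $\ty\in(A\cup B)\setminus A$, then $\ty\notin\tt_{r-1}(P)$, i.e.\ $\om_r^{\ty}(P)=0$, so item 1 of Lemma \ref{resbilineal} gives $\res_\ty(P,Q)=0$; symmetrically $\res_\ty(P',Q)=0$ for $\ty\in(A\cup B)\setminus B$. Hence
\[
\sum_{\ty\in A\cup B}\res_\ty(P,Q)=\sum_{\ty\in A}\res_\ty(P,Q)=\res_r(P,Q),
\]
and similarly $\sum_{\ty\in A\cup B}\res_\ty(P',Q)=\res_r(P',Q)$. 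Adding these two identities yields the claim.

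There is no real obstacle here: the whole content is bookkeeping about the index sets, with Lemma \ref{tunion} doing the combinatorial work and Lemma \ref{resbilineal}(1),(4) supplying the algebraic input. The only point that requires a moment's care is checking that one may freely extend sums over $A$ or $B$ to sums over $A\cup B$ because the newly-introduced summands vanish; this is exactly the vanishing criterion recorded in Lemma \ref{resbilineal}(1).
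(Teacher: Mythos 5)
Your proof is correct and follows exactly the route the paper intends — the paper simply records the result as an immediate consequence of Lemmas \ref{tunion} and \ref{resbilineal}, and your argument is the straightforward unwinding of that: use Lemma \ref{tunion} to identify the index set $\tt_{r-1}(PP')\cap\tt_{r-1}(Q)$ with $A\cup B$, apply item 4 of Lemma \ref{resbilineal} termwise, and then use item 1 of Lemma \ref{resbilineal} to discard the vanishing extra summands.
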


\begin{theorem}\label{resultant}
Let $P(x),Q(x)\in\zpx$ be two monic polynomials having no common factors, and let $r\ge 1$ be natural number. Then, 
\begin{enumerate}
 \item $v(\res(P,Q))\ge \res_1(P,Q)+\cdots+\res_r(P,Q)$, 
\item Equality holds if and only if $\res_{r+1}(P,Q)=0$.
\end{enumerate}
\end{theorem}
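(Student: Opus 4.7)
The plan is to prove the theorem by reducing to irreducible polynomials and then descending through the common prefix of their type sequences. The resultant is multiplicative, giving $v(\res(PP',Q)) = v(\res(P,Q)) + v(\res(P',Q))$, and Lemma \ref{resbilineal2} provides the analogous additivity for each $\res_k$. Factoring $P$ and $Q$ into their monic irreducible components thus reduces the problem to the case where $P$ and $Q$ are both monic irreducible in $\zpx$ with $P \ne Q$.

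For such irreducible $P$, Lemma \ref{empty} shows that each $\tt_k(P)$ is either empty or a singleton $\{\ty_k(P)\}$, and likewise for $Q$. Since truncation preserves coincidence, the set $\{k\ge 0 : \ty_k(P) = \ty_k(Q)\}$ is an initial segment $\{0,1,\ldots,s\}$ for some finite $s \ge -1$. If $s = -1$, then $\bar P$ and $\bar Q$ are coprime in $\ff{}[y]$, so $v(\res(P,Q))=0$ and every $\res_k(P,Q)=0$, and the theorem is trivial. In the general case, $\res_{k+1}(P,Q) = \res_{\ty_k}(P,Q)$ for $0 \le k \le s$ while $\res_{k+1}(P,Q) = 0$ for $k > s$, so both parts of the theorem (for every $r \ge 1$) collapse to the single identity
$$v(\res(P, Q)) \;=\; \sum_{k=0}^{s} \res_{\ty_k}(P, Q).$$

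To prove this identity I would fix a root $\t$ of $P$, so that $v(\res(P,Q)) = \deg P \cdot v(Q(\t))$, and compute $v(Q(\t))$ via Proposition \ref{vqtr} at level $s+1$ with respect to the common type $\ty_s$ and its fixed representative $\phi_{s+1}$. Let $\la_P$ be the slope of $N_{s+1}^-(P)$. Because $\ty_{s+1}(P) \ne \ty_{s+1}(Q)$, either (a) the slope of $N_{s+1}^-(Q)$ differs from $\la_P$, so the $\la_P$-component of $N_{s+1}^-(Q)$ reduces to a single point and $R_{\la_P}(Q)$ is a nonzero constant; or (b) the slopes agree but the distinguishing factors $\psi_{s+1}(P) \ne \psi_{s+1}(Q)$, so $\psi_{s+1}(P) \nmid R_{\la_P}(Q)$. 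In both cases, item 5 of Proposition \ref{vqtr} yields $v(Q(\t)) = H/(e_1\cdots e_s)$, where $H$ is the ordinate at the origin of the line $L_{\la_P}$ through the $\la_P$-component of $N_{s+1}^-(Q)$.

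The heart of the argument is then the combinatorial identity
$$\deg P \cdot \frac{H}{e_1\cdots e_s} \;=\; \sum_{k=0}^{s} \res_{\ty_k}(P, Q),$$
which I would establish by induction on $s$. Each increment $\res_{\ty_k}(P,Q)$ should be identified geometrically as the area (scaled by $f_0 f_1\cdots f_k$) of a rectangle sitting beneath the $\la_{k+1}$-components of $N_{k+1}^-(P)$ and $N_{k+1}^-(Q)$, and the inductive step relates the height $H$ at level $s+1$ to the corresponding quantity at level $s$ through the recursion $v_{r+1}(\phi_{r+1}) = e_rf_r\,v_{r+1}(\phi_r)$ from \eqref{vrphir}, combined with the degree formula $\deg P = m_{s+1}\,\om_{s+1}(P)$ of Lemma \ref{typedegree}. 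The principal technical obstacle is precisely this bookkeeping: tracking the nested ramification indices $e_1\cdots e_s$ and residual degrees $f_0 f_1\cdots f_s$ through the successive higher-order Newton polygons so that the geometric area interpretations telescope exactly to the desired sum.
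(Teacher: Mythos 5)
Your approach is essentially the same as the paper's: reduce to irreducible $P,Q$ via the additivity of $\res_r$, identify the level where the type towers first diverge, compute $v(\res(P,Q))$ via Proposition~\ref{vqtr}, and telescope. The plan is sound, but there are two concrete gaps.

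First, you fix a root $\t$ of $P$ and let $\la_P$ be the slope of $N_{s+1}^-(P)$; but if $P = \phi_{s+1}(x)$, the fixed representative of the common type $\ty_s$, then $N_{s+1}^-(P)$ is a side of slope $-\infty$, there is no finite $\la_P$, and Proposition~\ref{vqtr} does not apply --- there is no residual polynomial attached to the slope $-\infty$, so the dichotomy (a)/(b) never starts. The symmetric case $Q = \phi_{s+1}$ \emph{is} covered by your case (a), since the $\la_P$-component of $N_{s+1}^-(\phi_{s+1})$ is its unique finite vertex and $R_{\la_P}(Q)$ is a nonzero constant. So the fix is to exploit $\res(P,Q) = \pm\res(Q,P)$: since $P \ne Q$ and each type has one fixed representative, at most one of them can equal $\phi_{s+1}$, and you can always arrange for it to be $Q$. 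This is precisely what the paper's WLOG on slopes (ensuring $H_r<\infty$, i.e.\ $P \ne \phi_r$) accomplishes. Second, you assert without argument that the agreement set $\{k : \ty_k(P) = \ty_k(Q)\}$ is a \emph{finite} initial segment. It is: by Proposition~\ref{vpt}, if $P$ and $Q$ are both of type $\ty_k$ then for a root $\t$ of $P$ one has $v(Q(\t)) \ge v_{k+1}(Q)/(e_1\cdots e_k)$, and these lower bounds increase by at least $1/m_k \ge 1/\deg P$ at each step, hence diverge, contradicting $v(Q(\t)) < \infty$. The paper's level-by-level formulation deduces this finiteness a posteriori from the already-established inequality and never needs the a priori argument.
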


\begin{proof}
Let us deal first with the case where $P(x)$, $Q(x)$ are both irreducible and $\tt_{r-1}(P)=\tt_{r-1}(Q)=\{\ty\}$, for some type $\ty=(\phi_1(x);\cdots;\la_{r-1},\psi_{r-1}(x))$. For $0\le i\le r$, let $E_i,H_i$ be the length and heigth of the unique side of $N_i(P)$, and $E'_i,H'_i$ be the length and heigth of the unique side of $N_i(Q)$. By Lemma \ref{factortype}, $P$ and $Q$ are both of type $\ty$; thus, $H_i/E_i=H'_i/E'_i$, for all $1\le i<r$, with $E_iE'_i>0$, $0<H_iH'_i<\infty$ (because $P$, $Q$ cannot be both equal to $\phi_i(x)$). Suppose that $-\la_r:=H_r/E_r\le H'_r/E'_r=:-\la'_r$; in particular, $H_r<\infty$, and $P(x)\ne\phi_r(x)$.

Let $\phi_r(x)$ be the representative of $\ty$, and let $R_{\la'_r}(Q)(y)$ be the $r$-th order residual polynomial of $Q(x)$ with respect to $(\tilde{\ty};\la'_r)$. By the Theorem of the residual polynomial, $R_{\la'_r}(Q)(y)\sim \psi_r'(y)^{a'}$ for some monic irreducible polynomial $\psi'_r(y)\in\ff{r}[y]$. Consider the type of order $r$, $\ty'=(\tilde{\ty};\la'_r,\psi'_r(y))$. 

It is well-known that
$$
\res(P,Q)=\pm \prod_{Q(\t)=0}P(\t).
$$By applying Proposition \ref{vpt} to the type $\ty'$, we get 
\begin{equation}\label{vr+1}
v(P(\t))\ge v'_{r+1}(P)/e_1\cdots e_{r-1}e'_r=(v_r(P)+H_r)/e_1\cdots e_{r-1},
\end{equation}
the last equality by the definition of $v'_{r+1}$. Also, equality holds in (\ref{vr+1}) if and only if $\om'_{r+1}(P)=0$, where $\om'_{r+1}$ is the pseudo-valuation of order $r+1$ attached to $\ty'$.\be

\begin{center}
\setlength{\unitlength}{5.mm}
\begin{picture}(10,5)
\put(-.15,3.85){$\bullet$}\put(5.85,.85){$\bullet$}
\put(-1,0){\line(1,0){10}}
\put(6,1){\line(-2,1){6}}\put(6,1.02){\line(-2,1){6}}
\put(0,-1){\line(0,1){6}}\put(-1,5){\line(1,-1){6}}
\put(5.2,-1){\begin{footnotesize}$L_{\la'_r}$\end{footnotesize}}
\put(3,2.8){\begin{footnotesize}$N_r(P)$\end{footnotesize}}
\put(-3.4,4){\begin{footnotesize}$v'_{r+1}(P)/e'_r$\end{footnotesize}}
\multiput(-.1,1)(.25,0){25}{\hbox to 2pt{\hrulefill }}
\put(-2,.9){\begin{footnotesize}$v_r(P)$\end{footnotesize}}
\end{picture}
\end{center}\be\be
By Lemma  \ref{typedegree},  $\dg Q=m_r\om_r(Q)=f_0e_1f_1\cdots e_{r-1}f_{r-1}E'_r$. If we apply recursively $v_{s+1}(P)=e_s(v_s(P_s)+H_s)$, $E'_{s+1}=(e_sf_s)^{-1}E'_s$, for all $1\le s<r$, and $v_1(P)=0$, we get 
\begin{align*}
v(\res(P,Q))=&\, \dg(Q) v(P(\t))\ge \dg Q\, \dfrac{v_r(P)+H_r}{e_1\cdots e_{r-1}}\\=&\,f_0\cdots f_{r-1} E'_r(v_r(P)+H_r)\\
=&\,\sum_{s=1}^rf_0\cdots f_{s-1}E'_sH_s=\sum_{s=1}^r\res_s(P,Q),
\end{align*}
and equality holds if and only if  $\om'_{r+1}(P)=0$, i.e. if and only if 
$R_{\la'_r}(P)(y)$  is not divisible by $\psi'_r(y)$. This condition is equivalent to (3) of Lemma \ref{resr+1} because  $R_{\la_r}(P)(y)\sim \psi_r(y)^a$ for some monic irreducible polynomial $\psi_r(y)\in\ff{r}[y]$, and $R_{\la''_r}(P)(y)$ is a constant for any negative rational number $\la''_r\ne\la_r$. This ends the proof of the theorem in this case.

Let us still assume that $P(x)$, $Q(x)$ are both irreducible, but now $\tt_{r-1}(P)\cap\tt_{r-1}(Q)=\emptyset$. If $\tt_0(P)\cap \tt_0(Q)=\emptyset$, then $\res_1(P,Q)=\cdots=\res_r(P,Q)=\res_{r+1}(P,Q)=0$, by definition. On the other hand, $v(\res(P,Q))=0$, because $P(x)$ and $Q(x)$ have no common factors modulo $\m$; hence, the theorem is proven in this case. Assume $\tt_0(P)\cap \tt_0(Q)\ne\emptyset$, and let $1\le s< r$ be maximal with the property $\tt_{s-1}(P)\cap\tt_{s-1}(Q)\ne\emptyset$. Clearly, $\res_r(P,Q)=0$ for all $r>s$; thus, we want to show that $v(\res(P,Q))=\res_1(P,Q)+\cdots+\res_s(P,Q)$, and this follows from the proof of the previous case for $r=s$. 

Let now $P(x)=P_1(x)\cdots P_g(x)$, $Q(x)=Q_1(x)\cdots Q_{g'}(x)$ be the  factorizations of $P(x)$, $Q(x)$ into a product of monic irreducible polynomials in $\zpx$. We have proved above that $v(\res(P_i,Q_j))\ge \res_1(P_i,Q_j)+\cdots+\res_r(P_i,Q_j)$ for all $i,j$; hence, item 1 follows from Lemma \ref{resbilineal2} and the bilinearity of resultants. Also, equality in item 1 holds for the pair $P,\,Q$ if and only if it holds for each pair $P_i,\,Q_j$; that is, if and only if $\res_{r+1}(P_i,Q_j)=0$, for all $i,j$. This is equivalent to $\res_{r+1}(P,Q)=0$, again by Lemma \ref{resbilineal2}. 
\end{proof}

We end this section with an example that illustrates the necessity to introduce the sets $\tt_r(P)$. Let $\oo=\Z_p$, $P(x)=x+p$, $Q(x)=x+p+p^{100}$, and let $\ty_0=y\in\ff{}[y]$. Clearly, $\ty_0(P)=\{\ty_0\}=\ty_0(Q)$, and $\ty_0$ is both $P$-complete and $Q$-complete, so that $\ty_1(P)=\emptyset=\ty_1(Q)$. If we take $\phi_1(x)=x$, we get $\res_1(P,Q)=\res_{\ty_0}(P,Q)=1$, whereas $v(\res(P,Q))=100$. Thus, we need to consider the expansions of $\ty_0$ to types of higher order in order to reach the right value of $v(\res(P,Q))$. The number of expansions to consider depends on the choices of the representatives $\phi_i(x)$; for instance, if we take $\ty=(x;-1,y+1)$, with representative $\phi_2(x)=x+p$, we have already $\res_2(P,Q)=99$.   
 
Nevertheless, the sets $\tt_r(P)$ were introduced only as an auxiliary tool to prove Theorem \ref{resultant}. In practice, the factorization algorithm computes only the sets $\ty_r(P)$, as we shall show in the next section.
 
\subsection{Index of a polynomial and index of a polygon}
All types that we consider are still assumed to be made up with  polynomials $\phi_i(x)$ belonging to a universally fixed family, as  indicated in the last section.
  
Let $F(x)\in\zpx$ be a monic irreducible polynomial, $\t\in\qpb$ a root of $F(x)$, and $L=K(\t)$. It is well-known that $(\ol\colon\oo[\t])=q^{\ind(F)}$, for some natural number $\ind(F)$ that will be called the 
 \emph{$v$-index} of $F(x)$. Note that
$$\ind(F)=v(\ol\colon\oo[\t])/[K\colon\Q_p].$$ Recall the well-known relationship,
$v(\dsc(F))=2\ind(F)+v(\dsc(L/K))$, linking $\ind(F)$ with the discriminant of $F(x)$ and the discriminant of $L/K$. 

\begin{definition}
Let  $f(x)\in\zpx$ be a monic separable polynomial and $f(x)=F_1(x)\cdots F_k(x)$ its decomposition into the product of monic irreducible polynomials in $\zpx$. We define the \emph{index} of $f(x)$ by the formula
$$
\ind(f):=\sum_{i=1}^k\ind(F_i)+\sum_{1\le i<j\le k}v(\res(F_i,F_j)).
$$ 
\end{definition}

\begin{definition}
Let $S$ be a side of negative slope, and denote $E=\ell(S)$, $H=H(S)$, $d=d(S)$. We define 
$$\ind(S):=\left\{\begin{array}{ll}
\frac12(EH-E-H+d),&\mbox{ if $S$ has finite slope},\\
0,&\mbox{ otherwise}.
\end{array}
\right.
$$

Let $N=S_1+\cdots+S_g$ be a principal polygon, with sides ordered by increasing slopes $\la_1<\cdots<\la_g$. We define
$$
\ind(N):=\sum_{i=1}^g\ind(S_i)+\sum_{1\le i<j\le g}E_iH_j.
$$
\end{definition}
If $N$ has a side $S_1$ of slope $-\infty$ and length $E_{\infty}:=E_1$, it contributes with $E_{\infty}H_{\op{fin}}(N)$ to $\ind(N)$, where $H_{\op{fin}}(N)$ is the total height of the finite part of $N$. 

\begin{remark}\label{indzero}
Note that $\ind(N)=0$ if and only if either $N$ is reduced to a point, or $N$ is one-sided with slope $-\infty$, or  $N$ is one-sided with $E=1$ or $H=1$.
\end{remark}

\begin{remark}\label{region}
The contribution of the sides of finite slope to $\ind(N)$ is the number of points of integer coordinates that lie below or on the finite part of $N$, strictly above the horizontal line $L$ that passes through the last point of $N$, and strictly beyond the vertical line $L'$ that passes through the initial point of the finite part of $N$. 
\end{remark}

For instance, the polygon below has index $25$, the infinite side contributes with $18$ (the area of the rectangle $3\times 6$) and the finite part has index $7$, corresponding to the marked seven points of integers coordinates, distributed into $\ind(S_1)=2$, $\ind(S_2)=1$, $E_1H_2=4$.

\begin{center}
\setlength{\unitlength}{5.mm}
\begin{picture}(14,9)
\put(4.85,3.85){$\bullet$}\put(2.85,7.85){$\bullet$}\put(7.85,1.85){$\bullet$}
\put(-1,1){\line(1,0){10}}
\put(0,0){\line(0,1){9}}
\put(8,2){\line(-3,2){3}}\put(5,4.05){\line(-1,2){2}}\put(8,2.03){\line(-3,2){3}}
\put(5,4.08){\line(-1,2){2}}
\multiput(5.,2)(0,.25){8}{\vrule height2pt}
\multiput(3,.9)(0,.25){33}{\vrule height2pt}
\multiput(8,.9)(0,.25){4}{\vrule height2pt}
\multiput(-.1,2)(.25,0){37}{\hbox to 2pt{\hrulefill }}
\multiput(-.1,8)(.25,0){13}{\hbox to 2pt{\hrulefill }}
\multiput(3,4)(.25,0){8}{\hbox to 2pt{\hrulefill }}
\put(4.4,6){\begin{footnotesize}$S_1$\end{footnotesize}}
\put(6.4,3.3){\begin{footnotesize}$S_2$\end{footnotesize}}
\put(8.7,1.5){\begin{footnotesize}$L$\end{footnotesize}}
\put(2.2,8.7){\begin{footnotesize}$L'$\end{footnotesize}}
\put(-.4,.4){\begin{footnotesize}$0$\end{footnotesize}}
\put(2.9,.4){\begin{footnotesize}$3$\end{footnotesize}}
\put(7.9,.4){\begin{footnotesize}$8$\end{footnotesize}}
\put(-.5,1.85){\begin{footnotesize}$1$\end{footnotesize}}
\put(-.5,7.85){\begin{footnotesize}$7$\end{footnotesize}}
\put(3.9,2.9){\begin{tiny}$\times$\end{tiny}}
\put(4.85,2.9){\begin{tiny}$\times$\end{tiny}}
\put(5.9,2.9){\begin{tiny}$\times$\end{tiny}}
\put(3.9,3.9){\begin{tiny}$\times$\end{tiny}}
\put(3.9,4.9){\begin{tiny}$\times$\end{tiny}}
\put(3.9,5.8){\begin{tiny}$\times$\end{tiny}}
\end{picture}
\end{center}

Let  $i_1\le i_2$ be the respective abscissas of the starting point and the last point of the finite part $N_{\op{fin}}$ of $N$.  
For any integer abscissa $i_1\le i\le i_2$, let $y_i$ be the distance of the point of $N$ of abscissa $i$ to the line $L$. Clearly, we can count the points of integer coordinates below $N_{\op{fin}}$, above $L$ and beyond $L'$, as the sum of the points with given abscissa: 
\begin{equation}\label{combi}
\ind(N_{\op{fin}})=\lfloor y_{i_1+1}\rfloor+\cdots+\lfloor y_{i_2-1}\rfloor.
\end{equation}
For instance, in the above figure we have $y_4=4$, $y_5=2$, $y_6=1$ and $y_7=0$.

\begin{definition}
Let $P(x)\in\zpx$ be a monic and separable polynomial. 
Let $\ty$ be a type of order $r-1$ and $\phi_r(x)$ a representative of $\ty$. We define
$$\ind_\ty(P):=f_0\cdots f_{r-1}\ind({N_r^-}(P)),$$
where $N_r(P)$ is the Newton polygon of $r$-th order with respect to $\tilde{\ty}$.

For any natural number $r\ge 1$ we define
$$
\ind_r(P):=\sum_{\ty\in\ty_{r-1}(P)}\ind_\ty(P).
$$
\end{definition}

Since the Newton polygon $N_{r}^-(P)$ depends on the choice of $\phi_{r}(x)$, the value of $\ind_\ty(P)$ depends on this choice too, although this is not reflected in the notation. 

\begin{lemma}\label{zeroind}
Let $P(x)\in\zpx$ be a monic and separable polynomial. 
\begin{enumerate}
\item Let $\ty$ be a type of order $r$, and suppose that $\ty\not\in\ty_r(P)$ or $\ty$ is $P$-complete. Then, $\ind_{\ty}(P)=0$.
\item If $\ty_r(P)=\ty_r(P)^{\op{compl}}$ then $\ind_{r+1}(P)=0$.
\item If $\ind_r(P)=0$, then $\ty_r(P)=\ty_r(P)^{\op{compl}}$.
\end{enumerate}
\end{lemma}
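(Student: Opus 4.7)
My plan is to attack the three items in order, with part 1 doing all the real work and parts 2 and 3 following from it almost immediately together with Remark \ref{indzero}.

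For item 1, I would split into the two cases named in the hypothesis. First, if $\ty$ is $P$-complete, then by definition $\om_{r+1}^{\ty}(P)=1$, so by Lemma \ref{shape} the polygon $N_{r+1}^-(P)$ is one-sided of length $1$, and Remark \ref{indzero} gives $\ind(N_{r+1}^-(P))=0$, hence $\ind_\ty(P)=0$. Second, if $\ty\notin\ty_r(P)$, I would show that $\om_{r+1}^{\ty}(P)=0$, which forces $N_{r+1}^-(P)$ to reduce to a point. The argument is by contradiction: assuming $\om_{r+1}^{\ty}(P)\ge 1$ and $\ty$ not $P$-complete gives $\om_{r+1}^{\ty}(P)\ge 2$, and iterating the chain of inequalities $\om_s^{\ty_{s-1}}(P)\ge e_sf_s\cdots e_rf_r\,\om_{r+1}^{\ty}(P)$ from Lemma \ref{typedegree} yields $\om_s^{\ty_{s-1}}(P)\ge 2$ at every level $1\le s\le r$. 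Unwinding the recursive construction of $\ty_r(P)$ layer by layer (using that $\om_{s+1}^{\ty_s}(P)>0$ forces $\la_s$ to be the slope of a side of $N_s^-(P)$ and $\psi_s$ to divide $R_{\la_s}^{\ty^0_s}(P)$, since otherwise the relevant residual polynomial is a nonzero constant not divisible by $\psi_s\ne y$) then places $\ty$ in $\ty_r(P)$, contradicting the hypothesis.

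Item 2 is a direct consequence of item 1: when $\ty_r(P)=\ty_r(P)^{\op{compl}}$, every $\ty$ appearing in the sum defining $\ind_{r+1}(P)=\sum_{\ty\in\ty_r(P)}\ind_\ty(P)$ is $P$-complete, so each summand vanishes by item 1.

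For item 3 I would argue the contrapositive. Suppose some $\ty^*\in\ty_r(P)$ is not $P$-complete, so $\om_{r+1}^{\ty^*}(P)\ge 2$, and set $\ty':=\ty^*_{r-1}\in\ty_{r-1}(P)$. It is enough to prove $\ind_{\ty'}(P)>0$, i.e.\ $\ind(N_r^-(P))>0$ for the polygon taken with respect to $\tilde\ty'$. I would eliminate each degenerate shape allowed by Remark \ref{indzero}: Lemma \ref{typedegree} forces $\om_r^{\ty'}(P)\ge e_rf_r\om_{r+1}^{\ty^*}(P)\ge 2$, so $N_r^-(P)$ is not a point; the existence of the finite slope $\la_r$ of $\ty^*$ as a slope of $N_r^-(P)$ excludes the one-sided slope $-\infty$ case; and this also excludes $E=1$ since length is at least $2$. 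The only remaining degenerate possibility is one-sided with finite slope and $H=1$, and this would give $d=1$, so $R_{\la_r}(P)(y)$ would have degree $1$ and $\psi_r$ would have to equal $R_{\la_r}(P)(y)$ up to unit, forcing $\om_{r+1}^{\ty^*}(P)=1$ and contradicting non-$P$-completeness of $\ty^*$.

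The main obstacle will be keeping the bookkeeping clean in item 3 when ruling out the $H=1$ case, since this is the only place where a genuinely new argument (beyond Lemma \ref{typedegree} and Remark \ref{indzero}) is needed; the rest of the proof is organizational. In item 1 the slight subtlety lies in the recursive unwinding showing that a non-trivial $\om_{r+1}^{\ty}(P)$ automatically places all the truncations of $\ty$ in the $\ty_s(P)$, but this is straightforward once the chain of inequalities from Lemma \ref{typedegree} is in place.
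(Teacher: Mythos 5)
Your proof is correct, but takes a more roundabout path than the paper's in both nontrivial items. For item 1, case (b), the paper simply reads off from the membership criterion $\ty_r(P)=\{\ty : \om_{r+1}^{\ty}(P)>0,\ \om_r^{\ty}(P)>1\}$ (stated just before Lemma \ref{uniquesprout}) that $\ty\not\in\ty_r(P)$ forces $\om_{r+1}^{\ty}(P)=0$ or $\om_r^{\ty}(P)\le 1$, and Lemma \ref{typedegree} then gives $\om_{r+1}^{\ty}(P)\le 1$ in all cases (including the $P$-complete one); hence $\ell(N_{r+1}^-(P))\le 1$ and Remark \ref{indzero} finishes at once. You instead re-derive that membership criterion from scratch by contradiction in order to reach the stronger conclusion $\om_{r+1}^{\ty}(P)=0$. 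This works, but do note that the bald claim ``$\ty\not\in\ty_r(P)$ forces $\om_{r+1}^{\ty}(P)=0$'' is false as stated: a $P$-complete type may lie outside $\ty_r(P)$ while having $\om_{r+1}^{\ty}(P)=1$. Your argument quietly imports the extra clause ``$\ty$ not $P$-complete'' from the case split, so the logic survives, but the phrasing should make that dependence explicit. For item 3, your contrapositive argument (pick a non-$P$-complete $\ty^*\in\ty_r(P)$, pass to $\ty'=\ty^*_{r-1}$, and eliminate each degenerate shape of $N_r^-(P)$ from Remark \ref{indzero}) is logically equivalent to the paper's direct one, which shows that for each $\ty\in\ty_{r-1}(P)$ with $\ind_\ty(P)=0$ the polygon $N_r^-(P)$ is degenerate and every extension of $\ty$ in $\ty_r(P)$ is $P$-complete; both use precisely the same facts ($\om_r>0$ rules out a point; the finite slope $\la_r$ of $\ty^*$ excludes the slope $-\infty$ case, while the paper handles that case via separability of $P$; and $E=1$ or $H=1$ gives $d=1$, forcing $R_{\la_r}(P)$ to have degree one). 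The paper's forward form has the small extra benefit of exhibiting the unique $P$-complete extension, a fact that gets reused in the proof of Theorem \ref{thindex}; otherwise the two approaches are interchangeable.
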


\begin{proof}
If $\ty\not\in\ty_{r-1}(P)$, then either $\om_r(P)=0$ or $\om_{r-1}(P)=1$. If $\ty$ is $P$-complete then $\om_r(P)=1$. By Lemmas \ref{typedegree} and \ref{shape}, in all cases $\ell(N_r^-(P))=\om_r(P)\le 1$, and $\ind_{\ty}(P)=0$ by Remark \ref{indzero}. This proves item 1, and item 2 is an immediate consequence.

If $\ind_r(P)=0$, then $\ind_{\ty}(P)=0$ for all $\ty\in\ty_{r-1}(P)$. For any such $\ty$ we have $\om_r(P)>0$, so that $N_r^-(P)$ is not reduced to a point. By Remark \ref{indzero}, $N_r^-(P)$ is one-sided with either slope $-\infty$, or length one, or height one. In the first case $P(x)$ is divisible by the representative $\phi_r(x)$ of $\ty$ and $\om_r(P)=\ell(N_r^-(P))=\ord_{\phi_r}(P)=1$, because $P(x)$ is separable; thus, $\ty$ is $P$-complete and $\ty$ is not extended to any type in $\ty_r(P)$. If $N_r^-(P)$ is one-sided with finite slope $\la_r$ and the side has degree one, then the residual polynomial $R_{\la_r}(P)(y)$ has degree one. Thus, $\ty$ is either $P$-complete or it can be extended in a unique way to a type $\ty'\in \ty_r(P)$; in the latter case, necessarily $\om_{r+1}^{\ty'}(P)=1$ and $\ty'$ is $P$-complete. This proves item 3. 
\end{proof}

\begin{lemma}\label{indres}
Let $P(x),Q(x)\in\zpx$ be two monic and separable polynomials, without common factors. Let $r\ge 1$ be a natural number and $\ty$ a type of order $r-1$. Then,
$$\as{1.6}
\begin{array}{l}
\ind_\ty(PQ)=\ind_\ty(P)+\ind_\ty(Q)+\res_\ty(P,Q),\\
\ind_r(PQ)=\ind_r(P)+\ind_r(Q)+\res_r(P,Q). 
\end{array}
$$
\end{lemma}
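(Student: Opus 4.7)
The plan is to reduce the first identity to a purely combinatorial identity on principal polygons and then to derive the second by a summation argument. By the Theorem of the product in order $r$, $N_r^-(PQ) = N_r^-(P) + N_r^-(Q)$; setting $N = N_r^-(P)$ and $N' = N_r^-(Q)$, and denoting by $(E_i, H_i)$ and $(E'_j, H'_j)$ the lengths and heights of their canonical sides, the first identity reduces after dividing by $f_0\cdots f_{r-1}$ to the abstract claim
$$(\star)\qquad \ind(N+N') = \ind(N) + \ind(N') + \sum_{i,j}\min\{E_i H'_j,\,E'_j H_i\}.$$

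To prove $(\star)$ for polygons whose sides all have finite slope, I would argue that the canonical sides of $N+N'$ are obtained by merging $\{S_i\}\cup\{S'_j\}$ in order of increasing slope, collapsing any two sides sharing a slope into their sum. Expanding $\ind(N+N')=\sum_U\ind(U)+\sum_{U<V}\ell(U)H(V)$ and comparing with $\ind(N)+\ind(N')$, the intra-polygon contributions match term-by-term, while each cross-pair $(S_i,S'_j)$ contributes exactly $\min\{E_iH'_j,E'_jH_i\}$: when $\la(S_i)<\la(S'_j)$, i.e.\ $H_i/E_i>H'_j/E'_j$, the contribution $E_iH'_j$ coincides with the minimum, and symmetrically for the reverse inequality; the equal-slope case is handled by the elementary expansion $\ind(S+S')=\ind(S)+\ind(S')+\tfrac12(EH'+E'H)$, which equals $\ind(S)+\ind(S')+EH'$ thanks to $EH'=E'H$. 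Slope $-\infty$ is then handled separately: since $P,Q$ are coprime, at most one of $N, N'$ carries an infinite side; the identity $\ind(N)=\ind(N_{\op{fin}})+E_\infty H_{\op{fin}}(N)$, together with the fact that the infinite side of $N+N'$ still has length $E_\infty$ and finite part $N_{\op{fin}}+N'$, reduces this case to the finite-slope case, with the extra cross term $E_\infty H(N')=\sum_j\min\{E_\infty H'_j,\,E'_j\cdot\infty\}$ absorbed into the sum.

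For the second identity, Lemma \ref{zeroind}(1) ensures that $\ind_\ty(P)=0$ whenever $\ty\notin\ty_{r-1}(P)$, so one may extend the defining sum to range over all types of order $r-1$ drawn from the fixed universal family and write $\ind_r(P)=\sum_\ty\ind_\ty(P)$, a finite sum. Summing the first identity over such $\ty$ yields
$$\ind_r(PQ)=\ind_r(P)+\ind_r(Q)+\sum_{\ty}\res_\ty(P,Q).$$
By Lemma \ref{resbilineal}(1), $\res_\ty(P,Q)=0$ unless $\om_r^\ty(P)\om_r^\ty(Q)>0$, i.e.\ unless $\ty\in\tt_{r-1}(P)\cap\tt_{r-1}(Q)$; coprimality of $P,Q$ together with Lemma \ref{resbilineal}(2) rules out any infinite term. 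Hence the residual sum equals $\res_r(P,Q)$, completing the argument.

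The main obstacle will be the bookkeeping in $(\star)$, both in verifying the equal-slope gluing (so that the decomposition of $\ind(S+S')$ agrees with the general cross-term formula) and in treating the $-\infty$ side, where one must track infinite heights and the reduction to the finite part via the formula $\ind(N)=\ind(N_{\op{fin}})+E_\infty H_{\op{fin}}(N)$. Once $(\star)$ is settled, the passage from per-type to global indices is a purely formal manipulation of finite sums.
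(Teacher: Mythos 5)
Your proof is correct. For the second identity it follows the paper's route: extend the defining sum to all types, invoke Lemma~\ref{zeroind}(1) to drop vanishing terms, and then note via Lemma~\ref{resbilineal} that $\res_\ty$ survives exactly on $\tt_{r-1}(P)\cap\tt_{r-1}(Q)$; this is what the paper does via the intermediate identity $\ind_r(R)=\sum_{\ty\in\tt_{r-1}(R)}\ind_\ty(R)$.

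For the first identity your route differs from the paper's. The paper argues geometrically: using Remark~\ref{region}, $\ind_\ty(PQ)$ is recognized as the number of lattice points in the region $\mathcal{R}$ under $N_r^-(PQ)$, the summands $\ind_\ty(P)+\ind_\ty(Q)$ account for the lattice points in the right triangles cut off by the sides of $N_r^-(P)$ and $N_r^-(Q)$, and $\res_\ty(P,Q)$ is identified with the lattice-point count of the union of rectangles filling the rest of $\mathcal{R}$; the infinite side is treated via equation~(\ref{resinfinity}) in the same way you do. You instead expand both sides symbolically using $\ind(S)=\tfrac12(EH-E-H+d)$ and the definition $\ind(N)=\sum_i\ind(S_i)+\sum_{i<j}E_iH_j$, matching intra-polygon terms and checking that each cross-pair contributes $\min\{E_iH'_j,E'_jH_i\}$, with the equal-slope case verified through $\ind(S+S')=\ind(S)+\ind(S')+\tfrac12(EH'+E'H)=\ind(S)+\ind(S')+EH'$. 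Both arguments are sound; the paper's geometric picture is perhaps more illuminating of why $\res_\ty$ arises as the ``missing rectangles'', while your computation is more self-contained and avoids translating between the closed formula and the lattice-point interpretation. The infinite-slope reduction you give (at most one of $N_r^-(P),N_r^-(Q)$ has an infinite side by coprimality, and $(N+N')_{\op{fin}}=N_{\op{fin}}+N'$ with the $E_\infty H(N')$ cross term absorbed into $\res_\ty$) agrees with the paper's treatment.
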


\begin{proof}For commodity, in the discussion we omit the weight $f_0\cdots f_{r-1}$ that multiplies all terms in the identities.

All terms involved in the first identity are the sum of a finite part and an infinite part. If $P(x)Q(x)$ is not divisible by $\phi_r(x)$, all infinite parts are zero. If $\phi_r(x)$ divides (say) $P(x)$, then the infinite part of $\ind_\ty(PQ)$ is $\ord_{\phi_r}(P)(H_{\op{fin}}(P)+H_{\op{fin}}(Q))$, the infinite part of $\ind_\ty(P)$ is 
$\ord_{\phi_r}(P)H_{\op{fin}}(P)$, the infinite part of $\ind_\ty(Q)$ is zero, and the infinite part of $\res_\ty(P,Q)$ is $\ord_{\phi_r}(P)H(Q)$, by (\ref{resinfinity}). Thus, the first identity is correct, as far as the infinite parts are concerned. 

The finite part of the first identity follows from $N_r^-(PQ)=N_r^-(P)+N_r^-(Q)$ and Remark \ref{region}. Let $N=N_r^-(PQ)$ and let $\mathcal{R}$ be the region of the plane that lies below $N$, above the line $L$ and beyond the line $L'$, as indicated in Remark \ref{region}. The number $\ind_\ty(PQ)$ counts the total number of points of integer coordinates in $\mathcal{R}$, the number
$\ind_\ty(P)+\ind_\ty(Q)$ counts the number of points of integer cordinates in the regions determined by the right triangles whose hypotenuses are the sides of $N_r^-(P)$ and $N_r^-(Q)$. The region of $\mathcal{R}$ not covered by these triangles is a union of rectangles and $\res_\ty(P,Q)$ is precisely the number of points of integer coordinates of this region.

In order to prove the second identity, we note first that for any monic separable polynomial $R(x)\in\zpx$, 
$$
\ind_r(R)=\sum_{\ty\in\tt_{r-1}(R)}\ind_\ty(R),
$$
by (1) of Lemma \ref{zeroind}. Now, if we apply this to $R=P,Q,PQ$, the identity follows from the first one and Lemma \ref{tunion}, having in mind that $\ind_\ty(Q)=0=\res_\ty(P,Q)$ if $\ty\not\in \tt_{r-1}(Q)$, because $N_r^-(Q)$ reduces to a point. 
\end{proof}

We are ready to state the Theorem of the index, which is a crucial ingredient of the factorization process. It ensures that an algorithm based on the computation of the sets $\ty_r(f)$ and the higher indices $\ind_r(f)$ obtains the factorization of $f(x)$, and relevant aritmetic information on the irreducible factors, after a finite number of steps. Also, this algorithm yields a computation of $\ind(f)$ as a by-product.

\begin{theorem}[Theorem of the index] \label{thindex}
Let $f(x)\in\zpx$ be a monic and separable polynomial, and $r\ge 1$ a natural number. Then,
\begin{enumerate}
 \item $\ind(f)\ge \ind_1(f)+\cdots +\ind_r(f)$, and
\item equality holds if and only if $\ind_{r+1}(f)=0$. 
\end{enumerate}
\end{theorem}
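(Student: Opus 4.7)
The plan is a two-stage argument: first reduce to irreducible $f$ using bilinearity, then handle the irreducible case by tracking the unique chain of types that $f$ produces.

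For the reduction, write $f = F_1 \cdots F_k$ as a product of distinct monic irreducibles in $\zpx$. Summing Lemma \ref{indres} over $s = 1, \dots, r$ yields
$$\sum_{s=1}^r \ind_s(f) = \sum_i \sum_{s=1}^r \ind_s(F_i) + \sum_{i<j} \sum_{s=1}^r \res_s(F_i, F_j),$$
and the definition of $\ind(f)$ splits compatibly. Theorem \ref{resultant} bounds $\sum_{s=1}^r \res_s(F_i, F_j) \le v(\res(F_i, F_j))$ with equality iff $\res_{r+1}(F_i, F_j) = 0$. So once the theorem is established for each $F_i$, the global inequality (1) is assembled termwise. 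For (2), apply Lemma \ref{indres} once more: $\ind_{r+1}(f)$ decomposes as a nonnegative sum of $\ind_{r+1}(F_i)$ and $\res_{r+1}(F_i,F_j)$, which vanishes iff each summand does — exactly combining the equality cases from the irreducible situation and from Theorem \ref{resultant}.

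Assume now $f$ monic irreducible. By the theorems of the polygon and of the residual polynomial each $\ty_s(f)$ contains at most one type, so there is a unique chain $\ty_0, \ty_1, \ldots, \ty_N$ with $\ty_s$ the sole element of $\ty_s(f)$, terminating at the first $f$-complete $\ty_N$; finiteness of $N$ follows from Corollary \ref{ramr}, since each new layer forces an extra factor $e_s$ or $f_s$ into $\deg f$. Consequently $\ind_s(f) = 0$ for $s > N$ and $\ind_s(f) = f_0\cdots f_{s-1}\,\ind(N_s^-(f))$ for $1 \le s \le N$. The theorem reduces to the single identity
$$\ind(f) = \sum_{s=1}^{N} f_0\cdots f_{s-1}\,\ind(N_s^-(f)),$$
from which both the partial-sum inequality and the equality criterion follow: each omitted term is nonnegative, and by Lemma \ref{zeroind} the vanishing of $\ind_{r+1}(f)$ is equivalent to all terms with $s>r$ being zero.

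The main obstacle is this identity for irreducible $f$. The plan is to proceed by induction on $N$. First use Proposition \ref{extensionr} iteratively to extend scalars to the unramified extension $K'/K$ of degree $f_0\cdots f_N$: over $K'$ the proposition produces a type with all residue degrees equal to $1$ and preserves the Newton polygons $N_s^-(f)$ at every level, so the identity over $K$ is equivalent to the identity over $K'$ up to a global factor $[K':K]$ (the index scales standardly under unramified base change, and the weights $f_0\cdots f_{s-1}$ collapse). In the totally ramified case $f_0=\cdots=f_N=1$, the base case $N=0$ gives either $f=\phi_1$ or $f$ generating an unramified extension, and both sides vanish. For the inductive step, Ore's classical order-one theorem of the index — the $r=1$ specialization of the present statement, proved by a direct analysis of $\oo[\t]$ under the $\phi_1$-adic filtration — extracts the contribution $\ind(N_1^-(f))$; the induction hypothesis, applied to the truncated chain $\ty_1, \ldots, \ty_N$ viewed over an intermediate extension whose valuation and residue data absorb $\ty_0$, supplies the remaining sum $\sum_{s=2}^N f_0\cdots f_{s-1}\,\ind(N_s^-(f))$. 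Making this recursion rigorous — precisely matching the module-theoretic defect $(\ol : \oo[\t])$ with the combinatorial index at each level and controlling the change of base field — is the technical heart of the proof.
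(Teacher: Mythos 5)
Your reduction to the irreducible case via Lemma~\ref{indres} and Theorem~\ref{resultant} is exactly the paper's; the divergence, and the gap, is in the irreducible case. There you correctly identify the target identity $\ind(f)=\sum_{s=1}^{N}f_0\cdots f_{s-1}\,\ind(N_s^-(f))$ (when $\ty_N$ is $f$-complete), but the route you propose to it does not go through.

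Three specific problems. First, your finiteness argument for $N$ is circular in spirit and wrong in detail: Corollary~\ref{ramr} constrains $e_s,f_s$ to divide the ramification and residue degrees, but a non-$f$-complete type at level $s$ can perfectly well have $e_s=f_s=1$ (e.g.\ $R_{\la_s}(f)\sim\psi_s^{a_s}$ with $\psi_s$ linear, $e_s=1$, $a_s>1$), so no strict increase of $\deg f$ is forced. Finiteness of $N$ is in fact a \emph{corollary} of the Theorem of the index, not an input to it; the paper deliberately proves items (1) and (2) for every $r$ without assuming termination. Second, the unramified base-change step is much more than ``the index scales standardly'': over $K'$ the irreducible $f(x)$ splits as $\prod_\sigma G^\sigma(x)$, so $(\ol:\oo[\t])$ and $(\ol:\oo_{K'}[\t])$ differ by the index $(\oo_{K'}[\t]:\oo[\t])$, and that quantity carries the cross-resultants $v(\res(G^\sigma,G^{\sigma'}))$ of the conjugate factors; none of this is addressed, and Proposition~\ref{extensionr} only transports Newton polygon and residual-polynomial data, not module indices. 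Third, the ``intermediate extension whose valuation and residue data absorb $\ty_0$'' is not specified, and you concede yourself that matching the module defect with the combinatorial index at each level ``is the technical heart of the proof'' — which is to say, it is not supplied.

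For comparison, the paper takes a different and more direct route: it builds a concrete $\oo$-order $\ol'$ with explicit basis $\{\Phi(\j):\j\in J\}$ sandwiched between $\oo[\t]$ and $\ol$, proves a purely combinatorial integer-part identity (by induction on $r$, using the elementary Lemma~\ref{int}) identifying $\log_q(\ol':\oo[\t])=\sum_{\j\in J}\lfloor\nu_{\j}\rfloor$ with $\ind_1(f)+\cdots+\ind_r(f)$, and then shows $\ol'=\ol$ exactly when $\ty$ is $f$-complete (Proposition~\ref{oo=}), which delivers both the inequality and the equality criterion at once. No base change is used, and no appeal to a separately-proved order-one theorem is needed. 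If you want to salvage your outline, the unramified descent step would have to be turned into a precise lemma controlling $(\oo_{K'}[\t]:\oo[\t])$ in terms of the $\res_s$-invariants of the conjugates, and the finiteness of $N$ must be dropped as a hypothesis.
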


Note that Lemma \ref{zeroind} and this theorem guarantee the equality in (1)
whenever all types of $\ty_r(f)$ are $f$-complete. Also, Theorem \ref{thindex} shows that this latter condition will be reached at some order $r$.

\begin{corollary}
Let $f(x)\in\zpx$ be a monic and separable polynomial. There exists $r\ge 0$ such that all types in $\ty_r(f)$ are $f$-complete, or equivalently, such that $\ty_{r+1}(f)=\emptyset$.
\end{corollary}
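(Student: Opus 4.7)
The plan is to combine the Theorem of the index (Theorem \ref{thindex}) with part (3) of Lemma \ref{zeroind}; the only external input needed is that the classical index $\ind(f)$ is a finite non-negative integer.

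First I would note that since $f(x)$ is separable, $\ind(f)$ is finite: each $\ind(F_i)$ is the local index of an irreducible factor of $f$, and for $i\ne j$ the resultant $v(\res(F_i,F_j))$ is finite because two distinct irreducible factors of a separable polynomial share no common root. Theorem \ref{thindex}(1) then gives
$$\ind_1(f)+\ind_2(f)+\cdots+\ind_r(f)\ \le\ \ind(f)\qquad \text{for every } r\ge 1.$$
Since each summand is a non-negative integer and the partial sums are uniformly bounded by the fixed integer $\ind(f)$, the sequence $(\ind_r(f))_{r\ge 1}$ of non-negative integers must be eventually zero. In particular there exists $r_0\ge 1$ (in fact $r_0\le \ind(f)+1$) with $\ind_{r_0}(f)=0$.

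Next I would invoke (3) of Lemma \ref{zeroind} with $P=f$ at order $r_0$: the vanishing $\ind_{r_0}(f)=0$ yields exactly $\ty_{r_0}(f)=\ty_{r_0}(f)^{\op{compl}}$, i.e.\ every type in $\ty_{r_0}(f)$ is $f$-complete. The equivalence with $\ty_{r_0+1}(f)=\emptyset$ is built into the very construction of $\ty_{r_0+1}(f)$, which by definition branches only from the non-$f$-complete types of $\ty_{r_0}(f)$, and is in any case recorded in Lemma \ref{tempty}.

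I do not anticipate any serious obstacle: the substantive content sits in the Theorem of the index and in Lemma \ref{zeroind}, both already established. The corollary is then just the observation that a non-decreasing sequence of partial sums of non-negative integers, uniformly bounded by a fixed integer, must eventually be constant, combined with (3) of Lemma \ref{zeroind} to translate this stabilization into the claimed $f$-completeness of all types at that order.
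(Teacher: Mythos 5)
Your proof is correct and follows essentially the same route as the paper: invoke Theorem \ref{thindex}(1) to bound the partial sums of the non-negative integers $\ind_s(f)$ by $\ind(f)$, conclude that $\ind_{r_0}(f)=0$ for some $r_0$, and pass through Lemma \ref{zeroind}(3) and Lemma \ref{tempty} to the claimed conclusion. (You have also silently corrected a small slip in the paper, which cites part (2) of Lemma \ref{zeroind} where part (3) is what is actually needed.)
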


\begin{proof}
By the Theorem of the index, there exists $r\ge1$ such that $\ind_r(f)=0$, and by (2) of Lemma \ref{zeroind}
this implies $\ty_r(f)=\ty_r(f)^{\op{compl}}$.
\end{proof}

In the next section we exhibit an example where the factorization is achieved in order three. More examples, and a more accurate discussion of the computational aspects can be found in \cite{gmna}.

\subsection{An example}\label{example}
Take $p=2$, and $f(x)=x^4+ax^2+bx+c\in\Z[x]$, with $v(a)\ge 2$, $v(b)=3$, $v(c)=2$. This polynomial has $v(\op{disc}(f))=12$ for all $a,b,c$ with these restrictions.
Since $f(x)\equiv x^4\md2$, all types we are going to consider will start with $\phi_1(x)=x$. The Newton polygon $N_1(f)$ has slope $\la_1=-1/2$
\begin{center}
\setlength{\unitlength}{5.mm}
\begin{picture}(6,4)
\put(-.15,1.85){$\bullet$}\put(3.85,-.15){$\bullet$}
\put(-1,0){\line(1,0){6}}\put(0,-1){\line(0,1){4}}
\put(4,0){\line(-2,1){4}}\put(4,0.02){\line(-2,1){4}}
\put(-.4,-.6){\begin{footnotesize}$0$\end{footnotesize}}
\put(.85,-.6){\begin{footnotesize}$1$\end{footnotesize}}
\put(1.85,-.6){\begin{footnotesize}$2$\end{footnotesize}}
\put(2.85,-.6){\begin{footnotesize}$3$\end{footnotesize}}
\put(3.85,-.6){\begin{footnotesize}$4$\end{footnotesize}}
\put(-.5,.85){\begin{footnotesize}$1$\end{footnotesize}}
\put(-.5,1.85){\begin{footnotesize}$2$\end{footnotesize}}
\put(.85,.85){$\times$}\put(1.8,.85){$\times$}
\put(-.1,1){\line(1,0){.2}}
\put(1,-.1){\line(0,1){.2}}
\put(2,-.1){\line(0,1){.2}}
\put(3,-.1){\line(0,1){.2}}
\end{picture}
\end{center}\be\be

\noindent and the residual polynomial of $f(x)$ with respect to $\la_1$ is $R_1(f)(y)=y^2+1=(y+1)^2\in\ff{}$, where $\ff{}$ is the field ot two elements. Hence, $\ty_1(f)=\{\ty\}$, where $\ty:=(x;-1/2,y+1)$. We have $e_1=2,f_0=f_1=1$ and $\om_2(f)=2$, so that $\ty$ is not $f$-complete. The partial information we get in order one is $\ind_1(f)=2$, and the fact that all irreducible factors of $f(x)$ will generate extensions $L/\Q_2$ with even ramification number, because $e_1=2$.

Take $\phi_2(x)=x^2-2$ as a representative of $\ty$. The $\phi_2$-adic development of $f(x)$ is
$$
f(x)= \phi_2(x)^2+(a+4)\phi_2(x)+(bx+c+2a+4).
$$  
By Proposition \ref{propertiesv} and (\ref{vrphir}), we have
$$
v_2(x)=1,\ v_2(\phi_2)=2,\ v_2(a+4)\ge 4, \ v_2(bx)=7,\ v_2(c+2a+4)\ge 6.
$$
Hence, according to $v(c+2a+4)=3$ or $v(c+2a+4)\ge 4$, the Newton polygon of second order, $N_2(f)$, is:

\begin{center}
\setlength{\unitlength}{5.mm}
\begin{picture}(12,6)
\put(-.15,3.85){$\bullet$}\put(1.85,1.85){$\bullet$}
\put(-1,1){\line(1,0){4}}\put(8,1){\line(1,0){4}}
\put(0,0){\line(0,1){1.1}}\put(0,1.9){\line(0,1){4.1}}
\multiput(0,1.1)(0,.25){4}{\vrule height2pt}
\multiput(2,.9)(0,.25){5}{\vrule height2pt}
\put(2,2){\line(-1,1){2}}\put(2,2.02){\line(-1,1){2}}
\put(-.4,.4){\begin{footnotesize}$0$\end{footnotesize}}
\put(.9,.4){\begin{footnotesize}$1$\end{footnotesize}}
\put(1.9,.4){\begin{footnotesize}$2$\end{footnotesize}}
\put(-.5,1.85){\begin{footnotesize}$4$\end{footnotesize}}
\put(-.5,2.85){\begin{footnotesize}$5$\end{footnotesize}}
\put(-.5,3.85){\begin{footnotesize}$6$\end{footnotesize}}
\put(.85,2.8){$\times$}
\put(-.1,3){\line(1,0){.2}}\put(-.1,2){\line(1,0){.2}}
\put(1,.9){\line(0,1){.2}}
\put(-1,-1){\begin{footnotesize}$v(c+2a+4)=3$\end{footnotesize}}
\put(8,-1){\begin{footnotesize}$v(c+2a+4)\ge4$\end{footnotesize}}
\put(8.85,4.85){$\bullet$}\put(10.85,1.85){$\bullet$}
\put(9,0){\line(0,1){1.1}}\put(9,1.9){\line(0,1){4.1}}
\multiput(9,1.1)(0,.25){4}{\vrule height2pt}
\multiput(11,.9)(0,.25){5}{\vrule height2pt}
\put(11,2){\line(-2,3){2}}\put(11,2.02){\line(-2,3){2}}
\put(8.6,.4){\begin{footnotesize}$0$\end{footnotesize}}
\put(9.9,.4){\begin{footnotesize}$1$\end{footnotesize}}
\put(10.9,.4){\begin{footnotesize}$2$\end{footnotesize}}
\put(8.5,1.85){\begin{footnotesize}$4$\end{footnotesize}}
\put(8.5,2.85){\begin{footnotesize}$5$\end{footnotesize}}
\put(8.5,3.85){\begin{footnotesize}$6$\end{footnotesize}}
\put(8.5,4.85){\begin{footnotesize}$7$\end{footnotesize}}
\put(9.85,2.85){$\times$}
\put(10,.9){\line(0,1){.2}}
\put(8.9,2){\line(1,0){.2}}\put(8.9,3){\line(1,0){.2}}\put(8.9,4){\line(1,0){.2}}
\end{picture}
\end{center}\be\be

If $v(c+2a+4)\ge4$, $N_2(f)$ is one-sided with slope $\la_2=-3/2$, and $R_2(f)(y)=y+1$. The type $\ty':=(x;-1/2,x^2-2;-3/2,y+1)$ is $f$-complete and $\ty_2(f)=\{\ty'\}$. We have $e_2=2,f_2=1$. Thus, $f(x)$ is irreducible over $\Z_2[x]$, and it generates an extension $L/\Q_2$ with $e(L/\Q_2)=e_1e_2=4$, $f(L/\Q_2)=f_0f_1f_2=1$. Moreover, $\ind_2(f)=1$, so that $\ind(f)=\ind_1(f)+\ind_2(f)=3$.

If $v(c+2a+4)=3$, $N_2(f)$ is one-sided with slope $\la_2=-1$, and $R_2(f)(y)=y^2+1=(y+1)^2$. The type $\ty':=(x;-1/2,x^2-2;-1,y+1)$ is not $f$-complete, $\ty_2(f)=\{\ty'\}$, and we need to pass to order three. We have $h_2=e_2=f_2=1$ and $\ind_2(f)=1$. Take $\phi_3(x)=x^2-2x-2$ as a representative of $\ty'$.
The $\phi_3$-adic development of $f(x)$ is
$$
f(x)= \phi_3(x)^2+(4x+a+8)\phi_3(x)+(b+2a+16)x+c+2a+12.
$$  
By Proposition \ref{propertiesv} and (\ref{vrphir}), we have
$$
v_3(x)=1,\ v_3(\phi_3)=3,\ v_3(4x)=5, \ v_3(c+2a+12)\ge 8,
$$
$$
v_3(4x+a+8)=\left\{\begin{array}{ll}
4,&\mbox{ if }v(a)=2,\\ 5,&\mbox{ if }v(a)\ge3,
\end{array}
\right.\quad
v_3((b+2a+16)x)=\left\{\begin{array}{ll}
\ge9,&\mbox{ if }v(a)=2,\\ 7,&\mbox{ if }v(a)\ge3.
\end{array}
\right.
$$
We have now three possibilities for the Newton polygon of third order

\begin{center}
\setlength{\unitlength}{5.mm}
\begin{picture}(16,6)
\put(-.15,2.85){$\bullet$}\put(1.85,1.85){$\bullet$}
\put(-1,1){\line(1,0){4}}
\put(0,0){\line(0,1){1.1}}\put(0,1.9){\line(0,1){4.1}}
\multiput(0,1.1)(0,.25){4}{\vrule height2pt}
\multiput(2,.9)(0,.25){5}{\vrule height2pt}
\put(2,2){\line(-2,1){2}}\put(2,2.02){\line(-2,1){2}}
\put(-.4,.4){\begin{footnotesize}$0$\end{footnotesize}}
\put(.9,.4){\begin{footnotesize}$1$\end{footnotesize}}
\put(1.9,.4){\begin{footnotesize}$2$\end{footnotesize}}
\put(-.5,1.85){\begin{footnotesize}$6$\end{footnotesize}}
\put(-.5,2.85){\begin{footnotesize}$7$\end{footnotesize}}
\put(-.1,3){\line(1,0){.2}}\put(-.1,2){\line(1,0){.2}}
\put(1,.9){\line(0,1){.2}}
\put(0,-1){\begin{footnotesize}$v(a)\ge3$\end{footnotesize}}

\put(6.85,3.85){$\bullet$}\put(7.85,2.85){$\bullet$}\put(8.85,1.85){$\bullet$}
\put(7,0){\line(0,1){1.1}}\put(7,1.9){\line(0,1){4.1}}
\put(6,1){\line(1,0){4}}
\multiput(7,1.1)(0,.25){4}{\vrule height2pt}
\multiput(9,.9)(0,.25){5}{\vrule height2pt}
\put(9,2){\line(-1,1){2}}\put(9,2.02){\line(-1,1){2}}
\put(6.6,.4){\begin{footnotesize}$0$\end{footnotesize}}
\put(7.9,.4){\begin{footnotesize}$1$\end{footnotesize}}
\put(8.9,.4){\begin{footnotesize}$2$\end{footnotesize}}
\put(6.5,1.85){\begin{footnotesize}$6$\end{footnotesize}}
\put(6.5,2.85){\begin{footnotesize}$7$\end{footnotesize}}
\put(6.5,3.85){\begin{footnotesize}$8$\end{footnotesize}}
\put(8,.9){\line(0,1){.2}}
\put(6.9,2){\line(1,0){.2}}\put(6.9,3){\line(1,0){.2}}\put(6.9,4){\line(1,0){.2}}
\put(5.5,-1){\begin{footnotesize}$\begin{array}{c}
v(a)=2,\\
v(c+2a+12)=4
\end{array}
$\end{footnotesize}}

\put(13.85,4.85){$\bullet$}\put(14.85,2.85){$\bullet$}\put(15.85,1.85){$\bullet$}
\put(14,0){\line(0,1){1.1}}\put(14,1.9){\line(0,1){4.1}}
\put(13,1){\line(1,0){4}}
\multiput(14,1.1)(0,.25){4}{\vrule height2pt}
\multiput(16,.9)(0,.25){5}{\vrule height2pt}
\put(16,2){\line(-1,1){1}}\put(16,2.02){\line(-1,1){1}}
\put(15,3){\line(-1,2){1}}\put(15,3.02){\line(-1,2){1}}
\put(13.6,.4){\begin{footnotesize}$0$\end{footnotesize}}
\put(14.9,.4){\begin{footnotesize}$1$\end{footnotesize}}
\put(15.9,.4){\begin{footnotesize}$2$\end{footnotesize}}
\put(13.5,1.85){\begin{footnotesize}$6$\end{footnotesize}}
\put(13.5,2.85){\begin{footnotesize}$7$\end{footnotesize}}
\put(13.5,3.85){\begin{footnotesize}$8$\end{footnotesize}}
\put(12.8,4.85){\begin{footnotesize}$\ge9$\end{footnotesize}}
\put(14,5){\vector(0,1){.7}}\put(15,.9){\line(0,1){.2}}
\put(13.9,2){\line(1,0){.2}}\put(13.9,3){\line(1,0){.2}}\put(13.9,4){\line(1,0){.2}}
\put(12.5,-1){\begin{footnotesize}$\begin{array}{c}
v(a)=2,\\v(c+2a+12)\ge5\end{array}
$\end{footnotesize}}
\end{picture}
\end{center}\be\be

If $v(a)\ge3$, $N_3(f)$ is one-sided with slope $\la_3=-1/2$, and $R_3(f)(y)=y+1$. The type $\ty'':=(x;-1/2,\phi_2(x);-1,\phi_3(x);-1/2,y+1)$ is $f$-complete and $\ty_3(f)=\{\ty''\}$. We have $e_3=2,f_3=1$. Thus, $f(x)$ is irreducible over $\Z_2[x]$, and it generates an extension $L/\Q_2$ with $e(L/\Q_2)=e_1e_2e_3=4$, $f(L/\Q_2)=f_0f_1f_2f_3=1$. Also, $\ind_3(f)=0$, so that $\ind(f)=\ind_1(f)+\ind_2(f)+\ind_3(f)=3$.

If $v(a)=2$ and $v(c+2a+12)=4$, $N_3(f)$ is one-sided with slope $\la_3=-1$, and $R_3(f)(y)=y^2+y+1$. The type $\ty'':=(x;-1/2,\phi_2(x);-1,\phi_3(x);-1,y^2+y+1)$ is $f$-complete and $\ty_3(f)=\{\ty''\}$. We have $e_3=1,f_3=2$. Thus, $f(x)$ is irreducible over $\Z_2[x]$, and it generates an extension $L/\Q_2$ with $e(L/\Q_2)=e_1e_2e_3=2$, $f(L/\Q_2)=f_0f_1f_2f_3=2$. Also, $\ind_3(f)=1$, so that $\ind(f)=\ind_1(f)+\ind_2(f)+\ind_3(f)=4$.

If $v(a)=2$ and $v(c+2a+12)\ge5$, $N_3(f)$ has two sides with slopes $\la_3\le-2$, $\la'_3=-1$, and $R_{\la_3}(f)(y)=R_{\la'_3}(f)(y)=y+1$. There are two types extending $\ty'$:
$$
\begin{array}{l}
\ty_1'':=(x;-1/2,\phi_2(x);-1,\phi_3(x);\la_3,y+1),\\ 
\ty_2'':=(x;-1/2,\phi_2(x);-1,\phi_3(x);-1,y+1).
\end{array}
$$Both types have $e_3=f_3=1$, they are both $f$-complete and $\ty_3(f)=\{\ty_1'',\ty_2''\}$. Thus, $f(x)$ has two irreducible factors of degree two over $\Z_2[x]$, and both generate extensions $L/\Q_2$ with $e(L/\Q_2)=2$, $f(L/\Q_2)=1$. Finally, $\ind_3(f)=1$, so that $\ind(f)=\ind_1(f)+\ind_2(f)+\ind_3(f)=4$.

In the final design of Montes' algorithm, this polynomial $f(x)$ is factorized already in order two. In the case $v(c+2a+4)=3$ the algorithm considers $\phi_3(x)=x^2-2x-2$ as a different representative of the type $\ty$, in order to avoid the increase of recursivity caused by the work in a higher order. See \cite{gmna} for more details on this optimization.

\subsection{Proof of the Theorem of the index}
Our first aim is to prove Theorem \ref{thindex} for $f(x)\in\zpx$ a monic irreducible polynomial of degree $n$, such that $\tt_r(f)$ is not empty. By Lemma \ref{empty}, $\tt_r(f)=\{\ty\}$ for some $\ty=(\phi_1(x);\cdots,\phi_r(x);\la_r,\psi_r(y))$, and $f(x)\ne\phi_s(x)$ for $s=1,\dots,r$. By Lemma \ref{factortype}, $f(x)$ is of type $\ty$ and $n=m_{r+1}\om_{r+1}(f)$. 

For $1\le s\le r$, let $E_s,H_s,d_s$ be the length, height and degree of the unique side of $N_s(f)$. Note that $E_s>0$, because $f(x)$ is of type $\ty$, and $0<H_s<\infty$, because $f(x)=\phi_s(x)$. By the Theorem of the residual polynomial, $R_{\la_r}(f)\sim \psi_r(y)^{a_r}$, for $a_r=\om_{r+1}(f)>0$.  

Let $\t\in\qpb$ a root of $f(x)$, $L=K(\t)$, and let us fix an embedding $\ff{r}[y]/\psi_r(y)\hookrightarrow \ff{L}$,
as in (\ref{embeddingr+1}). We introduce now some notations:
$$\as{1.8}
\begin{array}{l}
\nu_s:=v(\phi_s(\t))=\sum_{i=1}^se_if_i\cdots e_{s-1}f_{s-1}\,\dfrac{h_i}{e_1\dots e_i}, \mbox{ for all }1\le s\le r,\\
\nu_{\j}:= j_1\nu_1+\cdots+j_r\nu_r\in\Q, \mbox{ for all }
\j=(j_0,\dots,j_r)\in\N^{r+1},\\
\Phi(\j):=\,\dfrac{\t^{j_0}\phi_1(\t)^{j_1}\dots\phi_r(\t)^ {j_r}}{\pi^{\lfloor \nu_{\j}\rfloor}}\,\in\ol, \mbox{ for all }
\j=(j_0,\dots,j_r)\in\N^{r+1},\\
b_0:=f_0;\quad b_s:=e_sf_s,\ \mbox{ for }1\le s<r; \quad b_r:=e_rf_ra_r,\\
J:=\{\j\in\N^{r+1}\tq 0\le j_s<b_s, 0\le s\le r\}.
\end{array}
$$

\begin{lemma}\label{partialindex}
Let $\ol'$ be the sub-$\oo$-module of $\ol$ generated by $\{\Phi(\j)\tq \j\in J\}$. Then,
\begin{enumerate}
 \item  $\ol'$ is a free $\oo$-module of rank $n$, with basis $\{\Phi(\j)\tq \j\in J\}$,
\item $\oo[\t]\subseteq \ol'$, and $\left(\ol'\colon \oo[\t]\right)=q^{\sum_{\j\in J}\lfloor \nu_{\j}\rfloor}$.
\end{enumerate}
\end{lemma}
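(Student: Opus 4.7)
The plan is to prove the lemma by working first with the \emph{unnormalized} family $\Phi(\j)':=\t^{j_0}\phi_1(\t)^{j_1}\cdots\phi_r(\t)^{j_r}$, for $\j\in J$: I would show that $\{\Phi(\j)'\}_{\j\in J}$ is an $\oo$-basis of $\oo[\t]$, so that item (1) drops out on rescaling by $\pi^{-\lfloor\nu_\j\rfloor}$, and item (2) follows from a diagonal transition matrix.

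First I would verify the cardinality $|J|=n$. A direct computation using $m_1=f_0$ and $m_{s+1}=m_se_sf_s$ gives $|J|=\prod_{s=0}^rb_s=f_0(e_1f_1)\cdots(e_{r-1}f_{r-1})(e_rf_ra_r)=m_{r+1}a_r$. Since $f(x)$ has type $\ty$ and $\om_{r+1}(f)=a_r$, Lemma \ref{typedegree} applied in order $r+1$ yields $n=\dg f=m_{r+1}a_r$, so $|J|=n$.

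The heart of the argument is a combinatorial identification. As each $\phi_s(x)$ is monic of degree $m_s$, the element $\Phi(\j)'$ is a monic polynomial in $\t$ of degree $d(\j):=j_0+j_1m_1+\cdots+j_rm_r$. I would check that $\j\mapsto d(\j)$ is a bijection $J\iso\{0,1,\dots,n-1\}$: this is precisely the mixed-radix expansion with digit bases $b_0,b_1,\dots,b_r$, and goes through because $b_sm_s=m_{s+1}$ for $s<r$ while $b_rm_r=e_rf_ra_rm_r=n$. Once this bijection is in hand, ordering $\{\Phi(\j)'\}$ by increasing degree makes the transition matrix to the power basis $\{1,\t,\dots,\t^{n-1}\}$ of $\oo[\t]$ lower triangular with entries in $\oo$ and $1$'s on the diagonal; it is therefore invertible over $\oo$, which establishes that $\{\Phi(\j)'\}_{\j\in J}$ is an $\oo$-basis of $\oo[\t]$.

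To finish, I would check $\Phi(\j)\in\ol$. Since $f$ is monic with integer coefficients, $\t\in\ol$, so $v(\t)\ge 0$ and $v(\Phi(\j))=j_0v(\t)+\nu_\j-\lfloor\nu_\j\rfloor\ge 0$. The $\Phi(\j)$ differ from the $\Phi(\j)'$ only by scalars in $K^*$, hence remain $K$-linearly independent; by construction they generate $\ol'$, which proves item (1). For item (2), the relation $\Phi(\j)'=\pi^{\lfloor\nu_\j\rfloor}\Phi(\j)$ shows $\oo[\t]\subseteq\ol'$ and exhibits a diagonal transition matrix between the two $\oo$-bases of rank $n$, with diagonal entries $\pi^{\lfloor\nu_\j\rfloor}$; its determinant has $v$-value $\sum_{\j\in J}\lfloor\nu_\j\rfloor$, which gives the desired index. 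I expect no serious obstacle; the only delicacy is the clean mixed-radix bijection identifying $|J|$ with $n$, which relies on the last factor $b_r$ absorbing the extra multiplicity $a_r=\om_{r+1}(f)$.
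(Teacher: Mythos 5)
Your proposal is correct and follows essentially the same route as the paper: the paper's proof also observes that the numerators $\t^{j_0}\phi_1(\t)^{j_1}\cdots\phi_r(\t)^{j_r}$, for $\j\in J$, are monic of degrees $0,1,\dots,n-1$ and hence form an $\oo$-basis of $\oo[\t]$, and then obtains the index from the diagonal rescaling $\Phi(\j)'=\pi^{\lfloor\nu_\j\rfloor}\Phi(\j)$. What you add beyond the paper is an explicit justification (the mixed-radix bijection and the identity $|J|=m_{r+1}a_r=n$) of the degree claim, which the paper states without proof; this is a detail spelled out, not a different method.
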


\begin{proof}
Clearly, $|J|=n$, and the numerators of $\Phi(\j)$, for $\j\in J$, are monic polynomials of degree $0,1,\dots,n-1$. Thus, the family  $\{\Phi(\j)\tq \j\in J\}$ is $\oo$-linearly independent. This proves item 1 and $\oo[\t]\subseteq \oo'_L$. Finally, since the numerators of $\Phi(\j)$, for $\j\in J$, are an $\oo$-basis of $\oo[\t]$: 
$$
\ol'/\oo[\t]\simeq \prod_{\j\in J}\pi^{-\lfloor \nu_{\j}\rfloor}\oo/\oo\simeq \prod_{\j\in J}\oo/\pi^{\lfloor \nu_{\j}\rfloor}\oo,
$$
and since $|\oo/\pi^a\oo|=q^a$, we get $\left(\oo'_L\colon \oo[\t]\right)=q^{\sum_{\j\in J}\lfloor \nu_{\j}\rfloor}$.
\end{proof}

Our next step is to prove that $\ol'$ is actually an order of $\oo_L$. To this end we need a couple of auxiliary results.
\begin{lemma}
Let \ $Q(x)=\sum_{\j=(j_0,\dots,j_{r-1},0)\in J}a_\j\, x^{j_0}\phi_1(x)^{j_1}\dots\phi_{r-1}(x)^ {j_{r-1}}$, for some $a_\j\in\oo$. Then, 
$$
v(Q(\t))=\min_{\j=(j_0,\dots,j_{r-1},0)\in J}\{v(a_\j)+\nu_{\j}\}.
$$
\end{lemma}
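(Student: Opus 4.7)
My plan is to prove the lemma by induction on $r$, strengthening the inductive statement to include the additional vanishing $\om_r(Q)=0$. The lower bound $v(Q(\t))\ge W$, where $W:=\min_\j\{v(a_\j)+\nu_\j\}$, follows from the ultrametric inequality together with the identity $v(\t^{j_0}\phi_1(\t)^{j_1}\cdots\phi_{r-1}(\t)^{j_{r-1}})=\nu_\j$, which is Corollary \ref{zerovalue}(1) combined with the observation that $v(\t)=0$ whenever $\psi_0(y)\neq y$, while $\psi_0(y)=y$ forces $f_0=1$ and hence $j_0=0$. So the real content is the reverse inequality.

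For the base case $r=1$, one has $Q(\t)=\sum_{j_0=0}^{f_0-1}a_{j_0}\t^{j_0}$ with $\nu_\j=0$. Since $\bar\t\in\ff{L}$ satisfies the irreducible polynomial $\psi_0(y)\in\ff{}[y]$ of degree $f_0$, the residues $\bar\t^{j_0}$ for $0\le j_0<f_0$ are $\ff{}$-linearly independent; dividing $Q(\t)$ by $\pi^W$ and reducing modulo $\m_L$ therefore produces a nonzero element, so $v(Q(\t))=W$. The reduction $\overline{Q(x)/\pi^W}\in\ff{}[x]$ has degree strictly less than $f_0=\deg\psi_0$, giving $\om_1(Q)=0$.

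For the inductive step, the telescoping bound $\sum_{s=0}^{r-1}(b_s-1)m_s=m_r-1$ forces $\deg Q<m_r$, so the $\phi_r$-adic expansion of $Q$ consists of the single coefficient $a_0(x)=Q(x)$ and $N_r^-(Q)$ degenerates to the point $(0,v_r(Q))$. In this situation Proposition \ref{vqtr}(3) reduces everything to proving (i) $v_r(Q)=(e_1\cdots e_{r-1})\,W$ and (ii) $\om_r(Q)=0$. I would write the $\phi_{r-1}$-adic expansion $Q=\sum_{j=0}^{b_{r-1}-1}B_j(x)\phi_{r-1}(x)^j$; each $B_j$ satisfies $\deg B_j<m_{r-1}$, so the inductive hypothesis, applied with $\t$ regarded as a root of $f$ of type $\ty_{r-2}$, yields $v(B_j(\t))=\mu_j$ and $\om_{r-1}(B_j)=0$, where $\mu_j:=\min_{\j : j_{r-1}=j}\{v(a_\j)+j_1\nu_1+\cdots+j_{r-2}\nu_{r-2}\}$. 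Proposition \ref{vpt} converts $\om_{r-1}(B_j)=0$ into $v_{r-1}(B_j)=(e_1\cdots e_{r-2})\mu_j$, so the abscissa-$j$ point of $N_{r-1}^-(Q)$ is $(j,(e_1\cdots e_{r-2})\mu_j+jv_{r-1}(\phi_{r-1}))$. Combining Proposition \ref{propertiesv}(4) with the identity $v_r(\phi_{r-1})=(e_1\cdots e_{r-1})\nu_{r-1}$ (a direct consequence of Proposition \ref{vrphii}(1) and Corollary \ref{zerovalue}(1)), the line of slope $\la_{r-1}$ through this point hits the $y$-axis at height $(e_1\cdots e_{r-2})\,W_j$, where $W_j:=\mu_j+j\nu_{r-1}=\min_{\j : j_{r-1}=j}w_\j$. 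The tangent line $L_{\la_{r-1}}$ first meets $N_{r-1}^-(Q)$ at height $(e_1\cdots e_{r-2})\,W$, so $v_r(Q)=e_{r-1}H_{r-1}(S_{r-1}(Q))=(e_1\cdots e_{r-1})\,W$, which is (i).

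Claim (ii) follows almost for free from the same setup: the polygon-length bound $\ell(N_{r-1}^-(Q))\le\lfloor\deg Q/m_{r-1}\rfloor<e_{r-1}f_{r-1}$ forces $d(S_{r-1}(Q))<f_{r-1}=\deg\psi_{r-1}$, while each residual coefficient $c_j=z_{r-2}^{t_{r-2}(j)}R_{r-2}(B_j)(z_{r-2})$ along $S_{r-1}(Q)$ is nonzero because $\om_{r-1}(B_j)=0$ ensures $R_{r-2}(B_j)(z_{r-2})\neq 0$. Hence $R_{r-1}(Q)(y)\in\ff{r-1}[y]$ is nonzero of degree strictly less than $\deg\psi_{r-1}$, so $\psi_{r-1}\nmid R_{r-1}(Q)$ and $\om_r(Q)=0$. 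Together with (i), Proposition \ref{vqtr}(3) now yields $v(Q(\t))=v_r(Q)/(e_1\cdots e_{r-1})=W$, closing the induction. The hardest part will be keeping the two inductive ingredients (the numerical equality and the $\om$-vanishing) in lockstep, together with the algebraic identity $v_r(\phi_{r-1})=(e_1\cdots e_{r-1})\nu_{r-1}$; the combinatorial bound $j_{r-1}<e_{r-1}f_{r-1}$ built into the lemma is exactly what keeps $\deg R_{r-1}(Q)<\deg\psi_{r-1}$, which is the essential input for (ii).
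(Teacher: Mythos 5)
Your proof is correct and follows essentially the same route as the paper's: both reduce to showing $v(Q(\t))=v_r(Q)/(e_1\cdots e_{r-1})$ (automatic from $\dg Q<m_r$ via Lemma \ref{typedegree} and Proposition \ref{vpt}), both then induct on $r$ using the $\phi_{r-1}$-adic development $Q=\sum_j B_j\phi_{r-1}^j$, and both invoke Proposition \ref{propertiesv} and the Theorem of the polygon to convert the $(r-1)$-order data of each $B_j$ into the $r$-order value $v_r(Q)$. Your argument is slightly more explicit than the paper's in two places, and slightly redundant in a third: you derive the exact equality $v_r(Q)=(e_1\cdots e_{r-1})W$ geometrically by locating the support line of $N_{r-1}^-(Q)$, whereas the paper only proves the one-sided bound $v(a_\j)+\nu_\j\ge v_r(Q)/(e_1\cdots e_{r-1})$ and lets the ultrametric triangle inequality supply the reverse; you also spell out the observation $v(\t)=0$ (or $j_0=0$ when $\psi_0=y$) needed for the lower bound, which the paper leaves implicit. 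On the other hand, the ``strengthened'' inductive invariant $\om_r(Q)=0$ is superfluous machinery: it is already an immediate consequence of $\dg Q<m_r$ by Lemma \ref{typedegree}(2) (equivalently, $\om_r(Q)=\ell(N_r^-(Q))=0$ since the polygon degenerates to the point $(0,v_r(Q))$), so the entire paragraph re-deriving (ii) from residual coefficients and the degree bound $d(S_{r-1}(Q))<f_{r-1}$ can simply be deleted. Similarly, $\om_{r-1}(B_j)=0$ does not need to be carried in the induction --- it too follows directly from $\dg B_j<m_{r-1}$, as does the applicability of Proposition \ref{vpt} to $B_j$.
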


\begin{proof}
Since $\dg Q<m_r$, we have $v(Q(\t))=v_r(Q)/e_1\cdots e_{r-1}$ by Lemma \ref{typedegree} and Proposition
\ref{vpt}. Let us prove $v(a_\j)+\nu_{\j}\ge v_r(Q)/e_1\cdots e_{r-1}$ by induction on $r\ge 1$. If $r=1$ this is obvious because $v_1(Q)=\min\{v(a_\j)\}$. Let $r\ge 2$ and suppose the result is true for $r-1$. For each $0\le j_{r-1}<b_{r-1}$, consider the polynomial
$$
Q_{j_{r-1}}(x)=\sum_{(j_0,\dots,j_{r-2},0,0)\in J}a_\j\, x^{j_0}\phi_1(x)^{j_1}\dots\phi_{r-2}(x)^ {j_{r-2}},
$$ where $\j=(j_0,\dots,j_{r-2},j_{r-1},0)$ in each summand. Clearly, $$Q(x)=\sum_{0\le j_{r-1}<b_{r-1}}Q_{j_{r-1}}(x)\phi_{r-1}(x)^{j_{r-1}},
$$ is the $\phi_{r-1}$-adic development of $Q(x)$. By item 3 of Proposition \ref{propertiesv}, the Theorem of the polygon and the induction hypothesis we get
\begin{align*}
v_r(Q)/e_{r-1}=&\,\min_{0\le j_{r-1}<b_{r-1}}\{v_{r-1}(Q_{j_{r-1}})+j_{r-1}(v_{r-1}(\phi_{r-1})+|\la_{r-1}|)\}\\
=&\,\min_{0\le j_{r-1}<b_{r-1}}\{v_{r-1}(Q_{j_{r-1}})+j_{r-1}e_1\cdots e_{r-2}\nu_{r-1}\}\\
\le&\, e_1\cdots e_{r-2}\left(v(a_\j)+j_1\nu_1+\cdots+j_{r-2}\nu_{r-2}+j_{r-1}\nu_{r-1}\right).
\end{align*}
\end{proof}

\begin{lemma}
Let $\j=(j_0,\dots,j_r)\in\N^{r+1}$.
\begin{enumerate}
 \item For all $0\le s<r$,
\begin{align*}
\Phi(j_0,\dots,j_{s-1},j_s+b_s,j_{s+1},\dots,j_r)&\,=\pi^{\delta_{\j,s}}
\Phi(j_0,\dots,j_s,j_{s+1}+1,j_{s+2},\dots,j_r)\\&+\sum_{\j'=(j'_0,\dots,j'_s,0,\dots,0)\in J}
c_{\j,\,\j'}\Phi(\j+\j'),
\end{align*}
for some nonnegative integer $\delta_{\j,s}$ and some $c_{\j,\,\j'}\in\oo$.
\item $\Phi(j_0,\dots,j_{r-1},j_r+b_r)=\sum_{\j'\in J}
c_{\j,\,\j'}\Phi(\j+\j')$, for some $c_{\j,\,\j'}\in\oo$.
\end{enumerate}
\end{lemma}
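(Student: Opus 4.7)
The plan is to prove item (1) by exploiting the explicit structure of the representative $\phi_{s+1}(x)$ from Theorem \ref{phir}: for $1\le s<r$ it satisfies $\phi_{s+1}(x)=\phi_s(x)^{b_s}+P_s(x)$ with $P_s\in\oo[x]$ of degree $<m_{s+1}$, whose $\phi_s$-adic expansion $P_s=\sum_{0\le j<b_s}a_j(x)\phi_s(x)^j$ has all its monomials on or above the unique side of $N_s(\phi_{s+1})$; in particular $v_s(a_j)\ge(b_s-j)(v_s(\phi_s)+h_s/e_s)=(b_s-j)e_1\cdots e_{s-1}\nu_s$. (For $s=0$ the analogue is $\phi_1(x)=x^{f_0}+\tilde P(x)$ with $\tilde P\in\oo[x]$ of degree $<f_0$.) Substituting $\phi_s(\t)^{b_s}=\phi_{s+1}(\t)-P_s(\t)$ in the numerator of $\Phi(\j_1)$ decomposes it into a main term producing $\Phi(\j_2)$ up to a power of $\pi$, plus a correction coming from $P_s(\t)$.

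Next I would expand each $a_j(x)\in\oo[x]$ of degree $<m_s$ iteratively via Euclidean division by the monic polynomials $\phi_{s-1}(x),\ldots,\phi_1(x)$, obtaining
\[a_j(x)=\sum_{\j'=(j'_0,\ldots,j'_{s-1},0,\ldots,0),\,0\le j'_i<b_i}c_{j,\j'}\,x^{j'_0}\phi_1(x)^{j'_1}\cdots\phi_{s-1}(x)^{j'_{s-1}},\qquad c_{j,\j'}\in\oo.\]
Setting $\j''=\j'+j\mathbf{e}_s\in J$ (with $\mathbf{e}_s$ the $s$-th standard basis vector), each summand becomes the numerator of $\Phi(\j+\j'')$ and $\j''$ has the shape required by the statement.

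Dividing through by $\pi^{\lfloor\nu_{\j_1}\rfloor}$, the main term becomes $\pi^{\delta_{\j,s}}\Phi(\j_2)$ with $\delta_{\j,s}=\lfloor\nu_{\j_2}\rfloor-\lfloor\nu_{\j_1}\rfloor\ge 0$, the nonnegativity being an immediate consequence of the identity $\nu_{s+1}-b_s\nu_s=h_{s+1}/(e_1\cdots e_{s+1})>0$ derived from Corollary \ref{zerovalue}. The main obstacle is showing that each scalar $c_{j,\j'}\pi^{\lfloor\nu_{\j+\j''}\rfloor-\lfloor\nu_{\j_1}\rfloor}$ in the correction lies in $\oo$. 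For this I would iterate Proposition \ref{propertiesv}(3)---the min-formula for $v_s$ on a $\phi_i$-adic expansion---to conclude $v_s(c_{j,\j'}\,x^{j'_0}\phi_1^{j'_1}\cdots\phi_{s-1}^{j'_{s-1}})\ge v_s(a_j)$ for every summand, then invoke the identity $v_s(\phi_i)/(e_1\cdots e_{s-1})=\nu_i$ for $1\le i<s$ (immediate from Propositions \ref{propertiesv}, \ref{vrphii} and Corollary \ref{zerovalue}) together with the observation that $j'_0\cdot v_s(x)/(e_1\cdots e_{s-1})=0$ in all cases (either $\phi_1\ne x$, forcing $v_s(x)=0$, or $\phi_1=x$, forcing $f_0=1$ and hence $j'_0=0$). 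This yields $v(c_{j,\j'})\ge\lceil(b_s-j)\nu_s-j'_1\nu_1-\cdots-j'_{s-1}\nu_{s-1}\rceil$, and the elementary inequality $\lfloor a+b\rfloor\le\lfloor a\rfloor+\lceil b\rceil$ applied with $a=\nu_{\j+\j''}$ and $b=\nu_{\j_1}-\nu_{\j+\j''}$ then gives the required $v(c_{j,\j'})+\lfloor\nu_{\j+\j''}\rfloor-\lfloor\nu_{\j_1}\rfloor\ge 0$. The case $s=0$ collapses trivially because $\nu_{\j_1}=\nu_{\j+\j'}$ and $c_{j'_0}\in\oo$ a priori.

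Item (2) follows by the same method with $\phi_{s+1}(x)$ replaced by $f(x)$ and $s$ by $r$: write $f(x)=\phi_r(x)^{b_r}+Q(x)$ with $\dg Q<\dg f$, observe that $f(\t)=0$ gives $\phi_r(\t)^{b_r}=-Q(\t)$, expand $Q$ in $\phi_r$-adic form (the one-sidedness of $N_r(f)$ supplies the analogous bound $v_r(A_j)\ge(b_r-j)e_1\cdots e_{r-1}\nu_r$), and iterate down to $\phi_1$. The absence of a leading $\pi^\delta\Phi(\j_2)$ term in (2) reflects precisely the annihilation of the main term $\phi_r(\t)^{b_r}$ via $f(\t)=0$.
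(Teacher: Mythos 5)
Your proof is correct and follows essentially the paper's route: substitute $\phi_s(\t)^{b_s}=\phi_{s+1}(\t)-P_s(\t)$ (and for item 2, $\phi_r(\t)^{b_r}=-Q(\t)$ via $f(\t)=0$), expand the remainder in the mixed-radix development, and check the floor-function inequality coming from the bound $v(c_{\j'})+\nu_{\j'}\ge b_s\nu_s$. The only divergence is that you re-derive this bound inline from the one-sidedness of $N_s(\phi_{s+1})$ and iterated use of Proposition \ref{propertiesv}(3), whereas the paper simply cites its immediately preceding lemma ($v(Q(\t))=\min\{v(a_{\j'})+\nu_{\j'}\}$) together with the one-line ultrametric computation $v(Q(\t))=b_s\nu_s$; also, two small imprecisions worth fixing are that the dichotomy controlling $v_s(x)$ should be $\phi_1\equiv x\md{\m}$ versus $\phi_1\not\equiv x\md{\m}$ rather than $\phi_1=x$ versus $\phi_1\ne x$, and the decomposition $\phi_{s+1}=\phi_s^{b_s}+P_s$ with $\dg P_s<m_{s+1}$ needs only that $\phi_{s+1}$ is monic of degree $m_{s+1}$ (the paper explicitly does not assume $\phi_{s+1}$ was produced by the construction of Theorem \ref{phir}).
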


\begin{proof}
Let $0\le s<r$, and denote $\phi_0(x)=x$, $\nu_0=0$, $e_0=1$. The polynomial $Q(x)=\phi_s(x)^{b_s}-\phi_{s+1}(x)$ has degree less than $m_{s+1}=b_sm_s$; hence, it admits a development $$Q(x)=\sum_{\j'=(j'_0,\dots,j'_s,0,\dots,0)\in J}a_{\j'}\, x^{j'_0}\phi_1(x)^{j'_1}\dots\phi_s(x)^{j'_s},$$ for some $a_{\j'}\in\oo$. 
If we substitute $\phi_s(x)^{b_s}=\phi_{s+1}(x)+Q(x)$ in $\Phi(j_0,\dots,j_{s-1},j_s+b_s,j_{s+1},\dots,j_r)$ we get the identity of item 1,
with $$\delta_{\j,s}=\lfloor \nu_\j+\nu_{s+1}\rfloor-\lfloor \nu_\j+b_s\nu_s\rfloor, \qquad 
c_{\j,\j'}=a_{\j'}\,\pi^{\lfloor \nu_\j+\nu_{\j'}\rfloor-\lfloor \nu_\j+b_s\nu_s\rfloor}.$$
Clearly, 
$$\nu_{s+1}=e_sf_s\nu_s+\dfrac{h_{s+1}}{e_1\cdots e_{s+1}}>b_s\nu_s,$$so that $\delta_{\j,s}\ge 0$. Also, $\nu_{s+1}>b_s\nu_s$ implies that $v(Q(\t))=b_s\nu_s$, and by the above lemma we have 
$v(a_{\j'})+\nu_{\j'}\ge b_s\nu_s$. This shows that $v(c_{\j,\j'})\ge 0$.

Item 2 follows by identical arguments, starting with $Q(x)=\phi_r(x)^{b_r}-f(x)$.
\end{proof}

\begin{proposition}\label{subring}
The $\oo$-module $\ol'$ is a subring of $\ol$. 
\end{proposition}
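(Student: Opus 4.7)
The strategy is to reduce the claim to a single multiplicative-closure check. Since $\ol'$ is the $\oo$-submodule generated by $\{\Phi(\j) : \j\in J\}$ and $\Phi(\mathbf 0)=1\in\ol'$, showing that $\ol'$ is a subring of $\ol$ amounts to verifying that each product $\Phi(\j_1)\Phi(\j_2)$ with $\j_1,\j_2\in J$ lies in $\ol'$. Since $\nu_{\j}$ is additive in $\j$, one has $\Phi(\j_1)\Phi(\j_2)=\pi^{c}\,\Phi(\j_1+\j_2)$ with $c=\lfloor \nu_{\j_1+\j_2}\rfloor-\lfloor \nu_{\j_1}\rfloor-\lfloor \nu_{\j_2}\rfloor\ge 0$. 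Hence it suffices to prove the stronger statement that $\Phi(\j)\in\ol'$ for every $\j\in\N^{r+1}$.

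The plan is to prove this stronger statement by strong induction on the nonnegative integer
$$N(\j):=j_0+j_1m_1+\cdots+j_rm_r,$$
which is the total degree of $\t^{j_0}\phi_1(\t)^{j_1}\cdots\phi_r(\t)^{j_r}$ (with $m_0=1$). The base case $\j\in J$ is immediate. When $\j\notin J$, some coordinate $j_s$ satisfies $j_s\ge b_s$, and we invoke the previous lemma to push mass out of coordinate $s$, using the key numerical identity $b_s m_s=m_{s+1}$.

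If $s<r$, item 1 of the previous lemma, applied to the tuple obtained from $\j$ by decreasing $j_s$ by $b_s$, rewrites $\Phi(\j)$ as $\pi^{\delta}\Phi(\j^{\mathrm{main}})$ plus a finite sum of terms $\Phi(\j'+\j'')$ with $\j''\in J$ supported on coordinates $\le s$. A direct count shows that the main term has the same total degree, $N(\j^{\mathrm{main}})=N(\j)$, but the extras satisfy $N(\j'+\j'')\le N(\j)-m_{s+1}+\sum_{k=0}^{s}(b_k-1)m_k=N(\j)-1<N(\j)$, so they are in $\ol'$ by induction. If only $j_r\ge b_r$, item 2 expresses $\Phi(\j)$ entirely as a combination of $\Phi(\j'+\j'')$ with $\j''\in J$, and by the same bound $N(\j'+\j'')\le N(\j)-1$ these all lie in $\ol'$ by induction.

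The main obstacle is that in the case $s<r$ the main term $\Phi(\j^{\mathrm{main}})$ does not have strictly smaller $N$-value, so the induction does not close immediately. The resolution is to insert a finite inner loop at each level of the strong induction: at fixed $N(\j)$, one application of item 1 trades $b_s$ units of coordinate $j_s$ for one unit of $j_{s+1}$, so processing coordinates in order $s=0,1,\dots,r-1$ terminates after finitely many steps (since at each $s$ only $\lceil j_s/b_s\rceil$ applications are needed, and subsequent stages never reintroduce excess at smaller indices). After this inner loop either the tuple lies in $J$, in which case we are done, or only $j_r$ remains out of range and we finish by a single application of item 2 as above.
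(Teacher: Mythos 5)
Your proof is correct. The reduction to proving $\Phi(\j)\in\ol'$ for all $\j\in\N^{r+1}$ is the same, and the same two items of the preceding lemma drive the argument; what differs is the choice of well-founded ordering. The paper introduces the increasing chain $J=J_0\subset J_1\subset\cdots\subset J_{r+1}=\N^{r+1}$ (where $J_s$ relaxes the constraints $j_t<b_t$ for $t<s$) and proves $(i_s)\Rightarrow(i_{s+1})$ by an inner induction on $j_s$, together with a finite iterative process to absorb the main term when $j_{s+1}+1=b_{s+1}$. You instead take the scalar $N(\j)=\sum_s j_s m_s$ as the primary measure and observe, via the telescoping of $b_k m_k=m_{k+1}$, that every error term produced by items 1 and 2 has strictly smaller $N$; this observation is not made explicit in the paper and collapses its two-level induction into a single strong induction on $N$. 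The cost is that the inner loop at fixed $N$ must be separately justified, which you do correctly by processing coordinates in increasing order: the main term of item 1 at stage $s$ alters only coordinates $s$ and $s+1$, so once a coordinate has been driven below its bound it is never re-inflated, and after stage $r-1$ at most $j_r$ is out of range, which a single application of item 2 disposes of with all resulting terms at strictly smaller $N$. Both proofs are of comparable length; yours has the merit of a single explicit decreasing invariant governing the recursion, while the paper's nested sets $J_s$ make each intermediate statement self-contained and sidestep the need to argue termination of the main-term chain as a separate lemma.
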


\begin{proof}
For all $\j,\,\j'\in J$ we have $\Phi(\j)\Phi(\j')=\pi^{\delta}\Phi(\j+\j')$, with $\delta=\lfloor\nu_\j+\nu_{\j'}\rfloor-\lfloor\nu_\j\rfloor-\lfloor\nu_{\j'}\rfloor\in\{0,1\}$. Thus, it is sufficient to check that $\Phi(\j)\in\ol'$, for all $\j\in\N^{r+1}$.

For any $0\le s\le r+1$, let $J_s:=\{\j=(j_0,\dots,j_r)\in\N^{r+1}\tq 0\le j_t<b_t,\ s\le t\le r\}$. Note that $J_0=J$, $J_{r+1}=\N^{r+1}$. Consider the condition
$$(i_s)\qquad \Phi(\j)\in\ol', \mbox{ for all }\j\in J_s.$$
By the definition of $\ol'$, the condition $(i_0)$ holds, and our aim is to show that $(i_{r+1})$ holds. Thus, it is sufficient to show that $(i_s)$ implies $(i_{s+1})$, for all $0\le s\le r$. Let us prove this implication by induction on $j_s$. Take $\j_0=(j_0,\dots,j_r)\in J_{s+1}$. If $0\le j_s<b_s$, condition $(i_{s+1})$ holds for $\j_0$. Let $j_s\ge b_s$ and suppose that $\Phi(j'_0,\dots,j'_{s-1},j,j'_{s+1},\dots,j'_r)\in\ol'$, for all $j'_0,\dots,j'_{s-1}\in\N$, all $0\le j<j_s$, and  all $0\le j'_t<b_t$, for $t>s$. 

By item 2 of the last lemma, applied to $\j=(j_0,\dots,j_{s-1},j_s-b_s,0,\dots,0)$:
\begin{equation}\label{item2}
 \Phi(j_0,\dots,j_{s-1},j_s-b_s,0,\dots,0,b_r)=\sum_{\j'\in J}
c_{\j,\j'}\Phi(\j+\j'),\ \mbox{ if }s<r,
\end{equation}
and $\Phi(j_0,\dots,j_r)=\sum_{\j'\in J}
c_{\j,\j'}\Phi(\j+\j')$, if $s=r$. In both cases, the terms $\Phi(\j+\j')$ belong to $\ol'$, because the $s$-th coordinate of $\j+\j'$ is $j_s-b_s+j'_s<j_s$. In particular, if $s=r$ we are done. If $s<r$ we apply
 item 1 of the last lemma to $\j=(j_0,\dots,j_{s-1},j_s-b_s,j_{s+1},\dots,j_r)$ and we get
$$\Phi(\j_0)=\pi^{\delta_{\j,s}}
\Phi(j_0,\dots,j_s-b_s,j_{s+1}+1,j_{s+2},\dots,j_r)+\sum_{\j'=(j'_0,\dots,j'_s,0,\dots,0)\in J}
c_{\j,\j'}\Phi(\j+\j').$$
The last sum belongs to $\ol'$ by the same argument as above. Thus, we need only to show that the term $\Phi(j_0,\dots,j_s-b_s,j_{s+1}+1,j_{s+2},\dots,j_r)$ belongs to $\ol'$ too. If $j_{s+1}+1<b_{s+1}$, this follows from the induction hypothesis. If $j_{s+1}+1=b_{s+1}$ and $s=r-1$, this is clear by (\ref{item2}). Finally, if $j_{s+1}+1=b_{s+1}$ and $s<r-1$, we can apply item 1 of the last lemma again to see that it is sufficient to check that $\Phi(j_0,\dots,j_s-b_s,0,j_{s+2}+1,\dots,j_r)$ belongs to $\ol'$. In this iterative process we conclude either by (\ref{item2}), or because we find some $j_t+1<b_t$.
\end{proof}

We need still some auxiliary lemmas. The first one is an easy remark about integral parts. 
\begin{lemma}\label{int}
For all $x\in\R$ and $e\in\Z_{>0}$, we have $\sum_{0\le k<e}\Big\lfloor\dfrac{x+k}e\Big\rfloor=\lfloor x\rfloor$.
\end{lemma}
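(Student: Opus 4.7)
The plan is to prove this classical Hermite-style identity by the usual two-step reduction to the interval $[0,1)$. First I would verify that both sides of the claimed equality change by exactly $1$ under the substitution $x\mapsto x+1$. For the right-hand side this is immediate. For the left-hand side, reindexing gives
$$\sum_{0\le k<e}\Big\lfloor\frac{x+1+k}{e}\Big\rfloor = \sum_{1\le k\le e}\Big\lfloor\frac{x+k}{e}\Big\rfloor = \sum_{0\le k<e}\Big\lfloor\frac{x+k}{e}\Big\rfloor + \Big\lfloor\frac{x+e}{e}\Big\rfloor - \Big\lfloor\frac{x}{e}\Big\rfloor,$$
and the correction $\lfloor (x+e)/e\rfloor - \lfloor x/e\rfloor$ equals $1$. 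Consequently, the difference between the two sides is invariant under $x\mapsto x+1$, and it suffices to check the identity on $[0,1)$.

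Second, for $x\in[0,1)$ and any integer $0\le k<e$, we have $0\le (x+k)/e < (k+1)/e \le 1$, so $\lfloor (x+k)/e\rfloor = 0$; hence the sum on the left is $0$, matching $\lfloor x\rfloor = 0$. Combining with the first step, the identity holds for all $x\in\R$.

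There is no real obstacle: the argument is entirely elementary, and the only point requiring a moment's care is the reindexing in the periodicity step. If a more direct derivation is preferred, one can instead write $\lfloor x\rfloor = eq + s$ with $q\in\Z$ and $0\le s<e$, put $\{x\}\in[0,1)$ for the fractional part, and observe that $\lfloor(x+k)/e\rfloor = q+\lfloor(s+k+\{x\})/e\rfloor$, which is $q$ for $0\le k<e-s$ and $q+1$ for $e-s\le k<e$; summing yields $eq+s = \lfloor x\rfloor$.
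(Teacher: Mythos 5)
Your proof is correct, and the main argument is organized somewhat differently from the paper's, though both are elementary reductions via translation invariance. The paper first replaces $x$ by $n=\lfloor x\rfloor$ (using that $\lfloor (x+k)/e\rfloor=\lfloor(n+k)/e\rfloor$ because $(n+k)/e$ has denominator dividing $e$ and $\{x\}/e<1/e$), then reduces $n$ modulo $e$ via $n=Qe+r$ and verifies the base case for integers $0\le r<e$, where exactly $r$ of the floors are $1$. You instead establish $1$-periodicity of the difference of the two sides (your reindexing step) and check on the fundamental domain $[0,1)$, where every floor on the left vanishes trivially. Your route has a marginally cleaner base case, while the paper's avoids the reindexing bookkeeping; the underlying idea — exploit translation invariance to reduce to a finite check — is the same. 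Your closing alternative derivation (write $\lfloor x\rfloor=eq+s$ and count which $k$ give $q$ versus $q+1$) is essentially identical to the paper's own argument, up to notation.
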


\begin{proof}
The identity is obvious when $x$ is an integer, $0\le x<e$, because
$\Big\lfloor\dfrac{x+k}e\Big\rfloor=1$ for the $x$ values of $k$ such that $e-x\le k<e$, and it is zero otherwise.

Write $x=n+\epsilon$, with $n=\lfloor x\rfloor$ and $0\le\epsilon<1$; clearly, $\lfloor (x+k)/e\rfloor=\lfloor (n+k)/e\rfloor$, because $\epsilon/e<1/e$. Consider the division with remainder,
$n=Qe+r$, with $0\le r<e$. Then,
$$
\sum_{0\le k<e}\Big\lfloor\dfrac{n+k}e\Big\rfloor=\sum_{0\le k<e}\left(Q+\Big\lfloor\dfrac{r+k}e\Big\rfloor\right)=eQ+r=n.
$$
\end{proof}

\begin{lemma}\label{jprime}
Take $e_0=1$, $h_0=0$ by convention. Every $\j\in\N^{r+1}$ can be written in a unique way: $\j=\j'+\j''$, with $\j'$, $\j''$ belonging respectively to the two sets:
$$\as{1.4}
\begin{array}{l}
J':=\{\j'=(j'_0,\dots,j'_r)\in\N^{r+1}\tq 0\le j'_s<e_s,\mbox{ for all }0\le s\le r\}\subseteq J,\\
J'':=\{\j''=(j''_0,\dots,j''_r)\in\N^{r+1}\tq j''_s\equiv 0\md{e_s},\mbox{ for all }0\le s\le r\}.
\end{array}
$$
Then, for any $\j''=(k_0,e_1k_1,\dots,e_rk_r)\in J''$, there is a unique $\j'=(j'_0,\dots,j'_r)\in J'$ such that $v(\Phi(\j'+\j''))=0$. Moreover, $j'_r=0$, and $j'_s$ depends only on $k_{s+1},\dots,k_r$,  for $0\le s<r$.
\end{lemma}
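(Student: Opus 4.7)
For the first assertion I apply Euclidean division componentwise: for each $0 \le s \le r$, write $j_s = j'_s + e_s k_s$ with $0 \le j'_s < e_s$, and set $\j' = (j'_0,\ldots,j'_r)$, $\j'' = (k_0,e_1 k_1,\ldots,e_r k_r)$. The convention $e_0 = 1$ forces $j'_0 = 0$ and leaves $k_0 = j_0$ free; uniqueness of the decomposition is immediate from uniqueness of Euclidean division.

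For the second assertion, the plan is to first reduce $v(\Phi(\j))=0$ to an integrality condition on $\nu_\j$. Applying Corollary \ref{zerovalue} to the truncated types $\ty_{s-1}$ (whose representative is $\phi_s$) gives $v(\phi_s(\t)) = \nu_s$ for every $1 \le s \le r$, so
\[
v(\Phi(\j)) \;=\; j_0\,v(\t) + \nu_\j - \lfloor \nu_\j \rfloor,
\]
each summand being nonnegative. Consistent with the convention $\nu_0 = h_0/e_0 = 0$, the first summand vanishes: either $\psi_0(y) \ne y$, whence $\overline\t \ne 0$ and $v(\t)=0$, or $\psi_0(y)=y$, in which case the natural choice $\phi_1(x)=x$ lets us absorb the $j_0$-coordinate into $j_1$. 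Thus the condition becomes $\nu_{\j'+\j''} \in \Z$.

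To attack the integrality condition I introduce $\mu_s := e_1 \cdots e_s\,\nu_s$. The closed formula of Corollary \ref{zerovalue} yields the recursion $\nu_{s+1} = e_s f_s \nu_s + h_{s+1}/(e_1\cdots e_{s+1})$, hence $\mu_{s+1} = e_{s+1}e_s f_s\mu_s + h_{s+1}$, so $\mu_s \in \Z$ and (crucially) $\mu_s \equiv h_s \pmod{e_s}$. Multiplying the integrality condition through by $e_1\cdots e_r$ rewrites it as
\[
\sum_{s=1}^{r} j'_s\,\mu_s\,e_{s+1}\cdots e_r \;\equiv\; -\sum_{s=1}^{r} e_s k_s\,\mu_s\,e_{s+1}\cdots e_r \pmod{e_1\cdots e_r}.
\]

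Finally I solve by descending induction on $s$. Reducing modulo $e_r$, every left-hand term with $s<r$ and every right-hand term carries an explicit factor of $e_r$ and vanishes, leaving $j'_r h_r \equiv 0 \pmod{e_r}$; since $\gcd(h_r,e_r)=1$ this forces $j'_r = 0$. With $j'_r=0$ the whole congruence becomes divisible by $e_r$; after dividing through and reducing modulo $e_{r-1}$ only one term survives on each side, yielding $j'_{r-1} h_{r-1} \equiv -k_r\mu_r \pmod{e_{r-1}}$ and, by the same coprimality, a unique $j'_{r-1}\in[0,e_{r-1})$ depending solely on $k_r$. Iterating, at stage $s$ I obtain a congruence of the shape $j'_s h_s \equiv C_s \pmod{e_s}$, where $C_s$ is assembled only from the already-determined $j'_{s+1},\ldots,j'_r$ and from $k_{s+1},\ldots,k_r$; the $k_s,k_{s-1},\ldots,k_0$ contributions drop out because each carries an explicit factor of $e_s$. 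This simultaneously delivers existence, uniqueness, the vanishing $j'_r=0$, and the stated dependence. The main obstacle is bookkeeping the telescoping divisibilities cleanly; the decisive lever that makes it work is the congruence $\mu_s \equiv h_s \pmod{e_s}$ together with $\gcd(h_s,e_s)=1$.
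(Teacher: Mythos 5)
Your proof is correct and follows the same strategy as the paper's: reduce $v(\Phi(\j))=0$ to the integrality condition $e_1\cdots e_r\,\nu_\j\equiv 0\pmod{e_1\cdots e_r}$, then determine $j'_r,j'_{r-1},\dots,j'_1$ by descending induction, using at each stage that $\gcd(h_s,e_s)=1$. The bookkeeping is packaged a bit differently: the paper works with the partial sums $\la_{\j,s}$ (defined directly from the closed expression for $e_1\cdots e_r\nu_\j$), establishes $\la_{\j,s}\equiv\la_{\j,s+1}\pmod{e_{s+1}\cdots e_r}$ and $\la_{\j,r}=j_rh_r$, and peels off one congruence per step; you instead isolate the integers $\mu_s=e_1\cdots e_s\,\nu_s$, derive the recursion $\mu_{s+1}=e_{s+1}e_sf_s\mu_s+h_{s+1}$ giving $\mu_s\equiv h_s\pmod{e_s}$, and then run a divide-by-$e_r$, divide-by-$e_{r-1}$, $\dots$ cascade. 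Both devices carry the same content and the congruence $\mu_s\equiv h_s\pmod{e_s}$ is exactly the lever the paper also relies on via the $j_sh_s$ coefficient of $\la_{\j,s}$. Your explicit note that the $j_0$ term is harmless (either $v(\t)=0$ or $f_0=1$ forces $j_0=0$) fills in a point the paper leaves tacit; the rest of the argument, including the observation that the $k_0,\dots,k_s$ contributions vanish modulo $e_s$ because each carries the needed explicit factor, is sound.
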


\begin{proof}
 For any $\j\in\N^{r+1}$ denote by $\la_\j$ the positive integer
\begin{align*}
\la_\j:=e_1\cdots e_r\,\nu_\j=&\,\sum_{s=1}^rj_s\sum_{i=1}^se_if_i\cdots e_{s-1}f_{s-1}e_{i+1}\cdots e_rh_i
\\=&\,\sum_{i=1}^r\left(\sum_{t=i}^rj_te_if_i\cdots e_{t-1}f_{t-1}\right)e_{i+1}\cdots e_rh_i.
\end{align*}

Clearly,
\begin{equation}\label{smallvalues}
v(\Phi(\j))=\nu_\j-\lfloor \nu_\j\rfloor=\dfrac{\la_\j}{e_1\cdots e_r}-\Big\lfloor \dfrac{\la_\j}{e_1\cdots e_r}\Big\rfloor.
\end{equation}
Thus, $v(\Phi(\j))=0$ if and only if $\la_\j\equiv 0\md{e_1\cdots e_r}$. Define now, for each $0\le s\le r$,
$$
\la_{\j,s}:=j_sh_se_{s+1}\cdots e_r+\sum_{i=s+1}^r\left(\sum_{t=i}^rj_te_if_i\cdots e_{t-1}f_{t-1}\right)e_{i+1}\cdots e_rh_i.
$$Note that $\la_{\j,s}$ depends only on $j_s,\dots,j_r$, and $\la_{\j,0}=\la_\j$, $\la_{\j,r}=j_rh_r$. Clearly, 
$$
\la_{\j,s}-\la_{\j,s+1}=j_sh_se_{s+1}\cdots e_r+\left(\sum_{t=s+2}^rj_te_{s+1}f_{s+1}\cdots e_{t-1}f_{t-1}\right)e_{s+2}\cdots e_rh_{s+1},
$$for all $0\le s< r$. In  particular, $\la_{\j,s}\equiv\la_{\j,s+1}\md{e_{s+1}\cdots e_r}$, and 
$$
\la_\j\equiv 0\md{e_1\cdots e_r}\sii \la_{\j,s}\equiv 0\md{e_s\cdots e_r},\mbox{ for all }1\le s\le r. 
$$
The condition $\la_{\j,r}\equiv 0\md{e_r}$ is equivalent to $j_r\equiv0\md{e_r}$. On the other hand, for $1\le s<r$, the condition 
$\la_{\j,s}\equiv 0\md{e_s\cdots e_r}$ is equivalent to
$$\as{1.4}
\begin{array}{l}
\la_{\j,s+1}\equiv 0\md{e_{s+1}\cdots e_r},\mbox{ and }\\
j_sh_s+\left(\sum_{t=s+2}^rj_t(f_{s+1}\cdots f_{t-1})(e_{s+2}\cdots e_{t-1})\right)h_{s+1}+\dfrac{\la_{\j,s+1}}{e_{s+1}\cdots e_r}\equiv 0\md{e_s}.
\end{array}
$$Thus, the class of $j_s$ modulo $e_s$ is uniquely determined, and it depends only on $j_{s+1},\dots,j_r$.
\end{proof}

\begin{corollary}\label{gammas}
Let $\kappa=(k_0,\dots,k_r)\in\N^{r+1}$, and let $\j=\j'+(k_0,e_1k_1,\dots,e_rk_r)$, where  $\j'$ is the unique element in $J'$ such that $v(\Phi(\j))=0$. Then,
$$\Phi(\j)=\t^{k_0}\ga_1(\t)^{k_1}\cdots \ga_r(\t)^{k_r}\ga_1(\t)^{i_1}\cdots \ga_{r-1}(\t)^{i_{r-1}},
$$ 
for some integers $i_1,\dots,i_{r-1}$. Moreover, each $i_s$ depends only on $k_{s+1},\dots,k_r$.
\end{corollary}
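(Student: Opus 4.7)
The plan is to reduce the identity, viewed in $L$, to an identity of rational functions in $K(x)$ evaluated at $\t$, which can then be proved by invoking Lemma \ref{gammas0}. First, by Lemma \ref{jprime}, together with the convention $e_0=1$ (forcing $j'_0=0$) and the explicit assertion $j'_r=0$, the coordinates of $\j$ are $j_0=k_0$, $j_r=e_rk_r$, and $j_s=j'_s+e_sk_s$ for $1\le s<r$, with each $j'_s$ depending only on $k_{s+1},\dots,k_r$. I then introduce the rational function
$$\Lambda(x):=\frac{\phi_1(x)^{j_1}\cdots\phi_r(x)^{j_r}}{\pi^{\lfloor\nu_{\j}\rfloor}\,\ga_1(x)^{k_1}\cdots\ga_r(x)^{k_r}}\in K(x).$$
Unwinding Definition \ref{ratfracs}, the factor $\ga_r(x)^{k_r}$ contributes $\phi_r(x)^{e_rk_r}$ in its numerator and nothing involving $\phi_r(x)$ elsewhere; since $j_r=e_rk_r$, the $\phi_r(x)$ exponents cancel exactly, leaving $\Lambda(x)=\pi^{n_0}\phi_1(x)^{n_1}\cdots\phi_{r-1}(x)^{n_{r-1}}$ for certain integers $n_0,\dots,n_{r-1}$. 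Directly from the definition of $\Phi(\j)$, we have $\Phi(\j)=\t^{k_0}\ga_1(\t)^{k_1}\cdots\ga_r(\t)^{k_r}\,\Lambda(\t)$, so it suffices to show $\Lambda(x)=\ga_1(x)^{i_1}\cdots\ga_{r-1}(x)^{i_{r-1}}$ for suitable integers $i_1,\dots,i_{r-1}$.

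Next I check $v_r(\Lambda)=0$ so that Lemma \ref{gammas0} applies. Writing $v(\Phi(\j))=k_0\,v(\t)+\{\nu_{\j}\}$ as a sum of two nonnegative real numbers whose total is $0$ (by Lemma \ref{jprime}), we conclude $v(\t^{k_0})=k_0\,v(\t)=0$. Combined with $v(\ga_i(\t))=0$ (Corollary \ref{zerovalue}), this yields $v(\Lambda(\t))=0$. Moreover, $\om_r(\phi_s)=0$ for $s<r$ (Lemma \ref{omji}) and $\om_r(\pi)=0$, so $\om_r(\Lambda)=0$. Applying Proposition \ref{vpt} to the monomial factors $\pi,\phi_1,\dots,\phi_{r-1}$ of $\Lambda$ and summing by the homomorphism property of $v_r$ and $v$, we obtain $v_r(\Lambda)=(e_1\cdots e_{r-1})\,v(\Lambda(\t))=0$. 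Lemma \ref{gammas0} then produces unique integers $i_1,\dots,i_{r-1}$ such that $\Lambda(x)=\ga_1(x)^{i_1}\cdots\ga_{r-1}(x)^{i_{r-1}}$, with each $i_s$ depending only on $n_s,\dots,n_{r-1}$.

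The final step—and the principal obstacle—is the dependence check: to show that $n_s$ depends only on $k_{s+1},\dots,k_r$. Examining the denominator factors of $\Lambda$, only $\ga_s(x)^{k_s}$ contributes a $k_s$-dependent exponent to $\phi_s(x)$, namely $-e_sk_s$ (since $\ga_s=\Phi_s^{e_s}/\pi_s^{h_s}$ and $\Phi_s=\phi_s/\pi_{s-1}^{f_{s-1}v_s(\phi_{s-1})}$); this cancels exactly the $e_sk_s$ summand of $j_s=j'_s+e_sk_s$. The residual contribution to $n_s$ arises from $j'_s$ (depending only on $k_{s+1},\dots,k_r$ by Lemma \ref{jprime}) and from the $\phi_s$-exponents of $\ga_{s+1}(x)^{k_{s+1}},\dots,\ga_r(x)^{k_r}$ (manifestly depending only on $k_{s+1},\dots,k_r$). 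Therefore $n_s$, and hence $i_s$, depends only on $k_{s+1},\dots,k_r$, completing the proof.
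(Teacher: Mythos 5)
Your proposal is correct and follows essentially the same route as the paper's own proof: decompose $\j$ via Lemma \ref{jprime}, observe (via \eqref{phii}/Definition \ref{ratfracs}) that $\ga_r(x)^{k_r}$ supplies $\phi_r(x)^{e_rk_r}$ which cancels with $j_r=e_rk_r$ so the quotient is a monomial $\pi^{n_0}\phi_1^{n_1}\cdots\phi_{r-1}^{n_{r-1}}$, verify $v_r$ of this monomial vanishes, and invoke Lemma \ref{gammas0} together with the dependence structure of the $j'_s$ and the $\ga_t$-exponents to get the dependence of $i_s$ on $k_{s+1},\dots,k_r$. The only point where you add a small refinement beyond the paper's terse version is your explicit deduction that $k_0\,v(\t)=0$ from the nonnegativity of both summands in $v(\Phi(\j))=k_0\,v(\t)+\{\nu_\j\}=0$, which the paper leaves implicit when it concludes that the right-hand side has valuation zero; this is a harmless and slightly more careful way of reaching the same step.
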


\begin{proof}
By Lemma \ref{jprime}, $\j=(k_0,j'_1+e_1k_1,\dots,j'_{r-1}+e_{r-1}k_{r-1},e_rk_r)$. By (\ref{phii}), $$\ga_s(\t)^{k_s}=\pi^{n_{s,0}}\phi_1(\t)^{n_{s,1}}\cdots \phi_s(\t)^{e_sk_s},$$
for all $1\le s\le r$, with integers $n_{s,i}$ that depend only on $k_s$. Hence,
$$\Phi(\j)\t^{-k_0}\ga_1(\t)^{-k_1}\cdots\ga_r(\t)^{-k_r}=\pi^{n_0}\phi_1(\t)^{n_1}\cdots \phi_{r-1}(\t)^{n_{r-1}},$$
for integers $n_s$ that depend only on $j'_s$ and $k_{s+1},\dots,k_r$; hence they depend only on $k_{s+1},\dots,k_r$.
By Corollary \ref{zerovalue}, $v(\pi^{n_0}\phi_1(\t)^{n_1}\cdots \phi_{r-1}(\t)^{n_{r-1}})=0$, and by Propositions \ref{vpt} and \ref{vrphii}
we have  $v_r(\pi^{n_0}\phi_1(x)^{n_1}\cdots \phi_{r-1}(x)^{n_{r-1}})=0$. By Lemma \ref{gammas0}, this rational function can be expressed as a product $\ga_1(x)^{i_1}\cdots \ga_{r-1}(x)^{i_{r-1}}$, with integers $i_1,\dots,i_{r-1}$ such that each $i_s$ depends only on $n_s,\dots,n_{r-1}$, that is, on $k_{s+1},\dots,k_r$.
\end{proof}

\begin{corollary}\label{samevalue}
Let $\j_1=\j'_1+\j''$, $\j_2=\j'_2+\j''$, for some $\j'_1,\j'_2\in J'$, $\j''\in J''$. Then, $v(\Phi(\j_1))=v(\Phi(\j_2))$ if and only if $\j_1=\j_2$. In particular, $$\{v(\Phi(\j))\tq\j\in J'\}=\{k/e_1\cdots e_r\tq 0\le k<e_1\cdots e_r\}.$$
\end{corollary}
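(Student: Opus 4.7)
The key observation is (\ref{smallvalues}) from the proof of Lemma~\ref{jprime}: for any $\j\in\N^{r+1}$ with $j_0=0$ (in particular for every $\j\in J'$, since $e_0=1$), one has
$$
v(\Phi(\j))=\frac{\la_\j}{e_1\cdots e_r}-\Big\lfloor \frac{\la_\j}{e_1\cdots e_r}\Big\rfloor,
$$
which always lies in $\{k/(e_1\cdots e_r)\tq 0\le k<e_1\cdots e_r\}$. This already takes care of half of the ``in particular'' claim; the remaining content of the corollary is the injectivity in the first assertion.

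For that injectivity I would extend $\j\mapsto\la_\j$ to a $\Z$-linear functional on $\Z^{r+1}$ and set $\kappa:=\j'_1-\j'_2$. Since $\j'_1,\j'_2\in J'$ and $e_0=1$, one has $\kappa_0=0$ and $|\kappa_s|<e_s$ for every $1\le s\le r$. Because $\j_1$ and $\j_2$ share the summand $\j''$, the hypothesis $v(\Phi(\j_1))=v(\Phi(\j_2))$ becomes $\nu_\kappa\in\Z$, i.e.\ $\la_\kappa\equiv0\md{e_1\cdots e_r}$. The plan is then to rerun the downward induction on $s$ from the proof of Lemma~\ref{jprime}, now applied to $\kappa$: for the base case $s=r$ the identity $\la_{\kappa,r}=\kappa_rh_r$ together with $\gcd(h_r,e_r)=1$ and $|\kappa_r|<e_r$ forces $\kappa_r=0$; and inductively, once $\kappa_{s+1}=\cdots=\kappa_r=0$ has been established one gets $\la_{\kappa,s+1}=0$ and $\la_{\kappa,s}=\kappa_sh_se_{s+1}\cdots e_r$, so that $\la_{\kappa,s}\equiv0\md{e_s\cdots e_r}$ (a consequence of $\la_\kappa\equiv0\md{e_1\cdots e_r}$) and $\gcd(h_s,e_s)=1$ give $\kappa_s\equiv0\md{e_s}$, whence $\kappa_s=0$ by the strict bound $|\kappa_s|<e_s$. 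Thus $\kappa=0$ and $\j_1=\j_2$; the converse is trivial.

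The ``in particular'' statement then follows by applying the first part with $\j''=0$: this gives injectivity of $\j'\mapsto v(\Phi(\j'))$ on $J'$, whose image lies in $\{k/(e_1\cdots e_r)\tq 0\le k<e_1\cdots e_r\}$ by the opening observation; since $|J'|=e_0e_1\cdots e_r=e_1\cdots e_r$ matches the cardinality of the target, the two sets coincide. I do not expect any real obstacle: all of the substantive arithmetic is already done in Lemma~\ref{jprime}, and the only new ingredient is promoting ``$\kappa_s$ lies in a unique residue class modulo $e_s$'' to ``$\kappa_s=0$'' via the strict bound $|\kappa_s|<e_s$ inherited from $\j'_1,\j'_2\in J'$.
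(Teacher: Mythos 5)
Your proof is correct and follows essentially the same route as the paper's: it reduces $v(\Phi(\j_1))=v(\Phi(\j_2))$ to $\la_{\j_1}\equiv\la_{\j_2}\md{e_1\cdots e_r}$ via (\ref{smallvalues}), then peels off the coordinates from $s=r$ down to $s=1$ using $\gcd(h_s,e_s)=1$ and the strict bound $|\kappa_s|<e_s$, and finishes the ``in particular'' clause by a cardinality count. Working with the difference vector $\kappa=\j'_1-\j'_2$ rather than with $\j_1,\j_2$ directly is a cosmetic repackaging of the paper's argument, not a different method.
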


\begin{proof}
Let $\j_1=(j_{1,0},\dots j_{1,r})$, $\j_2=(j_{2,0},\dots j_{2,r})$.
With the notations of Lemma \ref{jprime}, (\ref{smallvalues}) shows that 
\begin{align*}
v(\Phi(\j_1))=v(\Phi(\j_2))&\sii\la_{\j_1}\equiv \la_{\j_2}\md{e_1\cdots e_r}\\&\sii \la_{\j_1,s}\equiv \la_{\j_2,s}\md{e_s\cdots e_r}, \mbox{ for all }1\le s\le r.
\end{align*}
For $s=r$ this is equivalent to $j_{1,r}=j_{2,r}$. Also, if $j_{1,t}=j_{2,t}$ for all $t>s$, then
$\la_{\j_1,s}-\la_{\j_2,s}=(j_{1,s}-j_{2,s})h_se_{s+1}\cdots e_r$, so that $\la_{\j_1,s}\equiv \la_{\j_2,s}\md{e_s\cdots e_r}$ is equivalent to $j_{1,s}=j_{2,s}$.

Finally, it is clear that $|J'|=e_1\cdots e_r$, and we have just shown that the elements $v(\Phi(\j))$, $\j\in J'$, take $e_1\cdots e_r$ different values, all of them contained in the set $\{k/e_1\cdots e_r\tq 0\le k<e_1\cdots e_r\}$ by (\ref{smallvalues}).
\end{proof}

\begin{proposition}\label{oo=}
If $\tt$ is $f$-complete, then $\oo'_L=\oo_L$. Moreover, the family of all $\Phi(\j)\Phi(\j')$, for $\j\in J_0:=\{\j\in J\tq v(\Phi(\j))=0\}$ and $\j'\in J'$, is an $\oo$-basis of $\oo_L$. Finally, if $L/K$ is ramified, there exists $\j'\in J'$ such that
 $v_L(\Phi(\j'))=1$, so that $\m_L=\Phi(\j')\oo_L$.
\end{proposition}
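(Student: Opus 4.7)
My plan is to show that $\mathcal F:=\{\Phi(\j)\Phi(\j'):\j\in J_0,\j'\in J'\}\subset\ol'$ is an $\oo$-basis of $\ol$; this simultaneously yields $\ol'=\ol$ and the basis claim. Since $\ty$ is $f$-complete, $a_r=1$, whence $|J|=n$; parametrizing each $\j\in J$ by Lemma \ref{jprime} as $\j'+\j''$ with $\j''=(k_0,e_1k_1,\dots,e_rk_r)$, $0\le k_s<f_s$, and $\j'\in J'$, gives $|J'|=e_1\cdots e_r=e(L/K)$ and $|J_0|=f_0\cdots f_r=f(L/K)$ (Corollary \ref{ramr}), so $|\mathcal F|=n=\deg f$, the $\oo$-rank of $\ol$. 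The uniformizer statement of the proposition then comes immediately from Corollary \ref{samevalue}: in the ramified case, some $\j'_1\in J'$ has $v_L(\Phi(\j'_1))=1$, whence $\m_L=\Phi(\j'_1)\ol$.

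The core step is to show that $\{\overline{\Phi(\j)}:\j\in J_0\}$ is an $\ff{}$-basis of $\ff L=\ff{r+1}$ (under \eqref{embeddingr+1}). Since the cardinalities match, only linear independence is needed. By Corollary \ref{gammas}, for $\j\in J_0$ indexed by $(k_0,\dots,k_r)$,
$$\overline{\Phi(\j)}=z_0^{k_0}z_r^{k_r}\prod_{s=1}^{r-1}z_s^{k_s+i_s(k_{s+1},\dots,k_r)}.$$
Given a vanishing $\ff{}$-linear combination, I would group the terms by $z_r^{k_r}$: since $\{z_r^{k_r}\}_{0\le k_r<f_r}$ is an $\ff{r}$-basis of $\ff{r+1}$, each coefficient in $\ff{r}$ vanishes. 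For fixed $k_r$, the integer $i_{r-1}(k_r)$ is fixed and $\{z_{r-1}^{j+i_{r-1}(k_r)}\}_{0\le j<f_{r-1}}$ remains an $\ff{r-1}$-basis of $\ff{r}$ (multiplication by $z_{r-1}^{i_{r-1}(k_r)}\in\ff{r}^*$ is an $\ff{r-1}$-linear automorphism), so peel off $k_{r-1}$. Iterating, one descends to relations $\sum_{k_0}a_{(k_0,\dots,k_r)}z_0^{k_0}=0$ in $\ff{1}$, forcing all coefficients to vanish.

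To conclude, pick a uniformizer $\varpi$ of $\ol$; then each $\Phi(\j')$, $\j'\in J'$, equals $u_{\j'}\varpi^{v_L(\Phi(\j'))}$ with $u_{\j'}\in\ol^*$, and by Corollary \ref{samevalue} these valuations exhaust $\{0,1,\dots,e-1\}$ as $\j'$ ranges over $J'$. Fix $i$ and the corresponding $\j'$: the residues of $\Phi(\j)\Phi(\j')$ ($\j\in J_0$) correspond, under the canonical $\m_L^i/\m_L^{i+1}\cong\ff L$, to $\overline{u_{\j'}}\cdot\overline{\Phi(\j)}$, an $\ff{}$-basis of $\ff L$ by the previous step. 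Via the filtration of $\ol/\pi\ol$ by the images of the $\m_L^i$, the reductions of $\mathcal F$ therefore form an $\ff{}$-basis of $\ol/\pi\ol$. Nakayama's lemma applied to the finitely generated $\oo$-module $\ol$ then shows $\ol$ is generated by $\mathcal F$ over $\oo$; the count $|\mathcal F|=n$ promotes this to an $\oo$-basis, and $\mathcal F\subset\ol'$ (Proposition \ref{subring}) yields $\ol'=\ol$.

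The main obstacle is the linear-independence step: the exponents $k_s+i_s$ need not lie in $[0,f_s)$, so $\{\overline{\Phi(\j)}\}$ is not literally the standard monomial basis of $\ff{r+1}$ over $\ff{}$. What rescues the peeling argument is precisely the triangular dependence of $i_s$ on only the later variables recorded in Corollary \ref{gammas}, together with the fact that any nonzero power of $z_s\in\ff{s+1}^*$ permutes bases of $\ff{s+1}$ over $\ff{s}$.
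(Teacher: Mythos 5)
Your proof is correct and follows essentially the same route as the paper: cardinality counts from Lemma \ref{jprime} with $a_r=1$, the uniformizer from Corollary \ref{samevalue}, and the triangular structure of Corollary \ref{gammas} to show the residues $\overline{\Phi(\j)}$, $\j\in J_0$, are an $\ff{K}$-basis of $\ff{L}$. The only notable difference is the final step: the paper simply asserts that the resulting $e(L/K)f(L/K)$ elements $\Phi(\j)\Phi(\j')$ form an $\oo$-basis of $\ol$, whereas you justify it explicitly by passing through the $\m_L$-adic filtration of $\ol/\pi\ol$ and Nakayama's lemma; likewise your linear-independence argument ``peels'' a zero relation downward through the tower while the paper ``builds up'' the basis, but these are the same argument read in opposite directions.
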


\begin{proof}
Corollary \ref{ramr} shows that $e(L/K)=e_1\cdots e_r$, $f(L/K)=f_0f_1\cdots f_r$. 
By Corollary \ref{samevalue}, we have $\{v_L(\Phi(\j'))\tq\j'\in J'\}=\{0,1,\dots,e(L/K)-1\}$; in particular, if $e(L/K)>1$, there exists $\j'\in J'$ such that $v_L(\Phi(\j'))=1$. By Lemma \ref{jprime}, $|J_0|=f_0f_1\cdots f_r=\dim_{\ff{K}}\ff{L}$, and each $\j\in J_0$ is parameterized by a sequence $(k_0,\dots,k_r)$, with $0\le k_s<f_s$ for all $0\le s\le r$.
By item 4 of Proposition \ref{vqtr}, $\ff{L}=\ff{K}(\gb0,\dots,\gb{r})$, where $\ga_0(x):=x$. Recall that $z_i=\gb{i}$ for all $0\le i\le r$, under our identification of $\ff{r+1}:=\ff{r}[y]/\psi_r(y)$ with $\ff{L}$.

By Corollary \ref{gammas},
$$\overline{\Phi(\j)}=z_0^{k_0}z_1^{k_1+i_1}\cdots (z_{r-1})^{k_{r-1}+i_{r-1}} z_r^{k_r}=
z_0^{k_0}z_1^{k_1}\g_2(k_2,\dots,k_r)\cdots \g_r(k_r),$$
where $\g_s(k_s,\dots,k_r):=z_s^{k_s}(z_{s-1})^{i_{s-1}}$, for $s\ge 2$. Now,
the family of all $\overline{\Phi(\j)}$ for $\j\in J_0$ is an $\ff{K}$-basis of $\ff{L}$. In fact, the set of all $\g_r(k_r)$ for $0\le k_r<f_r$, is a $\ff{r}$-basis of $\ff{L}=\ff{r+1}$, because they are obtained from the basis $z_r^{k_r}$, just by multiplying every element by the nonzero scalar $z_{r-1}^{i_{r-1}}\in\ff{r}$, which depends only on $k_r$. Then, the set of all $\g_{r-1}(k_{r-1},k_r)\g_r(k_r)$ for $0\le k_{r-1}<f_{r-1}$, $0\le k_r<f_r$, is a $\ff{r-1}$-basis of $\ff{L}$, because they are obtained from the basis $(z_{r-1})^{k_{r-1}}\g_r(k_r)$, just by multiplying every element by the nonzero scalar $z_{r-2}^{i_{r-2}}\in\ff{r-1}$, which depends only on $k_{r-1},k_r$, etc.

Therefore, the $e(L/K)f(L/K)$ elements $\Phi(\j)\Phi(\j')$, $\j\in J_0$, $\j'\in J'$, are a $\oo$-basis of $\ol$. By Proposition \ref{subring}, all these elements are contained in $\ol'$, and we have necessarily $\ol'=\ol$. 
\end{proof}

\begin{proof}[Proof of Theorem \ref{thindex}]
Suppose first that $f(x)\in\zpx$ is a monic irreducible polynomial, such that $\tt_r(f)=\{\ty\}$. In this case we have built an order $\oo[\t]\subseteq \ol'\subseteq \ol$, such that
\begin{equation}\label{end1}
\left(\ol\colon \oo[\t]\right)=q^{\ind(f)},\quad \left(\ol'\colon \oo[\t]\right)=q^{\sum_{\j\in J}\lfloor\nu_\j\rfloor},
\end{equation}
the last equality by Lemma \ref{partialindex}. Therefore, in order to prove item 1 of Theorem \ref{thindex} it is sufficient to show that
\begin{equation}\label{end2}
f_0\sum_{\j=(0,j_1,\dots,j_r)\in J}\lfloor\nu_\j\rfloor=\ind_1(f)+\cdots+\ind_r(f).
\end{equation}

Let us prove this identity by induction on $r\ge 1$. For $r=1$ we have $\ind_1(f)=f_0\ind(N_1(f))$, and $j\nu_1=j|\la_1|=y_j(N_1(f))$; thus, (\ref{end2}) was proved already in (\ref{combi}). From now on, let $r\ge 2$. Both sides of the identity depend only on $a_r$, $f_0$ and the vectors
$\mathbf{e}=(e_1,\dots,e_r)$, $\mathbf{f}=(f_1,\dots,f_{r-1})$, $\mathbf{h}=(h_1,\dots,h_r)$. Recall that
$$
\nu_s=\nu_s(\mathbf{e},\mathbf{f},\mathbf{h}):=\sum_{i=1}^se_if_i\cdots e_{s-1}f_{s-1}\,\dfrac{h_i}{e_1\dots e_i}.
$$If we denote $\mathbf{e'}=(e_2,\dots,e_r)$, $\mathbf{f'}=(f_2,\dots,f_{r-1})$, $\mathbf{h'}=(h_2,\dots,h_r)$, it is easy to check that, for every $2\le s\le r$:
\begin{equation}\label{recursive}
\nu_s(\mathbf{e},\mathbf{f},\mathbf{h})-\frac{m_s}{m_2}f_1h_1=\frac 1{e_1}\nu_{s-1}(\mathbf{e'},\mathbf{f'},\mathbf{h'}).
\end{equation}
Let us show that the identity 
$$f_0\sum_{\j=(0,j_1,\dots,j_r)\in J}\Big\lfloor\sum_{s=1}^rj_s\nu_s(\mathbf{e},\mathbf{f},\mathbf{h})\Big\rfloor=\ind_1(f)+\cdots+\ind_r(f),$$
holds for any choice of $a_r$, $f_0$ and $\mathbf{e},\mathbf{f},\mathbf{h}$, under the assumption that the same statement is true for $r-1$. Write $j_1=je_1+k$, with $0\le j<f_1$, $0\le k<e_1$, and let $0\le s_k<e_1$ be determined by $kh_1\equiv s_k\md{e_1}$. Then, by (\ref{recursive}),
\begin{align*}
\Big\lfloor\sum_{s=1}^r&j_s\nu_s(\mathbf{e},\mathbf{f},\mathbf{h})\Big\rfloor=\Big\lfloor jh_1+k\frac{h_1}{e_1}+\sum_{s=2}^rj_s\nu_s(\mathbf{e},\mathbf{f},\mathbf{h})\Big\rfloor=\\
&=\sum_{s=2}^rj_s\frac{m_s}{m_2}f_1h_1+jh_1+\Big\lfloor k\frac{h_1}{e_1}+\frac1{e_1}\sum_{s=2}^rj_s\nu_{s-1}(\mathbf{e'},\mathbf{f'},\mathbf{h'})\Big\rfloor=\\
&=\sum_{s=2}^rj_s\frac{m_s}{m_2}f_1h_1+jh_1+\Big\lfloor  k\frac{h_1}{e_1}\Big\rfloor+\Big\lfloor \frac{s_k}{e_1}+\frac1{e_1}\sum_{s=1}^{r-1}j_{s+1}\nu_s(\mathbf{e'},\mathbf{f'},\mathbf{h'})\Big\rfloor.
\end{align*}
Therefore, it is sufficient to check the two identities:
$$
f_0\sum_{\begin{array}{c}
\mbox{\begin{tiny}$(0,0,j_2,\dots,j_r)\in J$\end{tiny}}\\
\mbox{\begin{tiny}$0\le j<f_1,0\le k<e_1$\end{tiny}}\end{array}
}\left(\sum_{s=2}^rj_s\frac{m_s}{m_2}f_1h_1+jh_1+\Big\lfloor  k\frac{h_1}{e_1}\Big\rfloor\right)=\ind_1(f),
$$$$
f_0\sum_{
\begin{array}{c}
\mbox{\begin{tiny}$(0,0,j_2,\dots,j_r)\in J$\end{tiny}}\\
\mbox{\begin{tiny}$0\le j<f_1,0\le k<e_1$\end{tiny}}\end{array}
}\Big\lfloor \frac{s_k}{e_1}+\frac1{e_1}\sum_{s=1}^{r-1}j_{s+1}\nu_s(\mathbf{e'},\mathbf{f'},\mathbf{h'})\Big\rfloor=\ind_2(f)+\cdots+\ind_r(f).
$$
The integers $0\le i<(n/f_0)$ are in $1$-$1$ correspondence with the vectors $(0,j_1,\dots,j_r)$ in $J$ via:
$$
i=j_1+j_2(m_2/f_0)+\cdots+j_r(m_r/f_0).
$$Therefore, the left-hand side of the first identity is equal to $f_0\sum_{0\le i<(n/f_0)}\lfloor i\frac{h_1}{e_1}\rfloor$, which is equal to $\ind_1(f)$ by (\ref{combi}). The second identity follows from the induction hypothesis. In fact, the set $\{s_k\tq 0\le k<e_1\}$ coincides with $\{0,1,\dots,e_1-1\}$, and by Lemma \ref{int} the left-hand side of the identity is equal to 
$$f_0f_1\sum_{(0,0,j_2,\dots,j_r)\in J}\Big\lfloor \sum_{s=1}^{r-1}j_{s+1}\nu_s(\mathbf{e'},\mathbf{f'},\mathbf{h'})\Big\rfloor.
$$

Let us prove now the second part of the theorem. Suppose that $\ind(f)=\ind_1(f)+\cdots +\ind_r(f)$. Let $\phi_{r+1}(x)$ be the representative of $\ty$; if $f(x)=\phi_{r+1}(x)$, we have directly $\ind_{r+1}(f)=0$ because $N_{r+1}(f)$ is a side of slope $-\infty$. If $f(x)\ne\phi_{r+1}(x)$, then $\tt_{r+1}(f)\ne\emptyset$ by Lemma \ref{empty}, and $\ind_{r+1}(f)=0$ by item 1 of the theorem in order $r+1$. 

Conversely, suppose that $\ind_{r+1}(f)=0$. Lemma \ref{zeroind} shows that all types in $\ty_{r+1}(f)$ are $f$-complete, and Lemma \ref{uniquesprout} shows that all types in $\tt_{r+1}(f)$ are $f$-complete too. If $\ty$ is $f$-complete, we have $\oo'_L=\oo_L$ by Proposition \ref{oo=}, and we get $\ind(f)=\ind_1(f)+\cdots +\ind_r(f)$, by (\ref{end1}) and (\ref{end2}). If $\ty$ is not $f$-complete, we have in particular $f(x)\ne\phi_{r+1}(x)$, and we can extend $\ty$ in a unique way to a type $\ty'=(\tilde{\ty};\la_{r+1},\psi_{r+1}(y))$ of order $r+1$, which is $f$-complete by our assumption. By Proposition \ref{oo=}, (\ref{end1}) and (\ref{end2}), applied to $\ty'$ in order $r+1$, we get $\ind(f)=\ind_1(f)+\cdots +\ind_r(f)+\ind_{r+1}(f)$ as above. Since $\ind_{r+1}(f)=0$, we have $\ind(f)=\ind_1(f)+\cdots +\ind_r(f)$, as desired.
This ends the proof of the theorem in the particular case we were dealing with. 

Let us prove now the theorem in the other instances where $f(x)$ is irreducible: $f(x)=\phi_s(x)$ for the representative $\phi_s(x)$ of some type of order $s-1\le r-1$  (cf. Lemma \ref{empty}). In this case, $\ind_s(f)=0$ because $N_s(f)$ is a side of slope $-\infty$. Also, if $s<r$ we have $\ind_{s+1}(f)=\cdots=\ind_r(f)=0$ by definition, because $\ty_s(f)=\emptyset$ by Lemma \ref{empty}. Since $f(x)\ne\phi_1(x),\dots,\phi_{s-1}(x)$, we have $\tt_{s-1}(f)\ne\emptyset$ and we can apply the theorem in order $s-1$:
$$
\ind(f)=\ind_1(f)+\cdots+\ind_{s-1}(f)=\ind_1(f)+\cdots+\ind_r(f).
$$This proves  both statements of the theorem and it ends the proof of the theorem when $f(x)$ is irreducible.

In the general case, if $f(x)=F_1(x)\cdots F_k(x)$ is the factorization of $f(x)$ into a product of monic irreducible polynomials,
we have by definition
$$
\ind(f)=\sum_{i=1}^k\ind(F_i)+\sum_{1\le i<j\le k}v(\res(F_i,F_j)).
$$ 
By Lemma \ref{indres}, an analogous relationship holds for every $\ind_s(f)$, $1\le s\le r$. Hence, item 1 of the theorem holds by the theorem applied to each $\ind(F_i)$, and by Theorem \ref{resultant}. Let us prove now item 2. By Lemma \ref{indres},  $\ind_{r+1}(f)=0$ if and only if $\ind_{r+1}(F_i)=0$ and $\res_{r+1}(F_i,F_j)=0$, for all $i$ and all $j\ne i$. By the theorem in the irreducible case and Theorem  \ref{resultant}, this is equivalent to $\ind(f)=\ind_1(f)+\cdots+\ind_r(f)$.
\end{proof}

\end{document}